\definecolor{myblue}{rgb}{0.53,0.94,0.92}
\numberwithin{equation}{section}
\newtheorem{theorem}{Theorem}[section]
\newtheorem{proposition}{Proposition}[section]
\newtheorem{lemma}{Lemma}[section]
\newtheorem{remarke}{Remark}[section]
\newenvironment{proof}[1]{\mbox{}\\\noindent\textbf{Proof of #1}.\par}{\hfill$\square$\par}
\newenvironment{remark}{\begin{remarke}}{\end{remarke}}
\newcommand{\step}[2]{\noindent\underline{Step {#1}}. {\it #2}.}
\newcommand{\brom}[1]{\uppercase\expandafter{\romannumeral#1}}
\newcommand{\Sch}{Schr{\"o}dinger }
\newcommand{\lot}{{\rm lot}}
\newcommand{\hs}{\hspace{-.1em}}
\newcommand{\hsh}{\hspace{-.05em}}
\newcommand{\phs}{\hspace{.1em}}
\newcommand{\phsh}{\hspace{.05em}}
\newcommand{\h}[1]{\hat{#1}}
\newcommand{\la}{\lambda}
\newcommand{\tta}{\theta}
\newcommand{\varl}{\varLambda}
\newcommand{\varp}{\varPhi}
\newcommand{\vart}{\varTheta}
\newcommand{\varg}{\vg}
\newcommand{\wh}[1]{\widehat{#1}}
\newcommand{\ti}[1]{\tilde{#1}}
\newcommand{\F}{F}
\newcommand{\f}{f}
\newcommand{\err}{\varPsi}
\newcommand{\errz}{\err_0}
\newcommand{\errzt}{\ti{\err}_0}
\newcommand{\mmod}{{\rm Mod}}
\newcommand{\mmodt}{\widetilde{\mmod}}
\newcommand{\m}{\mathbf{M}}
\newcommand{\rest}{\mathcal{R}}
\newcommand{\p}{\h{p}}
\newcommand{\Po}{\mathcal{P}_1}
\newcommand{\Pt}{\mathcal{P}_2}
\newcommand{\one}{\mathbf{1}}
\newcommand{\lf}{\left}
\newcommand{\rg}{\right}
\newcommand{\vv}{v}
\newcommand{\hv}{\h{\vv}}
\newcommand{\xx}{z}
\newcommand{\w}{w}
\newcommand{\wz}{\w_0}
\renewcommand{\wp}{\w^\perp}
\newcommand{\wzt}{\ti{\w}_0}
\newcommand{\W}{W}
\newcommand{\Wp}{\W^\perp}
\newcommand{\hw}{\h{\w}}
\newcommand{\hW}{\wh{\W}}
\newcommand{\vg}{\varGamma}
\newcommand{\ka}{k_1}
\newcommand{\kb}{k_2}
\newcommand{\kk}{\Delta k}
\newcommand{\ao}{\rho_1}
\newcommand{\bo}{\rho_2}
\newcommand{\oo}{\rho}
\newcommand{\les}{\lesssim}
\newcommand{\Ham}{H}
\newcommand{\mHam}{{\it \mathbb{H}}}
\newcommand{\Haml}{H_\la}
\newcommand{\mHaml}{\mathbb{H_\la}}
\newcommand{\mHamp}{\mathbb{H^\perp}}
\newcommand{\mHamlp}{\mathbb{H_\la^\perp}}
\newcommand{\mHamo}{\mHam^{\first}}
\newcommand{\mHamt}{\mHam^{\second}}
\newcommand{\mHamth}{\mHam^{\third}}
\newcommand{\A}{A}
\newcommand{\As}{A^\ast}
\newcommand{\Al}{A_\la}
\newcommand{\Asl}{A_\la^\ast}
\newcommand{\mA}{\mathbb{A}}
\newcommand{\mAs}{\mathbb{A^\ast}}
\newcommand{\mAl}{\mathbb{A_\la}}
\newcommand{\mAsl}{\mathbb{A_\la^\ast}}
\newcommand{\pr}{\partial}
\newcommand{\prt}{\pr_{t}}
\newcommand{\prs}{\pr_{s}}
\newcommand{\prb}{\pr_{b}}
\newcommand{\py}{\pr_y}
\newcommand{\hJ}{{\rm\hat{J}}}
\newcommand{\tJ}{{\rm\tilde{J}}}
\newcommand{\E}{\mathcal{E}}
\newcommand{\R}{{\rm R}}
\newcommand{\B}{B_{\delta}}
\newcommand{\Rw}{\R_{\hw}}
\newcommand{\av}{a}
\newcommand{\varpt}{\ti{\varp}}
\newcommand{\chib}{\chi_{B_1}}
\newcommand{\lchib}{(\varl\chi)_{B_1}}
\newcommand{\Q}{\mathcal{Q}}
\newcommand{\Qfour}{\Q_4}
\newcommand{\Qfive}{\Q_5}
\newcommand{\Qf}{\Q_{F}}
\newcommand{\lgb}{\log b}
\newcommand{\lgba}{\lf|\log b\rg|}
\newcommand{\lgyba}{\lf|\log(\rg.\hs\hs y\sqrt{b}\lf.\hs\hs)\rg|}
\newcommand{\db}{\delta(b^\ast)}
\newcommand{\bflog}{\frac{b^4}{\lgba^2}}
\newcommand{\Eb}{\E_4+\bflog}
\newcommand{\Ebb}{\bigg(\Eb\bigg)}
\newcommand{\sg}{\varSigma}
\newcommand{\sgb}{\sg_b}
\newcommand{\er}{e_{r}}
\newcommand{\etau}{e_{\tau}}
\newcommand{\ex}{e_{x}}
\newcommand{\ey}{e_{y}}
\newcommand{\ez}{e_{z}}
\newcommand{\ea}{E}
\newcommand{\tea}{\ti{E}}
\newcommand{\supscriptscale}{6.8pt}
\newcommand{\sups}[1]{\scaleto{(#1)}{\supscriptscale}}
\newcommand{\first}{\scaleto{(1)}{\supscriptscale}}
\newcommand{\second}{\scaleto{(2)}{\supscriptscale}}
\newcommand{\third}{\scaleto{(3)}{\supscriptscale}}
\def\onez{(1\hs+\hs Z)}
\def\pyzy{\Big(\py\hs\hs+\hs\hs\frac{Z}{y}\Big)}
\def\ptone{\phantom{\first}}
\def\RR{\mathbb{R}}
\def\ZZ{\mathbb{Z}}
\def\SS{\mathbb{S}}
\def\CCC{\mathcal{C}}
\def\SSS{\mathcal{S}}
\def\mf{\mathcal{F}}
\def\d{{\rm d}}
\def\iff{{\rm iff}}
\def\spann{{\rm span}}
\def\iset{\mathcal{I}}
\def\isetst{\mathcal{I}^{\ast}}
\def\sstar{s^{\ast}}
\title{Blowup dynamics for smooth equivariant solutions to energy critical Landau-Lifschitz flow}
\date{}
\author{Jitao Xu\footnote{School of Mathematical Sciences, University of Science and Technology of China, Hefei, Anhui, 230026, PR China, xujt@mail.ustc.edu.cn}, Lifeng Zhao\footnote{School of Mathematical Sciences, University of Science and Technology of China, Hefei, Anhui, 230026, PR China, zhaolf@ustc.edu.cn}}
\begin{document}
	\maketitle
	\thispagestyle{plain}
	\begin{abstract}
	In this paper, we study the energy critical 1-equivariant Landau-Lifschitz flow mapping $\RR^2$ to $\SS^2$ with arbitrary given coefficients $\ao\in\RR,\, \bo>0$. We prove that there exists a codimension one smooth well-localized set of initial data arbitrarily close to the ground state which generates type-\uppercase\expandafter{\romannumeral2} finite-time blowup solutions, and give a precise description of the corresponding singularity formation. In our proof, both the Schr\"odinger part and the heat part play important roles in the construction of approximate solutions and the mixed energy/Morawetz functional. However, the blowup rate is independent of the coefficients.
	\end{abstract}

	%%%%%%%%%%%%%%%  1 
	%%%%%%%%%%%%%%%%%%%%%%%%%%%%%%
	%%%%%%%%%%%%%%%%%%%%%%%%%%%%%%
	\section{Introduction}
	\label{S: introduction}
	
	%%%%%%%%%%%%%%%  1.1  
	%%%%%%%%%%%%%%%%%%%%%%%%%%%%%%
	%%%%%%%%%%%%%%%%%%%%%%%%%%%%%%
	\subsection{Setting of the problem}
	\label{SS: Setting of the problem}
	We consider the energy-critical Landau-Lifschitz equation
	\begin{equation} \label{eq: LL}
		\begin{cases}
			u_t=\ao u\wedge\Delta u -\bo u\wedge(u\wedge\Delta u), \\
			\lf. u \rg|_{t=0} = u_0 \in \dot{H}^1,
		\end{cases}
		\; (t,x)\in \RR\times\RR^2, \;\; u(t,x)\in\SS^2,
	\end{equation}
	where the exchange constant $\ao\in\RR$ and the Gilbert damping constant $\bo>0$. This equation was proposed by Landau and Lifshitz \cite{landau1992theory} in studying the dispersive theory of magnetization of ferromagnets. It describes the evolution of magnetization in classical ferromagnet, and thus the study on \eqref{eq: LL} is of fundamental importance for the understanding of nonequilibrium magnetism, see \cite{spinwaves} for more details. If $\ao=0$, \eqref{eq: LL} reduces to the harmonic map heat flow \eqref{eq: heat flow}, a model in nematic liquid flow \cite{BBCH}. If $\bo=0$, \eqref{eq: LL} becomes the conservative \Sch map flow \eqref{eq: sch map flow}, which is of much fundamental interest in differential geometry \cite{2002DingWeiyue_Schrodingerflow}.

	\medskip
	For equation $\eqref{eq: LL}$, the associated Dirichlet energy is given by
	\begin{equation} \label{dirichilet energy}
		E(u) = \int_{\RR^2} |\nabla u|^2 dx,
	\end{equation}
	which is dissipative along the flow
	\begin{equation} \label{eq: energy dissipative}
		\frac{d}{dt}\bigg(\int_{\RR^2}|\nabla u|^2\bigg) =-2\bo\int_{\RR^2}|u\wedge\Delta u|^2.
	\end{equation}
	Moreover, the energy \eqref{dirichilet energy} is invariant under the mixed symmetric transformations of the scaling and the rotation
	\begin{equation} \notag
		u(t,x)\to u_{\la,O}(t,x) = O u\bigg(\frac{t}{\la},\frac{x}{\la^2} \bigg), 
		\quad(\la,O)\in\mathbb{R_{+}^{\ast}}\times O(3).
	\end{equation}
	A remarkable feature of \eqref{eq: LL} is that smooth solutions preserve the magnitude $|u(t,x)|=1$ for all $(t,x)\in\RR_{+}^{\ast}\times\RR^2$, once we fix $|u_0|=1$ initially. In particular, there is a specific class of solutions with an additional symmetry, called the $k$-equivariant maps, which take the form
	\begin{equation} \label{eq: rotation}
		u(t,x)=e^{k\tta\R}\begin{bmatrix}
			u_1(t,r)\\ u_2(t,r)\\ u_3(t,r) \end{bmatrix},
		\quad\mbox{with}\quad
		\R=\begin{bmatrix}
			0 & -1 & 0 \\1 & 0 & 0 \\ 0 & 0 & 0
		\end{bmatrix},
	\end{equation}
	where $(r, \tta)$ is the polar coordinate on $\RR^2$, and $k\in \mathbb{Z}^{\ast}$ is the homotopy degree given by
	\begin{equation} \notag
		k=\frac{1}{4\pi} \int_{\RR^2}\big(\pr_1 u\wedge\pr_2 u\big)\cdot u.
	\end{equation}
	A typical stationary solution of \eqref{eq: LL} is the $k$-equivariant harmonic map
	\begin{equation} \notag
		Q_k(r,\tta)=\frac{e^{k\tta\R}}{1+r^{2k}} \begin{bmatrix}
			2r^k\\ 0 \\1\hs-\hs r^{2k}
		\end{bmatrix}, \quad k\in\ZZ.
	\end{equation}
	According to the Bogomol'nyi's factorization\cite{1976Bogomol_StabilityofClassicalSolutions}, $Q_k$ is the minimizers of the Dirichlet energy~\eqref{dirichilet energy} in homotopy-$k$ class with
	\begin{equation} \notag
		E(Q_k)=4\pi|k|.
	\end{equation}
	In other words, $Q_k$ is the ground state of \eqref{eq: LL}. Since this paper is mainly concerned with the 1-equivariant solutions, we use the convention $Q=Q_1$ for the 1-equivariant ground state.

	\subsection{Related geometric flows}
	\label{SS: heat flow Sch map}
	In the past decades, great progress has been made on both the harmonic heat flow problem and the \Sch map problem. For the harmonic heat flow,
	\begin{equation} \label{eq: heat flow}
	\mbox{(Heat flow)}\quad
		\begin{cases}
	 		u_t = \Delta u + |\nabla u|^2 u, \\
			\lf. u \rg|_{t=0} = u_0,
		\end{cases}
		\; (t,x) \in  \RR\times\RR^2, \quad u(t,x) \in \SS^2,
	\end{equation}
	we refer to
	\cite{kung1989heat, eells1964harmonic, lin1998energy, 1990ChenYunmei_HarmonicMapBlowupandGlobalExistence} for existence and uniqueness results. Since 2-dimensional heat flow is energy critical, singularity formation by energy concentration is possible. It is known that concentration implies the bubbling off of a nontrivial harmonic map at a finite number of blowup points, see Struwe \cite{1985Struwe}, Ding and Tian \cite{dingtian}, Qing and Tian \cite{qing1997bubbling}, Topping \cite{topping2004winding}. For $k$-equivariant case, blowup near $Q_k$ for $k \geq 3$ has been ruled out in \cite{2010Gustafson_HarmonicHeatAsymptoticStability}, where the harmonic map is proved to be asymptotically stable. Chang, Ding and Ye \cite{changdingye} found the first example of the finite-time blowup solutions of heat flow.  For $\mathbb D^2$ initial manifold and $\mathbb S^2$ target, van den Berg, Hulshof, and King \cite{BHK} implemented a formal analysis based on the matched asymptotics techniques and predicted the existence of blow-up solutions with quantized rates
	\begin{equation}
	\la_L(t)\approx \frac{C|T-t|^L}{\lf|\log (T-t)\rg|^\frac{2L}{2L-1}},\quad L\in\mathbb N^*. \notag
	\end{equation}
	For $\mathbb R^2$ initial manifold, Rapha{\"e}l and Schweyer \cite{2013Raphael_HarmonicHeatQuantizedBlowup, 2015Raphael_HarmonicHeatQuantizedBlowup} exhibited a set of initial data arbitrarily close to the least energy harmonic map in the energy-critical topology such that the corresponding solutions blow up in finite time with the quantized blow-up rate $\la_L(t)$ for any $L\geq 1$.  The case $L = 1$ corresponds to the stable regime. Without equivariant assumption, Davila, del Pino and Wei \cite{DPJ} constructed a solution  which blows up precisely at finite number of given points if the starting manifold is a bounded domain in $\mathbb R^2$. The profile around each point is close to an asymptotically singular scaling of a 1-corotational harmonic map with blowup rate $\la_L(t)$, $L=1$. This rate was expected to be generic for 1-corotational heat flow, see \cite{BHK}. For more results on harmonic map heat flow, see \cite{LW} and the references therein.

	\medskip
	For the \Sch map problem
	\begin{equation} \label{eq: sch map flow}
	\mbox{(\Sch map)}\quad
		\begin{cases}
			u_t = u \wedge \Delta u, \\
			\lf. u \rg|_{t=0} = u_0,
		\end{cases}
		\; (t,x) \in  \RR\times\RR^2, \quad u(t,x) \in\SS^2,
	\end{equation}
	there are quite many works. The local well-posedness of \Sch map was established in \cite{sulem1986continuous, ding2001local, mcgahagan2007approximation}. When the target is $\mathbb S^2$, the global well-posedness for small data in critical spaces was proved by Bejenaru, Ionescu, Kenig and Tataru \cite{BIKT2}. Their result was extended to the target of K{\"a}hler manifolds by Li \cite{2018Lize2, 2019Lizeh}. The stationary solutions of \Sch flow are harmonic maps. When the energy is less than $4\pi$, 1-equivariant solutions are global in time and scatter, see \cite{BIKT1}. Gustafson, Kang, Tsai, Nakanishi \cite{2007Gustafson_SchrodingerFlow, 2008Gustafson_HarmonicMapAsymptotic, 2010Gustafson_HarmonicHeatAsymptoticStability} showed that harmonic maps are  asymptotically stable in high equivariant classes ($k\leq 3$), which precludes the blowup solutions near the harmonic maps in those cases. However, in 1-equivariant class, Bejenaru and Tataru~\cite{BT} showed that harmonic maps are stable under smooth well-localized perturbations, but unstable under $\dot{H}^1$ topology. Merle, Rapha{\"e}l and Rodnianski \cite{2011Merle_SchMapBlowup} proved the existence of a codimension one set of smooth well-localized initial data arbitrarily close to $Q$ which generates type-\brom{2} blowup solutions, and they figured out the detailed asymptotic behavior of these solutions near the blowup time. Perelman \cite{2014Galina_Blowup} presented another type-\brom{2} blowup solution with different singularity formation.
	
	\medskip
	There are a lot of works devoted to the study of Landau-Lifshitz flow. The global existence of weak solutions or partial regular solutions have been shown in \cite{AS, CF, GH, KO, Mel, Wang}. However, the dynamical behavior is much less studied. The asymptotic stability of ground state harmonic maps in high equivariant classes were proved by Gustafson, Nakanishi and Tsai \cite{2010Gustafson_HarmonicHeatAsymptoticStability}. In 1-equivariant class, the solutions with energy less than $4\pi$ were proved to converge to a constant map in energy space by Li and Zhao \cite{2017LizeAsymptotic}.	The equivariant blowup solution was constructed in \cite{BW} by formal asymptotics and was verified by numerical experiments. However, as the authors said in \cite{BW} `{\it mathematically rigorous justification is required}'. As far as we know, the problem of blowup dynamics of the Landau-Lifschitz equation near $Q$  remains open until now.

	%%%%%%%%%%%%%%%  1.3
	%%%%%%%%%%%%%%%%%%%%%%%%%%%%%%
	%%%%%%%%%%%%%%%%%%%%%%%%%%%%%%
	\subsection{Statement of the result}
	\label{SS: main result}
	Our work is in continuation of the investigation of the \Sch map \cite{2011Merle_SchMapBlowup}, the wave map \cite{2012Raphael_WaveMapBlowup}, the harmonic heat flow \cite{2015Raphael_HarmonicHeatQuantizedBlowup}. We establish the existence of smooth 1-equivariant type-\brom{2} blowup solutions to \eqref{eq: LL}, and gives a sharp description on the asymptotic behavior on its singularity formation.

	\begin{theorem}[Existence and description of blowup LL flow]
		\label{th: main th}
		There exists a set of smooth well-localized 1-equivariant initial data with its elements arbitrarily close to the ground state $Q$ in $\dot{H}^1$ topology, such that the corresponding solution to \eqref{eq: LL} blows up in finite time. The singularity formation corresponds to the concentration of the universal bubble of energy in the scale invariant energy space:
		\begin{equation} \label{eq: singularity formation}
			u - e^{\vart(t)\R} Q\lf( \frac{x}{\la(t)} \rg) \to u^{\ast}\in\dot{H}^1\;\;\mbox{as}\;\; t\to T,
		\end{equation}
		for geometrical parameters $\big(\vart(t), \la(t)\big)\in \CCC^1\big(\hspace{.1em}[\hspace{.1em}0,T),\RR\hs\times\hs\RR_{+}^{\ast}\big)$ with their asymptotic behaviors near the blowup time $T$ given by
		\begin{gather}
			\la(t)= C(u_0)\big(1+o(1)\big)\frac{(T-t)}{\lf|\log (T-t)\rg|^2}, \;\;C(u_0)>0,
			\label{eq: lambda}\\			
			\vart(t)\to\vart(u_0)\in\RR 
			\;\;\mbox{as}\;\; t\to T.
			\label{eq: Theta}
		\end{gather}
		Moreover, there holds the propagation of regularity:
		\begin{equation} \label{eq: propagation of regularity}
			\Delta u^{\ast} \in L^2.
		\end{equation}
	\end{theorem}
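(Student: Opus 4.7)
The plan is to adapt the modulation-plus-energy method that underlies the harmonic heat flow analysis of Rapha\"el and Schweyer \cite{2015Raphael_HarmonicHeatQuantizedBlowup} and the Schr\"odinger map analysis of Merle, Rapha\"el and Rodnianski \cite{2011Merle_SchMapBlowup}, now in the mixed dispersive--dissipative setting of \eqref{eq: LL}. The starting point is the 1-equivariant reduction \eqref{eq: rotation}: after passing to the tangent frame along $Q$ and introducing the associated complex-valued radial unknown, \eqref{eq: LL} becomes a one-dimensional equation whose linearization at $Q$ reads, schematically, $(\rho_1 i-\rho_2)\Ham$, where $\Ham=\As\A$ is the factorized Hamiltonian already used in the two works above and $\A$ is the first-order conjugation operator associated with the ground state.

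Next, I would construct a family of improved approximate solutions $\Q_b$, indexed by a small real parameter $b>0$, by solving inductively the elliptic problems
\begin{equation*}
(\rho_1 i-\rho_2)\Ham\,\Q_b^{(k+1)} + b\,\varLambda\Q_b^{(k)} + \mbox{l.o.t.} = 0,
\end{equation*}
truncated at the outer scale $\sim 1/\sqrt{b}$. As in the $L=1$ scheme of \cite{2015Raphael_HarmonicHeatQuantizedBlowup}, a slowly growing tail and a logarithmic resonance between the scaling generator and the zero mode $\varLambda Q$ of $\Ham$ produce the $(\log b)^2$ factor in \eqref{eq: lambda}. The key observation is that the complex prefactor $\rho_1 i-\rho_2$ factors out of every step of the iteration, so both the profile and its outer matching are uniform in $(\rho_1,\rho_2)$; this is precisely why the leading-order rate ends up independent of the coefficients. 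The solution is then decomposed as $u = e^{\vart\R}[\Q_{b(s)}+\err]$ in renormalized variables $y=r/\la$, $\d s/\d t = 1/\la^2$, and the three parameters $(\la,\vart,b)$ are fixed by orthogonality conditions on $\err$ against well-chosen elements of the generalized kernel of $\Ham$. This yields modulation equations of the form $-\la_s/\la = b + (\cdot)$, $\vart_s = (\cdot)$, $b_s + b^2 = (\cdot)$, with $\err$ governed by $(\rho_1 i-\rho_2)\Ham\err$ plus a controlled forcing.

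The technical heart of the proof, and the main obstacle, is to build a mixed energy/Morawetz functional
\begin{equation*}
\E(\err) = \|\Ham\err\|_{L^2}^2 + \rho_1\,(\mbox{local virial correction}) + \mbox{lower-order weighted terms}
\end{equation*}
satisfying two properties simultaneously: first, coercivity of $\Ham$ modulo the orthogonality directions must force $\E$ to control an $H^2$-type norm of $\err$; second, $\E$ must obey a monotonicity identity of the form $\d\E/\d s \les b\,\E + b^{8}/|\log b|^{4}$ uniformly in $(\rho_1,\rho_2)$ with $\rho_2>0$. The heat part supplies a genuine dissipation $\rho_2\|\A\Ham\err\|_{L^2}^2$ that absorbs the worst local terms, while the a priori reversible Schr\"odinger part, which by itself only conserves $\|\Ham\err\|_{L^2}^2$, must be tamed by a local virial/Morawetz commutator in the spirit of \cite{2011Merle_SchMapBlowup}. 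The delicate point is that when $|\rho_1|/\rho_2$ is large the dispersive part generates errors that the heat dissipation alone cannot absorb near the soliton core, so one essentially has to diagonalize the linear operator at the level of the functional, choosing the Morawetz weight so that the cross terms in $\rho_1\rho_2$ cancel.

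Given such a functional, the remainder of the argument is a fairly standard trapping scheme. I would set up a bootstrap regime $\E(\err)\le b^{8}/|\log b|^{4}$ together with size controls on $(\la,\vart,b)$, show that the only linear instability sits in the $b$-equation, and select, by a Brouwer-type argument on the initial value of $b$, the codimension-one submanifold of initial data for which the bootstrap stays closed up to the maximal existence time $T$. Integrating $-\la_s/\la = b$ with $b(s)\sim 1/(s|\log s|)$ then produces \eqref{eq: lambda}, while integrability of $\vart_s$ gives \eqref{eq: Theta}. Finally, \eqref{eq: singularity formation} and the propagation of regularity \eqref{eq: propagation of regularity} follow by combining the compact-set control of $\err$ with a backward-integration argument in the outer region, which produces the limit profile $u^{\ast}$ and the bound $\Delta u^{\ast}\in L^{2}$.
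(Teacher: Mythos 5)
Your overall scheme (renormalization around $Q$, an approximate profile built from the zero mode $T_1$ of $\Ham$, modulation equations, a mixed energy/Morawetz functional exploiting the dissipation $\bo\|\mA\Ham\cdot\|_{L^2}^2$, and a trapping argument) is the same as the paper's, but two structural points in your write-up are wrong and would break the proof. First, you modulate only $(\la,\vart,b)$ and place the codimension-one instability in $b$. The paper needs a fourth parameter $a\approx-\vart_s$ (so that the four orthogonality conditions \eqref{eq: orthogonality} on $\alpha,\beta$ against $\varp_M,\Ham\varp_M$ can be imposed), and the unstable direction is $a$, not $b$: in the trapped regime $b_s=-b^2(1+2/\lgba)+{\rm lot}$ is self-consistently attracting and $b(0),\w(0)$ range over open sets, whereas the rescaled quantity $\kappa=2a\lgba/b$ satisfies $\kappa_s=\kappa\,b\,(1+o(1))+O(b/\sqrt{\log M})$ and is exponentially repelled from the trapped interval; it is $a(0)$ that is selected by the topological/connectedness argument. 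Selecting $b(0)$ by a Brouwer-type argument would not produce the claimed codimension-one set and leaves the genuinely unstable phase-speed direction uncontrolled; it also ruins the phase convergence \eqref{eq: Theta}, which requires the refined bound $|a|\les b/\lgba^{3/2}$ obtained from an extra projection at scale $b^{-\delta}$. Second, your rate derivation is quantitatively inconsistent: the modulation ODE forces $b(s)=1/s-2/(s\log s)+O(1/(s(\log s)^2))$, i.e.\ $b\sim 1/s$, not $b\sim 1/(s|\log s|)$ as you assert. With $b\sim 1/(s|\log s|)$, integrating $-\la_s/\la=b$ gives $\la\sim 1/\log s$, and then $t=\int\la^2\,ds$ diverges, so you would not even obtain finite-time blowup, let alone \eqref{eq: lambda}; the $|\log(T-t)|^2$ factor comes from the $2/\lgba$ correction in $b_s$, generated by the flux of the radiation $\sgb$ through the flux computation \eqref{eq: flux computation}, fed back through $\la_s/\la=-b$.

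Two further points are weaker but worth flagging. Your single ``$H^2$-type'' functional with bootstrap $b^8/|\log b|^4$ is dimensionally off: at the $\dot H^2$ level the radiation satisfies only $\E_2\les b^2\lgba^6$ (the profile error alone is of order $b^2$ there), while the modulation equations need $\sqrt{\E_4}$-precision with $\E_4\les b^4/\lgba^2$; this is why the paper runs the mixed energy/Morawetz identity at the $\dot H^4$ level, proves the $\E_2$ bound separately from the dissipation of $\int|u\wedge\Delta u|^2$, and controls $\E_1$ by the energy dissipation \eqref{eq: energy dissipative}. Finally, the claim that the prefactor $\ao i-\bo$ ``factors out of every step'' is too quick: the profiles do carry $(\ao,\bo)$-dependent coefficients (e.g.\ $\varp_{1,0},\varp_{0,1}$ in \eqref{eq: varp function}), and the coefficient-independence of the rate is a consequence of an exact cancellation between these coefficients and the flux terms in the modulation system, while the Morawetz weight must be chosen with coefficients \eqref{eq: cccc} depending on $\bo/\ao$, with a genuine case split $|\bo/\ao|\le 1$ versus $|\bo/\ao|>1$; your diagonalization heuristic points in the right direction but does not by itself supply the sign structure (Lemma \ref{le: structure}) needed to cancel the unsigned commutator term \eqref{eq: uncontrollable term}.
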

	\paragraph{Comments on the result}
	\par 1. {\it On the blowup asymptotics}: The overall blowup behavior is similar to that of \Sch map problem \cite{2011Merle_SchMapBlowup}. One of the major distinctions is that, due to the additional heat flow term (also referred to as the Lifscthiz disspation), the approximate solution is completely different from that of \Sch map problem. It behaves like a combination of the approximate solution of  \eqref{eq: heat flow} and \eqref{eq: sch map flow}. However, the appearance of the heat flow term does not deteriorate the error estimate.
	
	\medskip
	\par 2. {\it On the Morawetz estimate}: As in \cite{2011Merle_SchMapBlowup}, a unsigned quadratic term induced by the commutator $\big[\prt, \mHam\big]$ appears in the plain energy identity. It can not be controlled via the energy bounds directly (see \eqref{eq: uncontrollable term}), and thus requires an extra Morawetz functional to create cancellation. The construction of a suitable functional is the core of the analysis. However, the presence of the heat flow term make our situation very complicated, which is the main difficulty of this work. The key observation we have made is the intrinsic structure of the operator $\mA,\mAs$ gathered in Lemma \ref{le: structure}, which enables us to capture the competion of the \Sch part and the heat flow part, and formulate the uncontrollable term. In our construction, the ratio of $\ao, \bo$ also plays a crucial role, based on which the exquisitely-designed coefficients \eqref{eq: cccc} are eventually responsible for the distinct controls of various errors coming from the Morawetz estimate \eqref{eq: morawetz eq}.
		
	\medskip
	\par 3. {\it On the universality of the blowup rate}: The blowup rate \eqref{eq: lambda} is independent of the coefficients $\ao, \bo$, in spite of the evident influence of the latter. In fact, the subdued contribution of the coefficients has been observed by van den Berg and Williams through formal asymptotics \cite{BW}. Indeed, we shall see in Lemma \ref{le: ap solution}  that the coupling coefficients involving $\ao, \bo$ in front of the approximate profiles are consistent with the expressions in the flux computation \eqref{eq: flux computation}. This correspondence produces cancellations in computing the modulation equations, which gives the identical dynamic system for the modulation parameters \eqref{eq: modulation eq} as in \cite{2011Merle_SchMapBlowup, 2013Raphael_HarmonicHeatQuantizedBlowup}. That explains, to some extend, the reason why the blowup rate is irrelevant to the coefficients. 
	%%3. The core of the analysis of this work is the derivation of mixed energy/Morawetz estimate, as illustrated in Section~\ref{sec: energy method}. We define a Morawetz term which plays a crucial role in the energy estimates. The spirit of this method is identical to that of \cite{2011Merle_SchMapBlowup}. \\%%
		%%We define a new Morawetz term, which is different from the one in the \Sch map problem. The purpose of the Morawetz estimate is to cancel a uncontrollable term coming from the energy estimate. However, due to the participation of the heat part, the original energy estimate will bring about extra uncontrollable term. Thus we need to modify the original Energy/Morawetz functional.%%
		
	\medskip
	\par 4. {\it On the codimension one instability}: In our construction, the initial data of the blowup solutions are characterized by specifying the modulation parameters $a,b$ and the radiation term $\w$. The choice of $b_0$ and $\w_0$ resist small perturbations, while $a_0$, representing the time derivative of the phase, is unstable and thus selected accordingly afterward.  as shown in Section \ref{S: the trapped regime}. This regime ensures, in some weak sense, that the solutions evolve from a codimension one set of smooth initial data will blow up in the way we describe in Theorem\ref{th: main th}.

	\paragraph{Notions}
	We use the polar coordinates $(r,\tta)$ and $(y,\tta)$ on $\RR^2$, where by the anticipated scaling transform, we will set $y = r/\la$. We use the convention
	\begin{equation} \notag
		\pr_\tau = \frac{1}{y}\pr_\tta,
		\quad \varl f = y \cdot \nabla_y f.
	\end{equation}
	For any given parameter $b$, we introduce the scales
	\begin{equation} \label{eq: scales}
		B_0 = \frac{1}{\sqrt{b}}, \quad B_1 = \frac{\lgba}{\sqrt{b}}.
	\end{equation}
	The cut-off function $\chi$ is a smooth radially symmetric function defined by
	\begin{equation} \notag
	\chi(x) =\begin{cases}
		1, \quad |x|\leq1, \\
		0, \quad |x|\geq2,
	\end{cases}
	\end{equation}
	with its scaling given by
	\begin{equation} \notag
		\chi_M(x) = \chi\Big(\frac{x}{M}\Big),
	\end{equation}
	provided any large constant $M>0$.
	
	%%%%%%%%%%%%%%%  1.4
	%%%%%%%%%%%%%%%%%%%%%%%%%%%%%%
	%%%%%%%%%%%%%%%%%%%%%%%%%%%%%%
	\subsection{Strategy of the proof}
	Let us briefly sketch our approach for proving the Theorem \ref{th: main th}, which follows from the strategy developed in \cite{2011Merle_SchMapBlowup, 2012Raphael_WaveMapBlowup, 2013Raphael_HarmonicHeatQuantizedBlowup}.
	
	%%%%%%%%%%%%%%%%%
	%%%%%%%%%%%%%%%%%
	\medskip
	\step{1}{Renormalization} We look for the 1-equivariant solution with its energy slightly higher than the ground state $Q$ which takes the form
	\begin{equation} \label{eq: sol form 0}
		u(t)=e^{\vart(t)\R}(Q+\hv)\lf(t,\frac{r}{\la(t)}\rg),
	\end{equation}
	where $\hv$ is a perturbation with small enough Sobolev norm
	\begin{equation} \notag
		\|\hv\|_{\dot{H}^1} \ll 1.
	\end{equation}
	The blowup mechanism suggests $\la\to 0$ as $t\to T$, and thus it is suitable to consider the renormalized function $v(s,y)$ under the self-similar transformation
	\begin{equation} \label{eq: ss transform}
		u(t)=e^{\vart \R}v(s,y), \quad
		\frac{ds}{dt}=\frac{1}{\la(t)^2}, \quad
		\frac{y}{r}=\frac{1}{\la(t)}.
	\end{equation}
	Then from \eqref{eq: LL} we obtain the renormalized equation for $v$:
	\begin{equation} \notag
		\prs v -\frac{\la_s}{\la}\varl v +\vart_s\R v =\ao v\wedge v -\bo v\wedge(v\wedge\Delta v).
	\end{equation}
	To understand the solution's behavior in the vicinity of the ground state, we apply the Frenet basis associated to $Q$, that is $[e_r,e_{\tau},Q]$ with
	\begin{equation} \notag
		\er=\frac{\pr_y Q}{|\pr_y Q|}, \quad
		\etau=\frac{\pr_{\tau}Q}{|\pr_{\tau}Q|}, \quad
		Q(y,\tta)=e^{\tta\R} \begin{bmatrix} \varl\phi(y) \\ 0 \\ Z(y) \end{bmatrix},
	\end{equation}
	where $\phi, \varl\phi, Z$ are given by \eqref{eq: ground state}, \eqref{eq: lambda phi Z}. Then replacing $v$ by $Q+\hv$, we reformulate \eqref{eq: LL} and encounter the following repeated linear pattern in $\hv$:
	\begin{equation} \notag
		\Delta \hv + |\nabla Q|^2 \hv,
	\end{equation}
	which gives rise to the linearized Hamiltonian/\Sch operator
	\begin{equation} \notag
		\Ham = -\Delta +\frac{V(y)}{y^2}, \quad\mbox{with}\quad
		V(y)=\frac{y^4-6y^2+1}{(1+y^2)^2},
	\end{equation}
	and also its vectorial version $\mHam$ \eqref{eq: mHam func}, \eqref{eq: mHam}. Finally, we obtain the following component equations of $\hv$, which is equivalent to \eqref{eq: LL}. ($\lot$ stands for the lower order errors)
	\begin{equation} \label{eq: component equation 0}
	\lf\{\begin{aligned}
	\prs \h\alpha - \frac{\la_s}{\la}\varl\h\alpha
	&= \, \ao \Ham\h\beta - \bo\Ham\h\alpha + \frac{\la_s}{\la}\varl\phi + \lot, \\
	%%%%%%%%%%%%%%%%%%%%%%%%%%%%%%%%%%%%%%%%%%%%%%%%%%%%%%%%%%%%%%%%
	\prs \h\beta - \frac{\la_s}{\la}\varl\h\beta
	&= - \ao \Ham\h\alpha - \bo \Ham\h\beta
	- \vart_s Z\h\alpha + \lot, \\
	%%%%%%%%%%%%%%%%%%%%%%%%%%%%%%%%%%%%%%%%%%%%%%%%%%%%%%%%%%%%%%%
	\prs \h\gamma - \frac{\la_s}{\la}\varl\h\gamma
	&= - \ao \h\alpha\Ham\h\beta + \ao \h\beta\Ham\h\alpha
	+ \bo \h\alpha\Ham\h\alpha + \bo \h\beta\Ham\h\beta
	- \frac{\la_s}{\la}\varl\phi\h\alpha + \lot,
	\end{aligned}\rg.
	\end{equation}
	Under the convention $\hv=[\er, \etau, Q]\hw$, \eqref{eq: component equation 0} can be rewritten as the frequently used vectorial form
	\begin{equation} \label{eq: vec wz equation 0}
		\prs\hw -\frac{\la_s}{\la}\varl\hw + \vart_s Z\R\hw +\hJ\Big(\,\ao\mHam\hw -\bo\hJ\mHam\hw +\p\,\varl\phi\,\Big) = 0.
	\end{equation}

	%%%%%%%%%%%%%%%%%
	%%%%%%%%%%%%%%%%%
	\medskip
	\step{2}{Construction of the approximate solution} To characterize the evolution of $\la_s, \vart_s$ appearing in \eqref{eq: component equation 0}, we define two more modulation parameters $a,b$, and claim the following slow modulated ansatz:
	\begin{equation} \notag
		a\approx-\vart_s, \quad
		b\approx-\frac{\la_s}{\la}, \quad
		a_s\approx 0, \quad b_s\approx -(b^2+a^2).
	\end{equation}
	Then we seek for the approximate solution of \eqref{eq: vec wz equation 0} whose leading part is
	\begin{equation} \label{eq: ap solution 0}
		\wz = a\phsh\varp_{1,0} +b\phsh\varp_{0,1} +b^2\phsh S_{0,2},
	\end{equation}
	where $\varp_{1,0}, \varp_{0,1}$ are the first order profiles  responsible for the cancellation of the expressions inside the big brace in \eqref{eq: vec wz equation 0}, while $S_{0,2}$ is the second order profile dominating the third component $\h\gamma$, which is chosen to obey the constraint
	\begin{equation} \notag
		\h\alpha^2 +\h\beta^2 +(1+\h\gamma)^2=1.
	\end{equation}
	Moreover, we set the higher order profiles $\varp_{i,j}$ \eqref{eq: varp asymptotics 2} to improve the precision of the approximate solution. In general, the constructions of these profiles are mainly concerned with solving a typical type of elliptic equations
	\begin{equation} \label{eq: elliptic equation}
		\begin{cases}
			\Ham \varp_{i,j} =E_{i,j}(\varl\phi, \varp_{k,l}), 
			& \mbox{for}\;\; i+j=1, \\
			\Ham \varp_{i,j} =E_{i,j}(\varl\phi, \varp_{k,l}, S_{k,l}), & \mbox{for}\;\; 2\leq i+j\leq 3,
		\end{cases}
	\end{equation}
	with $k\leq i,\, l\leq j,\, k+l<i+j$. Its solution can be obtained by the method of variation of constants with the known resonance $\varl\phi$ of the Hamiltonian $\Ham$. The constructions of $\varp_{1,0}, \varp_{0,1}$ \eqref{eq: varp function} are quite straightforward, while the higher order profiles $\varp_{i,j}$ require further manipulations, owing to their fast growing tails as $y\to+\infty$ induced by $T_1-\varl T_1$ where
	\begin{equation} 
	\lf\{\begin{aligned}
		& T_1=-y\log y+y+O\bigg(\frac{(\log y)^2}{y}\bigg), \\
		& \varl T_1=-y\log y+O\bigg(\frac{(\log y)^2}{y}\bigg).
	\end{aligned}\rg.
	\end{equation}
	To handle this, a radiation $\sgb$ \eqref{eq: sigma b} has been applied to modify the recursive system \eqref{eq: elliptic equation}, as illustrated in \eqref{eq: lambda T1 Sigma b outside}. Furthermore, the presence of $\sgb$ functions have profound influence on the error $\errz$ of the approximate solution, the coupling formulas derived from which eventually leads to the forthcoming modulation equations
	\begin{equation} \label{eq: modulation equation 0}
		a_s = -\frac{2ab}{\lgba} +\lot, \quad
		b_s = -b^2 \bigg(1+\frac{2}{\lgba}\bigg) +\lot,
	\end{equation}
	as well as $a=-\vart_s+\lot$, $b=-\la_s/\la+\lot$. These identities constitute an ODE system, which determines the asymptotics for the modulation parameters. By refined computations on the ODE system, we see given $\la_0>0$, the scaling parameter $\la(t)$ change its sign in finite time $T$, which yields the finite-time blowup.
	
	%%%%%%%%%%%%%%%%%
	%%%%%%%%%%%%%%%%%
	\medskip
	\step{3}{Control of the radiation} Based on the approximate solution $\wzt=\chib\wz$ localized at the scale $B_1$, we seek for an actual solution, by showing the existence of the radiation $\w$ in the decomposition
	\begin{equation} \notag
		\hw = \wzt + \w.
	\end{equation}
	Recall \eqref{eq: sol form 0}, it remains to prove the existence of $\w$ in suitable Sobolev space that permits the modulation equations \eqref{eq: modulation equation 0}, but this requires the estimate of the form
	\begin{equation} \label{eq: first estimate}
   		\sum_{i=0}^4\int\frac{|\py^i\w|^2}{1+y^{8-2i}} \les \bflog,
	\end{equation}
	which is the key to finish the analysis. This is accomplished by the energy method together with the bootstrap argument. To be specific, we introduce the energies of $\w$, namely $\E_1, \E_2, \E_4$, at the level of $\dot{H}^1, \dot{H}^2, \dot{H}^4$ respectively, and then claim the bootstrap bounds of the energies \eqref{eq: pointwise energy 1}--\eqref{eq: pointwise energy 3} on a small initial interval. By proving that refined bounds \eqref{eq: refined pointwise energy 1}-\eqref{eq: refined pointwise energy 3} hold, we know the radiation $\w$ is trapped in the bootstrap regime, and then the estimate \eqref{eq: first estimate} follows. To show the refined bounds, we derive the following differential inequality for $\E_4$:
	\begin{equation} \label{eq: energy estimate 0}
		\frac{\d}{dt} \bigg(\frac{\E_4}{\la^{6}}\bigg) 
		\leq \frac{b}{\la^8} \bigg[ 2\big(1-d_2+o(1)\big) \E_4 + O\bigg(\bflog\bigg) \bigg],
	\end{equation}
	which is the core of our analysis. The main difficulty is the appearance of the unsigned quadratic terms emerging therefrom:
	\begin{equation} \label{eq: uncontrollable term 0}
		\int\R\mHaml\W_2^0\cdot
		\Big[-\mHamlp\big(G\W^\perp\big)+G\R\W_2^0\,\Big].
	\end{equation}
	This term, however can not be handled using the same Morawetz estimate in \cite{2011Merle_SchMapBlowup}, because of its intrinsic connection with the coefficients $\ao,\bo$. Therefore we construct a new Morawetz term $\m(t)$  using the factorization of $\mHamp$ \eqref{eq: mHamp factori}, with its coefficients involving $\bo/\ao$, to cancel \eqref{eq: uncontrollable term 0}. Analysis behind this cancellation splits into two cases: $|\bo/\ao|\leq 1$ and $|\bo/\ao|>1$, and in both cases the interaction of \eqref{eq: uncontrollable term 0} and Morawetz term will not perturb \eqref{eq: modulation equation 0}, \eqref{eq: energy estimate 0}. This fact suggests the invariance of the blowup speed against different $\ao, \bo$. Finally, we apply \eqref{eq: energy estimate 0} to close the bootstrap arguments, and the description of the blowup speed as well as the behavior of the residue term $\w$ are direct consequences of \eqref{eq: first estimate}, \eqref{eq: modulation equation 0}, which yield Theorem~\ref{th: main th}. 
	
	\paragraph{Structure of the paper}
	In Section \ref{S: 1-equiv flow in frenet basis}, we introduce the Frenet basis, renormalized variables and the linear Hamiltonian $\Ham$, and convert the problem to the vectorial form.
	In Section \ref{S: construction of the ap sol}, we construct an approximate solution $\w_0$ to the renormalized equation, and also its localized version $\wzt$, then give estimates on corresponding error terms.
	In Section \ref{S: the trapped regime}, we are aimed at seeking for an actual solution, so we consider the correction term $\w$, set up the trapped regime, introduce the definitions of energies $\E_1,\E_2,\E_4$, and obtain the modulation equations.
	In Section \ref{S: energy method}, we compute the mixed energy identity/Morawetz formula, and close the bootstrap bounds.
	In Section \ref{S: des on singularity formation}, we exhibit the that the finite-time blowup, and give a precise description of the blowup speed as well as the behavior of the correction term $\w$.
	In Appendix \ref{S: appendix A}, we list some results from \cite{2011Merle_SchMapBlowup} on the interpolation bounds of the correction term $\w$, which are frequently used in Section \ref{S: energy method}.
	In Appendix \ref{S: appendix B}, we give a proof of the Lemma \ref{le: gain of 2d}, which is also a crucial component in Section \ref{S: energy method}.

	\paragraph{Acknowledgements}
	J. Xu would like to thank F. Merle for their fruitful discussions and his kind advices on this work. 
	The contents of the second and third sections were finished when J. Xu was visiting Universit\'e de Cergy-Pontoise with financial support from University of Science and Technology of China, so he thank especially for the nice hospitality of UCP as well as the generous support from USTC.
	L. Zhao would like to thank P. Rapha\"el for suggesting this interesting problem to him
	during his visit in Universit\'e de Nice Sophia-Antipolis in 2018.
	L. Zhao is supported by NSFC Grant of China No. 12271497  and the National Key Research and Development Program of China  No. 2020YFA0713100.

	%%%%%%%%%%%%%%%  2  
	%%%%%%%%%%%%%%%%%%%%%%%%%%%%%%
	%%%%%%%%%%%%%%%%%%%%%%%%%%%%%%
	
	\section{1-equivariant flow in the Frenet basis}
	\label{S: 1-equiv flow in frenet basis}
	
	%%%%%%%%%%%%%%%  2.1
	%%%%%%%%%%%%%%%%%%%%%%%%%%%%%%
	%%%%%%%%%%%%%%%%%%%%%%%%%%%%%%
	\subsection{The ground state and Frenet basis}
	\label{SS: ground state and the Frenet basis}
	Let us introduce the geometric set up of the 1-equivariant solution $u$ of the Landau-Lifschitz equation \eqref{eq: LL}. Maps with values in $\SS^2$ will be viewed as maps into $\RR^3$ with image parameterized by Euler angle $(\phi, \tta)$. The ground state $Q$ is a harmonic map of homotopy degree $k=1$ satisfying
	\begin{equation}
	\Delta Q = - |\nabla Q|^2 Q.
	\label{groud state eq}
	\end{equation}
	Its explicit formula is
	\begin{equation} \label{eq: ground state}
	Q(r,\tta) = \begin{bmatrix} \sin(\phi(r))\cos(\tta) \\ \sin(\phi(r))\sin(\tta) \\ \cos(\phi(r))
	\end{bmatrix},
	\quad\mbox{with}\quad \phi(r)=2\arctan(r).
	\end{equation}
	For the ease of notations, we define the following functions
	\begin{equation} \label{eq: lambda phi Z}
	\lf\{\begin{aligned}
	& \varl\phi(r) =r\pr_r\big(2\arctan(r)\big) =\frac{2r}{1+r^2} =\sin(\phi(r)), \\
	& Z(r) =\frac{1-r^2}{1+r^2} =\cos(\phi(r)),
	\end{aligned}\rg.
	\end{equation}
	according to which \eqref{eq: ground state} is rewritten as
	\begin{align}
	Q(r,\tta) = \begin{bmatrix} \varl\phi(r) \cos(\tta) \\ \varl\phi(r) \sin(\tta) \\ Z(r) \end{bmatrix} = e^{\tta \R} \begin{bmatrix} \varl\phi(r) \\ 0 \\ Z(r) \end{bmatrix}.
	\notag
	\end{align}
	In order to study the flow in the vicinity of $Q$,
	we choose a suitable gauge, namely Frenet basis $\big[\er, \etau, Q\big]$, with
	\begin{equation} \label{eq: frenet}
	\begin{gathered}
	\er = \frac{\pr_r Q}{|\pr_r Q|} = e^{\tta \R}
	\begin{bmatrix} Z \\ 0 \\ -\varl\phi
	\end{bmatrix},\quad
	\etau = \frac{\pr_\tau Q}{|\pr_\tau Q|} = e^{\tta \R}
	\begin{bmatrix} \,0\, \\ \,1\, \\ \,0\, \end{bmatrix}.
	\end{gathered}
	\end{equation}
	The action of derivatives and rotations in the Frenet basis are given by the following lemma. The proof follows from direct computations.
	
	%%%%%%%%%%%%%%%%%%%%%%%%%%%%%%
	%%%%%%%%%%%%%%%%%%%%%%%%%%%%%%
	\begin{lemma}[Derivation and rotation of Frenet basis \cite{2011Merle_SchMapBlowup}]
		\label{le: frenet basis}
		There holds:\\
		{\rm(i)} Action of derivatives:
		\begin{equation} \label{def: derivation of the Frenet basis}
		\begin{aligned}
			&\lf\{\begin{aligned}
				&\pr_r \er = -(1+Z)Q,\\
				&\pr_r \etau = 0,\\
				&\pr_r Q = (1+Z)\er,
			\end{aligned}\rg. \qquad\quad\; 
			\lf\{\begin{aligned}
				&\varl \er = -\varl\phi\,Q,\\
				&\varl \etau = 0,\\
				&\varl Q = \varl\phi\,\er,
			\end{aligned}\rg.\\
			&\lf\{\begin{aligned}
				&\pr_{\tau} \er =\frac{Z}{r}\etau,\\
				&\pr_{\tau} \etau =-\frac{Z}{r}\er -(1+Z)Q,\\
				&\pr_{\tau} Q =(1+Z)\etau,
			\end{aligned}\rg. \quad 
			\lf\{\begin{aligned}
				&\Delta \er =-\frac{1}{r^2}\er -\frac{2Z(1+Z)}{r}Q,\\
				&\Delta \etau = -\frac{1}{r^2}\etau,\\
				&\Delta Q =-2(1+Z)^2 Q.
				\end{aligned}\rg.
		\end{aligned}
		\end{equation}
		{\rm(ii)} Action of rotations:
		\begin{equation} \notag
			\R\er = Z\etau, \quad \R\etau = -Z\er -\varl\phi\phsh Q,
			\quad \R Q = \varl\phi\,\etau. 
		\end{equation}
		Moreover, the scaling and rotation symmetries yield the two parameters family of the harmonic map
		\begin{equation} \notag
			Q_{\vart,\la}(r) = e^{\vart \R}Q\Big(\frac{r}{\la}\Big),
			\quad (\vart,\la)\in \RR\times\RR_{+}^{\ast},
		\end{equation}
		with the infinitesimal generators:
		\begin{equation}
		\frac{d}{d \la}(Q_{\vart,\la})\big|_{\la=1, \vart=0}= -\varl\phi\, \er,\quad
		\frac{d}{d \vart}(Q_{\vart,\la})\big|_{\la=1, \vart=0}= -\varl\phi\, \etau.
		\notag
		\end{equation}
	\end{lemma}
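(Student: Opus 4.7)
The strategy is direct, coordinate-based computation from the explicit formulas~\eqref{eq: frenet} and~\eqref{eq: ground state}, organised around a single scalar identity. Since $\phi(r)=2\arctan r$, one has $\phi'(r)=\tfrac{2}{1+r^2}=1+Z$, and from $\varl\phi=\sin\phi$ and $Z=\cos\phi$ one then deduces
\begin{equation}\notag
\varl\phi=r(1+Z),\qquad Z^2+(\varl\phi)^2=1,\qquad \pr_r\varl\phi=Z(1+Z),\qquad \pr_r Z=-\varl\phi(1+Z).
\end{equation}
These four relations generate every cancellation required below, so I would begin by recording them explicitly.

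For the radial block in (i) my plan is to differentiate the column vectors $Q=e^{\tta\R}[\varl\phi,0,Z]^T$ and $\er=e^{\tta\R}[Z,0,-\varl\phi]^T$ componentwise; for instance $\pr_r Q=e^{\tta\R}[Z(1+Z),0,-\varl\phi(1+Z)]^T=(1+Z)\er$, and analogously $\pr_r\er=-(1+Z)Q$, while $\pr_r\etau=0$ is immediate from $\etau=e^{\tta\R}[0,1,0]^T$. Multiplying by $r$ and using $r(1+Z)=\varl\phi$ converts these into the $\varl$-identities. The angular block reduces to the action of $\R$ on the Frenet vectors, since $\R$ commutes with $e^{\tta\R}$, so $\pr_\tta X=\R X$ for $X\in\{Q,\er,\etau\}$. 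Computing $\R\er$ and $\R Q$ amounts to multiplying $[Z,0,-\varl\phi]^T$ and $[\varl\phi,0,Z]^T$ by $\R$, which gives $\R\er=Z\etau$ and $\R Q=\varl\phi\,\etau$ by inspection; for $\R\etau$ the image $e^{\tta\R}[-1,0,0]^T$ must be decomposed in $\spann\{\er,Q\}$, and since the Gram matrix of $\{[Z,0,-\varl\phi]^T,[\varl\phi,0,Z]^T\}$ has determinant $Z^2+(\varl\phi)^2=1$, the coefficients follow from a trivial $2\times2$ solve, producing $\R\etau=-Z\er-\varl\phi\,Q$. Dividing by $r$ and invoking $\pr_\tau=\tfrac{1}{r}\pr_\tta$, together with $\varl\phi/r=1+Z$, yields the $\pr_\tau$-identities in (i) and the rotational identities in (ii).

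For the Laplacian I would use $\Delta=\pr_r^2+\tfrac{1}{r}\pr_r+\tfrac{1}{r^2}\pr_\tta^2$ together with the identities just obtained. The case $\Delta\etau=-\etau/r^2$ is immediate from $\R^2\etau=-\etau$. In computing $\Delta Q$ and $\Delta\er$ the resulting linear combinations in $\{\er,Q\}$ simplify via $(\varl\phi)^2+Z^2=1$ and its consequence $(\varl\phi)^2=(1-Z)(1+Z)$: the $\er$-coefficient of $\Delta\er$ collapses to $-1/r^2$, the $Q$-coefficient of $\Delta\er$ to $-2Z(1+Z)/r$, and $\Delta Q$ reduces to $-2(1+Z)^2Q$. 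Finally, the infinitesimal generators of the family $Q_{\vart,\la}=e^{\vart\R}Q(\cdot/\la)$ follow from the chain rule at $(\vart,\la)=(0,1)$, using $\pr_\la[Q(r/\la)]\big|_{\la=1}=-\varl Q=-\varl\phi\,\er$ and $\pr_\vart[e^{\vart\R}Q]\big|_{\vart=0}=\R Q$, combined with the $\varl$- and rotational identities on $Q$. The argument is entirely mechanical; the only technical demand is careful sign bookkeeping and consistent use of the single algebraic identity $Z^2+(\varl\phi)^2=1$ that powers every cancellation.
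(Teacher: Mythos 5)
Your proposal is correct and coincides with the paper's own (unwritten) argument, which simply asserts that the lemma follows from direct computation in the explicit basis \eqref{eq: frenet}; the identities $\varl\phi=r(1+Z)$, $Z^2+(\varl\phi)^2=1$, $\pr_r\varl\phi=Z(1+Z)$, $\pr_r Z=-\varl\phi(1+Z)$ you isolate are exactly what drives every cancellation, including the collapse of the $\Delta\er$, $\Delta Q$ coefficients. One caveat: carried out as you describe, the $\vart$-generator comes out as $\R Q=+\varl\phi\,\etau$ (consistent with part (ii) of the lemma), so the minus sign in the lemma's final display appears to be a typo in the statement rather than something your computation should reproduce — flag the discrepancy instead of adjusting your signs to match.
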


	%%%%%%%%%%%%%%%  2.2 
	%%%%%%%%%%%%%%%%%%%%%%%%%%%%%%
	%%%%%%%%%%%%%%%%%%%%%%%%%%%%%%
	\subsection{The component and vectorial formula}
	\label{SS: the component formulas and the vectorial formula}
	Let us introduce two time-dependent geometrical parameters $\vart(t), \la(t)$, and look for solutions of a specific form
	\begin{equation} \label{eq: sol form}
		u = \SSS(Q+\hv)
		= e^{\vart(t)\R}(Q+\hv)\bigg(t,\frac{r}{\la(t)}\bigg),
	\end{equation}
	where we denote $\SSS$ the mixed transformation corresponding to scaling and rotation induced by $\la, \vart$, and the map $\hv$ is a small perturbation near $Q$. We express $\hv$ under component form with respect to the Frenet basis
	\begin{equation} \label{hv hw}
		\hv=\big[\er,\etau,Q\big]\hw,\quad 
		\hw=\big[\h\alpha,\h\beta,\h\gamma\big]^{T}.
	\end{equation}
	As the image of $u$ lies in $\SS^2$, the component functions $\h\alpha,\h\beta,\h\gamma$ obey the constraint
	\begin{equation} \label{eq: component constraint}
		\h\alpha^2+\h\beta^2+(1+\h\gamma)^2=1.
	\end{equation}
	The LL map problem is equivalent to the existence of the component functions $\h\alpha,\h\beta,\h\gamma$, coupling with the dynamical behavior of the geometrical parameters $\la, \vart$. In this subsection we derive the equation for $\h\alpha,\h\beta,\h\gamma$ and its vectorial form. We apply the commonly used self-similar transformation:
	\begin{equation} \label{eq: var sub}
		\frac{ds}{dt} = \frac{1}{\la^2(t)},
		\quad \frac{y}{r} = \frac{1}{\la(t)},
	\end{equation}
	which actually defines a renormalized time $s$ going from certain starting time $s_0$, eventually to $+\infty$, as the original time $t$ runs from $0$ to the lifespan $T$ of $u$. Accordingly, for any given function $v(t,r)$, we use the convention
	\begin{equation} \label{eq: v lambda}
	v_\la(t,r) = v\Big(t,\frac{r}{\la}\Big) = v(s,y),
	\end{equation}
	which leads to
	\begin{equation} \notag
		\pr_r v_{\la} =\frac{1}{\la}(\pr_r v)_{\la}, \quad
		\Delta v_{\la} =\frac{1}{\la^2}(\Delta v)_{\la}.
	\end{equation}
	Also, for the transformation $\SSS$, there holds
	\begin{equation} \notag
		\SSS v =e^{\vart \R} v_{\la}, \quad
		\SSS (v_1 \wedge v_2) = \SSS v_1\wedge\SSS v_2.
	\end{equation}
	Invoking these with \eqref{groud state eq},
	we compute each term in \eqref{eq: LL} for solution \eqref{eq: sol form}:
	\begin{equation} \label{eq: hv eq}
	\lf\{\begin{aligned}
	& \,\,u_t = \prt (\SSS Q) + \prt (\SSS \hv), \\
	& \,\ao u\wedge\Delta u = \frac{\ao}{\la^2}
		\SSS\Big[Q\wedge(\Delta \hv + |\nabla Q|^2 \hv) + \hv\wedge\Delta\hv \Big], \\
	& \hs\hs-\hs\hs \bo u\wedge(u\wedge\Delta u)
	\hs=\hs-\frac{\bo}{\la^2}
			\SSS \Big\{(Q+\hv)\wedge\Big[Q\wedge(\Delta \hv + |\nabla Q|^2 \hv) + \hv\wedge\Delta\hv\Big]\Big\}\\
	& \phantom{-\hs\hs\bo u\wedge(u\wedge}
		= \frac{\bo}{\la^2} \SSS\Big\{
			\big(\er\cdot(\Delta\hv+|\nabla Q|^2\hv)\big)\er
			+ \big(\etau\cdot(\Delta\hv+|\nabla Q|^2\hv)\big)\etau \\
	& \phantom{\hs\hs\bo u\wedge(u\wedge\Delta u)}\quad\quad\quad
		- \hv\wedge\big(Q\wedge(\Delta\hv+|\nabla Q|^2\hv)\big)
			- (Q+\hv)\wedge(\hv\wedge\Delta\hv) \Big\}.
	\end{aligned}\rg.
	\end{equation}

	Considering any given function $v$, from \eqref{eq: var sub}, \eqref{eq: v lambda} and the definition of $\SSS$, we have
	\begin{equation} \notag
		\prt(v_{\la}) =\frac{1}{\la^2}\Big(\prs v-\frac{\la_s}{\la}\varl v\Big)_{\la}, \quad
		\prt(\SSS v) =\frac{1}{\la^2}\SSS\Big(\prs v +\vart_s\R v -\frac{\la_s}{\la}\varl v\Big).
	\end{equation}
	In particular, by Lemma \ref{le: frenet basis}, we have
	\begin{equation} \label{eq: partial t basis}
	\begin{aligned}
		\prt (\SSS \er) & = \frac{1}{\la^2} \SSS\Big(\vart_s Z\etau + \frac{\la_s}{\la}\varl\phi\phsh Q\Big), \\
		\prt (\SSS \etau) & = \frac{1}{\la^2} \SSS\Big(-\vart_s Z\er -\vart_s \varl\phi\phsh Q \Big), \\
		\prt (\SSS Q) & = \frac{1}{\la^2} \SSS\Big(\vart_s\varl\phi \phsh\etau - \frac{\la_s}{\la}\varl\phi\phsh\er\Big).
	\end{aligned}
	\end{equation}
	Thus the time derivative of $\SSS \hv$ is
	\begin{equation} \label{eq: partial t Sv}
	\begin{aligned}
		\prt(\SSS\hv) 
		& = \prt(\h\alpha_{\la})\SSS\er +\prt(\h\beta_{\la})\SSS\etau +\prt(\h\gamma_{\la})\SSS Q \\
		&\quad +\h\alpha_{\la}\prt(\SSS\er) +\h\beta_{\la}\prt(\SSS\etau) +\h\gamma_{\la}\prt(\SSS Q) \\
		& = \frac{1}{\la^2}\bigg\{\bigg[\prs\h\alpha -\frac{\la_s}{\la}\varl\h\alpha -\vart_s\h\beta Z -\frac{\la_s}{\la}\h\gamma\varl\phi\bigg]_{\la}\SSS\er \\
	%%%%%%%%%%%%%%%%%%%%%%%%%%%%%%%%%%%%%%%%%%%%%%%%%%%%%%%%%%%%%%%%
		& \qquad\quad + \bigg[\prs\h\beta -\frac{\la_s}{\la} \varl\h\beta +\vart_s\h\alpha Z +\vart_s\h\gamma \varl\phi\bigg]_{\la}\SSS\etau\\
	%%%%%%%%%%%%%%%%%%%%%%%%%%%%%%%%%%%%%%%%%%%%%%%%%%%%%%%%%%%%%%%%
		& \qquad\quad + \bigg[\prs\h\gamma -\frac{\la_s}{\la}\varl\h\gamma +\frac{\la_s}{\la}\h\alpha\varl\phi -\vart_s\h\beta\varl\phi\bigg]_{\la}\SSS Q\bigg\}.
	\end{aligned}
	\end{equation}
	Next, the linear terms in the forthcoming $\hv$ equation, as exhibited in \eqref{eq: hv eq}, are
	\begin{equation} \label{eq: linear term}
	\begin{aligned}
		& \ao Q\wedge\big(\Delta \hv +|\nabla Q|^2 \hv\big) \\
		& \qquad + \bo\Big[\Big(\er\cdot(\Delta\hv+|\nabla Q|^2\hv)\Big)\er +\Big(\etau\cdot(\Delta\hv+|\nabla Q|^2\hv)\Big)\etau\Big].
	\end{aligned}
	\end{equation}
	Similarly, by $\hv\wedge\hv=0$, the nonlinear terms in \eqref{eq: hv eq} is reformulated to
	\begin{equation}\label{eq: hv nonlinear}
	\begin{split}
	& \ao\hv\wedge\Delta\hv - \bo\hv\wedge\Big(Q\wedge(\Delta\hv+|\nabla Q|^2\hv)\Big) - \bo(Q+\hv)\wedge(\hv\wedge\Delta\hv) \\
	=&\, \ao\hv\wedge(\Delta\hv+|\nabla Q|^2\hv) -\bo\hv\wedge\Big(Q\wedge(\Delta\hv+|\nabla Q|^2\hv)\Big) \\
	&- \bo(Q+\hv)\wedge\Big(\hv\wedge(\Delta\hv+|\nabla Q|^2\hv)\Big).
	\end{split}
	\end{equation}
	Notice here there are repeated linear patterns on $\hv$, namely $\Delta\hv+|\nabla Q|^2\hv$, in both \eqref{eq: linear term} and \eqref{eq: hv nonlinear}. From the property of the ground state \eqref{groud state eq}, there holds $|\nabla Q|^2 = -\Delta Q/Q = 2(1+Z)^2$. Using the derivation of the Frenet basis \eqref{def: derivation of the Frenet basis}, we compute:
	\begin{equation} \label{eq: part linear}
	\begin{aligned}
		&\quad\, \Delta\hv +|\nabla Q|^2\hv \\
		& = \Delta\h\alpha\phsh\er +2\pr_r\h\alpha\phs\pr_r\er + \h\alpha\phsh\Delta\er +\Delta\h\beta\phsh\etau + 2\pr_r\h\beta\phs\pr_r\etau +\h\beta\phsh\Delta\etau  \\
		&\quad +\Delta\h\gamma\phsh Q +2\pr_r\h\gamma\phsh\pr_r Q +\h\gamma\phsh\Delta Q +2(1+Z)^2\big(\h\alpha\phsh\er + \h\beta\phsh\etau +\h\gamma\phsh Q\big)\\
		& = \bigg\{\hs\Delta\h\alpha\hs+\hs\bigg(2\onez^2\hs\hs-\hs\hs\frac{1}{r^2}\bigg)\h\alpha\hs+\hs2\onez\pr_r\h\gamma\bigg\}\phsh\er +\bigg\{\Delta\h\beta \hs+\hs\bigg(2\onez^2 \hs\hs-\hs\hs\frac{1}{r^2}\bigg)\h\beta\bigg\}\phsh \etau \\
		&\quad + \bigg\{\Delta\h\gamma -2\onez\pr_r\h\alpha -\frac{2Z\onez}{r}\h\alpha \bigg\}\phsh Q.
	\end{aligned}
	\end{equation}
	The first and second components on the RHS of \eqref{eq: part linear} inspire us to define the Hamiltonian/\Sch operator
	\begin{equation} \label{eq: sch operator}
		\Ham = -\Delta + \frac{V(r)}{r^2},
	\end{equation}
	where the potential is
	\begin{equation} \label{eq: V}
		V(r)=\frac{1}{r^2}-2(1+Z)^2=\varl Z+Z^2=\frac{r^4-6r^2+1}{(1+r^2)^2}.
	\end{equation}
	To make the formulas brief, we further introduce the following operators, each of which maps the vector $\hw$ to a scalar function of its components:
	\begin{equation} \label{eq: mHam func}
	\lf\{\begin{aligned}
	\,\mHamo(\hw) =&\, \Ham \h\alpha -2(1+Z) \pr_r\h\gamma, \\
	\mHamt(\hw) =&\, \Ham \h\beta, \\
	\mHamth(\hw) =&\, -\Delta \h\gamma +2(1+Z)\pr_r\h\alpha +\tfrac{2Z(1+Z)}{r} \h\alpha.
	\end{aligned}\rg.
	\end{equation}
	Then \eqref{eq: part linear} is actually
	\begin{equation}
	\Delta \hv + |\nabla Q|^2 \hv = -\Big(\mHamo(\hw)\,\er + \mHamt(\hw)\,\etau + \mHamth(\hw)\,Q\Big), \notag
	\end{equation}
	Using \eqref{eq: part linear}, \eqref{eq: sch operator} to treat \eqref{eq: linear term}, we have
	\begin{equation} \label{eq: linear v}
	\begin{aligned}
	& \ao Q\wedge(\Delta \hv + |\nabla Q|^2 \hv) \\
	& \qquad + \bo \Big[\Big(\er\cdot(\Delta\hv+|\nabla Q|^2\hv)\Big)\er
	+\Big(\etau\cdot(\Delta\hv+|\nabla Q|^2\hv)\Big)\etau \Big] \\
	=&\,\Big(\ao\mHamo(\hw) -\bo\mHamo(\hw)\Big)\,\er + \Big(-\ao \mHamo(\hw) -\bo\mHamo(\hw)\Big)\,\etau.
	\end{aligned}
	\end{equation}

	Finally the nonlinear terms \eqref{eq: hv nonlinear} are
	\begin{equation} \notag
	\begin{aligned}
	&\quad \ao\hv\wedge\Delta\hv - \bo\hv\wedge\Big(Q\wedge(\Delta\hv+|\nabla Q|^2\hv)\Big) - \bo(Q+\hv)\wedge(\hv\wedge\Delta\hv) \\
	& =\bigg\{\ao\Big(\h\gamma\mHamt(\hw)-\h\beta\mHamth(\hw)\Big) \\
		&\quad +\bo\bigg( -\big(\h\beta^2+\h\gamma^2+2\h\gamma\big)\mHamo(\hw)+\h\alpha\h\beta\mHamt(\hw) + \h\alpha(1+\h\gamma)\mHamth(\hw) \bigg)\bigg\}\,\er
	\end{aligned}
	\end{equation}
	\begin{equation} \label{eq: hv nonlinear term}
	\begin{aligned}
	& +\bigg\{\ao\Big(\h\alpha\mHamth(\hw)-\h\gamma\mHamo(\hw)\Big)\\
		&\quad +\bo\bigg( -\big(\h\alpha^2+\h\gamma^2+2\h\gamma\big)\mHamt(\hw)+\h\alpha\h\beta\mHamo(\hw) + \h\beta(1+\h\gamma)\mHamth(\hw) \bigg)\bigg\}\, \etau\\
	& +\bigg\{\ao\Big(\h\beta\mHamo(\hw)-\h\alpha\mHamt(\hw)\Big) \\
		&\quad +\bo\bigg( -\big(\h\alpha^2+\h\beta^2\big)\mHamth(\hw)+\h\alpha(1+\h\gamma)\mHamo(\hw) + \h\beta(1+\h\gamma)\mHamt(\hw) \bigg)\bigg\}\, Q.
	\end{aligned}
	\end{equation}
	Inserting \eqref{eq: partial t basis}, \eqref{eq: partial t Sv}, \eqref{eq: linear v}, \eqref{eq: hv nonlinear term} into \eqref{eq: hv eq},
	and projecting them onto $\big\{\SSS\er,\SSS\etau,\SSS Q\big\}$, we obtain the component equations of $\hv$:
	\begin{equation} \label{eq: compon 1}
	\begin{aligned}
	\prs \h\alpha -\frac{\la_s}{\la}&\varl\h\alpha
	=\, \ao(1+\h\gamma)\mHamt(\hw) -\bo(1+\h\gamma)^2\mHamo(\hw)
	+\frac{\la_s}{\la}(1+\h\gamma)\varl\phi
	+\vart_s\h\beta Z \\
	& -\ao\h\beta\mHamth(\hw) +\bo\h\alpha(1+\h\gamma)\mHamth(\hw)
	-\bo\h\beta^2\mHamo(\hw) +\bo\h\alpha\h\beta\mHamt(\hw),
	\end{aligned}
	\end{equation}
	%%%%%%%%%%%%%%%%%%%%%%%%%%%%%%%%%%%%%%%%%%%%%%%%%%%%%%%%%%%%%%%%
	\begin{equation} \label{eq: compon 2}
	\begin{aligned}
	\prs \h\beta - \frac{\la_s}{\la}&\varl\h\beta
	=\, -\ao(1+\h\gamma)\mHamo(\hw) -\bo(1+\h\gamma)^2\mHamt(\hw)
	-\vart_s\h\alpha Z -\vart_s(1+\gamma)\varl\phi \\
	& +\ao\h\alpha\mHamth(\hw) +\bo\h\beta(1+\h\gamma) \mHamth(\hw)
	-\bo\h\alpha^2 \mHamt(\hw) +\bo\h\alpha\h\beta \mHamo(\hw),
	\end{aligned}
	\end{equation}
	%%%%%%%%%%%%%%%%%%%%%%%%%%%%%%%%%%%%%%%%%%%%%%%%%%%%%%%%%%%%%%%
	\begin{equation} \label{eq: compon 3}
	\begin{aligned}
	\prs \h\gamma &- \frac{\la_s}{\la}\varl\h\gamma
	=\, \ao\h\beta\mHamo(\hw) -\ao\h\alpha\mHamt(\hw)
	-\frac{\la_s}{\la}\h\alpha\varl\phi
	+\vart_s\h\beta\varl\phi \\
	& +\bo\h\alpha(1+\h\gamma)\mHamo(\hw) +\bo\h\beta(1+\h\gamma)\mHamt(\hw)
	 -\bo\h\alpha^2\mHamth(\hw) -\bo\h\beta^2\mHamth(\hw).
	\end{aligned}
	\end{equation}
	These component equations, especially \eqref{eq: compon 1}, \eqref{eq: compon 2}, reveal the structure of a combination of the quasilinear \Sch equation and the quasilinear heat equation.

	\begin{remark}
	{\rm (i)} The linear term on the RHS of \eqref{eq: compon 3} is comparatively small owing to the presence of $\la_s/\la$ and $\vart_s$, which eventually implies the smallness of $\h\gamma$. This is actually reasonable in view of the constraint \eqref{eq: component constraint}, and it will be confirmed in Section~\ref{S: construction of the ap sol}.
	{\rm (ii)} The spatial variable used here is $y=r/\la$ instead of $r$, due to the action of $\SSS$ in \eqref{eq: hv eq}. Accordingly, modifications should be made by replacing $r$ with $y$ in the definitions of \eqref{eq: sch operator}, \eqref{eq: mHam func}, for example, $\Ham_{y} = -\Delta_y + V(y)/y^2$. However, since the potential risk of ambiguity is low, we will still use the original notations.
	\end{remark}

	An essential feature of our analysis is to keep track of the geometric structure of \eqref{eq: LL}, so we rewrite the system into vectorial form. We set $\ez = [0,0,1]^{T}$, and define the rotation related to $\hw$:
	\begin{equation} \label{eq: hJ}
		\hJ = (\ez + \hw)\wedge,
	\end{equation}
	where we note that $\ez\wedge=\R$ is just the usual rotation \eqref{eq: rotation}. The operators defined in \eqref{eq: mHam func} actually yield the vectorial Hamiltonian/ \Sch operator:
	\begin{equation} \label{eq: mHam}
		\mHam\hw =
		\begin{bmatrix} \,\mHamo(\hw)\, \\ \mHamt(\hw) \\ \mHamth(\hw) \end{bmatrix}
		= \begin{bmatrix} \Ham\h\alpha \\ \Ham\h\beta \\ -\hs\hs\Delta \h\gamma \end{bmatrix}
		+2\big(1+Z\big)\hs\begin{bmatrix} \py \h\gamma \\ 0 \\ \py \h\alpha + \frac{Z}{y}\h\alpha \end{bmatrix}.
	\end{equation}
	Moreover, we define the following vector
	\begin{equation} \label{eq: p vec}
		\p =\begin{bmatrix}\vart_s\\ \la_s/\la\\ 0\end{bmatrix}.
	\end{equation}
	Therefore \eqref{eq: compon 1}--\eqref{eq: compon 3} can be rewritten as the following vectorial form
	\begin{equation} \label{eq: vec hw equation}
		\prs\hw -\frac{\la_s}{\la}\varl\hw + \vart_s Z\R\hw +\hJ\Big(\,\ao\mHam\hw -\bo\hJ\mHam\hw +\p\,\varl\phi\,\Big) = 0.
	\end{equation}

	%%%%%%%%%%%%%%%  2.3
	%%%%%%%%%%%%%%%%%%%%%%%%%%%%%%
	%%%%%%%%%%%%%%%%%%%%%%%%%%%%%%
	\subsection{The linearized Hamiltonian}
	The linearized Hamiltonian/\Sch operator $\Ham$ \eqref{eq: sch operator} and $\mHam$ \eqref{eq: mHam} naturally arise when computing the Landau-Lifschtiz flow near $\SSS Q$ \eqref{eq: part linear}. In this subsection, we collect some of their properties, which will be of essential importance in the derivation of Laypnouv mononicity. 
	
	The linearized Hamiltonian $\Ham$ admits the following factorization:
	\begin{equation} \label{eq: Ham factori}
		\Ham=\As\A,
	\end{equation}
	with
	\begin{equation} \notag
		\A=-\py+\frac{Z}{y},
		\quad \As=\py+\frac{1+Z}{y},
	\end{equation}
	where $\As$ is the adjoint of $\A$. Given any radially symmetric function $f(y)$, these operators can be reformulated as
	\begin{equation} \label{eq: A reformulation}
		\A f = - \varl\phi \,\py\bigg(\frac{f}{\varl\phi}\bigg), \quad
		\As f = \frac{1}{y\varl\phi} \,\py\big(y\varl\phi f\big).
	\end{equation}
	Thus, their kernels on $\RR_{+}^{\ast}$ are explicit:
	\begin{equation} \notag
	\lf\{\begin{aligned}
		&\A f = 0 & \iff \quad & f\in\spann(\varl\phi), \\
		&\As f = 0 & \iff \quad &  f\in\spann\bigg(\frac{1}{y\varl\phi}\bigg).
	\end{aligned}\rg.
	\end{equation}
	Hence considering $\Ham f = 0$ on $\RR_{+}^{\ast}$, we deduce either $f= C \varl\phi$ or $\A f = \frac{C}{y\varl\phi}$ for any constant $C\in \RR$. In other words, we have
	\begin{equation} \notag
		\Ham f=0 \quad\iff\quad f\in\spann(\varl\phi,\vg),
	\end{equation}
	where $\varl\phi$ is a resonance of $\Ham$ at the origin induced by energy critical scaling invariance, and
	\begin{equation} \notag
		\vg(y)=\varl\phi(y)\int_1^y\frac{dx}{x \lf(\varl\phi(x)\rg)^2}
		=\frac{1}{4(1+y^2)}\Big(y^3+4y\log y-\frac{1}{y}\Big).
	\end{equation}
	The asymptotics for $\varl\phi, \vg$ are
	\begin{equation} \label{eq: lambda phi}
		\varl\phi(y)=\begin{cases}
			2y+O(y^3) &\mbox{as}\;\; y\to0,\\
			\frac{2}{y}+ O\Big(\frac{1}{y^3}\Big)
			&\mbox{as}\;\; y\to+\infty,
		\end{cases}
	\end{equation}
	and
	\begin{equation} \label{eq: gamma}
		\vg(y)=\begin{cases}
			-\frac{1}{4y}+O(y\log y) &\mbox{as}\;\; y\to0,\\
			\frac{y}{4}+ O\Big(\frac{\log y}{y}\Big)
			&\mbox{as}\;\; y\to+\infty.
		\end{cases}
	\end{equation}
	Owing to the Wronskian of $\varl\phi, \vg$ given by
	\begin{equation} \notag
		\varl\phi'\vg -\vg'\varl\phi=-\frac{1}{y},
	\end{equation}
	the variation of constants formula yields a regular solution to the inhomogeneous equation
	\begin{equation} \notag
		\Ham u= f,
	\end{equation}
	which is given by
	\begin{equation} \label{eq: inverse of Ham}
		u(y) = \varl\phi(y)\int_0^y f(x)\vg(x) xdx - \vg(y)\int_0^y f(x)\varl\phi(x) xdx.
	\end{equation}
	In particular, if $f = \varl\phi$, the solution is 
	\begin{equation} \label{eq: T1}
		T_1(y) =\frac{(1-y^4)\log(1+y^2)+2y^4-y^2-4y^2\int_{1}^{y}\frac{\log(1+x^2)}{x}dx}{2y(1+y^2)},
	\end{equation}
	with the asymptotics
	\begin{equation} \label{eq: T1 asymptotics}
		T_1(y)=\begin{cases}
		-\frac{y^3}{4}+O(y^5) &\mbox{as}\;\; y\to0, \\
		-y\log y+y+O\Big(\frac{(\log y)^2}{y}\Big)
		&\mbox{as}\;\; y\to+\infty.
	\end{cases}
	\end{equation}
	To deal with the evolution of the forthcoming radiation \eqref{eq: w}, we extract the dominating part of the vectorial Hamiltonian $\mHam$, which is defined by
	\begin{equation}\label{eq: mHamp}
		\mHamp\begin{bmatrix}
			\alpha\\ \beta\\ \gamma
		\end{bmatrix}
		=\begin{bmatrix}
			\Ham\alpha\\ \Ham\beta\\ 0
		\end{bmatrix}.
	\end{equation}
	The given factorization \eqref{eq: Ham factori} implies the corresponding factorization of $\mHamp$, which is $\mHamp = \mAs\mA$ with
	\begin{equation} \label{eq: mHamp factori}
		\mA\begin{bmatrix} \alpha\\ \beta\\ \gamma\end{bmatrix}
		=\begin{bmatrix} \A\alpha\\ \A\beta\\ 0 \end{bmatrix},
		\quad \mAs\begin{bmatrix} \alpha\\ \beta\\ \gamma\end{bmatrix}
		=\begin{bmatrix} \As\alpha\\ \As\beta\\ 0 \end{bmatrix}.
	\end{equation}
	Moreover, there holds the relation of $\mHamp$ with \eqref{eq: rotation}:
	\begin{equation} \label{eq: relation mHamp}
		\R\mHamp=\mHamp\R, \quad\R\mHam\R=-\mHamp.
	\end{equation}
	
	%%%%%%%%%%%%%%%  3
	%%%%%%%%%%%%%%%%%%%%%%%%%%%%%%
	%%%%%%%%%%%%%%%%%%%%%%%%%%%%%%
	\section{Construction of the approximate solution}
	\label{S: construction of the ap sol}
	This section is devoted to the construction of the approximate self-similar solution to the vectorial equation \eqref{eq: vec hw equation} with controllable growth in spacial variable $y$. The blowup construction heavily relys on the behavior of the modulation parameters $\la, \vart$, and also their derivatives $\la_s/\la, \vart_s$, which emerged in \eqref{eq: vec hw equation}. To study them, we introduce two more parameters, as well as their anticipated dynamics
	\begin{equation} \label{eq: mod parameter ansatz}
		a\approx-\vart_s, \quad
		b\approx-\frac{\la_s}{\la}, \quad
		a_s\approx 0, \quad b_s\approx -(b^2+a^2).
	\end{equation}
	This approximation will be justified in subsection \eqref{SS: modulation equations}. The layout of our approximate solution resembles a series expansion of $a, b$. More precisely, we have the following lemma.
	
	%%%%%%%%%%%%%%%%%
	%%%%%%%%%%%%%%%%%
	\begin{lemma}[The approximate solution] \label{le: ap solution}
	Let $M$ be a large enough universal constant, then there exists a universal small constant $b^\ast=b^\ast(M)>0$, such that for any $\CCC^1$ maps
	\begin{equation} \label{eq: a b c1 map}
		a, b:\,[s_0,+\infty)\mapsto(-b^\ast,b^\ast)
	\end{equation}
	with the prior bound
	\begin{equation} \label{eq: a prior bound}
		|a| \lesssim \frac{b}{\lgba},
	\end{equation}
	and the varying scale $B_0, B_1$ given by \eqref{eq: scales}, the following holds: There exist smooth radially symmetric profiles $\varp_{1,0}(y), \varp_{0,1}(y), \varp_{i,j}(b,y)$ and $S_{0,2}(y)\sim |T_1(y)|^2$ admiting the asymptotics
	\begin{equation} \label{eq: varp asymptotics}
		\varp_{1,0},\,\varp_{0,1} =\begin{cases}
		 	O(y^3) &\mbox{{\rm for}}\;\; y\leq 1, \\
		 	O(y\log y) &\mbox{{\rm for}}\;\; 1\leq y\leq 2B_1,
		\end{cases}
	\end{equation}
	and 
	\begin{equation} \label{eq: varp asymptotics 2}
		|\varp_{i,j}| \les \begin{cases}
			y^{2(i+j)+1} &\mbox{{\rm for}}\;\; y\leq 1, \\ 
			\dfrac{1+y^{2(i+j)-3}}{b\lgba}
			&\mbox{{\rm for}}\;\; 1\leq y\leq 2B_1,
		\end{cases}
	\end{equation}
	for non-negative integers $i,j$ in the range of $2\leq i+j\leq 3$, such that
	\begin{equation} \label{eq: wz form}
		\wz= a\phsh\varp_{1,0} +b\phsh\varp_{0,1} +\hs\sum_{2\leq i+j\leq 3}a^{i}b^{j}\varp_{i,j} +b^2 S_{0,2}
	\end{equation}
	is an approximate self-similar solution to \eqref{eq: vec hw equation} in the following sense. Let
	\begin{equation} \label{eq: ap error expand}
		\mmod +\errz
		= \prs\wz -\frac{\la_s}{\la}\varl\wz +\vart_s Z\R\wz +\hJ\Big(\,\ao\mHam\wz -\bo\hJ\mHam\wz +\p\phsh\varl\phi\,\Big),
	\end{equation}
	where $\mmod$ is the modulation vector given by {\rm (}The summations are for $2\leq i+j\leq 3${\rm )}
	\begin{equation} \label{eq: mod vec}
	\begin{aligned}
		\mmod(t) 
		&= a_s\Big(\varp_{1,0}+\sum ia^{i-1}b^{j}\varp_{i,j}\Big) \\
		&\quad +\big(b_s+b^2 +a^2\big)\Big( \varp_{0,1} +2b S_{0,2} +\sum ja^{i}b^{j-1}\varp_{i,j} +\sum a^{i}b^{j}\prb \varp_{i,j} \Big) \\
		&\quad -\Big(\frac{\la_s}{\la}+b\Big)\Big(\varl\phi \big(\ex+O(\wz)\big)+\varl\wz\Big) \\
		&\quad +\big(\vart_s+a\big)\Big(\varl\phi\big(\ey+O(\wz)\big)+ZR\wz\Big),
	\end{aligned}
	\end{equation}
	and $\errz$ is the error satisfying the estimates:\\
	{\rm (i)} Weighted bounds:
	\begin{align}
		& \int_{y\leq 2B_1}\frac{|\pr_y^i\errz^{\first}|^2+|\pr_y^i\errz^{\second}|^2}{y^{6-2i}} 
		\les\bflog, \quad 0\leq i\leq 3,
		\label{eq: weighted bound 1}\\
		& \int_{y\leq 2B_1}\frac{|\pr_y^i\errz^{\third}|^2}{y^{8-2i}}\les\frac{b^6}{\lgba^2}, \quad 0\leq i\leq 4,
		\label{eq: weighted bound 2}\\
		& \int_{y\leq 2B_1}|\Ham\errz^{\first}|^2 +|\Ham\errz^{\second}|^2
		 \les b^4\lgba^2,
		\label{eq: weighted bound 3}\\
		& \int_{y\leq 2B_1}\frac{|\pr_y^i\Ham\errz^{\first}|^2 +|\pr_y^i\Ham\errz^{\second}|^2}{y^{2-2i}} 
		\les\bflog, \quad 0\leq i\leq 1,
		\label{eq: weighted bound 4}\\
		& \int_{y\leq 2B_1}|\A\Ham\errz^{\sups{1}}|^2 +|\A\Ham\errz^{\sups{2}}|^2 \les b^5,
		\label{eq: weighted bound 5}\\
		& \int_{y\leq 2B_1}|\Ham^2\errz^{\sups{1}}|^2 +|\Ham^2\errz^{\sups{2}}|^2 \les \frac{b^6}{\lgba^2}.
		\label{eq: weighted bound 6}
	\end{align}
	{\rm (ii)} Flux computation: Let $\varp_M$ be defined by \eqref{eq: varp M}, then
	\begin{equation} \label{eq: flux computation}
	\lf\{\begin{aligned}
	& \frac{(\Ham\errz^{\first},\varp_M)}{(\varl\phi,\varp_M)} = \frac{2(\ao ab -\bo b^2)}{(\ao^2 +\bo^2)\lgba}\bigg(1+O\Big(\frac{1}{\lgba}\Big)\bigg), \\
	& \frac{(\Ham\errz^{\second},\varp_M)}{(\varl\phi,\varp_M)} = \frac{2(\ao b^2 +\bo ab)}{(\ao^2 +\bo^2)\lgba}\bigg(1+O\Big(\frac{1}{\lgba}\Big)\bigg).
	\end{aligned}\rg.
	\end{equation}
	\end{lemma}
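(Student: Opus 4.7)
The plan is to build $\wz$ by collecting terms order-by-order in the formal expansion in $(a,b)$ around the ground state, mimicking the approach of \cite{2011Merle_SchMapBlowup, 2013Raphael_HarmonicHeatQuantizedBlowup} but adapted to the mixed Schr\"odinger/heat structure visible in \eqref{eq: vec hw equation}. Substituting the ansatz \eqref{eq: wz form} into \eqref{eq: ap error expand} and using the ODE guesses \eqref{eq: mod parameter ansatz} to trade $\prs$ for $b_s\prb$ and lower-order corrections, the terms $a_s, b_s+b^2+a^2, \la_s/\la+b, \vart_s+a$ factor cleanly into the modulation vector $\mmod$ in \eqref{eq: mod vec}, while the genuinely spatial equations at each power $a^i b^j$ are inhomogeneous elliptic problems $\Ham \varp_{i,j}=E_{i,j}$ of the form~\eqref{eq: elliptic equation}. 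The leading right-hand sides come from $\hJ(\p\,\varl\phi) \approx \R\p\,\varl\phi=-a\varl\phi\,\ey+b\varl\phi\,\ex$, coupled through the matrix $\bigl[\begin{smallmatrix}\bo & -\ao\\ \ao & \bo\end{smallmatrix}\bigr]$ induced by $\ao\mHamp -\bo\R\mHamp$ on the first two components. Inverting this matrix forces
\[
\varp_{1,0}\sim\frac{1}{\ao^2+\bo^2}\bigl(\ao,\bo,0\bigr)^T T_1,\qquad
\varp_{0,1}\sim\frac{1}{\ao^2+\bo^2}\bigl(-\bo,\ao,0\bigr)^T T_1,
\]
which is the natural seed for the coefficients appearing in the flux formula \eqref{eq: flux computation}, and \eqref{eq: varp asymptotics} follows from \eqref{eq: T1 asymptotics}.

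For the higher orders $2\le i+j\le 3$ I would run the induction on $i+j$ using the variation-of-constants formula \eqref{eq: inverse of Ham}. The forcing $E_{i,j}$ is built from products and commutators of the lower-order $\varp_{k,l}$ with $\varl\phi$, $Z$, and $(1+Z)$, together with the transport terms $\varl\varp_{k,l}$ and $Z\R\varp_{k,l}$ arising from $-\frac{\la_s}{\la}\varl\wz+\vart_s Z\R\wz$, all of which inherit the pointwise bounds of $T_1$. The essential obstruction is that $T_1-\varl T_1$ decays only like $y$ modulo logs, so a naive iteration gives profiles growing like $y^{2(i+j)-1}\log^c y$ near $B_1$ — too fast to close the weighted norms \eqref{eq: weighted bound 1}--\eqref{eq: weighted bound 2}. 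To fix this I follow the Rapha\"el--Schweyer trick: modify the recursion by adding the radiation $\sgb$ (already alluded to in \eqref{eq: sigma b}) that matches $-b T_1$ to its slow tail on $y\ge 1$, so that the effective forcing in $\Ham\varp_{i,j}=E_{i,j}$ is localized on $y\le 2B_1$ and the bound \eqref{eq: varp asymptotics 2} holds. The scalar profile $S_{0,2}\sim |T_1|^2$ is then fixed by the pointwise constraint \eqref{eq: component constraint} imposed at order $b^2$ on the third component of $\wz$.

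With all profiles in hand, expanding \eqref{eq: ap error expand} and subtracting $\mmod$ leaves $\errz$ as an explicit sum of three kinds of terms: (a) higher-order polynomials $a^i b^j$ with $i+j\ge 4$, handled by the prior bound \eqref{eq: a prior bound} and the weights in \eqref{eq: varp asymptotics 2}; (b) commutator errors from the $\sgb$-modification, localized in $B_0\lesssim y\lesssim 2B_1$ and therefore gaining powers of $b$ from the integration region; (c) nonlinear errors from $\hJ$ acting through $\wz\wedge$, which are cubic in $\wz$. The six weighted bounds \eqref{eq: weighted bound 1}--\eqref{eq: weighted bound 6} are then a matter of repeatedly applying $\Ham,\A$ to each of these explicit pieces, using the factorization \eqref{eq: Ham factori} and the reformulation \eqref{eq: A reformulation} to avoid differentiating through singularities at $y=0$, and using the localization to convert $y\le 2B_1$ integrals of $y^{2k}\log^c y$ into the declared powers of $b$.

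The main obstacle I anticipate is the precise flux identity \eqref{eq: flux computation}: it requires that, after all the cancellations that define $\varp_{1,0},\varp_{0,1},\varp_{i,j}$, the coefficients $\ao,\bo$ arrange themselves exactly into the ratios $\frac{\ao ab-\bo b^2}{\ao^2+\bo^2}$ and $\frac{\ao b^2+\bo ab}{\ao^2+\bo^2}$. I would attack this by isolating the $B_0$-localized piece of $\errz^{\first},\errz^{\second}$ that survives pairing with $\varp_M$ (only the $\sgb$-commutator terms contribute at logarithmic order, since the $E_{i,j}$ were solved exactly on $y\le B_1$), and showing that this piece is precisely the second-order analogue of the linear system solved in step one, with right-hand sides $(-ab,\,b^2)^T$ and $(-b^2,\,-ab)^T$ coming respectively from $\vart_s\varl\wz$ and $-(\la_s/\la+b)\varl\wz$ evaluated against the first-order profiles. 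Inverting the same $\bigl[\begin{smallmatrix}\bo & -\ao\\ \ao & \bo\end{smallmatrix}\bigr]$ matrix yields the ratios in \eqref{eq: flux computation}, and the $1+O(1/\lgba)$ remainder is controlled by the already-established bound \eqref{eq: weighted bound 3}. This closes the lemma.
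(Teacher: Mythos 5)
Your construction follows the same route as the paper: first-order profiles obtained by inverting the $(\ao,\bo)$ rotation matrix against $T_1$, $S_{0,2}$ fixed by the sphere constraint at order $b^2$, the radiation $\sgb$ of \eqref{eq: sigma b} introduced to cancel the linearly growing tail of $\varl T_1-T_1$, recursive elliptic solves for $\varp_{i,j}$ via \eqref{eq: inverse of Ham}, and the flux carried by the residual $\sgb$-terms left in $\errz$. Up to minor bookkeeping slips (the phase term in the equation is $\vart_s Z\R\wz$, not $\vart_s\varl\wz$, and the $(\la_s/\la+b)$- and $(\vart_s+a)$-proportional pieces are routed into $\mmod$, not into the error whose flux you compute), this is the argument actually carried out in Steps 1--8 of the paper's proof.

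There is, however, one step in your plan that would fail as written: controlling the $1+O(1/\lgba)$ remainder in \eqref{eq: flux computation} "by the already-established bound \eqref{eq: weighted bound 3}". The main flux term has size $(\Ham\errz^{\first},\varp_M)\sim \frac{b^2}{\lgba}\,(\varl\phi,\varp_M)\sim \frac{b^2\log M}{\lgba}$, whereas Cauchy--Schwarz together with \eqref{eq: weighted bound 3} and \eqref{eq: varp M properties} only gives $|(\Ham\errz^{\first},\varp_M)|\les \|\Ham\errz^{\first}\|_{L^2}\|\varp_M\|_{L^2}\les C(M)\,b^2\lgba$, which is larger than the main term by a factor $\lgba^{2}$; such a bound cannot isolate the leading coefficient, let alone certify a relative error $O(1/\lgba)$. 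What is needed (and what the paper does in Step 8) is an exact evaluation of the pairing on the support of $\varp_M$: by \eqref{eq: sigma b}, \eqref{eq: cb}, on $y\leq 2M\ll B_0$ one has $\Ham\big(ab\,\sg_{1,0}+b^2\sg_{0,1}\big)=\frac{c_b\,\varl\phi}{\ao^2+\bo^2}\big(\ao ab-\bo b^2,\;\bo ab+\ao b^2,\;0\big)^T$ with $c_b=\frac{2}{\lgba}\big(1+O(\frac{1}{\lgba})\big)$, and every other term in the explicit expansion \eqref{eq: errz expand 4} is shown, term by term, to pair with $\varp_M$ at a size negligible relative to $b^2/\lgba$ because those terms either vanish to high order at the origin, carry extra powers of $b$, or are supported at $y\gtrsim B_0$. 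So the flux identity is a consequence of the explicit local structure of $\errz$ near the origin, not of the global weighted $L^2$ bounds; your plan needs this replacement to close.
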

	
	\begin{proof}{Lemma \ref{le: ap solution}}
	We proceed the proof by first constructing the profiles $\varp_{1,0}, \varp_{0,1}, S_{0,2}$, deriving the modulation vector $\mmod$, and then eliminating the growing tails induced by the $\varl T_1-T_1$, constructing the higher order profiles $\varp_{i,j}$ ($2\leq i+j\leq 3$), and finally estimating the error $\errzt$.
	
	%%%%%%%%%%%%%%%%%
	%%%%%%%%%%%%%%%%%
	\medskip
	\step{1}{Construction of $\varp_{1,0}$, $\varp_{0,1}$} Setting $\hw=\wz$ in \eqref{eq: vec hw equation} gives the RHS of \eqref{eq: ap error expand}.  Assuming the smallness of $a,b$, we see the expression
	\begin{equation} \label{eq: main part in wz eq}
		\hJ\Big(\ao\mHam\wz-\bo\hJ\mHam\wz+\p\phsh\varl\phi\Big)
	\end{equation}
	is the major error to be eliminated in the first place. From the definition of \eqref{eq: hJ}, \eqref{eq: p vec}, we have
	\begin{equation} \label{eq: step1 hv}
	\begin{aligned}
		&\hJ\Big(\ao\mHam\wz -\bo\hJ\mHam\wz +\p\phsh\varl\phi\Big) = \hJ\Bigg\{\hs(\ao-\bo\R)\phsh\mHam\wz -\bo\wz \wedge \mHam\wz -\begin{bmatrix}\,a\,\\ b\\ 0\end{bmatrix}\hs\varl\phi\Bigg\} \\
		&\qquad\qquad\qquad\qquad -\Big(\frac{\lambda_s}{\lambda}+b\Big)\varLambda\phi\big(\ex+O(\wz)\big) +\big(\varTheta_s+a\big)\varLambda\phi\big(\ey+O(\wz)\big).
	\end{aligned}	
	\end{equation}
	We decompose $\wz$ by
	\begin{equation} \label{eq: wz decomposition}
		\wz=\wz^{1} +\wz^{2} +b^2\phsh S_{0,2}, 
		\quad\mbox{with}\quad
		\lf\{\begin{aligned}
			& \wz^{1}=a\phsh\varp_{1,0} +b\phsh\varp_{0,1}, \\
			& \wz^{2}=\sum_{2\leq i+j\leq 3} a^{i}b^{j} \varp_{i,j},
		\end{aligned}\rg.
	\end{equation}
	and further assume the profiles are of the form
	\begin{equation} \label{eq: profile form}
		S_{0,2} = S_{0,2}^{\third}\phsh e_z, \quad
		\varp_{i,j}^{\third} = 0 
		\;\,\mbox{for}\;\, 1\leq i+j\leq 3.
	\end{equation}
	Thus the leading profiles of $\wz$ should be $\wz^1+b^2S_{0,2}$, where $\wz^1$ is mainly responsible for the first and second components, while $b^2S_{0,2}$ for the third one. From these we compute:
	\begin{equation} \label{eq: ao boR term in wz}
	\begin{aligned}
		\quad (\ao-\bo\R)\phsh\mHam\wz
		&= (\ao-\bo\R)\begin{bmatrix}
			\,\mHamo(\wz^{1})\,\\ \mHamt(\wz^{1})\\ 0
		\end{bmatrix} +(\ao-\bo\R)\begin{bmatrix}
			\,\mHamo(\wz^{2})\,\\ \mHamt(\wz^{2})\\ 0
		\end{bmatrix} \\
		&\quad -2b^2\onez\phs\py S_{0,2}^{\third}\phs\big(\ao\ex-\bo\ey\big) \\
		&\quad -\ao b^2\Delta S_{0,2}^{\third}\,\ez +2\ao\onez\sum_{i=1,2}\pyzy(\wz^{i})^{\first}\phsh\ez.
	\end{aligned}
	\end{equation}
	Injecting \eqref{eq: ao boR term in wz} into \eqref{eq: step1 hv}, we let
	\begin{equation} \label{eq: varp function equation}
		(\ao-\bo\R)\begin{bmatrix}
			\,\mHamo(\wz^{1})\,\\ \mHamt(\wz^{1})\\ 0
		\end{bmatrix} -\begin{bmatrix}
			\,a\,\\ b\\ 0
		\end{bmatrix} \varLambda\phi = 0,
	\end{equation}
	which yields the equations for the first-order profiles $\varp_{1,0}, \varp_{0,1}$:
	\begin{equation} \notag
		(\ao-\bo\R)\mHamp\varp_{1,0}=\varl\phi\phsh\ex, \quad
		(\ao-\bo\R)\mHamp\varp_{0,1}=\varl\phi\phsh\ey.
	\end{equation}
	Using \eqref{eq: T1}, we have the following smooth solution
	\begin{equation} \label{eq: varp function}
		\varp_{1,0} = \frac{1}{\ao^2+\bo^2}\begin{bmatrix}\,\ao\, \\ \,\bo\, \\ 0 \end{bmatrix} T_1, \quad 
		\varp_{0,1} = \frac{1}{\ao^2+\bo^2}\begin{bmatrix}-\bo \\ \ao \\ 0 \end{bmatrix} T_1.
	\end{equation}
	Their asymptotics \eqref{eq: varp asymptotics} follows directly from \eqref{eq: T1 asymptotics}. Moreover, there holds
	\begin{equation} \label{eq: varp relation}
		\varp_{0,1}=\R\phsh\varp_{1,0}, \quad 
		\varp_{1,0}=-\R\phsh\varp_{0,1}.
	\end{equation}
	
	%%%%%%%%%%%%%%%%%
	%%%%%%%%%%%%%%%%%
	\medskip
	\step{2}{Construction of $S_{0,2}$} In view of the constraint \eqref{eq: component constraint}, the anticipated approximate solution should meet 
	\begin{equation} \label{eq: component constraint wz}
		(\wz^{\first})^2 +(\wz^{\second})^2 +(1+\wz^{\third})^2 \approx 0, 
	\end{equation}
	with sufficient precision. From the decomposition \eqref{eq: wz decomposition}, we see  
	\begin{equation} \notag
	\lf\{\begin{aligned}
		& \wz^{\sups{i}} \approx (\wz^{1})^{\sups{i}} = a\phsh\varp_{1,0}^{\sups{i}} +b\phsh\varp_{0,1}^{\sups{i}},
		\quad\mbox{for}\quad i=1,2, \\
		& \wz^{\sups{3}} = b^2S_{0,2}.
	\end{aligned}\rg.
	\end{equation}
	In consideration of \eqref{eq: a prior bound}, the profile $a\phsh\varp_{1,0}$ is negligible compared with $b\phsh\varp_{0,1}$, so that 
	\begin{equation} \notag
		(\wz^{\first})^2 +(\wz^{\second})^2 \approx 
		b^2\Big[ \big(\varp_{0,1}^{\first}\big)^2+\big(\varp_{0,1}^{\second}\big)^2 \Big].
	\end{equation} 
	In order to be compatible with \eqref{eq: component constraint wz},  we let 
	\begin{equation} \label{eq: S02 function} 
		S_{0,2}^{\third} =-\frac{1}{2(\ao^2+\bo^2)}(T_1)^2,
	\end{equation}
	which admits the following asymptotics from \eqref{eq: T1 asymptotics}:
	\begin{equation} \label{eq: S asymptotics}
		S_{0,2}(y) =
		\begin{cases}
			O(y^6) &\mbox{for}\;\; y\leq 1, \\
		 	O\big(y^2(\log y)\big) &\mbox{for}\;\; 1\leq y\leq 2B_1.
		\end{cases}
	\end{equation}
	
	%%%%%%%%%%%%%%%%%
	%%%%%%%%%%%%%%%%%
	\medskip
	\step{3}{Derivation of the modulaltion vector} In view of \eqref{eq: ap error expand}, we have
	\begin{equation} \label{eq: step2 hv}
	\begin{aligned}
		&\quad \prs\wz -\frac{\la_s}{\la}\varl\wz +\vart Z\R\wz \\
		&= a_s\varp_{1,0} +b_s\varp_{0,1} +\sum\prs(a^{i}b^{j})\varp_{i,j} +\sum a^{i}b^{j}b_s\prb \varp_{i,j} +2bb_sS_{0,2} \\
		&\quad -\Big(\frac{\la_s}{\la}+b\Big)\varl\wz +ab\varl\varp_{1,0} +b^2\varl\varp_{0,1} +\sum a^{i}b^{j+1}\varl \varp_{i,j} +b^3\varl S_{0,2} \\
		&\quad +\big(\vart_s+a\big)Z\R\wz -a^2Z\R\phsh\varp_{1,0} -abZ\R\phsh\varp_{0,1} -\sum a^{i+1}b^{j}Z\R\phsh \varp_{i,j} -ab^2Z\R\varp_{i,j} \\
		&= -\Big(\frac{\la_s}{\la}+b\Big)\varl\wz +\big(\vart_s+a\big)Z\R\wz +a_s\Big(\varp_{1,0} +\sum ia^{i-1}b^{j}\varp_{i,j}\Big) \\
		&\quad +\big(\prs b +a^2 +b^2\big) \Big(\varp_{0,1} +2b\,S_{0,2} +\sum ja^{i}b^{j-1}\varp_{i,j} +\sum a^{i}b^{j}\prb \varp_{i,j} \Big) \\
		&\quad +ab\big(\varl\varp_{1,0}-\varp_{1,0}\big)+b^2\big(\varl\varp_{0,1}-\varp_{0,1}\big) +b^3\big(\varl S_{0,2}-2S_{0,2}\big) \\
		&\quad +ab(1+Z)\phsh\varp_{1,0} -a^2(1+Z)\phsh\varp_{0,1} -2a^2b\,S_{0,2} \\
		&\quad +\sum a^{i}b^{j}\Big(b\varl \varp_{i,j} -aZ\R \varp_{i,j}\Big)  -(a^2+b^2)\sum \Big(ja^{i}b^{j-1}\varp_{i,j} +a^{i}b^{j}\prb \varp_{i,j}\Big).
	\end{aligned}
	\end{equation}
	where \eqref{eq: varp relation} has been applied. The above summations run over the indices $i,j$ for $2\leq i+j\leq 3$. By collecting the terms with coefficients $(\vart_s+a), (\la_s/\la +b), a_s, (b_s+a^2+b^2)$ in \eqref{eq: step1 hv}, \eqref{eq: step2 hv}, we obtain the modulation vector \eqref{eq: mod vec}, and also the error
	\begin{equation} \label{eq: errz expand}
	\begin{aligned}
		\errz &= \hJ\phs\Bigg\{ (\ao-\bo\R)\hs\begin{bmatrix} 
			\,\mHamo(\wz^2)\,\\ \mHamt(\wz^2)\\ 0 
		\end{bmatrix} -2b^2\onez\phs\py S_{0,2}^{\third}\phs\big(\ao\ex-\bo\ey\big) \\
		&\qquad\quad -\bo\wz\hs\wedge\hs\mHam\wz -\ao b^2\Delta S_{0,2}^{\third}\,\ez +2\ao\onez\sum_{i=1,2}\pyzy(\wz^{i})^{\first}\phsh\ez \Bigg\} \\
		&\quad +ab\big(\varl\varp_{1,0}-\varp_{1,0}\big)+b^2\big(\varl\varp_{0,1}-\varp_{0,1}\big) +b^3\big(\varl S_{0,2}-2S_{0,2}\big) \\
		&\quad +ab(1+Z)\phsh\varp_{1,0} -a^2(1+Z)\phsh\varp_{0,1} -2a^2b\,S_{0,2} \\
		&\quad +\hspace{-.7em}\sum_{2\leq i+j\leq 3}\hspace{-.6em}a^{i}b^{j}\Big(b\varl \varp_{i,j} \hs-\hs aZ\R \varp_{i,j}\Big) -(a^2+b^2)\hspace{-.7em}\sum_{2\leq i+j\leq 3}\hspace{-.5em}\Big(ja^{i}b^{j-1}\varp_{i,j} +a^{i}b^{j}\pr_b \varp_{i,j}\Big).
	\end{aligned}	
	\end{equation}
	
	%%%%%%%%%%%%%%%%%
	%%%%%%%%%%%%%%%%%
	\medskip
	\step{4}{The $\sgb$ function} To create cancellations on the forthcoming growing tails of the second order profiles (see \eqref{eq: lambda T1 T1}, \eqref{eq: lambda T1 Sigma b outside}), we introduce the radiation $\sgb$ defined by
	\begin{equation} \label{eq: sigma b}
		\Ham \sgb = c_b\chi_{\frac{B_0}{4}}\varl\phi -d_b\Ham\Big[(1-\chi_{3B_0})\varl\phi\Big],
	\end{equation}
	where the coefficients are
	\begin{equation} \label{eq: cb}
	\lf\{\begin{aligned}
		& c_b = \frac{4}{\int\chi_{\frac{B_0}{4}}(\varl\phi)^2xdx}
		= \frac{2}{\lgba} \bigg(1+O\Big(\frac{1}{\lgba}\Big)\bigg), \\
		& d_b = c_b\int\chi_{\frac{B_0}{4}}\varl\phi\vg xdx = O\bigg(\frac{1}{b \lgba}\bigg).
	\end{aligned}\rg.
	\end{equation}
	Applying \eqref{eq: inverse of Ham}, we see
	\begin{equation} \notag
	\begin{aligned}
		\sgb(y) = -\vg\int_0^y c_b\chi_{\frac{B_0}{4}}(\varl\phi)^2 +\varl\phi\int_0^y c_b\chi_{\frac{B_0}{4}}\varl\phi\vg -d_b\big(1-\chi_{3B_0}\big)\varl\phi,
	\end{aligned}
	\end{equation}
	which implies
	\begin{equation} \label{eq: sigma b asymptotics}
		\sgb =\begin{cases}
			c_b T_1  &\mbox{for}\;\; y\leq \frac{B_0}{4},\\
			-4\varg &\mbox{for}\;\; y\geq 6B_0.
		\end{cases}
	\end{equation}
	From \eqref{eq: lambda phi}, \eqref{eq: gamma}, there holds the asymptotics: for $y\geq 6B_0$,
	\begin{equation} \notag
		\sgb(y) = -y+O\bigg(\frac{\log y}{y}\bigg).
	\end{equation}
	while for $1\leq y\leq 6B_0$,
	\begin{equation} \notag
	\begin{aligned}
		\sgb(y) & = -c_b\bigg(\frac{y}{4}+O\bigg(\frac{\log y}{y}\bigg)\bigg)\int_0^y\chi_{\frac{B_0}{4}}(\varl\phi)^2xdx \\
		&\quad + c_b\,\varl\phi\int_1^yO(x)dx +O\bigg(\frac{1_{y\geq 3B_0}}{y b\lgba}\bigg) \\
		& = -y \frac{\int_0^y \chi_{\frac{B_0}{4}}(\varl\phi)^2xdx}{\int\chi_{\frac{B_0}{4}}(\varl\phi)^2xdx} + O\bigg(\frac{1+y}{\lgba}\bigg).
	\end{aligned}
	\end{equation}
	Furthermore, the derivatives of $c_b, d_b$ on $b$ are
	\begin{equation} \notag
		\prb c_b =O\bigg(\frac{1}{b\lgba^2}\bigg),
		\quad \prb d_b =O\bigg(\frac{1}{b^2\lgba}\bigg),
	\end{equation}
	and thus
	\begin{equation} \label{eq: b derivative sigma b}
	\begin{aligned}
		\prb \sgb & = \prb c_b\,T_1\,\one_{y\leq \frac{B_0}{4}} +O\bigg(\frac{1}{b^2y\lgba}\bigg)\one_{\frac{B_0}{4}\leq y\leq 6B_0} \\
		& = O\bigg(\frac{y^3}{b\lgba^2}\,\one_{y\leq 1} +\frac{y}{b\lgba}\,\one_{1\leq y\leq 6B_0}\bigg).
	\end{aligned}
	\end{equation}

	%%%%%%%%%%%%%%%%%
	%%%%%%%%%%%%%%%%%
	\medskip
	\step{5}{Manipulation on the growing tails} Due to the explicit construction \eqref{eq: varp function}, \eqref{eq: S02 function} based on $T_1$, we observe the pattern $\varl T_1-T_1$ appears in each term in the third line on the RHS of \eqref{eq: errz expand}. By \eqref{eq: T1 asymptotics}, the asymptotics of $\varl T_1-T_1$ at the scale of $y\sim 2B_1$ is
	\begin{equation} \label{eq: lambda T1 T1}
		\varl T_1 -T_1=-y+O\bigg(\frac{(\log y)^2}{y}\bigg),
	\end{equation}
	suggesting the potential fast growing tails of the second order profiles (see \eqref{eq: high varp system}, \eqref{eq: varp 2 equation}), which will make the approximate solution out of control. To fix this, we use $\sgb$ to create the following cancellation for $6B_0\leq y\leq 2B_1$:
	\begin{equation} \label{eq: lambda T1 Sigma b outside}
		\varl T_1 -T_1 -\sgb =O\bigg(\frac{(\log y)^2}{y}\bigg).
	\end{equation}
	For $1\leq y\leq 6B_0$, there holds
	\begin{equation} \notag
	\begin{aligned}
		\varl T_1 -T_1 -\sgb & =
		-y\Bigg(1-\frac{\int_0^y\chi_{\frac{B_0}{4}}(\varl\phi)^2xdx}{\int\chi_{\frac{B_0}{4}}(\varl\phi)^2xdx}\Bigg) +O\bigg(\frac{(\log y)^2}{y}\bigg) +O\bigg(\frac{1+y}{\lgb}\bigg) \\
		& \les \frac{y}{\lgba}\Big(1+\lgyba\Big).
	\end{aligned}
	\end{equation}
	These with \eqref{eq: inverse of Ham} yield the bound
	\begin{equation} \label{eq: lambda T1 Sigma b inverse}
		\big|\Ham^{-1}\big(\varl T_1\hs-\hs T_1\hs-\hs\sgb\big)\big|
		= O\bigg(y^5\one_{y\leq 1} +\frac{1+y}{b\lgba}\one_{1\leq y\leq 2B_1}\bigg)
		\les \frac{1+y}{b\lgba}.
	\end{equation}
	Therefore we define the vectorial functions
	\begin{equation} \notag
		\sg_{1,0}\hs=\hs\frac{1}{\ao^2+\bo^2}\hs\begin{bmatrix} \,\ao\,\\ \bo\\ 0 \end{bmatrix}\hs\hs\sgb, \quad
		\sg_{0,1}\hs=\hs\frac{1}{\ao^2+\bo^2}\hs\begin{bmatrix} -\bo \\ \ao \\ 0 \end{bmatrix}\hs\hs\sgb, \quad	
		\sg_{0,2}\hs=\hs \frac{1}{\ao^2+\bo^2}T_1\sgb\,\ez,
	\end{equation}
	corresponding to the cancellations of $\varl\varp_{1,0}-\varp_{1,0}, \varl\varp_{0,1}-\varp_{0,1}, \varl S_{0,2}-S_{0,2}$ respectively. Similar to \eqref{eq: varp relation}, we have
	\begin{equation} \notag
		\sg_{0,1}= \R\sg_{1,0}, \quad \sg_{1,0}= -\R\sg_{0,1}.
	\end{equation}
	Then \eqref{eq: errz expand} can be rewritten as
	\begin{equation} \label{eq: errz expand 2}
	\begin{aligned}
		\errz 
		&= \hJ\phs\Bigg\{ (\ao-\bo\R)\hs\begin{bmatrix} 
			\,\mHamo(\wz^2)\,\\ \mHamt(\wz^2)\\ 0 
		\end{bmatrix} -2b^2\onez\phs\py S_{0,2}^{\third}\phs\big(\ao\ex-\bo\ey\big) \\
		&\qquad\quad  -\bo\wz\hs\wedge\hs\mHam\wz -\ao b^2\Delta S_{0,2}^{\third}\,\ez +2\ao\onez\sum_{i=1,2}\pyzy(\wz^{i})^{\first}\phsh\ez \Bigg\} \\
		&\quad +\sum_{2\leq i+j\leq 4}a^{i}b^{j}\ea_{i,j}+b^3\big(\varl S_{0,2}-2S_{0,2}+\sg_{0,2}\big) \\
		&\quad  +ab\sg_{1,0} +b^2\sg_{0,1} -b^3\sg_{0,2} -2a^2b\phs S_{0,2} -(a^2+b^2)\hspace{-.5em}\sum_{2\leq i+j\leq 3}\hspace{-.5em}a^{i}b^{j}\pr_b \varp_{i,j}.	
	\end{aligned}		
	\end{equation}
	where
	\begin{equation} \label{eq: E wz}
	\begin{aligned}
		\sum_{2\leq i+j\leq 4}\hspace{-.4em} a^{i}b^{j} \ea_{i,j}
		&= ab(1+Z)\phsh\varp_{1,0} -a^2(1+Z)\phsh\varp_{0,1} \\
		&\quad +ab\big(\varl\varp_{1,0}-\varp_{1,0}-\sg_{1,0}\big)+b^2\big(\varl\varp_{0,1}-\varp_{0,1}-\sg_{0,1}\big) \\
		&\quad +\sum_{2\leq i+j\leq 3}\hspace{-.3em} a^{i}b^{j}\Big(b\varl \varp_{i,j} -aZ\R \varp_{i,j}\Big) -(a^2+b^2)\hspace{-.5em}\sum_{2\leq i+j\leq 3}\hspace{-.5em}ja^{i}b^{j-1}\varp_{i,j}.
	\end{aligned}
	\end{equation}

	%%%%%%%%%%%%%%%%%
	%%%%%%%%%%%%%%%%%
	\medskip
	\step{6}{The $\hJ$ structure} From the definition \eqref{eq: hJ}, we have the identity
	\begin{equation} \label{eq: J ez}
		\hJ\ez =(\ez+\wz)\wedge\ez =-(\ez+\wz)\wedge\wz =-\hJ\wz.
	\end{equation}
	It implies the smallness of the third component under the action of $\hJ$, which is helpful in analyzing the terms inside the big brace in \eqref{eq: errz expand 2}. First, we compute the nonlinear term in \eqref{eq: errz expand 2}. By the decomposition \eqref{eq: wz decomposition}:	
	\begin{equation}
	\begin{aligned}
		\wz\wedge\mHam\wz 
		&= \wz^1\wedge\mHam\wz^1 +\wz^1\wedge\mHam\wz^2 +b^2\wz^1\wedge\mHam S_{0,2} \\
		&\quad +\wz^2\wedge\mHam\wz^1 +\wz^2\wedge\mHam\wz^2 +b^2\wz^2\wedge\mHam S_{0,2} \\
		&\quad +b^2S_{0,2}\wedge\mHam\wz^1 +b^2S_{0,2}\wedge\mHam\wz^2  +b^4S_{0,2}\wedge\mHam S_{0,2}.
	\end{aligned}
	\end{equation}
	From \eqref{eq: varp function}, \eqref{eq: mHam}, we see 
	\begin{equation} \notag
	\begin{aligned}
		\wz^1\wedge\mHam\wz^1 
		& = \wz^1\wedge\begin{bmatrix}
			\mHamo(\wz^1)\\ \mHamt(\wz^1)\\ 0
		\end{bmatrix} +\wz^1\wedge\begin{bmatrix}
			0\\ 0\\ \mHamth(\wz^1)
		\end{bmatrix} \\
		& = \big(a\phsh\varp_{1,0}+b\phsh\varp_{0,1}\big)\wedge\mHamp\big(a\phsh\varp_{1,0}+b\phsh\varp_{0,1}\big) -\mHamth(\wz^1)\big(\ez\wedge\wz^1\big) \\
		& = -2\onez\pyzy(\wz^1)^{\first}\phs\R\wz^1.
	\end{aligned}
	\end{equation}
	Besides, \eqref{eq: profile form} implies $(\wz^2)^{\third}=0$, which together with \eqref{eq: J ez} yields
	\begin{equation} \notag
	\begin{aligned}
		\hJ\big(\wz^1\wedge\mHam\wz^2\big)
		& = \hJ\Bigg\{ \wz^1\wedge\begin{bmatrix}
			\mHamo(\wz^2)\\ \mHamt(\wz^2)\\ 0
		\end{bmatrix} +\wz^1\wedge\begin{bmatrix}
			0\\ 0\\ \mHamth(\wz^2)
		\end{bmatrix} \Bigg\} \\
		& = \hJ\Bigg\{ O\Big(|\wz^1|\,|\mHam\wz^2|\,|\wz|\Big) -2\onez\pyzy(\wz^2)^{\first}\phs\R\wz^1\Bigg\}.
	\end{aligned}
	\end{equation} 
	An analogous identity switching the position of $\wz^1$ and $\wz^2$ holds for $\wz^{2}\wedge\mHam\wz^{1}$. Again from \eqref{eq: profile form}, $S_{0,2}=S_{0,2}^{\third}\,\ez$, and then direct computations give
	\begin{equation} \notag
	\begin{aligned}
		\hJ\Big( b^2\wz^1\wedge\mHam & S_{0,2}  +b^2S_{0,2}\wedge\mHam\wz^1 \Big) \\
		& = \hJ\Big( b^2\Delta S_{0,2}^{\third}\phs\R\wz^1 +2b^2\onez\phsh\py S_{0,2}^{\third}\phs(\wz^1)^{\second}\phsh\wz +b^2S_{0,2}^{\third}\phs\R\mHam\wz^1 \Big).
	\end{aligned}
	\end{equation}
	Combining these, the nonlinear term is actually given by
	\begin{equation} \label{eq: J in out nonlinear}
	\begin{aligned}
		\hJ\big(-\hs\bo\wz\hs\wedge\hs\mHam\wz\big)
		&= \hJ\Bigg\{\, 2\bo\onez\sum_{i+j\leq 3}\pyzy(\wz^i)^{\first}\phs\R\wz^{j}\\
		&\qquad\;\; -\bo b^2\Delta S_{0,2}^{\third}\phs\R\wz^1 -\bo b^2S_{0,2}^{\third}\phs\R\mHam\wz^1 +\sum_{4\leq i+j\leq 7}a^{i}b^{j}R_{i,j} \Bigg\},
	\end{aligned}
	\end{equation}
	where $R_{i,j}$ consists of functions with coefficient $a^{i}b^{j}$ for $4\leq i+j\leq 7$:
	\begin{equation} \notag
	\begin{aligned}
		\sum_{4\leq i+j\leq 7} a^{i}b^{j} R_{i,j}
		& = -\bo\bigg\{\phsh 2b^2\onez\phs\py S_{0,2}^{\third}\phs(\wz^1)^{\second}\phsh\wz +\wz^2\wedge\mHam\wz^2 \\
		&\qquad\qquad +b^2\wz^2\wedge\mHam S_{0,2} +b^2S_{0,2}\wedge\mHam\wz^2 +b^4S_{0,2}\wedge\mHam S_{0,2} \\
		&\qquad\qquad\qquad\quad +O\Big(|\wz^1|\,|\mHam\wz^2|\,|\wz|\Big) +O\Big(|\wz^2|\,|\mHam\wz^1|\,|\wz|\Big) \bigg\}.
	\end{aligned}
	\end{equation}
	Next, for the last two terms in the brace in \eqref{eq: errz expand 2}, using \eqref{eq: J ez} directly, we have
	\begin{equation} \label{eq: J in out ez}
	\begin{aligned}
		\hJ\bigg\{ -\ao b^2\Delta S_{0,2}^{\third}\phsh\ez &\,+2\ao\onez\sum_{i=1,2}\pyzy(\wz^{i})^{\first}\phsh\ez \bigg\} \\
		& = \hJ\bigg\{ \ao b^2\Delta S_{0,2}^{\third}\phsh\wz -2\ao\onez\sum_{i=1,2}\pyzy(\wz^{i})^{\first}\phs\wz \bigg\}.
	\end{aligned}
	\end{equation}
	Furthermore, for any vector $\vv$ with $\vv^{\third}=0$, we observe that
	\begin{equation} \notag
		\vv = \hJ\big(-\hsh\R\vv\big) +(\wz\cdot\vv)\ez -\wz^{\third}\vv.
	\end{equation} 
	In particular, take $\vv=E_{i,j}$, and there holds
	\begin{equation} \label{eq: J in out E}
	\begin{aligned}
		\sum_{2\leq i+j\leq 4} a^{i}b^{j}\ea_{i,j} 
		&= \sum_{2\leq i+j\leq 4}  a^{i}b^{j} \bigg\{ \hJ\phs\big(-\hsh\R\ea_{i,j}\big)+\big(\wz\cdot\ea_{i,j}\big)\ez-b^2S_{0,2}^{\third}\ea_{i,j} \bigg\},
	\end{aligned}
	\end{equation}
	where we note that $E_{i,j}^{\third}=0$ follows from $\varp_{i,j}^{\third}=\sg_{1,0}^{\third}=\sg_{0,1}^{\third}=0$. Now we use \eqref{eq: J in out E} to place $E_{i,j}$ into the big brace in \eqref{eq: errz expand 2}, and apply \eqref{eq: J in out nonlinear}, \eqref{eq: J in out ez}. Then the error becomes
	\begin{equation} \label{eq: errz expand 3}
	\begin{aligned}
		\errz 
		&= \hJ\phs\Bigg\{ (\ao-\bo\R)\hs\hs\begin{bmatrix} 
			\,\mHamo(\wz^2)\,\\ \mHamt(\wz^2)\\ 0 
		\end{bmatrix}\ +\sum_{2\leq i+j\leq 7}\hspace{-.4em}a^{i}b^{j}\tea_{i,j} \Bigg\} \\
		&\quad  +ab\sg_{1,0} +b^2\sg_{0,1} -b^3\sg_{0,2} -2a^2b\phs S_{0,2} -(a^2+b^2)\hspace{-.5em}\sum_{2\leq i+j\leq 3}\hspace{-.5em}a^{i}b^{j}\pr_b \varp_{i,j} \\
		&\quad +\sum_{2\leq i+j\leq 4} a^{i}b^{j} \Big[\big(\wz\cdot\ea_{i,j}\big)\ez-b^2S_{0,2}^{\third}\ea_{i,j}\Big] +b^3\big(\varl S_{0,2}-2S_{0,2}+\sg_{0,2}\big),
	\end{aligned}
	\end{equation}
	where
	\begin{equation} \label{eq: Et wz}
	\begin{aligned}
		\sum_{2\leq i+j\leq 7}\hspace{-.3em}a^{i}b^{j}\tea_{i,j}
		& = -\hspace{-.3em}\sum_{2\leq i+j\leq 4} a^{i}b^{j}\R\ea_{i,j} -2b^2\onez\phs\py S_{0,2}^{\third}\phs\big(\ao\ex\hs-\hs\bo\ey\big) \\
		& -2\onez\hs\hs\sum_{i+j\leq 3}\hspace{-.3em}\pyzy(\wz^i)^{\first}\big(\ao\hs-\hs\bo\R\big)\wz^{j} -\bo b^2S_{0,2}^{\third}\phs\R\mHam\wz^1  \\
		& +b^2\Delta S_{0,2}^{\third}\big(\ao\hs-\hs\bo\R\big)\wz^1 +2\ao b^2\onez\pyzy(\wz^1)^{\first}S_{0,2}^{\third}\,\wz \\
		& +\ao\bigg[ b^2\Delta S_{0,2}^{\third} -2\onez\pyzy(\wz^2)^{\first}\bigg]\big(\wz^2\hs+\hs b^2S_{0,2}\big) \\
		& +\sum_{4\leq i+j\leq 7}a^{i}b^{j}R_{i,j}.
	\end{aligned}
	\end{equation}
	
	%%%%%%%%%%%%%%%%%
	%%%%%%%%%%%%%%%%%
	\medskip
	\step{7}{Construction of $\varp_{i,j}$} We choose suitable $\varp_{i,j}$ by eliminating the error \eqref{eq: Et wz}. More precisely, we let 
	\begin{equation} \notag
		(\ao-\bo\R)\hs\hs\begin{bmatrix} 
			\,\mHamo(\wz^2)\,\\ \mHamt(\wz^2)\\ 0 
		\end{bmatrix} +\sum_{2\leq i+j\leq 3}\hspace{-.4em}a^{i}b^{j}\tea_{i,j} = 0,
	\end{equation}
	which, in view of \eqref{eq: profile form}, yields the following linear system
	\begin{equation} \label{eq: high varp system}
		\begin{bmatrix}
			\ao & \bo \\ -\bo & \ao 
		\end{bmatrix} \lf[\begin{aligned}
			\,\Ham\varp_{i,j}^{\first}\, \\ \,\Ham\varp_{i,j}^{\second}\,
		\end{aligned}\rg]
		= -\lf[\begin{aligned}
			\,\tea_{i,j}^{\first}\, \\ \,\tea_{i,j}^{\second}\,
		\end{aligned}\rg],
		\quad\mbox{for}\quad 2\leq i+j\leq 3.
	\end{equation}
	It can also be solved using \eqref{eq: inverse of Ham}, and then the constructions of $\varp_{i,j}$ are obtained. We consider first the case $i+j=2$. From \eqref{eq: E wz}, \eqref{eq: Et wz}, we see
	\begin{equation} \label{eq: varp 2 equation}
	\begin{aligned}
		-\sum_{i+j=2} \hspace{-.2em} a^{i}b^{j} \tea_{i,j} 
		&= a^2 \bigg\{ \hs\onez\varp_{1,0} +2\ao(1+Z)\pyzy\varp_{1,0}^{\first}\big(\ao\hs-\hs\bo\R\big)\varp_{1,0}^{\ptone} \bigg\} \\
		& +ab \bigg\{ \onez\varp_{0,1} +\big(\varl\varp_{0,1}-\varp_{0,1}-\sg_{0,1}\big) \\
		&\qquad\qquad\qquad +2\ao\onez\sum_{\substack{i+k=1\\j+l=1}}\pyzy\varp_{i,j}^{\first} \big(\ao\hs-\hs\bo\R\big)\varp_{k,l}^{\ptone} \bigg\}\\
		& +b^2 \bigg\{ 2\onez\py S_{0,2}^{\third}\big(\ao\ex\hs-\hs\bo\ey\big) -\big(\varl\varp_{1,0}-\varp_{1,0}-\sg_{1,0}\big) \\
		&\qquad\qquad\qquad\qquad +2\ao\onez\pyzy\varp_{0,1}^{\first}\big(\ao\hs-\hs\bo\R\big)\varp_{0,1}^{\ptone} \bigg\}.
	\end{aligned}
	\end{equation}
	This together with the asymptotics for $\varp_{1,0}, \varp_{0,1}, S_{0,2}$ and \eqref{eq: lambda T1 Sigma b inverse} implies the bound
	\begin{equation} \label{eq: varp 2 asymptotics}
		\varp_{i,j} =O\bigg(y^5\one_{y\leq 1}+\frac{1+y}{b\lgba}\one_{y\geq 1}\bigg) \les \frac{1+y}{b\lgba},
	\end{equation}
	and also the crude bound
	\begin{equation} \notag
		|\varp_{i,j}|\les 1+y^3.
	\end{equation}
	Moreover, from \eqref{eq: b derivative sigma b}, we have the estimate for the derivatives on $b$:
	\begin{equation} \notag
		\prb \varp_{i,j} = O\bigg(\frac{y^5}{b\lgba^2}\,\one_{y\leq 1} +\frac{y^3}{b\lgba}\,\one_{1\leq y\leq 6B_0}\bigg).
	\end{equation}
	Next for $i+j=3$, from \eqref{eq: E wz}, \eqref{eq: Et wz} again, we have
	\def\ione{i}
	\def\itwo{j}
	\def\jone{k}
	\def\jtwo{l}
	\def\isub{{\ione,\itwo}}
	\def\jsub{{\jone,\jtwo}}
	\newcommand{\varssum}[2]{\sum_{\substack{\ione+\jone={#1}\\ \itwo+\jtwo={#2}}}\hspace{-.2em}\bigg[\pyzy\varp_{\isub}^{\first}\phs S_{\jsub}^{\ptone} +\pyzy S_{\isub}^{\first}\phs\varp_{\jsub}^{\ptone}\bigg]}
	\begin{equation} \notag
	\begin{aligned}
		-\sum_{i+j=3} \hspace{-.2em} a^{i}b^{j} \tea_{i,j} 
		& = \sum_{i+j=2}\hspace{-.3em} a^{i}b^{j}\Big(b\varl\R \varp_{i,j} +aZ \varp_{i,j}\Big) +(a^2+b^2)\hspace{-.3em}\sum_{i+j=2}\hspace{-.3em}ja^{i}b^{j-1}\R \varp_{i,j} \\
		& -2\onez\hs\hs\sum_{i+j=3}\hspace{-.3em}\pyzy(\wz^i)^{\first}\big(\ao\hs-\hs\bo\R\big)\wz^{j} \\
		& -\bo b^2S_{0,2}^{\third}\phs\R\mHam\wz^1 +b^2\Delta S_{0,2}^{\third}\big(\ao\hs-\hs\bo\R\big)\wz^1.
	\end{aligned}
	\end{equation}
	This together with \eqref{eq: varp function}, \eqref{eq: S02 function}, \eqref{eq: varp 2 asymptotics} gives for $i+j=3$ that
	\begin{equation} \label{eq: varp 3 asymptotics}
		\varp_{i,j} =O\bigg(y^7\one_{y\leq 1}+\frac{1+y^3}{b\lgba}\one_{y\geq 1}\bigg) \les \frac{1+y^3}{b\lgba},
	\end{equation}
	the crude bound
	\begin{equation} \notag
		|\varp_{i,j}|\les 1+y^5,
	\end{equation}
	and the derivatives
	\begin{equation} \notag
		\prb \varp_{i,j} = O\bigg(\frac{y^7}{b\lgba^2}\,\one_{y\leq 1} +\frac{y^5}{b\lgba}\,\one_{1\leq y\leq 6B_0}\bigg).
	\end{equation}
	
	%%%%%%%%%%%%%%%%%
	%%%%%%%%%%%%%%%%%
	\medskip
	\step{8}{Estimates on the error} Due to the choice of $\varp_{i,j}$, the error \eqref{eq: errz expand 3} boils down to
	\begin{equation} \label{eq: errz expand 4}
	\begin{aligned}
		\errz 
		&= ab\sg_{1,0} +b^2\sg_{0,1} -b^3\sg_{0,2} -2a^2b\phs S_{0,2} -(a^2+b^2)\hspace{-.5em}\sum_{2\leq i+j\leq 3}\hspace{-.5em}a^{i}b^{j}\pr_b \varp_{i,j} \\
		&\quad +\sum_{2\leq i+j\leq 4} a^{i}b^{j} \big(\wz\cdot\ea_{i,j}\big)\ez +b^3\big(\varl S_{0,2}-2S_{0,2}+\sg_{0,2}\big) \\
		&\quad -b^2\phsh S_{0,2}^{\third}\hs\sum_{2\leq i+j\leq 4} a^{i}b^{j}\ea_{i,j} +\hJ\phs\bigg( \sum_{4\leq i+j\leq 7} \hs\hs a^{i}b^{j}\tea_{i,j} \bigg).	
		\end{aligned}
	\end{equation}
	For each line of the RHS of \eqref{eq: errz expand 4}, we estimate them in the region $[0, 2B_1]$ by previous asymptotics. From \eqref{eq: sigma b asymptotics}, \eqref{eq: S asymptotics}, there holds
	\begin{equation} \label{eq: main error of errz}
	\begin{aligned}
		&\quad ab\sg_{1,0} +b^2\sg_{0,1} -b^3\sg_{0,2} -2a^2b\phs S_{0,2} \\
		& = O\big(b^2 \sgb\big)(\ex+\ey) +O\big(b^3 T_1\sgb\big)\ez + O\big(ab^2 S_{0,2}\big) \\
		& = b^2\,O\bigg(\frac{y^3}{\lgba}\one_{y\leq 1} +\frac{y\log y}{\lgba}\one_{1\leq y\leq \frac{B_0}{4}} +y\one_{y\geq \frac{B_0}{4}}\bigg)\big(\ex+\ey\big) \\
		&\quad +b^2\,O\bigg(\frac{y^6}{\lgba}\one_{y\leq 1} +\frac{y^2(\log y)^2}{\lgba}\one_{1\leq y\leq \frac{B_0}{4}} +y^2\log y\one_{y\geq \frac{B_0}{4}}\bigg)\ez \\
		&\quad +a^2b\,O\Big(y^2(\log y)^2\one_{y\geq 1}\Big).
	\end{aligned}
	\end{equation}
	By the $b$ derivatives estimates, we have
	\begin{equation} \notag
		(a^2+b^2)\hspace{-.3em}\sum_{2\leq i+j\leq 3}a^{i}b^{j}\prb \varp_{i,j}
		= O\bigg(\frac{b^3 y^5}{\lgba^2}\one_{y\leq 1} +\frac{b^3 y^3}{\lgba}\one_{1\leq y\leq 6B_0}\bigg).
	\end{equation}
	For the second line, \eqref{eq: S02 function} induces the cancellation:
	\begin{equation} \notag
		\Big(b\big(\varl\varp_{0,1} \hs-\hs\varp_{0,1} \hs-\hs\sg_{0,1}\big)\cdot b\,\varp_{0,1}\Big)\phsh\ez +b^3\big(\varl S_{0,2}-2S_{0,2}+\sg_{0,2}\big) =0,
	\end{equation}
	which together with explicit formula \eqref{eq: E wz} gives
	\begin{equation} \notag
	\begin{aligned}
		&\quad \sum_{2\leq i+j\leq 4} a^{i}b^{j} \big(\wz\cdot\ea_{i,j}\big)\ez +b^3\big(\varl S_{0,2}-2S_{0,2}+\sg_{0,2}\big) \\
		& = ab^2\, O\bigg(y^6\one_{y\leq 1} +\frac{y^2\log y}{\lgba}\Big(\hs1\hs+\hs\lgyba\hs\Big)\one_{1\leq y\leq 6B_0} +\frac{y^2\log y}{\lgba}\one_{y\geq 6B_0}\bigg)\,\ez.
	\end{aligned}
	\end{equation}
	For the third line, from \eqref{eq: E wz}, \eqref{eq: Et wz}, and the crude bounds of $\varp_{i,j}$, we have
	\begin{equation} \notag
		\Big|b^2\phsh S_{0,2}^{\third}\hs\sum_{2\leq i+j\leq 4} a^{i}b^{j}\ea_{i,j}\Big| +\Big|\sum_{4\leq i+j\leq 7}a^{i}b^{j}\tea_{i,j} \Big|  
		\les b^4 \Big(y^9\one_{y\leq 1} +y^5(\log y)^2\one_{y\geq 1} \Big).
	\end{equation}
	Collecting these estimates, we see \eqref{eq: main error of errz} is the dominate part of $\errz$, which yields the bound
	\begin{equation} \label{eq: weighted bound 1 part}
		\int_{y\leq 2B_1}\frac{|\errz^{\first}|^2+|\errz^{\second}|^2}{y^{6}}
		\les\frac{b^4}{\lgba^2}, \quad
		\int_{y\leq 2B_1}\frac{|\errz^{\third}|^2}{y^{8}}
		\les\frac{b^6}{\lgba^2}.
	\end{equation}
	This yields \eqref{eq: weighted bound 1}, \eqref{eq: weighted bound 2} with $i=0$. The cases $i\geq 1$ are similar. After the action of $\Ham$, we have
	\begin{equation} \notag
	\begin{aligned}
		&\quad ab\Ham\sg_{1,0} +b^2\Ham\sg_{0,1} = O\big(b^2 \sgb\big)(\ex+\ey) \\
		& = b^2\,O\bigg(\frac{y}{\lgba}\one_{y\leq 1} +\frac{1}{y\lgba}\one_{1\leq y\leq 6B_0} \bigg)\big(\ex+\ey\big),
	\end{aligned}
	\end{equation}
	which still leads to the same bounds as \eqref{eq: weighted bound 1 part}. So do most of the other terms, except the $b^4\R\varl\varp_{0,3}$ coming from the last term of \eqref{eq: Et wz}, which, from \eqref{eq: varp 3 asymptotics}, can be bounded by
	\begin{equation} \notag
		\int_{y\leq 2B_1}\hs\hs\hs |\Ham\big(b^4\R\varl\varp_{0,3}\big)|^2
		\les b^8 \bigg(\int_{y\leq 1}\hs\hs|\Ham(y^7)|^2 +\int_{1\leq y\leq 2B_1}\hs\frac{|\Ham(y^3)|^2}{b^2\lgba^2}\bigg) 
		 \les b^4\lgba^2,
	\end{equation}
	and thus \eqref{eq: weighted bound 3} follows. The estimates \eqref{eq: weighted bound 4}--\eqref{eq: weighted bound 6} are obtained similarly. The details are left to readers. Finally, for the flux computations, we observe from \eqref{eq: sigma b} that 
	\begin{equation} \notag
	\begin{aligned}
		(\mHamp\errz,\varp_M) 
		& = \big(ab\mHamp\sg_{1,0} +b^2\mHamp\sg_{0,1},\varp_M\big) +C(M)\frac{b^6}{\lgba^2} \\
		& = \frac{ab\phsh c_b\big(\varl\phi,\varp_M\big)}{\ao^2+\bo^2} \begin{bmatrix} \ao\\ \bo\\ 0 \end{bmatrix} 
		+\frac{b^2\phsh c_b\big(\varl\phi,\varp_M\big)}{\ao^2+\bo^2} \begin{bmatrix} -\bo\\ \ao\\ 0 \end{bmatrix} +C(M)\frac{b^6}{\lgba^2},
	\end{aligned}
	\end{equation}
	which together with \eqref{eq: cb} yields the desired \eqref{eq: flux computation}. This concludes the proof.
	\end{proof}
	%%%%%%%%%%%%%%%%%
	%%%%%%%%%%%%%%%%%	
	\begin{remark}
		The construction of $S_{0,2}$ \eqref{eq: S02 function} may seem to be unnatural, but it actually corresponds to the constraint \eqref{eq: component constraint}, ensuring that LL flow maps from $\RR^2$ to $\SS^2$. On the contrary, if we simply let $\wz^{\third}=S_{0,2}^{\third}=0$, the error arising therefrom will make $\w^{\third}=\gamma$ {\rm (}see \eqref{eq: decomposition}, \eqref{eq: w}{\rm )} out of control, and destroy the corresponding bounds in Lemma~{\rm \ref{le: bounds for gamma}}. 
		\end{remark}
	
	%%%%%%%%%%%%%%%  3.1
	%%%%%%%%%%%%%%%%%%%%%%%%%%%%%%
	%%%%%%%%%%%%%%%%%%%%%%%%%%%%%%
	\subsection{Localization of the profiles}
	\label{SS: Localization}
	The aim of this subsection is to localize the approximate solution constructed in Lemma \ref{le: ap solution} within the spacial scale $y\leq 2B_1$.
	
	\begin{lemma}[Localization]
	\label{le: localized ap solution}
	Under the assumption of Lemma {\rm \ref{le: ap solution}}, we define the localized profile 
	\begin{equation} \label{eq: localized ap sol}
		\wzt = \chib\wz,
	\end{equation}
	and $\varpt_{i,j} = \chib\varp_{i,j}$ accordingly. Then \eqref{eq: localized ap sol} is an approximate solution to \eqref{eq: vec hw equation} in the sense that
	\begin{equation} \label{eq: localized error}
	\begin{aligned}
		\errzt +\mmodt(t) = \prs\wzt -\frac{\la_s}{\la}\varl\wzt
		+\vart_s Z\R\wzt
		+\tJ\Big(\, \ao\mHam\wzt -\bo\tJ\mHam\wzt
		+\p\,\varl\phi \,\Big),
	\end{aligned}
	\end{equation}
	where $\tJ := \big(\ez+ \wzt\big)\wedge$. The localized modulation vector $\mmodt$ is given by
	\begin{equation} \label{eq: localized mod vec}
   	\begin{aligned}
   		\mmodt & = \chib\mmod +\big(b_s+b^2+a^2\big)\,O\Big(\frac{\wz}{b}\Big)\one_{y\sim B_1} -\Big(\frac{\la_s}{\la}+b\Big)O(\wz)\one_{y\sim B_1} \\
   		&\quad +(\vart_s+a)\varl\phi\phsh\big(\ey+O(\wzt)\big)\one_{y\gtrsim B_1} -\Big(\frac{\la_s}{\la}+b\Big)\varl\phi\phsh\big(\ex+O(\wzt)\big)\one_{y\gtrsim B_1},
   	\end{aligned}
   	\end{equation}
	while the localized error $\errzt$ satisfies the following estimates:
	\begin{align}
		& \int_{y\leq 2B_1}\frac{|\pr_y^i\errzt^{\first}|^2+|\pr_y^i\errzt^{\second}|^2}{y^{6-2i}} 
		\les\bflog, \quad 0\leq i\leq 3,
		\label{eq: localized weighted bound 1}\\
		& \int_{y\leq 2B_1}\frac{|\pr_y^i\errzt^{\third}|^2}{y^{8-2i}}\les\frac{b^6}{\lgba^2}, \quad 0\leq i\leq 4,
		\label{eq: localized weighted bound 2}\\
		& \int_{y\leq 2B_1}|\Ham\errzt^{\first}|^2 +|\Ham\errzt^{\second}|^2
		\les b^4\lgba^2,
		\label{eq: localized weighted bound 3}\\
		& \int_{y\leq 2B_1}\frac{|\pr_y^i\Ham\errzt^{\first}|^2 +|\pr_y^i\Ham\errzt^{\second}|^2}{y^{2-2i}} \les\bflog, \quad 0\leq i\leq 1,
		\label{eq: localized weighted bound 4}\\
		& \int_{y\leq 2B_1}|\A\Ham\errzt^{\sups{1}}|^2 +|\A\Ham\errzt^{\sups{2}}|^2 \les b^5,
		\label{eq: localized weighted bound 5}\\
		& \int_{y\leq 2B_1}|\Ham^2\errzt^{\sups{1}}|^2 +|\Ham^2\errzt^{\sups{2}}|^2 \les \frac{b^6}{\lgba^2}.
		\label{eq: localized weighted bound 6}\\
		& \frac{(\Ham\errzt^{\first},\varp_M)}{(\varl\phi,\varp_M)} = \frac{2(\ao ab -\bo b^2)}{(\ao^2 +\bo^2)\lgba}\bigg(1+O\Big(\frac{1}{\lgba}\Big)\bigg),
		\label{eq: localized flux computation 1}\\
		& \frac{(\Ham\errzt^{\second},\varp_M)}{(\varl\phi,\varp_M)} = \frac{2(\ao b^2 +\bo ab)}{(\ao^2 +\bo^2)\lgba}\bigg(1+O\Big(\frac{1}{\lgba}\Big)\bigg).
		\label{eq: localized flux computation 2}
	\end{align}
	\end{lemma}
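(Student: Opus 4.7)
The strategy is to derive \eqref{eq: localized error} by multiplying the master identity \eqref{eq: ap error expand} from Lemma~\ref{le: ap solution} by $\chib$ and then tracking how the cutoff fails to commute with the differential operators. Since algebraic operations (wedge products, multiplication by $Z$, rotation $\R$) commute with multiplication by $\chib$, the only new contributions come from
\begin{equation*}
\prs\wzt=\chib\,\prs\wz+(\prs\chib)\wz,\qquad
\varl\wzt=\chib\,\varl\wz+(\lchib)\wz,
\end{equation*}
from $\mHam\wzt=\chib\,\mHam\wz+[\mHam,\chib]\wz$, and from the identity $\tJ-\chib\hJ=(1-\chib)\ez\wedge$. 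Each such defect is supported either on $y\sim B_1$ (any commutator that lands one derivative on $\chib$) or on $y\gtrsim B_1$ (terms carrying the factor $1-\chib$). The plan is to assign each defect either to the modulation vector $\mmodt$ or to the error $\errzt$ so that the structure of \eqref{eq: localized mod vec} is reproduced exactly.

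The key splittings are as follows. The factor $\chib$ depends on $s$ only through $b$, so $\prs\chib=b_s\pr_b\chib$; writing $b_s=-(b^2+a^2)+\bigl(b_s+b^2+a^2\bigr)$ splits the $(\prs\chib)\wz$ term into the $\bigl(b_s+b^2+a^2\bigr)O(\wz/b)\one_{y\sim B_1}$ line of $\mmodt$ plus a residue of order $(b^2+a^2)(\wz/b)\one_{y\sim B_1}$ sent to $\errzt$. The term $-\tfrac{\la_s}{\la}(\lchib)\wz$ splits via $\la_s/\la=-b+(\la_s/\la+b)$ to produce the $-\bigl(\la_s/\la+b\bigr)O(\wz)\one_{y\sim B_1}$ line of $\mmodt$, with residue $b(\lchib)\wz$ absorbed into $\errzt$. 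Finally, the $(1-\chib)\p\varl\phi$ contribution, together with the analogous pairings from $\tJ-\chib\hJ$, reproduces the last line of \eqref{eq: localized mod vec} via the splittings $\vart_s=-a+(\vart_s+a)$ and $\la_s/\la=-b+(\la_s/\la+b)$. The remaining pieces, namely $[\mHam,\chib]\wz$ and the nonlinear corrections $\tJ\mHam\wzt-\chib\hJ\mHam\wz$, carry no modulation-parameter coefficients and are entirely absorbed into $\errzt-\chib\errz$.

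For the weighted bounds \eqref{eq: localized weighted bound 1}--\eqref{eq: localized weighted bound 6}, the principal piece $\chib\errz$ inherits the estimates of Lemma~\ref{le: ap solution} since $|\chib|\leq1$ and the original bounds are already on $\{y\leq2B_1\}$. For the new pieces supported on $B_1\leq y\leq 2B_1$, I would use the pointwise bounds from Lemma~\ref{le: ap solution}: at scale $y\sim B_1$, $|\wz|\les b\,y\log y\sim\sqrt{b}\lgba^{2}$ with analogous bounds for its derivatives, while $|\pr_y^k\chib|\les B_1^{-k}$. Inserting these into each weighted norm and using $B_1^{2}=\lgba^{2}/b$ on the annulus, one checks term by term that every new contribution falls safely below the target $b^4/\lgba^2$ (or its analogues in the higher norms). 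The bounds involving $\Ham$, $\A\Ham$, and $\Ham^2$ follow the same pattern, noting that $[\Ham,\chib]$ lands at most two derivatives on $\chib$, so no new logarithmic divergence appears. The main obstacle is the bookkeeping: identifying the coefficients of each defect so that the modulation structure $(a_s,b_s+b^2+a^2,\la_s/\la+b,\vart_s+a)$ is reconstructed, and checking that the residues left for $\errzt$ each fit below the numerous target scales \eqref{eq: localized weighted bound 1}--\eqref{eq: localized weighted bound 6}.

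The flux identities \eqref{eq: localized flux computation 1}--\eqref{eq: localized flux computation 2} are essentially automatic. The fixed test function $\varp_M$ is supported in $\{y\leq 2M\}$ with $M$ a universal constant independent of $b$, so for $b$ small enough $\chib\equiv1$ on $\mathrm{supp}\,\varp_M$. Consequently $[\Ham,\chib]$ vanishes there and the pairing $(\Ham\errzt,\varp_M)$ agrees with $(\Ham\errz,\varp_M)$, reducing \eqref{eq: localized flux computation 1}--\eqref{eq: localized flux computation 2} directly to \eqref{eq: flux computation}.
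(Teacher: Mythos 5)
Your proposal is correct and follows essentially the same route as the paper: localize, classify the defects as either commutator terms supported on $y\sim B_1$ (derivatives of $\chib$, including $\prs\chib=b_s\prb\chib$) or tail terms carrying $1-\chib$, split each modulation parameter into its leading value plus remainder so that the remainders build $\mmodt$ and the leading parts join $\chib\errz$ in $\errzt$, and then check the weighted bounds from the profile asymptotics at scale $B_1$. Your observation that the flux identities are untouched because $\chib\equiv 1$ on the support of $\varp_M$ is exactly the mechanism implicit in the paper's proof.
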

	
	%%%%%%%%%%%%%%%%%
	%%%%%%%%%%%%%%%%%
	\begin{proof}{Lemma {\rm\ref{le: localized ap solution}}}
	From the localization \eqref{eq: localized ap sol} and the error of the approximate solution \eqref{eq: ap error expand}, we compute:
	\begin{equation} \label{eq: localization expand}
	\begin{aligned}
		&\quad \prs\wzt -\frac{\la_s}{\la}\varl\wzt
		+\vart_s Z\R\wzt +\tJ\Big(\, \ao\mHam\wzt -\bo\tJ\mHam\wzt
		+\p\phsh\varl\phi \,\Big) \\
		& = \prs\chib\wz -\frac{\la_s}{\la}\lchib\wz
		+ \chib\Big(\prs\wz-\frac{\la_s}{\la}\varl\wz +\vart_s ZR\wz\Big)\\
		&\quad +\Big((\ez\hs\hs+\hs\hs\wz)\hs-\hs(1\hs\hs-\hs\hs\chib)\wz\Big)\wedge\Big(\ao\mHam(\chib\wz)\hs-\hs\bo(\ez\hs+\hs\wzt)\hs\wedge\hs\mHam(\chib\wz)\hs+\hs\p\phsh\varl\phi\Big) \\
		& =\chib\big(\errz + \mmod\big) \\
		&\quad +\prs\chib\wz -\hs\frac{\la_s}{\la}\lchib\wz +(1-\chib)\tJ\big(\p\phsh\varl\phi\big)\\
		&\quad -\chib(1-\chib)\wz\wedge\Big(\ao\mHam\wz-\bo(\ez+\wz)\wedge\mHam\wz+\p\phsh\varl\phi\Big)\\
		&\quad +\bo\chib(1-\chib)\tJ\big(\wz\wedge\mHam\wz\big)\\
		&\quad -\tJ\Big\{\hs\big(\ao\hs-\hs\bo(\ez+\wzt)\hs\wedge\hs\hs\big)\Big(2\py\chib\py\wz +\Delta_y\wz +2(1+Z)\py\chib \,\ey\wedge\wz\hs\Big)\hs\Big\}.
	\end{aligned}
	\end{equation}
	For the second line of RHS of \eqref{eq: localization expand}, we reformulate them according to the anticipated modulation equations
	\begin{equation} \notag
	\begin{aligned}
		&\quad \prs\chib\wz -\frac{\la_s}{\la}\lchib\wz
		+(1-\chib)\phsh\tJ\big(\p\phsh\varl\phi\big)\\
		&= \big(b_s+b^2+a^2\big)\,O\Big(\frac{\wz}{b}\Big)\one_{y\sim B_1} -\Big(\frac{\la_s}{\la}+b\Big)O(\wz)\one_{y\sim B_1} \\
		&\quad +O\big(b^2y\log y\big)(\ex+\ey)\one_{y\sim B_1} +O\big(b^3y^2(\log y)^2\big)\ez \one_{y\sim B_1} +O\big(b\phsh y^{-1}\big)(\ex+\ey)\one_{y\gtrsim B_1} \\
		&\quad +(\vart_s+a)\varl\phi\big(\ey+O(\wzt)\big)\phsh\one_{y\gtrsim B_1} -\Big(\frac{\la_s}{\la}+b\Big)\varl\phi\big(\ex+O(\wzt)\big)\phsh\one_{y\gtrsim B_1}.
	\end{aligned}
	\end{equation}
	To treat the third line of the RHS of \eqref{eq: localization expand}, from the cancellation \eqref{eq: varp function equation}, which is
	\begin{equation} \notag
		\big(\ao-\bo\R\big)\begin{bmatrix}
			\,\mHamo(\wz^{1})\,\\ \mHamt(\wz^{1})\\ 0
		\end{bmatrix} -\begin{bmatrix}
			\,a\,\\ b\\ 0
		\end{bmatrix} \varLambda\phi =0,
	\end{equation}
	we have the estimate
	\begin{equation} \notag
	\begin{aligned}
		&\quad -\chib(1\hs-\hs\chib)\wz\wedge\Big(\ao\mHam\wz-\bo(\ez+\wz)\wedge\mHam\wz+\varl\phi\,\p\Big) \\
		&= O\Big(|\wz|\cdot\big(|\mHam(\wz^{2})|+b^2|\mHam S_{0,2}| +|\wz|\,|\mHam\wz|\big)\Big) \one_{y\sim B_1} \\
		&\qquad -\big(\vart_s+a\big)\varl\phi\phs\big(\ex\wedge\wz\big)\one_{y\sim B_1} -\Big(\frac{\la_s}{\la}+b\Big)\varl\phi\phs\big(\ey\wedge\wz\big)\one_{y\sim B_1} \\
		&= O\big(b^3\phsh y^2\log y\big)\one_{y\sim B_1} -\big(\vart_s+a\big)\varl\phi\phs O(\wz)\one_{y\sim B_1} -\Big(\frac{\la_s}{\la}+b\Big)\varl\phi\phs O(\wz)\one_{y\sim B_1}.
	\end{aligned}
	\end{equation}
	Moreover, the fourth line of \eqref{eq: localization expand} is in fact
	\begin{equation} \notag
	\begin{aligned}
		& \bo\chib(1\hs-\hs\chib) \tJ\big(\wz\hs\wedge\hs\mHam\wz\big) \\ 
		& \qquad\qquad = O\Big(|\wz|\phsh|\mHam\wz| +|\wz|^2\phsh|\mHam\wz|\Big) \one_{y\sim B_1} = O\big(b^2(\log y)^2\big)\phsh\one_{y\sim B_1}. 
	\end{aligned}
	\end{equation}
	Finally for the last line of \eqref{eq: localization expand}, we estimate it by brute force
	\begin{equation} \notag
	\begin{aligned}
		&\quad -\tJ\Big\{\hs\big(\ao\hs-\hs\bo(\ez+\wzt)\hs\wedge\hs\big)\Big(2\py\chib\py\wz +\Delta_y\wz +2(1\hs+\hs Z)\py\chib\phsh\ey\hs\wedge\hs\wz\Big)\hs\Big\} \\
		& = O\bigg(\frac{1}{B_1} |\py\wz| + \frac{1}{B_1^2} |\wz| +\frac{1}{B_1(1+y^2)} |\wz| \bigg)\,\one_{y\sim B_1} 
		= O\bigg(\frac{b^{\frac{3}{2}}\log y}{\lgba}\bigg)\phsh\one_{y\sim B_1}.
	\end{aligned}
  	\end{equation}
	Injecting these computations into \eqref{eq: localization expand}, we obtain the localized error
	\begin{equation} \label{eq: errzt expand}
	\begin{aligned}
		\errzt 
		&= \chib\errz +O\big(b^2y\log y\big)(\ex+\ey)\one_{y\sim B_1} \\
		&\quad +O\big(b^3y^2(\log y)^2\big)\ez \one_{y\sim B_1} +O\big(b\phsh y^{-1}\big)(\ex+\ey)\one_{y\gtrsim B_1} \\
		&\quad +O\big(b^3\phsh y^2\log y\big)\one_{y\sim B_1} +O\big(b^2(\log y)^2\big)\phsh\one_{y\sim B_1} +O\bigg(\frac{b^{\frac{3}{2}}\log y}{\lgba}\bigg)\phsh\one_{y\sim B_1} \\
		&= \chib\errz +O\big(b^2y\log y\big)(\ex+\ey)\one_{y\sim B_1}\\
		&\quad +O\big(b^3y^2(\log y)^2\big)\ez \one_{y\sim B_1} +O\big(b\phsh y^{-1}\big)(\ex+\ey)\one_{y\gtrsim B_1},
	\end{aligned}
	\end{equation}
	and also the explicit formula of the localized modulation vector \eqref{eq: localized mod vec}. Note from \eqref{eq: errzt expand} that most of the additional errors are supported in the region $[B_1,2B_1]$ (labelled by $\one_{y\sim B_1}$), and thus we can easily check they do not perturb the estimate in Lemma \ref{le: ap solution}, in particular thanks to the choice $B_1$. The details are left to readers.
	\end{proof}

	%%%%%%%%%%%%%%%  4
	%%%%%%%%%%%%%%%%%%%%%%%%%%%%%%
	%%%%%%%%%%%%%%%%%%%%%%%%%%%%%%
	\section{The Trapped Regime}
	\label{S: the trapped regime}
	We now aim at seeking for an actual blowup solution to \eqref{eq: LL}. To this end, we in this section study the solution of the form \eqref{eq: decomposition}. More precisely, we set up the bootstrap regime, give the orthogonality condition, and compute the modulation equations of the geometrical parameters.
	
	%%%%%%%%%%%%%%%  4.1 
	%%%%%%%%%%%%%%%%%%%%%%%%%%%%%%
	%%%%%%%%%%%%%%%%%%%%%%%%%%%%%%
	\subsection{Bootstrap setup and orthogonality conditions}
	\label{SS: bootstrap setup and orthogonality conditions}
	In this subsection, we describe the set of initial data leading to the blowup scenario of Theorem \ref{th: main th}, and give the corresponding bootstrap statement. For any initial data $u_0\in\dot{H}^1$ with
	\begin{equation}
		\|\nabla u_0-\nabla Q\|_{L^2}\ll 1,
	\end{equation}
	we may decompose the corresponding solution $u(t)$ in a small time interval $[0,t_1]$ by
	\begin{equation} \label{eq: decomposition}
		u(t) = e^{\vart \R}\big(Q+\hv\big)_{\la},
		\quad\mbox{with}\quad
		\lf\{\begin{matrix}
			\hv=\big[\er,\etau,Q\big]\hw,\\
			\hspace{-2em}\hw=\wzt+\w,
		\end{matrix}\rg.
	\end{equation}
	where $\wzt$ is the localized approximate solution, and $\w$ is a correction term called the radiation. The existence of this decomposition at any fixed time follows from the variational characterization of $Q$, see for example \cite{martelmerle2002stabilityofblowup, 2013Raphael_HarmonicHeatQuantizedBlowup}. Then by the standard modulation theory, the solution can be modulated such that there exists the geometrical parameter maps $\la,\vart,a,b\in \CCC^1([0,t_1],\RR)$, and the radiation
	\begin{equation} \label{eq: w}
		\w=\begin{bmatrix}\alpha\\ \beta\\ \gamma\end{bmatrix}
	\end{equation}
	satisfying the orthogonality conditions
	\begin{equation} \label{eq: orthogonality}
		(\alpha, \varp_M) = (\alpha, \Ham\varp_M) = (\beta, \varp_M) = (\beta, \Ham\varp_M) = 0.
	\end{equation}
	Here $\varp_M$ is a localized replacement of $\varl\phi$ defined, for sufficiently large universal constant $M\gg 1$, by
	\begin{equation} \label{eq: varp M}
		\varp_M = \chi_M\varl\phi -c_M \Ham\big(\chi_M\varl\phi\big),
	\end{equation}
	with
	\begin{equation} \notag
		c_M = \frac{(\chi_M\varl\phi,\,T_1)}{(\Ham(\chi_M\varl\phi),\,T_1)} \sim c_{\chi}M^2\big(1+o(1)\big).
	\end{equation}
	It admits the following non-degenerate and orthogonal properties
	\begin{equation} \label{eq: varp M properties}
	\lf\{\begin{aligned}
		&\, \|\varp_M\|_{L^2}= 4\log M \big(1+o(1)\big),\\
		&\, (\varl\phi,\varp_M) =(T_1, \Ham\varp_M) =4\log M\big(1+o(1)\big),\\
		&\, (T_1, \varp_M) =(\varl\phi, \Ham\varp_M) =0.
	\end{aligned}\rg.
	\end{equation}
	Indeed, to study the orthogonality \eqref{eq: orthogonality}, we define the vectorial function 
	\begin{equation} \notag
		\mf(\la,\vart,a,b,u) = \Big[(\alpha,\varp_M),\,(\beta,\varp_M),\,(\alpha,\Ham\varp_M),\,(\beta,\Ham\varp_M)\Big].
	\end{equation}
	Using the explicit construction of $\wzt$ in Lemma~\ref{le: localized ap solution}, we compute the following derivatives at point $P$ where $(\la,\vart,a,b,u)=(1,0,0,0,Q)$
	\begin{equation} \notag
		\begin{aligned}
		&\,\frac{\pr}{\pr\la}\Big|_{P}\Big(e^{-\vart R}u_{\frac{1}{\la}}\Big)=\varl\phi\,\er,\\
		&\,\frac{\pr}{\pr\vart}\Big|_{P}\Big(e^{-\vart R}u_{\frac{1}{\la}}\Big)=-\varl\phi\,\etau,
		\end{aligned} \quad\quad
		\begin{aligned}
		&\,\frac{\pr}{\pr a}\Big|_{P}\big(\wzt\big)=\varpt_{1,0},\\
		&\,\frac{\pr}{\pr b}\Big|_{P}\big(\wzt\big)=\varpt_{0,1},
		\end{aligned}
	\end{equation}
	from which follows the non-degeneracy of the Jacobian of $\mf$ at $P$:	\begin{equation} \label{eq: Jacobian}
	\lf|\begin{matrix}
	(\varl\phi,\varp_M)&0&(\varl\phi,\Ham\varp_M)&0\\
	0&(\varl\phi,\varp_M)&0&(\varl\phi,\Ham\varp_M)\\[5pt]
	\big(\varp_{1,0}^{\first},\varp_M\big)&
	\big(\varp_{1,0}^{\second},\varp_M\big)&
	\big(\varp_{1,0}^{\first},\Ham\varp_M\big)&
	\big(\varp_{1,0}^{\second},\Ham\varp_M\big)\\[5pt]
	\big(\varp_{0,1}^{\first},\varp_M\big)&
	\big(\varp_{0,1}^{\second},\varp_M\big)&
	\big(\varp_{0,1}^{\first},\Ham\varp_M\big)&
	\big(\varp_{0,1}^{\second},\Ham\varp_M\big)
	\end{matrix}\rg|=\frac{\big(1\hs+\hs o(1)\big)}{\ao^2+\bo^2}(\varl\phi,\varp_M)^4 \neq 0. \notag
	\end{equation}
	According to the implicit function theorem, there exist a constant $\delta>0$, a neighborhood $V_{P}$ of the point $P$, and a unique $\CCC^1$ geometrical parameter map 
	\begin{equation} \notag
		(\la,\vart,a,b):\big\{u\in \dot{H}^1:\|u-Q\|_{\dot{H}^1}<\delta\big\}\to V_{P},
	\end{equation}
	such that $\mf(\la,\vart,a,b,u)=0$. In view of \eqref{eq: decomposition}, this ensures the orthogonality of the radiation \eqref{eq: orthogonality}. Moveover, by the regularity of LL flow, the map $(\la,\vart,a,b)$ is $\CCC^1$ function of $t$, concluding our claim. Then we may measure the regularity of the $\w$ by the following Sobolev norms associated to the linear operator \eqref{eq: mHamp}: the energy norm
	\begin{equation} \label{def: E1}
		\E_1=\int\big|\nabla \w\big|^2 +\Big|\frac{\w}{y}\Big|^2,
	\end{equation}
	and higher-order energy norm
	\begin{equation} \label{def: E2 E4}
		\E_2=\int|\mHamp\w|^2,  \quad
		\E_4=\int|(\mHamp)^2\w|^2.
	\end{equation}
	We now make the following assumptions on the initial data $u_0$, which describe a codimension one set (see Proposition~\ref{pro: trapped regime} and its proof in subsection \ref{pf: trapped regime}) close to the ground state $Q$ in $\dot{H}^1$:
	\begin{itemize}
	\item Initial bound on the modulation parameters:
		\begin{equation} \label{eq: initial b a}
			0< b(0) < b^{\ast}(M) \ll 1,
			\quad a(0) \leq \dfrac{b(0)}{4 \lf|\lgb(0)\rg|}.
		\end{equation}
		\item Initial data of the scaling parameter: (Up to a fixed scaling, we can always assume this; thus this assumption is not compulsory)
		\begin{equation} \label{eq: initial lambda}
			\la(0) = 1.
		\end{equation}
		\item Initial energy bounds:
		\begin{equation} \label{eq: initial energy}
		\lf\{\begin{aligned}
			& 0 < \E_1(0) < \db \ll 1, \\
			& 0 < \E_2(0) +\E_4(0) < b(0)^{10}.
		\end{aligned}\rg.
		\end{equation}
	\end{itemize}
	where $\db$ denotes a generic constant related to $b^{\ast}$ which satisfies 
	\begin{equation} \label{eq: b inf}
		\db\to0,\;\;\mbox{as}\;\; b^{\ast}\to0.
	\end{equation} 
	The propagation of regularity of $u(t)$ ensures that these bounds can be preserved in a small time interval $[0, t_1)$. This suggests given a universal large constant $K$, independent of $M$ in Lemma \ref{le: ap solution}, we may assume on $[0,t_1]$ the following bounds:
	\begin{itemize}
		\item Pointwise bounds on the modulation parameters:
		\begin{equation} \label{eq: pointwise a b}
			0 < b(t) \leq K b^{\ast}(M),
			\quad a(t) \leq \dfrac{b(t)}{\lf|\lgb(t)\rg|}.
		\end{equation}
		\item Pointwise energy bounds:
		\begin{align}
			& \E_1(t) \leq K\phsh \db, 
			\label{eq: pointwise energy 1}\\
			& \E_2(t) \leq K\phsh b(t)^2 \lf|\lgb(t)\rg|^6, 
			\label{eq: pointwise energy 2}\\
			& \E_4(t) \leq K\phsh\dfrac{b(t)^4}{\lf|\lgb(t)\rg|^2}.
			\label{eq: pointwise energy 3}
		\end{align}
	\end{itemize}
	\vspace{-.35em}
	The core of our analysis is the following proposition, which yields the contraction of the blowup regime.
	
	%%%%%%%%%%%%%%%%%
	%%%%%%%%%%%%%%%%%
	\begin{proposition}[Trapped regime]\label{pro: trapped regime}
		Assume that $K$ in \eqref{eq: pointwise a b}--\eqref{eq: pointwise energy 3} has been chosen large enough, independent of $M$. Then for any large enough rescaled initial time $s_0$, and any initial data $\big(\w,\la,\vart,b\big)(0)$ satisfying \eqref{eq: initial b a}, \eqref{eq: initial lambda}, \eqref{eq: initial energy}, there exists
		\begin{equation}
			a(0)=a\Big(b(0),\w(0)\Big) \in
			\lf[-\frac{b(0)}{4\lf|\lgb(0)\rg|},\,\frac{b(0)}{4\lf|\lgb(0)\rg|}\rg],
		\end{equation}
		such that \eqref{eq: decomposition} the corresponding solution to \eqref{eq: LL} satisfies for $t\in[0,t_1]$ that:		\begin{itemize}
			\item Refined bounds on the modulation parameters:
			\begin{equation} \label{eq: refined pointwise a b}
				0 < b(t) \leq \dfrac{K}{2}\phsh b^{\ast}(M),
				\quad a(t) \leq \dfrac{b(t)}{2\lf|\lgb(t)\rg|}.
			\end{equation}
			\item Refined energy bounds:
			\begin{align}
				& \E_1(t) \leq \dfrac{K}{2}\db, 
				\label{eq: refined pointwise energy 1}\\
				& \E_2(t) \leq \dfrac{K}{2}b(t)^2\lf|\lgb(t)\rg|^6, 
				\label{eq: refined pointwise energy 2}\\
				& \E_4(t) \leq K(1-\eta) \dfrac{b(t)^4}{\lf|\lgb(t)\rg|^2},
				\label{eq: refined pointwise energy 3}
			\end{align}
			where $\eta\in(0,1)$ is a universal constant independent of $M$.
		\end{itemize}
	\end{proposition}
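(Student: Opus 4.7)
The plan is to close a bootstrap continuity argument on $[0, t_1]$: assuming the a priori bounds \eqref{eq: pointwise a b}--\eqref{eq: pointwise energy 3}, I would improve them strictly to \eqref{eq: refined pointwise a b}--\eqref{eq: refined pointwise energy 3}, which then extends the exit time. The single free initial parameter $a(0)$ is fixed by a topological shooting argument selecting against the unique unstable direction of the linearized modulation flow.

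First I would derive the modulation system by differentiating the orthogonality conditions \eqref{eq: orthogonality} in $s$, inserting \eqref{eq: vec hw equation} for $\hw = \wzt + \w$, and using the decomposition \eqref{eq: localized error}. Projecting against $\varp_M$ and $\Ham\varp_M$, and applying the flux identities \eqref{eq: localized flux computation 1}--\eqref{eq: localized flux computation 2} together with the non-degeneracy \eqref{eq: varp M properties}, produces
\begin{equation*}
\frac{\la_s}{\la} + b,\;\; \vart_s + a,\;\; a_s + \frac{2ab}{\lgba},\;\; b_s + b^2\bigg(1 + \frac{2}{\lgba}\bigg) \;=\; O\big(\sqrt{\E_4} + \lot\big),
\end{equation*}
the error terms being controlled by Cauchy--Schwarz against $\varp_M$ and the interpolation bounds recalled in Appendix \ref{S: appendix A}. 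Integrating $b_s$ yields $b \sim c/s$, and combined with $\la_s/\la \approx -b$ one recovers $\la \lgba^2 \sim T - t$. The coupled $(a, b)$ system has one unstable mode in the renormalized variable $\ti a = a \lgba / b$; a Brouwer-type shooting argument on $a(0) \in [-b(0)/(4\lgba(0)), b(0)/(4\lgba(0))]$ then produces an initialization keeping $\ti a \in [-1/2, 1/2]$ throughout $[0, t_1]$, giving \eqref{eq: refined pointwise a b}.

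The refined bound \eqref{eq: refined pointwise energy 1} on $\E_1$ follows from the global dissipation \eqref{eq: energy dissipative}, the initial smallness $\E_1(0) < \db$, and the $\dot H^1$ control of $\wzt$ from Lemma \ref{le: localized ap solution}. The core step is \eqref{eq: refined pointwise energy 3} for $\E_4$, via the Lyapunov inequality
\begin{equation*}
\frac{d}{dt}\bigg(\frac{\E_4}{\la^6}\bigg) \leq \frac{b}{\la^8}\bigg[2\big(1 - d_2 + o(1)\big)\E_4 + O\bigg(\bflog\bigg)\bigg],
\end{equation*}
with $d_2 > 1$ strictly. I would establish it by writing the mixed energy/Morawetz identity for $(\mHaml)^2 \w$ at the rescaled level, isolating the unsigned quadratic term \eqref{eq: uncontrollable term 0} produced by the commutator $[\prt, \mHam]$, and absorbing it with a tailored Morawetz correction $\m(t)$ built from the factorization $\mHamp = \mAs \mA$ of \eqref{eq: mHamp factori} with coefficients depending on $\bo/\ao$, treating the cases $|\bo/\ao| \leq 1$ and $|\bo/\ao| > 1$ separately. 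Integration in $t$, combined with $\la \lgba^2 \sim T - t$ and $\E_4(0) \ll b(0)^{10}$, yields \eqref{eq: refined pointwise energy 3} with strict gain $\eta > 0$. The intermediate bound \eqref{eq: refined pointwise energy 2} on $\E_2$ is recovered by interpolating between $\E_1$ and $\E_4$ using coercivity of $\mHamp$ under \eqref{eq: orthogonality}, the logarithmic slack $|\log b|^6$ being plentiful.

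The main obstacle is constructing $\m(t)$ so that it both cancels \eqref{eq: uncontrollable term 0} and preserves the leading $(1 - d_2)$ coefficient without reinjecting any unstable contribution into the modulation system. This requires the intrinsic algebraic identities of $\mA, \mAs$ recorded in Lemma \ref{le: structure}, and a careful split across the two sign regimes of $\bo/\ao$; precisely this cancellation explains the universality of the blowup rate \eqref{eq: lambda} across all admissible $(\ao, \bo)$.
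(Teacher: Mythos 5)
Your overall architecture matches the paper's: bootstrap continuity on $[0,t_1]$, modulation equations driven by the flux terms \eqref{eq: localized flux computation 1}--\eqref{eq: localized flux computation 2}, a one-dimensional topological shooting selection of $a(0)$ against the unstable mode of the $(a,b)$ system (the paper uses $\kappa=2a\lgba/b$ and a connectedness argument, which is your $\ti a$ up to normalization), Dirichlet-energy dissipation plus coercivity for $\E_1$, and time integration of the mixed energy/Morawetz inequality for $\E_4$. The genuine gap is your treatment of \eqref{eq: refined pointwise energy 2}: interpolation between $\E_1$ and $\E_4$ cannot produce the bound $\tfrac{K}{2}b^2\lgba^6$. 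Interpolation gives at best $\E_2\les \E_1^{2/3}\E_4^{1/3}\les K\,\db^{2/3}\,b^{4/3}\lgba^{-2/3}$, and since $\db$ is a \emph{fixed} constant along the flow (the $\dot H^1$ size of the radiation is only bounded by $K\db$, it does not decay like a power of $b$) while $b(t)\sim 1/s\to0$, the needed inequality $\db\les b\,\lgba^{10}$ fails for $b$ small — the logarithmic slack in $\lgba^6$ does not help against a loss that is polynomial in $b$. This is precisely why the paper states that a direct interpolation is insufficient and instead proves \eqref{eq: refined pointwise energy 2} by a separate monotonicity computation: it differentiates $\int|u\wedge\Delta u|^2$ in time (which equals $\la^{-2}\big(\E_2+O(b^2\lgba^2)\big)$, see \eqref{eq: lower bound to E2}), bounds the right-hand side of \eqref{eq: E2 comp 1} by $Kb^3\lgba^2/\la^4$ using Lemma \ref{le: gain of 2d} and the Appendix bounds, and integrates in time with the $b$--$\la$ relation \eqref{eq: b lambda relation}. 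Without an argument of this type your $\E_2$ step does not close, and \eqref{eq: pointwise energy 2} is genuinely used in the bootstrap (e.g. through \eqref{eq: ip bound 4}, \eqref{eq: ip bound 23}).

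A secondary point: you assert the Lyapunov inequality for $\E_4$ ``with $d_2>1$ strictly''. The constant available from Lemma \ref{le: gain of 2d} and Proposition \ref{pro: mixed energy estimate} is only $d_2\in(0,1)$, and $d_2>1$ is neither attainable nor needed: the closure of \eqref{eq: refined pointwise energy 3} comes from the extra factor close to $1/2$ gained when the time integral $\int_0^t b^5/(\la^8\lgba^2)$ is integrated by parts against $b^4/(\la^6\lgba^2)$ (see \eqref{eq: E4 estimate formula term2}), so that the final coefficient is $(1-d_2)K+C\le K(1-\eta)$ once $K$ is chosen large, independently of $M$. With $d_2\in(0,1)$ understood, your description of the $\E_4$ step, the Morawetz correction built from the factorization of $\mHamp$ with the $\bo/\ao$-dependent coefficients, and the case split on $|\bo/\ao|$ all coincide with the paper.
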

	This proposition implies the solution $u$ is trapped in the regime \eqref{eq: refined pointwise a b}, \eqref{eq: refined pointwise energy 1}--\eqref{eq: refined pointwise energy 3} , and thus the bounds can be maintained within the lifespan of $u$. The proof is given in subsection~\ref{SS: closing the bootstrap}.
	
	%%%%%%%%%%%%%%%  4.2 
	%%%%%%%%%%%%%%%%%%%%%%%%%%%%%%
	%%%%%%%%%%%%%%%%%%%%%%%%%%%%%%
	\subsection{Equation for the radiation}
	Recall from \eqref{eq: decomposition} that we have the decomposition
	\begin{equation} \notag
		\hw = \wzt + \w,
	\end{equation}
	Injecting this into the vectorial formula~\eqref{eq: vec hw equation}, and applying \eqref{eq: localized error}, we obtain the equation for the radiation
	\begin{equation} \label{eq: w equation}
		\prs\w -\frac{\la_s}{\la}\varl\w +\ao\hJ\mHam\w -\bo\hJ^2\mHam\w +\f = 0,
	\end{equation}
	where
	\begin{equation} \label{eq: f term}
		\f = \mmodt +\errzt +\rest.
	\end{equation}
	The modulation vector $\mmodt$, the error $\errzt$ are given by \eqref{eq: localized error}, and the term $\rest$ contains the residue involving the phase derivative and cross terms of $\wzt$ and $\w$:
	\begin{equation} \label{eq: rest}
		\rest = \vart_s Z\R\w -\bo\hJ\lf(\w\wedge\mHam\wzt\rg) +\w\wedge\Big(\ao\mHam\wzt -\bo\tJ\mHam\wzt +\varl\phi\,\p\Big).
	\end{equation}
	We rewrite $\w$ in terms of the original coordinate $(r,t)$ by
	\begin{equation} \label{eq: relation w W}
		\W(t,r) = \w(s,y),
	\end{equation}
	and then \eqref{eq: w equation} is equivalent to	\begin{equation} \label{eq: W equation}
		\prt\W+\ao\hJ_{\la}\mHaml\W-\bo\hJ_{\la}^2\mHaml\W+\F=0,
	\end{equation}
	where
	\begin{equation} \label{eq: F}
		\hJ_\la=\big(\ez+\hW\big)\wedge,
		\quad\F=\frac{1}{\la^2}\,\f_{\la}.
	\end{equation}
	Here the rescaled vectorial Hamiltonian $\mHaml$ is defined by
	\begin{equation} \label{eq: mHaml}
		\mHaml\begin{bmatrix}
			\alpha_{\la}\\ \beta_{\la}\\ \gamma_{\la}
		\end{bmatrix}=\begin{bmatrix}
			\Ham_{\la}\alpha_{\la}\\
			\Ham_{\la}\beta_{\la}\\
			-\Delta_r\gamma_{\la}
		\end{bmatrix}
		+\frac{2(1+Z_\la)}{\la}\begin{bmatrix}
			-\pr_r\gamma_{\la}\\ 0\\
			\pr_r\alpha_{\la}+\frac{Z_\la}{r}\alpha_{\la}
		\end{bmatrix},
	\end{equation}
	where the rescaled scalar Hamiltonian is $\Ham_{\la} = -\Delta_r +V_{\la}/r^2$. The notation of $\la$ subscript is given by \eqref{eq: v lambda}, from which $Z_{\la}(r)=Z(r/\la)$, $\alpha_{\la}(t,r)=\alpha(t,r/\la)$, and so forth. Similar to \eqref{eq: Ham factori}, $\Haml$ admits the factorization $\Haml=\Asl\Al$ with
	\begin{equation} \label{eq: Haml factori}
		\Al=-\pr_r+\frac{Z_{\la}}{r},
		\quad \Asl=\pr_r+\frac{1+Z_{\la}}{r}.
	\end{equation}
	In the forthcoming analysis, we define the dominating part of $\mHaml$ by
	\begin{equation} \label{def: operator mHamlp}
		\mHamlp\begin{bmatrix}
			\alpha_{\la}\\ \beta_{\la}\\ \gamma_{\la}
		\end{bmatrix}=\begin{bmatrix}
			\Haml\alpha_{\la}\\ \Haml\beta_{\la}\\ 0
		\end{bmatrix},
	\end{equation}
	which, from \eqref{eq: Haml factori}, admits the factorization $\mHamlp=\mAsl\mAl$ with
	\begin{equation} \label{eq: mA}
		\mAl\begin{bmatrix}
			\alpha_{\la}\\ \beta_{\la}\\ \gamma_{\la}
		\end{bmatrix}=\begin{bmatrix}
			\Al\alpha_{\la}\\ \Al\beta_{\la}\\ 0
		\end{bmatrix}, \quad
		\mAsl\begin{bmatrix}
			\alpha_{\la}\\ \beta_{\la}\\ \gamma_{\la}
		\end{bmatrix}=\begin{bmatrix}
			\Asl\alpha_{\la}\\ \Asl\beta_{\la}\\ 0
		\end{bmatrix}.
	\end{equation}
	In view of the scaling $y/r=1/\la$, there holds
	\begin{equation} \label{eq: relation Ham mHam}
		\mHam\w=\la^2\mHaml\W, \quad\mHamp\w=\la^2\mHamlp\W,
	\end{equation}
	and from \eqref{eq: relation mHamp}, we have
	\begin{equation} \label{eq: relation mHamlp}
		\R\mHamlp=\mHamlp\R, \quad\R\mHaml\R=-\mHamlp.
	\end{equation}

	%%%%%%%%%%%%%%%  4.3 
	%%%%%%%%%%%%%%%%%%%%%%%%%%%%%%
	%%%%%%%%%%%%%%%%%%%%%%%%%%%%%%
	\subsection{Modulation equations}
	\label{SS: modulation equations}
	This subsection is devoted to derivation of the modulation equations of $(\la,\vart,a,b)$, which constitute a ODE system that determines the blowup dynamics. We will use the equation \eqref{eq: w equation} and the orthogonality conditions \eqref{eq: orthogonality}.
	
	%%%%%%%%%%%%%%%%%
	%%%%%%%%%%%%%%%%%
	\begin{proposition}[Modulation equations] \label{pro: modulation equation}
		Assume \eqref{eq: pointwise a b}--\eqref{eq: pointwise energy 3}. Then there hold the following estimates on the modulation parameters:
		\begin{equation} \label{eq: modulation eq}
			\lf\{\begin{aligned}
				&\,\Big|a_s +\frac{2ab}{\lgba}\Big|+\bigg|b_s +b^2\bigg(1+\frac{2}{\lgba}\bigg)\bigg|
				\les \frac{1}{\sqrt{\log M}}\bigg(\sqrt{\E_4}+\frac{b^2}{\lgba}\bigg), \\
				&\,\big|a+\vart_s\big| +\Big|b+\frac{\la_s}{\la}\Big|\les C(M)\,b^3.
			\end{aligned}\rg.
		\end{equation}
	\end{proposition}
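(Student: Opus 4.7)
The plan is a standard modulation-theoretic argument built around the four orthogonality conditions \eqref{eq: orthogonality}. Since the test functions $\varp_M, \Ham\varp_M$ are time independent, differentiating in $s$ gives $(\prs\alpha,\varp_M) = (\prs\alpha,\Ham\varp_M) = (\prs\beta,\varp_M) = (\prs\beta,\Ham\varp_M) = 0$. Substituting $\prs\alpha,\prs\beta$ from the first two components of the radiation equation \eqref{eq: w equation} and decomposing the forcing as $\f = \mmodt + \errzt + \rest$, I would use the explicit formulas \eqref{eq: localized mod vec}--\eqref{eq: mod vec} to identify the four scalar unknowns $a_s,\; b_s+b^2+a^2,\; \la_s/\la + b,\; \vart_s + a$. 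The resulting $4\times 4$ linear system decouples into two $2\times 2$ blocks thanks to two algebraic facts from \eqref{eq: varp M properties}: the identity $(T_1,\varp_M)=0$ combined with \eqref{eq: varp function} yields $(\varp_{1,0}^{\sups{i}},\varp_M) = (\varp_{0,1}^{\sups{i}},\varp_M) = 0$, while $(\varl\phi,\Ham\varp_M)=0$.

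Projection against $\varp_M$ then isolates $(\la_s/\la + b, \vart_s + a)$, with dominant coefficient $(\varl\phi,\varp_M) = 4\log M\bigl(1+o(1)\bigr)$. Projection against $\Ham\varp_M$ isolates $(a_s, b_s+b^2+a^2)$, with dominant matrix
\begin{equation*}
\frac{4\log M\bigl(1+o(1)\bigr)}{\ao^2+\bo^2}\begin{bmatrix}\ao & -\bo \\ \bo & \ao\end{bmatrix},
\end{equation*}
obtained from $\varp_{1,0},\varp_{0,1}$ via \eqref{eq: varp function} and the pairing $(T_1,\Ham\varp_M)=4\log M(1+o(1))$. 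The right-hand side of this block is fed by the flux quantities $(\Ham\errzt^{\sups{1}},\varp_M)$, $(\Ham\errzt^{\sups{2}},\varp_M)$ computed in \eqref{eq: localized flux computation 1}--\eqref{eq: localized flux computation 2}; inverting the above rotation with coefficients $(\ao,\bo)/(\ao^2+\bo^2)$ produces exactly the diagonal combinations $-2ab/\lgba$ and $-2b^2/\lgba$, after absorbing $a^2 = O\bigl(b^2/\lgba^2\bigr)$ into the error by the bootstrap \eqref{eq: pointwise a b}. This is precisely the leading behaviour asserted in \eqref{eq: modulation eq} and, as noted in Comment~3 of the introduction, it is the cancellation responsible for the $\ao,\bo$-independence of the blowup rate.

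The remaining inner products that must be bounded are: the drift $(\la_s/\la)(\varl\w,\cdot)$, the Hamiltonian--$\hJ$ terms $\ao(\hJ\mHam\w)^{\sups{i}}, \bo(\hJ^2\mHam\w)^{\sups{i}}$, the residue $\rest$ from \eqref{eq: rest}, and the piece of $(\errzt,\cdot)$ not captured by the flux. For the $\Ham\varp_M$-projections they are controlled by Cauchy--Schwarz against weighted $L^2$ norms of $\w$, estimated by $\sqrt{\E_4}$ through the interpolation bounds of Appendix~A, together with the subleading $\errzt$-estimates \eqref{eq: localized weighted bound 3}--\eqref{eq: localized weighted bound 4} and the bootstrap \eqref{eq: pointwise a b}--\eqref{eq: pointwise energy 3}; the universal $1/\sqrt{\log M}$ factor arises from dividing by the $\log M$-size leading coefficient while the test functions $\Ham\varp_M$, being compactly supported in $y\lesssim M$, contribute norms scaling like $\sqrt{\log M}$. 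For the $\varp_M$-projection, writing $\varp_M = \chi_M\varl\phi - c_M\Ham(\chi_M\varl\phi)$ and transferring $\Ham$ onto $\errzt$ via self-adjointness gains an extra factor of $b$ thanks to $\|\Ham\errzt^{\sups{i}}\|_{L^2}\les b^2\lgba$, which produces the sharper $C(M)\,b^3$ bound on $|a+\vart_s|$ and $|b+\la_s/\la|$.

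The main obstacle will be the careful bookkeeping of the $\hJ, \hJ^2$ structure: these operators couple all three components of $\w$ through $(\ez + \wzt + \w)\wedge$, so the ``linear'' terms $\hJ\mHam\w$ and $\hJ^2\mHam\w$ in fact feed the third component (governed by $\gamma$, controlled by the finer weighted bounds of Lemma~\ref{le: bounds for gamma}) into the two projections of interest. Tracking these cross contributions requires an itemized expansion of $\hJ,\hJ^2$ and the simultaneous use of weighted $L^2$-bounds on $\mHam\w, \py\w, \w/y$ with matched weights against the compactly supported $\varp_M, \Ham\varp_M$; only after this is done do all error contributions fit into the announced $\frac{1}{\sqrt{\log M}}\bigl(\sqrt{\E_4} + b^2/\lgba\bigr)$ and $C(M)\,b^3$ bounds.
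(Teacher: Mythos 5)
Your strategy for the first estimate in \eqref{eq: modulation eq} is essentially the paper's: project the $\alpha,\beta$ components of \eqref{eq: w equation} onto $\Ham\varp_M$ and $\varp_M$, use the orthogonality conditions \eqref{eq: orthogonality} to kill the time derivatives, exploit $(T_1,\varp_M)=(\varl\phi,\Ham\varp_M)=0$ to decouple the $4\times 4$ system into a rotation-type $2\times2$ block with entries $\ao,\pm\bo$ acting on $\big(a_s,\,b_s+b^2+a^2\big)$, feed in the flux computations \eqref{eq: localized flux computation 1}--\eqref{eq: localized flux computation 2}, and gain the $1/\sqrt{\log M}$ by dividing the $O(\sqrt{\log M}\sqrt{\E_4})$ errors by $(\varl\phi,\varp_M)\sim\log M$. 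This reproduces the intermediate system \eqref{eq: almost modulation eq 2} and the subsequent cancellation, so that part of the proposal is sound and matches the paper.

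The gap is in your mechanism for the second estimate, $|a+\vart_s|+|b+\la_s/\la|\les C(M)\,b^3$. Transferring $\Ham$ onto $\errzt$ and invoking $\|\Ham\errzt^{\sups{i}}\|_{L^2}\les b^2\lgba$ cannot produce $b^3$: that bound (and even the weighted bound \eqref{eq: localized weighted bound 4}, which only gives a contribution of size $C(M)\,b^2/\lgba$ on the support of $\varp_M$) is far larger than $b^3$, so there is no ``extra factor of $b$'' to be gained this way. Indeed, on the support of $\varp_M$ the error is dominated by $ab\,\sg_{1,0}+b^2\sg_{0,1}$, which is pointwise of size $b^2/\lgba$, so any estimate of $(\errzt^{\sups{i}},\varp_M)$ by norms alone saturates at $C(M)\,b^2/\lgba\gg b^3$. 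What actually yields $b^3$ is structural: since $2M\ll B_0/4$, on the support of $\varp_M$ one has $\sgb=c_bT_1$, so the leading part of $\errzt$ is an explicit multiple of $T_1$ and its pairing with $\varp_M$ vanishes exactly by $(T_1,\varp_M)=0$ (the same identity you used only for decoupling); likewise the leading linear contributions vanish because $(\Ham\alpha,\varp_M)=(\alpha,\Ham\varp_M)=0$ and similarly for $\beta$ --- not by Cauchy--Schwarz, which again would only give $C(M)\sqrt{\E_4}\sim C(M)\,b^2/\lgba$. Only after these two cancellations do the remaining pieces ($a^2b\,S_{0,2}$, $(a^2+b^2)a^ib^j\prb\varp_{i,j}$, the $\rest$ and drift terms bounded by $b\sqrt{\E_4}$, etc.) come in at $O\big(C(M)\,b^3\big)$. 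Your treatment of the first line stands; the $\varp_M$-projection must be redone with these cancellations made explicit.
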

	
	%%%%%%%%%%%%%%%%%
	%%%%%%%%%%%%%%%%%
	\begin{remark}
	{\rm (i)} The expression in companion with $b_s$ in the first identity is not $a^2+b^2$, different from what is suggested by $\mmodt$. This is a direct result the smallness of $a$ \eqref{eq: pointwise a b} and the flux computations \eqref{eq: localized flux computation 1}, \eqref{eq: localized flux computation 2}. {\rm (ii)} The $\sqrt{\log M}$ on the denominator is crucial in the following analysis, especially when closing the bootstrap bound of $\E_4$ and specifying an appropriate initial data of $a$.
	\end{remark}
	
	%%%%%%%%%%%%%%%%%
	%%%%%%%%%%%%%%%%%
	\begin{proof}{Proposition {\rm\ref{pro: modulation equation}}}
	\label{pf: modulation equation}
	We project \eqref{eq: w equation} onto $\{\ex, \ey\}$ to obtain the equations for $\alpha, \beta$, and then take their $L^2$ inner product with $\Ham\varp_M$, $\varp_M$ defined by \eqref{eq: varp M properties} respectively. Computing each term in the resulting formulas by the interpolation bounds in Appendix \ref{S: appendix A}, \eqref{eq: pointwise a b}--\eqref{eq: pointwise energy 3}, and also \eqref{eq: localized mod vec}, \eqref{eq: localized flux computation 1}, \eqref{eq: localized flux computation 2},  we obtain
	\begin{equation} \label{eq: almost modulation eq 2}
	\begin{gathered}
		\ao\phsh a_s -\bo\big(b_s\hs+\hs b^2\hs+\hs a^2\big) +\frac{2}{\lgba}\big(\ao ab-\bo b^2\big) \les \frac{1}{\sqrt{\log M}}\bigg(\sqrt{\E_4}+\frac{b^2}{\lgba^2}\bigg), \\
		\bo\phsh a_s +\ao\big(b_s\hs+\hs b^2\hs+\hs a^2\big) +\frac{2}{\lgba}\big(\ao ab+\bo b^2\big) \les \frac{1}{\sqrt{\log M}}\bigg(\sqrt{\E_4}+\frac{b^2}{\lgba^2}\bigg), \\
		\big|a+\vart_s\big| +\Big|b+\frac{\la_s}{\la}\Big| =C(M)\,b^3.
	\end{gathered}
	\end{equation}
	Then \eqref{eq: modulation eq} follows by simple cancellations. For more details, reader can refer to \cite{2011Merle_SchMapBlowup}. Here we only compute the first line in \eqref{eq: almost modulation eq 2}. We first define 
	\begin{equation} \label{eq: U}
		U(t) =|a_s|+\big|b_s\hs+\hs b^2\hs+\hs a^2\big|+\big|a+\vart_s\big|+\Big|b+\frac{\la_s}{\la}\Big|.
	\end{equation} 
	Then taking the inner product of the first component of \eqref{eq: w equation} with $\Ham\varp_M$, we obtain the system
	\begin{equation} \label{eq: modulation expand}
	\begin{aligned}
		0 & = \big(\prs\alpha,\Ham\varp_M\big)
			-\frac{\la_s}{\la}\big(\varl\alpha,\Ham\varp_M\big)\\
		&\quad +\ao\Big((\hJ\mHam\w)^{\first},\Ham\varp_M\Big)
			-\bo\Big((\hJ^2\mHam\w)^{\first},\Ham\varp_M\Big)\\
		&\quad +\big(\rest,\Ham\varp_M\big) +\big(\mmodt,\Ham\varp_M\big) +\big(\errzt,\Ham\varp_M\big).
	\end{aligned}
	\end{equation}
	The time derivative term vanishes thanks to \eqref{eq: orthogonality}. For the second term, by commuting $\Ham, \varl$, we split it into
	\begin{equation} \label{eq: modulation expand term 2}
		\big(\varl\alpha,\Ham\varp_M\big)
		=\big(\varl\Ham\alpha,\varp_M\big)+2\big(\Ham\alpha, \varp_M\big)-\bigg(\frac{\varl V}{y^2}\alpha,\varp_M\bigg).
	\end{equation}
	Again using \eqref{eq: orthogonality}, the second term vanishes. From \eqref{eq: ip bound 3}, \eqref{eq: pointwise energy 3}, we have
	\begin{equation} \notag
		\big|\big(\varl\Ham\alpha,\varp_M\big)\big|
		\les C(M) \bigg(\int\frac{\lf|\py\Ham\alpha\rg|^2}{1+y^4} \bigg)^{\frac{1}{2}}
		\les C(M)\sqrt{\E_4},
	\end{equation}
	where $C(M)$ is a universal constant appearing in \eqref{eq: ip bound 3} induced by the coercivity \eqref{eq: Ham2 coercivity}. The constants in the following formulas are similar. For the last term in \eqref{eq: modulation expand term 2}, an analogous estimate holds, giving the bound
	\begin{equation} \label{eq: modulation comp 1}
		\Big|\frac{\la_s}{\la}\Big|
		\,\big|\big(\varl\alpha,\Ham\varp_M\big)\big|
		\les C(M)\phsh b \sqrt{\E_4}.
	\end{equation}
	For the third term in \eqref{eq: modulation expand}, from \eqref{eq: mHam},
	\begin{equation} \label{eq: modulation comp 2}
	\begin{aligned}
		\Big|\Big((\hJ\mHam\w)^{\first},\Ham\varp_M\Big)\Big|
		& \les \Big|\Big((1+\h\gamma)\Ham\beta,\Ham\varp_M\Big)\Big|\\
		&\quad +\bigg|\bigg(\h\beta\Big(\hs-\hs\Delta\gamma +2\onez\Big(\py\hs+\hs\frac{Z}{y}\Big)\alpha \Big),\Ham\varp_M\bigg)\bigg|,
	\end{aligned}
	\end{equation}
	where by commuting $\Ham$, we split the first one into
	\begin{equation} \notag
	\begin{aligned}
		\Big|\Big((1+\h\gamma)\Ham\beta,\Ham\varp_M\Big)\Big|
		& \les \Big|\Big((1+\h\gamma)\Ham^2\beta, \varp_M\Big)\Big| \\
		& \quad +\Big|\Big(\py\h\gamma\phsh\py\Ham\beta,\varp_M\Big)\Big| +\Big|\Big(\Delta\h\gamma\Ham\beta,\varp_M\Big)\Big|,
	\end{aligned}
	\end{equation}
	Using \eqref{eq: ip bound 20}, \eqref{eq: ip bound 22}, we have	\begin{equation} \notag
		\Big|\Big((1+\h\gamma)\Ham^2\beta, \varp_M\Big)\Big|
		\les \big\|1\hs+\hs\h\gamma\big\|_{L^\infty}\big\|\Ham^2\beta\big\|_{L^2}\|\varp_M\|_{L^2}
		\les \sqrt{\log M}\sqrt{\E_4}.
	\end{equation}
	For the remaining, by \eqref{eq: ip bound 3}, \eqref{eq: ip bound 16}, there holds the better bound
	\begin{equation} \notag
	\begin{aligned}
		& \Big|\Big(\py\h\gamma\phsh\py\Ham\beta,\varp_M\Big)\Big| +\Big|\Big(\Delta\h\gamma\Ham\beta,\varp_M\Big)\Big| \\
		&\quad \les C(M)\phsh \bigg\{ \Big\|\frac{\py\h\gamma}{y}\Big\|_{L^\infty} \Big\|\frac{\py\Ham\beta}{y(1+\lf|\log y\rg|)}\Big\|_{L^2} \\
		&\qquad\qquad\qquad + \Big\|\frac{\Delta\h\gamma}{y(1+y)(1+\lf|\log y\rg|)}\Big\|_{L^2} \, \Big\|\frac{\Ham\beta}{y(1+y)(1+\lf|\log y\rg|)}\Big\|_{L^2} \bigg\} 
		\les C(M)b\sqrt{\E_4}.
	\end{aligned}
	\end{equation}
	The other term on the RHS of \eqref{eq: modulation comp 2} can be treated similarly. Recall from the constraint \eqref{eq: component constraint} that $|\ez+\hw|=1$, which implies $|\hJ\vv|\leq |\vv|$ for any vector $\vv$ under the Frenet basis. Thus the fourth term in \eqref{eq: modulation expand} shares the same bound as above, so that
	\begin{equation} \label{eq: modulation comp 3}
		\Big|\Big((\hJ\mHam\w)^{\first},\Ham\varp_M\Big)\Big| +\Big|\Big((\hJ^2 \mHam\w)^{\first},\Ham\varp_M\Big)\Big|
		\les \Big(\sqrt{\log M}+ C(M)b\Big) \sqrt{\E_4}.
	\end{equation}
	We consider the term involving $\rest$ in \eqref{eq: modulation expand}. From \eqref{eq: rest} and Cauchy-Schwartz, we get
	\begin{equation}  \label{eq: modulation comp expand 4}
	\begin{aligned}
		\big|\big(\rest,\Ham\varp_M\big)\big|
		& \les C(M) \bigg\{\big\|\vart_s Z \R\w\big\|_{L^2(y\leq2M)} +\big\|\hJ\big(\w\wedge\mHam\wzt\big)\big\|_{L^2(y\leq2M)} \\
		&\qquad\qquad\qquad\quad +\Big\|\w\wedge\Big(\ao\mHam\wzt-\bo\tJ\mHam\wzt +\varl\phi\phsh\p\Big)\Big\|_{L^2(y\leq2M)} \bigg\}.
	\end{aligned}
	\end{equation}
	By \eqref{eq: ip bound 3}, the phase derivative term is bounded by
	\begin{equation} \notag
		\big\|\vart_s Z\R\w\big\|_{L^2(y\leq2M)}
		\les C(M)\big(b +U(t)\big)\sqrt{\E_4}.
	\end{equation}
	Similarly, from the construction of $\wzt$, the rest of the terms in \eqref{eq: modulation comp expand 4} are bounded by
	\begin{equation} \notag
	\begin{aligned}
		&\quad \big\|\hJ\big(\w\wedge\mHam\wzt\big)\big\|_{L^2(y\leq2M)} +\Big\|\w\wedge\Big(\ao\mHam\wzt-\bo\tJ\mHam\wzt +\varl\phi\,\p\Big)\Big\|_{L^2(y\leq2M)}\\
		& \les 
		 C(M)\sqrt{\E_4} \,\bigg( \big\|\mHam\wzt\big\|_{L^\infty(y\leq2M)} +\big\|\ao\mHam\wzt-\bo\tJ\mHam\wzt +\varl\phi\phsh\p\big\|_{L^\infty(y\leq2M)} \bigg) \\
		& \les C(M)\big(b+U(t)\big)\sqrt{\E_4},
	\end{aligned}
	\end{equation}
	and hence
	\begin{equation} \label{eq: modulation comp 4}
		\big|\big(\rest,\Ham\varp_M\big)\big|\les C(M)\big(b+U(t)\big)\sqrt{\E_4}.
	\end{equation}
	It remains to consider the contributions of $\mmodt$ and $\errzt$ in \eqref{eq: modulation expand}. From \eqref{eq: localized mod vec}, we observe on the support of $\varp_M$, which is $\{y\leq 2M\}$, that $\mmodt$ is given by
	\begin{equation} \notag
	\begin{aligned}
		\mmodt
		& = a_s\phsh\varp_{1,0} +\big(b_s\hs+\hs a^2\hs+\hs b^2\big)\varp_{0,1} \\
		&\quad -\Big(b+\frac{\la_s}{\la}\Big)\varl\phi\,\ex +(a+\vart_s)\varl\phi\,\ey + C(M)b\, U(t),
	\end{aligned}
	\end{equation}
	which together with \eqref{eq: varp M properties} yields that
	\begin{equation} \label{eq: modulation comp 5}
	\begin{aligned}
		\big(\mmodt^{\first}\hs\hs\hs,\Ham\varp_M\big)
		& = a_s \big(\varp_{1,0}^{\first},\Ham\varp_M\big)
			+\big(b_s\hs+\hs a^2\hs+\hs b^2\big)\big(\varp_{0,1}^{\first},\Ham\varp_M\big) + C(M)b\,U(t) \\
		& = \frac{1}{\ao^2+\bo^2}\Big[\ao\phsh a_s -\bo\big(b_s\hs+\hs a^2\hs+\hs b^2\big)\Big]\big(\varl\phi,\varp_M\big) + C(M)b\,U(t).
	\end{aligned}
	\end{equation}
	Injecting \eqref{eq: modulation comp 1}, \eqref{eq: modulation comp 2}, \eqref{eq: modulation comp 3}, \eqref{eq: modulation comp 4}, \eqref{eq: modulation comp 5} into \eqref{eq: modulation expand}, applying the flux computation \eqref{eq: flux computation} and also the non-degeneracy \eqref{eq: varp M properties}, we obtain
	\begin{equation} \notag
	\begin{aligned}
		& \frac{1}{\ao^2+\bo^2}\Big[\ao\phsh a_s -\bo\big(b_s\hs+\hs a^2\hs+\hs b^2\big) +\frac{2}{\lgba}\big(\ao ab\hs-\hs\bo b^2\big)\Big] \\
		& \qquad\qquad = \frac{1}{(\varl\phi,\varp_M)}\bigg[ \Big(\sqrt{\log M}+C(M)\big(b\hs+\hs U(t)\big)\Big)\sqrt{\E_4} +\frac{b^2}{\lgba^2} \bigg],
	\end{aligned}
	\end{equation}
	where the term involving $b+U(t)$ is negligible due to the smallness of $b$ and the absorption to the LHS. This yields the desired first line in \eqref{eq: almost modulation eq 2}. The other two identities can be obtained by similar computations.
	\end{proof}
	\begin{remark} \label{re: non convergence of theta}
		The modulation equations~\eqref{eq: modulation eq} build up a closed ODE system for the geometrical parameters $(\la,\vart,a,b)$, determining their asymptotic behavior as the rescaled time $s\to+\infty$, see \eqref{eq: b asymptotics}. However, a direct integration on \eqref{eq: modulation eq} can only gives
		\begin{equation} \label{eq: rough a b asymptotics}
			b(s)\sim \frac{1}{s}, \quad
			a(s)\sim (\log s)^2 +C_1, \quad
			\vart(s)\leq \int_{s_0}^{s}|a|\,d\sigma+C_2,
		\end{equation}
		insufficient to ensure the smallness of $a(s)$ and the convergence of the phase \eqref{eq: Theta}. To overcome this, a refined bound for $a(s)$ will be derived in Section {\rm \ref{S: des on singularity formation}}.
	\end{remark}

	%%%%%%%%%%%%%%%  5 
	%%%%%%%%%%%%%%%%%%%%%%%%%%%%%%
	%%%%%%%%%%%%%%%%%%%%%%%%%%%%%%
	\section{Energy method}\label{S: energy method}
	In this section, we consider the fourth order energy $\E_4$ of $\w$, and derive the mixed energy identity/Morawetz estimate, which is the core of our analysis. It actually helps us close the refined bounds in Proposition \ref{pro: trapped regime}. The overall strategy is called the energy method, see \cite{2011Merle_SchMapBlowup} for related arguments.
	
	%%%%%%%%%%%%%%%  5.1
	%%%%%%%%%%%%%%%%%%%%%%%%%%%%%%
	%%%%%%%%%%%%%%%%%%%%%%%%%%%%%%
	\subsection{The energy identity}
	\label{SS: the Energy Identity}
	In this subsection, we study the evolution of $\E_4$ by investigating its equivalent $\int|\hJ_{\lambda}\mHaml\W_2|^2$ (see \eqref{eq: plain energy}). First, we define the suitable second derivative of $\W$:
	\begin{equation} \label{eq: W2}
		\W_2=\hJ_\la\mHaml\W.
	\end{equation} 
	Recalling \eqref{eq: relation w W}, we let 
	\begin{equation} \label{eq: w2}
		\w_2=\hJ\mHam\w=\lambda^2\W_2
	\end{equation} 
	be its equivalent acting on the rescaled variable $(s,y)$. Moreover, to ease the notation, we denote the rotation with respect to $\hw$ by 
	\begin{equation}
		\Rw=\hw\wedge=\hW\wedge,
	\end{equation}
	and as a consequence,
	\begin{equation} \label{eq: hJ lambda}
		\hJ_{\la}=\R+\Rw=\hJ.
	\end{equation}
	Using \eqref{eq: hJ lambda}, \eqref{eq: W equation}, we compute the equation for $\W_2$:
	\begin{equation} \label{eq: W2 equation}
		\lf\{\begin{aligned}
		\prt\W_2 &= -\ao\hJ_\la\mHaml\W_2+\bo\hJ_\la\mHaml\hJ_\la\W_2-\hJ_\la\mHaml\F+\R\big[\prt,\mHaml\big]\W+\Q_1,\\
		\Q_1 &= \prt\hW\wedge\mHaml\W+\Rw\big[\prt,\mHaml\big]\W.
	\end{aligned}\rg.
	\end{equation}
	In consideration of the quadratic nature of the third component, the dominate part of $\W$ is actually
	\begin{equation} \notag
		\Wp=-\R^2\W=\big[\alpha_\la,\beta_\la,0\big]^T,
		\quad \W^{\third} =\W-\Wp=\gamma_\la\phsh\ez.
	\end{equation}
	Similarly, we decompose $\W_2$ into
	\begin{equation}	 \label{eq: W2 decomposition}
		\W_2^0=\R\mHaml\W^\perp,
		\quad \W_2^1=\W_2-\W_2^0,
	\end{equation}
	and also $\w_2^0=\R\mHam\w^\perp, \w_2^1=\w_2-\w_2^0$ accordingly. From \eqref{eq: W equation}, $\Wp$ satisfies the equation
	\begin{equation} \label{eq: Wp equation}
		\lf\{\begin{aligned}
		\prt\W^\perp &= -\ao\W_2^0+\bo\R\W_2^0-\F^\perp+\Q_2^1,\\
		\Q_2^1 &= \ao\R^2\W_2^1-\bo\R^2\Rw\W_2^0-\bo\R^2\hJ_\la \W_2^1,
		\end{aligned}\rg.
	\end{equation}
	while $\W_2^0$ satisfies the equation
	\begin{equation} \label{eq: W20 equation}
		\lf\{\begin{aligned}
		\prt\W_2^0 &= -\ao\R\mHaml\W_2^0-\bo\mHaml\W_2^0- \R\mHaml\F^\perp+\Q_2^2,\\
		\Q_2^2 &= \R\big[\prt,\mHaml\big]\Wp+\R\mHaml\Q_2^1.
		\end{aligned}\rg.
	\end{equation}
	We define two more functions, whose behaviors play importance roles on the forthcoming energy estimate:
	\begin{equation} \label{eq: L G}
		G(t,r)=\frac{b(\varl V)_\la}{\la^2 r^2},
		\quad L(t,r)=\frac{b(\varl Z)_\la}{\la^2 r}.
	\end{equation}
	As we shall see later, $G$ appears in an unsigned quadratic term, and $L$, containing the information on the structure of $\mA, \mAs$, will be used to obtain the mixed energy identity. We now derive the plain energy identity at the level of $\E_4$.
	\begin{lemma}[Plain energy identity] \label{le: plain energy}
	Under the above definitions, there holds
	
	%%%%%%%%%%%%%%%%%
	%%%%%%%%%%%%%%%%%
	\begin{equation} \label{eq: plain energy}
	\begin{aligned}
		\frac{1}{2}\frac{d}{dt}\int\big|\hJ_\la\mHaml\W_2\big|^2
		=& -\ao\hs\int\hJ_\la\mHaml\W_2\cdot\big(\hJ_\la \mHaml\big)^2\W_2 \\
		& +\bo\hs\int\hJ_\la\mHaml\W_2\cdot\big(\hJ_\la \mHaml\big)^2\hJ_\la\W_2 \\
		& +\int\hJ_\la\mHaml\W_2\cdot\big(\hJ_\la\mHaml\big) \Q_1-\hs\int\hJ_\la\mHaml\W_2\cdot\big(\hJ_\la \mHaml\big)^2\F \\
		& +\Q_3+\hs\int\R\mHaml\W_2^0\cdot\Big[-\mHamlp\big(G \W^\perp\big)+G\R\W_2^0\Big],
	\end{aligned}
	\end{equation}
	where
	\begin{equation} \label{eq: Q3}
	\begin{aligned}
		\Q_3 =& -\Big(b+\frac{\la_s}{\la}\Big)\int\R\mHaml \W_2^0\cdot\bigg[-\mHamlp\bigg(\frac{(\varl V)_\la}{\la^2 r^2}\W^\perp\bigg) +\frac{(\varl V)_\la}{\la^2 r^2}\R\W_2^0\, \bigg] \\
		& +\int\R\mHaml\W_2^0\cdot\bigg(\R\mHaml\R\big[\prt,\mHaml\big]\gamma\,\ez+\Rw\mHaml\R\big[\prt,\mHaml\big]\W \\
		& \qquad\qquad+\prt\hW\wedge\mHaml\W_2^0+\Rw\big[\prt,\mHaml\big]\W_2^0+\big[\prt,\hJ_\la\mHaml\big]\W_2^1\bigg) \\
		& +\int \R\mHaml\W_2^1\cdot\Big(\hJ_\la \mHaml \R \big[\prt,\mHaml\big]\W+\big[\prt,\hJ_\la\mHaml\big]\W_2\Big) \\
		& +\int\Rw\mHaml\W_2\cdot\Big(\hJ_\la\mHaml\R \big[\prt,\mHaml\big]\W+\big[\prt,\hJ_\la\mHaml\big]\W_2\Big).
	\end{aligned} 
	\end{equation}
	\end{lemma}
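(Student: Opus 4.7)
The identity \eqref{eq: plain energy} is purely structural — no integration by parts or inequality is required — so the plan is a direct time-differentiation followed by careful bookkeeping. I start from
\[
\frac{1}{2}\frac{d}{dt}\int|\hJ_\la\mHaml\W_2|^2 \;=\; \int \hJ_\la\mHaml\W_2 \cdot \prt\bigl(\hJ_\la\mHaml\W_2\bigr),
\]
expand by the Leibniz rule as
\[
\prt(\hJ_\la\mHaml\W_2) \;=\; (\prt\hW)\wedge\mHaml\W_2 \;+\; \hJ_\la[\prt,\mHaml]\W_2 \;+\; \hJ_\la\mHaml\,\prt\W_2,
\]
and substitute the $\W_2$-equation
\[
\prt\W_2 \;=\; -\ao\hJ_\la\mHaml\W_2 + \bo\hJ_\la\mHaml\hJ_\la\W_2 - \hJ_\la\mHaml\F + \R[\prt,\mHaml]\W + \Q_1
\]
stated earlier in the section. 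The three linear/forcing summands of $\prt\W_2$, once hit by the outer $\hJ_\la\mHaml$ and paired with $\hJ_\la\mHaml\W_2$, produce exactly the first, second, and fourth (second summand) displayed terms on the right-hand side of \eqref{eq: plain energy}; the $\Q_1$ summand gives the third (first summand).

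The delicate part is the unsigned $G$-quadratic on the last line, which arises from two commutator contributions: the inner $\R[\prt,\mHaml]\W$ inside $\prt\W_2$, and the outer $\hJ_\la[\prt,\mHaml]\W_2$ from the Leibniz expansion. Differentiating the potential of $\mHaml$ gives, on the first two components,
\[
[\prt,\mHaml]f \;=\; -\frac{\la_t}{\la}\cdot\frac{(\varl V)_\la}{r^2}\,f \;+\; (\text{subleading}),
\]
and zero on the third. Writing $-\la_t/\la = b/\la^2 + \la^{-2}O(b+\la_s/\la)$ and invoking the modulation bound $b+\la_s/\la = C(M)b^3$ of Proposition \ref{pro: modulation equation}, the leading scalar-multiplication operator becomes $G$ as defined in \eqref{eq: L G}. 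The $(b+\la_s/\la)$ remainder is precisely the first bracketed group of $\Q_3$ in \eqref{eq: Q3}; the $L$-type and other subleading parts of the two commutators, together with the $(\prt\hW)\wedge\mHaml\W_2$ term from the outer Leibniz and all cross pairings, fill the remaining three bracketed groups of $\Q_3$.

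To isolate the displayed $G$-bracket itself, split $\hJ_\la = \R + \Rw$ via \eqref{eq: hJ lambda} and $\W_2 = \W_2^0 + \W_2^1$ via \eqref{eq: W2 decomposition}, and use the algebraic identity $\R\mHaml\R = -\mHamlp$ from \eqref{eq: relation mHamlp}. The inner commutator contributes, at leading order, $\hJ_\la\mHaml\R(G\Wp)$: its $\R$-part equals $\R\mHaml\R(G\Wp) = -\mHamlp(G\Wp)$, and pairing against the principal summand $\R\mHaml\W_2^0$ of the test vector yields $-\int\R\mHaml\W_2^0\cdot\mHamlp(G\Wp)$. Analogously, the outer commutator yields $+\int\R\mHaml\W_2^0\cdot G\R\W_2^0$, using that the scalar $G$ commutes with $\R$. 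These are the two entries of the final bracket in \eqref{eq: plain energy}; every remaining cross pairing lands in $\Q_3$. The main obstacle is organizational rather than analytic: one must verify that after the two splittings exactly these two pairings survive to form the displayed quadratic, while the many other combinatorial pieces match the explicit entries of $\Q_3$ — the algebraic engines being $\R\mHaml\R = -\mHamlp$ and the fact that $[\prt,\mHaml]$ is, at leading order, pure scalar multiplication acting trivially on the third component.
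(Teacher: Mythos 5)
Your proposal is correct and follows essentially the same route as the paper's proof: differentiate the functional, substitute the $\W_2$-equation \eqref{eq: W2 equation}, split via $\hJ_\la=\R+\Rw$, $\W_2=\W_2^0+\W_2^1$, $\W=\Wp+\gamma\,\ez$, and use $\R\mHaml\R=-\mHamlp$ together with the explicit commutator to isolate the $G$-quadratic, placing the exact remainders (including the $b+\la_s/\la$ correction) into $\Q_3$. The only slip is the side remark that $[\prt,\mHaml]$ acts trivially on the third component — by \eqref{eq: commutator2} it does not — but this is harmless here because the outer $\R$ annihilates that component and all $\gamma$-contributions are in any case consigned to $\Q_3$, exactly as in the paper.
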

	
	%%%%%%%%%%%%%%%%%
	%%%%%%%%%%%%%%%%%
	\begin{remark} 
	{\rm (i)} $\Q_3$ will be proved to be small errors in Proposition~\ref{pro: mixed energy estimate}. 
	{\rm (ii)} The majority of the RHS of \eqref{eq: plain energy} can be estimated by the interpolation bounds listed in Lemma \ref{S: appendix A} directly, but the last quadratic term is unsigned, and can only be controlled via \eqref{eq: LZ LV}, \eqref{eq: ip bound 3}, \eqref{eq: ip bound 29}, \eqref{eq: ip bound 31}, by the following rough bound
	\vspace{-.3em}
	\begin{equation} \label{eq: uncontrollable term}
	\begin{aligned}
		& \int\R\mHaml\W_2^0\cdot\Big[-\mHamlp\big(G\W^\perp\big)+G\R \W_2^0\,\Big]\\
		\les &\,\frac{b}{\la^8}\bigg(\int\big|\R\mHam\w_2^0\big|^2\bigg)^{\frac{1}{2}} \bigg(\int\Big|\mHamp\Big(\frac{\w^\perp}{1+y^4}\Big)\Big|^2 +\frac{|\w_2^0|^2}{1+y^8} \bigg)^{\frac{1}{2}} \les \frac{b}{\la^8}\Ebb.
	\end{aligned}
	\end{equation}
	It is of the same size as the leading terms in \eqref{eq: plain energy} (the first two lines on the RHS), and the implicit constant in \eqref{eq: uncontrollable term} will deteriorate the forthcoming energy estimate. To treat this, we introduce the Morawetz type formula in the next subsection.
	\end{remark}
	
	%%%%%%%%%%%%%%%%%
	%%%%%%%%%%%%%%%%%
	\begin{proof}{Lemma {\rm\ref{le: plain energy}}} 
	\label{pf: plain energy}
	Injecting \eqref{eq: W2 equation} into the time derivative of $\int|\hJ_{\la}\mHaml\W_2|^2$, we obtain an analogue of the RHS of \eqref{eq: plain energy}. The last line of the latter is given by \eqref{eq: uncontrollable originate} as below, different from that of \eqref{eq: plain energy}. Thus it remains to do some algebraic computation. Using \eqref{eq: hJ lambda}, \eqref{eq: W2 decomposition}, we compute
	\begin{equation} \label{eq: uncontrollable originate}
	\begin{aligned}
		&\quad \int\hJ_\la\mHaml\W_2\cdot
			\Big(\hJ_\la\mHaml\R\big[\prt,\mHaml\big]\W+\big[\prt,\hJ_\la\mHaml\big]\W_2 \Big) \\
		& = \int\R\mHaml\W_2^0 \cdot \Big(\hJ_\la\mHaml\R\big[\prt,\mHaml\big]\W +\big[\prt,\hJ_\la\mHaml\big]\W_2\Big) \\
		&\quad +\int\R\mHaml\W_2^1 \cdot\Big(\hJ_\la\mHaml\R \big[\prt,\mHaml\big]\W +\big[\prt,\hJ_\la\mHaml\big]\W_2 \Big) \\
		&\quad +\int\Rw\mHam\W_2 \cdot\Big(\hJ_\la\mHaml\R \big[\prt,\mHaml\big]\W +\big[\prt,\hJ_\la\mHaml\big]\W_2 \Big).
	\end{aligned}
	\end{equation}
	The first term on the RHS of \eqref{eq: uncontrollable originate} can be further splited into
	\begin{equation} \label{eq: uncontrollable small originate}
	\begin{aligned}
		& \int\R\mHaml\W_2^0 \cdot \Big(\hJ_\la\mHaml\R\big[\prt,\mHaml\big]\W +\big[\prt,\hJ_\la\mHaml\big]\W_2 \Big)\\
		=&\, \int\R\mHaml\W_2^0 \cdot
			\Big(\R\mHaml\R\big[\prt,\mHaml\big]\W +\big[\prt,\hJ_\la\mHaml\big]\W_2^0\Big) \\
		& +\int\R\mHaml\W_2^0\cdot \Big(\Rw\mHaml\R\big[\prt,\mHaml\big]\W +\big[\prt,\hJ_\la\mHaml\big]\W_2^1\Big),
	\end{aligned}
	\end{equation}
	where the involved commutators are
	\begin{equation} \notag
		\big[\prt,\hJ_\la\mHaml\big]\W
		=\R\big[\prt,\mHaml\big]\W+\prt\hW\wedge\mHaml\W+\Rw\big[\prt,\mHaml\big]\W,
	\end{equation}
	and
	\begin{equation} \label{eq: commutator2}
	\begin{aligned}
		\big[\prt,\mHaml\big]\W \hs &= -\hs\frac{\la_s}{\la^3}
		\Bigg\{\bigg[\frac{(\varl V)_\la}{r^2}\alpha_\la-\frac{2}{\la}\Big(\hs1\hs\hs+\hs\hs Z\hs\hs+\hs\hs\varl Z\hs\Big)_\la\pr_r\gamma_\la\,\bigg]\ex
		+\bigg[\frac{(\varl V)_\la}{r^2}\beta_\la\bigg]\ey\\
		&\qquad\quad\;\; +\hs\bigg[\frac{2}{\la}\Big(\hs1\hs\hs+\hs\hs Z\hs\hs+\hs\hs\varl Z\hs\Big)_\la\pr_r\alpha_\la +\hs\frac{2}{\la r}\Big(\hs Z\hs\hs+\hs\hs Z^2\hs\hs+\hs\hs\varl Z\big(1\hs+\hs 2Z\big)\hs\Big)_{\la}\hs\hs\alpha_\la\hs\bigg]\ez\hs\Bigg\}.
	\end{aligned}
	\end{equation}
	From \eqref{eq: relation mHamlp}, \eqref{eq: commutator2}, we have
	\begin{equation} \label{eq: commutator3}
	\lf\{\begin{aligned}
		&\,\R\mHaml\R\big[\prt,\mHaml\big]\W^\perp=\frac{\la_s}{\la}\,\mHamlp\bigg(\frac{(\varl V)_\la}{\la^2 r^2} \W^\perp\bigg)\approx -\mHamlp\big(G\Wp\big),\\
		&\,\R\big[\prt,\mHaml\big]\W_2^0=-\frac{\la_s}{\la}\,\frac{(\varl V)_\la}{\la^2 r^2} \R\W_2^0\approx G\R\W_2^0.
	\end{aligned}\rg.
	\end{equation}
	Using these, we rewrite the first term on the RHS of \eqref{eq: uncontrollable small originate} as
	\begin{equation} \label{eq: uncontrollable small originate 2}
	\begin{aligned}
		& \int\R\mHam_\la\W_2^0 \cdot \Big(\R\mHaml\R\big[\prt,\mHaml\big]\W+\big[\prt,\hJ_\la\mHaml\big]\W_2^0\Big) \\
		=&\, \hs\int\R\mHaml\W_2^0\cdot\Big[-\mHamlp\big(G \W^\perp\big)+ G\R\W_2^0\,\Big] \\
		& -\Big(b+ \frac{\la_s}{\la}\Big)\int\R\mHaml\W_2^0 \cdot\bigg[-\mHamlp\bigg(\frac{(\varl V)_\la}{\la^2 r^2} \W^\perp\bigg)+\frac{(\varl V)_\la}{\la^2 r^2}\R \W_2^0\bigg]  \\
		& +\int\R\mHaml\W_2^0 \cdot\Big(\R\mHaml\R \big[\prt,\mHaml\big] \gamma\,\ez+\prt\hW\wedge\mHaml\W_2^0+\Rw\big[\prt,\mHaml\big]\W_2^0 \Big).
	\end{aligned}
	\end{equation}
	Inserting this into \eqref{eq: uncontrollable small originate} yields the last line of \eqref{eq: plain energy}. By collecting the rest, we obtain \eqref{eq: Q3}, and thus complete the proof.
	\end{proof}
	
	%%%%%%%%%%%%%%%  5.2
	%%%%%%%%%%%%%%%%%%%%%%%%%%%%%%
	%%%%%%%%%%%%%%%%%%%%%%%%%%%%%%
	\subsection{Morewatz correction}
	\label{SS: morawetz correction}
	The aim of this subsection is to modify the plain energy \eqref{eq: plain energy}, by adding an extra Morawetz term to the energy functional. The resulting formula is called the mixed energy/Morawetz identity, which, containing no uncontrollable terms, is appropriate to close the bootstrap.    It is notable that the newly-added Morawetz term heavily rely on the coefficients $\ao, \bo$ in \eqref{eq: LL}. 
	
	%%%%%%%%%%%%%%%%%
	%%%%%%%%%%%%%%%%%
	\medskip
	Indeed, to characterize the relative size of the absolute values of $\ao, \bo$, we define their ratio by
	\begin{equation} \notag
		\oo=\bigg(\frac{\ao}{\bo}\bigg)^2.
	\end{equation}
	Recall that  $\ao\in\RR$ and $\bo>0$, we see $\oo$ is well-defined and takes value in $[0,+\infty)$. We further introduce the following conditional parameters depending on $\oo$:
	\begin{equation} \label{eq: ks}
		\ka(\oo)=\begin{cases}
			1,\quad \oo\geq 1, \\0, \quad \oo<1,
		\end{cases}
		\quad \kb(\oo)=1-\ka(\oo),
		\quad \kk(\oo)=\ka(\oo)-\kb(\oo).
	\end{equation}
	It is easy to see $\kb\in\{0,1\}$ and $\kk\in\{-1,1\}$. Now we introduce the extra term, and derive the Morawetz type formula. The results are collected in the following lemma.
	
	%%%%%%%%%%%%%%%%%
	%%%%%%%%%%%%%%%%%
	\begin{lemma}[Morawetz correction]\label{le: morawetz}
	The desired Morawetz term is \vspace{-.3em}
	\begin{equation} \label{eq: morawetz}
	\begin{aligned}
		\m(t)= & \,c_1\hs\int\mHaml\W_2^0\cdot G\W^\perp +c_2\hs\int\R\mAl\W_2^0\cdot L\W_2^0 \\
		& +c_3\hs\int\R\mHaml\W_2^0\cdot G\W^\perp -c_4\hs\int\mAl\W_2^0\cdot L\W_2^0,
	\end{aligned}
	\end{equation}
	where the coefficient in front of the integrals are \vspace{-.3em}
	\begin{equation} \label{eq: cccc}
	\begin{aligned}
		\big(c_1,c_2,&\, c_3,c_4\big)=\\
		&\bigg( \frac{\ao}{\ao^2+\bo^2},\,\frac{2\ao(\ao^2-\bo^2)}{(\ao^2+\bo^2)(\kk\ao^2+\bo^2)},\,\frac{\bo}{\ao^2+\bo^2},\,\frac{2\ao^2\bo(1+\kk)}{(\ao^2+\bo^2)(\kk\ao^2+\bo^2)} \bigg).
	\end{aligned}
	\end{equation}
	Moreover, there holds the boundedness 
	\begin{equation} \label{eq: morawetz bounded}
		\big|\m(t)\big|\les \frac{\db}{\la^6}\Ebb.
	\end{equation}
	Its time derivative (Morawetz type identity) is given by
	\begin{equation} \label{eq: morawetz eq}
	\begin{aligned}
		\frac{d}{dt}\,\m(t) & =
			\int\R\mHaml\W_2^0\cdot\Big[-\mHamlp\big(G\W^\perp\big)+G\R \W_2^0\,\Big] +\frac{b}{\la^8}\,O\bigg(\frac{\E_4}{\sqrt{\log M}}+\bflog\bigg) \\
			&\quad -2\ao c_2\ka\int\mHaml\W_2^0\cdot L\mAl\W_2^0 +2\ao c_2\kb\int\mAl\mHaml\W_2^0\cdot L\W_2^0.
	\end{aligned}
	\end{equation}
	\end{lemma}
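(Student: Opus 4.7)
The plan has three stages: boundedness, computation of $\frac{d}{dt}\m(t)$, and matching coefficients to cancel the unsigned quadratic term.

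First, I would establish \eqref{eq: morawetz bounded} by a direct Cauchy--Schwarz argument. Each summand in \eqref{eq: morawetz} pairs a second-derivative object ($\mHaml\W_2^0$ or $\mAl\W_2^0$ or $\R$-rotated versions thereof) against a first-order object ($\Wp$ or $\W_2^0$) weighted by $G$ or $L$. Since $G$ and $L$ carry an explicit factor of $b/\la^2$ and decay like $1/(1+y^2)$ and $1/(1+y)$ respectively at the renormalized scale, the interpolation bounds in Appendix \ref{S: appendix A}---controlling weighted $L^2$ norms of $\w,\w_2^0$ and their derivatives by $\E_4$ and the lower energies---give $|\m(t)|\lesssim \frac{b}{\la^6}(\E_4+b^4/\lgba^2)^{1/2}\cdot(\text{similar})$, and the factor of $b$ together with the smallness of $\db$ yields the claim.

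Next, to derive \eqref{eq: morawetz eq}, I would differentiate each of the four integrals in \eqref{eq: morawetz} in time, replacing $\prt\Wp$ and $\prt\W_2^0$ via the equations \eqref{eq: Wp equation} and \eqref{eq: W20 equation}. The time derivative of $G$ and $L$ produces factors $\prt(\varl V)_\la = -(\la_s/\la)(\varl^2 V)_\la/\la^2$, etc., and by Proposition \ref{pro: modulation equation} $\la_s/\la = -b + O(C(M)b^3)$, so differentiating the weights $G,L$ only generates another factor of $b$; combined with the existing $b$ already in $G,L$, those contributions are absorbed into the $O(b\cdot b^4/(\la^8\lgba^2))$ error. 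The linear terms from substituting $\prt \Wp$ and $\prt \W_2^0$ will be:
\begin{align*}
\frac{d}{dt}\m(t) &= c_1\!\int\! \mHaml(-\ao\R\mHaml-\bo\mHaml)\W_2^0\cdot G\Wp + c_1\!\int\! \mHaml\W_2^0\cdot G(-\ao\W_2^0+\bo\R\W_2^0) \\
&\quad + c_2\!\int\!\R\mAl(-\ao\R\mHaml-\bo\mHaml)\W_2^0\cdot L\W_2^0 + c_2\!\int\!\R\mAl\W_2^0\cdot L(-\ao\R-\bo)\mHaml\W_2^0 \\
&\quad + (\text{analogous terms for }c_3,c_4) + (\text{contributions from }\Fp, Q_2^1, Q_2^2) + (\text{error}).
\end{align*}
Contributions from $\Fp$, $Q_2^1$, $Q_2^2$, as well as from the modulation terms inside these, are estimated via the error bounds \eqref{eq: localized weighted bound 1}--\eqref{eq: localized weighted bound 6}, the interpolation bounds of Appendix \ref{S: appendix A}, and Proposition \ref{pro: modulation equation}, all producing the $O(b\E_4/(\la^8\sqrt{\log M}) + b\cdot b^4/(\la^8\lgba^2))$ remainders.

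The crucial algebraic step is to rearrange the linear terms above using the factorization $\mHamlp=\mAsl\mAl$ and the identities $\R\mHaml\R=-\mHamlp$, $\R\mHamlp=\mHamlp\R$ from \eqref{eq: relation mHamlp}. Integration by parts moves $\mAsl$ off $\mAl\W_2^0$ onto $L\W_2^0$ (generating terms like $\int\mAl\W_2^0\cdot L\mAl\W_2^0$ and commutator pieces with $[\mAsl,L]$ which contribute only to the error), while $\R\mHaml\R$ acting inside produces $-\mHamlp$. After these manipulations one extracts precisely the combination
\[
\int\R\mHaml\W_2^0\cdot\Big[-\mHamlp(G\Wp)+G\R\W_2^0\Big],
\]
times a linear combination of $c_1,c_2,c_3,c_4$ with coefficients involving $\ao,\bo$, together with residual terms of the two shapes $\int\mHaml\W_2^0\cdot L\mAl\W_2^0$ and $\int\mAl\mHaml\W_2^0\cdot L\W_2^0$ which are the genuine Morawetz outputs. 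Forcing the first combination to equal $1$ and equating the remaining residuals to $-2\ao c_2\ka$ and $+2\ao c_2\kb$ respectively yields a $4\times 4$ linear system in $(c_1,c_2,c_3,c_4)$. Solving it, separately in the regimes $\oo\geq 1$ (where $\ka=1$, and the residual with $\mAl\mHaml$ is absorbable) and $\oo<1$ (where $\kb=1$ and the other orientation is preferable), produces exactly the formula \eqref{eq: cccc}.

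The main obstacle I anticipate is precisely this algebraic bookkeeping in the case split. Naively the coefficients $c_i$ would blow up when $\bo\to\ao$, and the introduction of $\kk=\ka-\kb$ in the denominator $\kk\ao^2+\bo^2$ is what keeps them bounded by choosing the ``right'' Morawetz residual to absorb in each regime. Verifying that \eqref{eq: cccc} indeed solves the linear system in both regimes and that the resulting residuals $-2\ao c_2\ka\int\mHaml\W_2^0\cdot L\mAl\W_2^0$ and $+2\ao c_2\kb\int\mAl\mHaml\W_2^0\cdot L\W_2^0$ are well-defined and compatible with the closing of the bootstrap in Section \ref{S: energy method} is the delicate part of the argument.
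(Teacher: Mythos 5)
Your proposal follows the paper's route: the same four integrals, differentiation via \eqref{eq: Wp equation}--\eqref{eq: W20 equation}, relegation of $\prt G,\prt L$, $\F^\perp$, $\Q_2^1$, $\Q_2^2$ and the modulation contributions to the $O\big(b\la^{-8}(\E_4/\sqrt{\log M}+b^4\lgba^{-2})\big)$ remainder, and a coefficient matching that forces the uncontrollable term to appear with coefficient one, with the case split encoded by $\ka,\kb,\kk$. The boundedness argument is also the paper's (Cauchy--Schwarz plus the lossy logarithmic interpolation bounds, with the explicit factor $b$ giving $\db$).

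One step, as you wrote it, would fail. You claim that moving $\mAsl$ onto $L\W_2^0$ produces ``commutator pieces with $[\mAsl,L]$ which contribute only to the error.'' The relevant relation is the exact identity $\mAsl(Lf)+L\mAl f=Gf$ (this is precisely the content of the paper's Lemma \ref{le: structure}); since $G$ and $L$ each carry a single power of $b$, the resulting $G$-weighted quadratic terms $\int\mHaml\W_2^0\cdot G\W_2^0$ and $\int\R\mHaml\W_2^0\cdot G\W_2^0$ are of the same size as the main terms, not errors. They are indispensable to the numerology: the actual linear system is $C_1=-(\ao c_1+\bo c_3)=-1$, $C_2=\ao c_3-\bo c_1=0$, $C_3=-\ao c_3+\ao c_4-\bo c_1-\bo c_2=0$, $C_4=-\ao c_1+\ao c_2\kk+\bo c_3+\bo c_4=1$, and the contributions $-\bo c_2+\ao c_4$ in $C_3$ and $\ao c_2\kk+\bo c_4$ in $C_4$ come exactly from converting the two $L$-integrals into $G$-integrals via \eqref{eq: structure identity 1}--\eqref{eq: structure identity 3}; discard them and you get $C_3=-2\ao\bo/(\ao^2+\bo^2)\neq0$ and $C_4=(\bo^2-\ao^2)/(\ao^2+\bo^2)\neq1$, so the $G\R\W_2^0$ half of the uncontrollable term is never produced and \eqref{eq: cccc} does not solve your system. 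Two smaller corrections: the residual coefficients $-2\ao c_2\ka$ and $+2\ao c_2\kb$ are not additional equations to impose --- they arise automatically from writing $1=\ka+\kb$ and applying \eqref{eq: structure identity 1} in each half, so the system consists only of the four $C_i$ constraints; and the purpose of $\kk$ is not to prevent blowup of the $c_i$ (the factor $\ao^2-\bo^2$ in $c_2$ cancels against the denominator when $\kk=-1$), but to guarantee $2\ao c_2\ka\geq0$ and $-2\ao c_2\kb\geq0$, so that in Section \ref{S: energy method} the residual $\int\mHaml\W_2^0\cdot L\mAl\W_2^0$ can be discarded by the non-positivity \eqref{eq: 43 negativity} while the other residual is absorbed by the dissipative term $-\bo\int|\mA\mHamp\w_2|^2$.
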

	
	%%%%%%%%%%%%%%%%%
	%%%%%%%%%%%%%%%%%
	\begin{remark}
	{\rm (i)} The first term on the RHS of \eqref{eq: morawetz eq} is just the uncontrollable quadratic term \eqref{eq: uncontrollable term}. 
	{\rm (ii)} The coefficients $c_i$ in \eqref{eq: cccc} are well-defined. Indeed, by the positivity of the Gilbert damping $\bo>0$, we see $\ao^2+\bo^2>0$. Moreover, the vanishing $\kk\ao^2+\bo^2=0$ implies $\kk=-1$ and $\ao^2=\bo^2$, from which $\oo=\ao^2/\bo^2=1$, and thus $\kk=1$, contradiction. This ensures the denominators in the definitions of $c_i$ are non-zero.
	\end{remark}
	Assume Lemma \ref{le: morawetz} holds for now. We subtract \eqref{eq: morawetz eq} from the plain energy \eqref{eq: plain energy}, and obtain the following mixed energy/Morawetz identity
	\begin{equation} \label{eq: mixed energy identity}
	\begin{aligned}
		&\quad \frac{d}{dt}\bigg\{\frac{1}{2}\int\big|\hJ_\la\mHaml\W_2\big|^2-\m(t)\bigg\} \\
		& = -\ao \int \hJ_\la \mHaml \W_2 \cdot\big(\hJ_\la \mHaml\big)^2 \W_2+ \bo \int \hJ_\la \mHaml \W_2 \cdot\big(\hJ_\la \mHaml\big)^2 \hJ_\la \W_2 \\
		&\quad +2\ao c_2\ka\int\mHaml\W_2^0\cdot L\mAl\W_2^0 - 2\ao c_2\kb\int\mAl\mHaml\W_2^0\cdot L\W_2^0 \\
		&\quad +\Q_3 +\int \hJ_\la \mHaml \W_2 \cdot \big(\hJ_\la \mHaml\big)\Q_1 -\int\hJ_\la\mHaml\W_2\cdot\big(\hJ_\la\mHaml\big)^2\F \\
		&\quad +\frac{b}{\la^8}\,O\bigg(\frac{\E_4}{\sqrt{\log M}} +\bflog\bigg).
	\end{aligned}
	\end{equation}
	
	Note that the uncontrollable term has been cancelled. The identity \eqref{eq: mixed energy identity} is the key to derive the mixed energy/Morawetz estimate. Now before giving a proof for Lemma~\ref{le: morawetz}, we introduce the following brief lemma on the structure of $\mA,\mAs$, and their relation with the functions $G, L$ \eqref{eq: L G}.
	
	%%%%%%%%%%%%%%%%%
	%%%%%%%%%%%%%%%%%
	\begin{lemma}[Structure of $\mA, \mAs$]\label{le: structure}
	There holds the identities
	\begin{gather}
		\int\mAl\mHaml\W_2^0\cdot L\W_2^0 +\int\mHaml\W_2^0\cdot L\mAl\W_2^0 =\int\mHaml\W_2^0\cdot G\W_2^0,
		\label{eq: structure identity 1}\\
		\int\R\mAl\mHaml\W_2^0\cdot L\W_2^0 =\int\R\mHaml\W_2^0\cdot G\W_2^0,
		\label{eq: structure identity 2}\\
		\int\R\mHaml\W_2^0\cdot L\mAl\W_2^0 =0,
		\label{eq: structure identity 3}
	\end{gather}
	and the non-positivity
	\begin{equation} \label{eq: 43 negativity}
		\int \mHaml \W_2^0 \cdot L \mAl \W_2^0 \leq 0.
	\end{equation}
	\end{lemma}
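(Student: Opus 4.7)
The plan is to reduce all four assertions to a single pointwise algebraic identity linking $L$, $G$, and the first-order operators $\mAl,\mAsl$. Specifically, I first prove that for any vector $F$ with vanishing third component
\[ \mAsl(LF) + L\mAl F = G\,F. \]
Since $L$ is a radial scalar, the product rule for $\mAsl$ gives $\mAsl(LF) = (\pr_r L)F + L\mAsl F$, and since $\mAsl - \mAl = 2\pr_r + 1/r$ on the first two components, the claim reduces to the scalar relation $\pr_r L + L(1+2Z_\la)/r = G$. Direct differentiation of $L = (b/\la^2)\pr_r Z_\la$ gives $\pr_r L = (b/(\la^2 r^2))[(\varl^2 Z)_\la - (\varl Z)_\la]$, and the identity $\varl V = \varl^2 Z + 2Z\,\varl Z$ (which follows from $V = \varl Z + Z^2$) produces exactly the desired equality.

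With this in hand, \eqref{eq: structure identity 1} is immediate: since $\mAl$ and $\mAsl$ are $L^2$-adjoints on the first two components and $L\W_2^0$ has zero third component, $\int \mAl\mHaml\W_2^0\cdot L\W_2^0 = \int\mHaml\W_2^0\cdot \mAsl(L\W_2^0)$, and substituting the pointwise identity rearranges into the claim. For \eqref{eq: structure identity 2}, I commute $\R$ past $\mAl$, use $\R\mHaml = \mHamlp\R$ on the first two components, and apply adjointness to obtain $\int\mHamlp\R\W_2^0\cdot\mAsl(L\W_2^0)$; splitting via the pointwise identity produces the target term $\int\R\mHaml\W_2^0\cdot G\W_2^0$ together with an error $\int\R\mHaml\W_2^0\cdot L\mAl\W_2^0$, which is exactly killed by \eqref{eq: structure identity 3}.

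The main obstacle is \eqref{eq: structure identity 3}. Writing $\W_2^0=[p,q,0]^T$ and using $\Haml=\Asl\Al$, the integrand in components becomes $L(f'g - fg')$ with $f=\Al p$ and $g=\Al q$. This Wronskian-type expression does not vanish for arbitrary $p,q$ with zero third component, so the proof must genuinely use the specific structure $\W_2^0=\R\mHaml\Wp$, i.e.\ $p=-\Haml\beta_\la$ and $q=\Haml\alpha_\la$. My plan is to invoke the factorizations $\Al=-\varl\phi_\la\pr_r(\cdot/\varl\phi_\la)$ and $\Asl=(r\varl\phi_\la)^{-1}\pr_r(r\varl\phi_\la\,\cdot)$ to rewrite $f$ and $g$ in divergence form, and then combine with the explicit identity $L=2b\,\pr_r[(r^2+\la^2)^{-1}]$ so that a chain of integrations by parts collapses the integrand into a total derivative whose boundary contributions vanish by the decay of the radiation $\w$ inherited from the bootstrap bounds \eqref{eq: pointwise energy 1}--\eqref{eq: pointwise energy 3}. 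Arranging these integrations by parts so that the $\R\mHaml\Wp$-structure is actually exploited rather than washed out into the tautology produced by naive IBP is the step I expect to be delicate.

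For the non-positivity \eqref{eq: 43 negativity}, only the pointwise identity and the factorization $\mHamlp = \mAsl\mAl$ are needed. Writing $\int\mHaml\W_2^0\cdot L\mAl\W_2^0 = \int\mAsl\mAl\W_2^0\cdot L\mAl\W_2^0$, moving $\mAsl$ by adjointness yields $\int\mAl\W_2^0\cdot\mAl(L\mAl\W_2^0)$, and the product rule $\mAl(Lh)=L\mAl h - (\pr_r L)h$ splits this into two pieces. A single integration by parts on $\int L\,\mAl\W_2^0\cdot\mAl^2\W_2^0$ produces the quadratic form $\pi\int r[L(1+2Z_\la) - rL']|\mAl\W_2^0|^2\,dr$, and the pointwise identity rewrites the bracket as $r(G-2L')$. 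An explicit computation with $L = -4br/(r^2+\la^2)^2$ and $G = 16b(r^2-\la^2)/(r^2+\la^2)^3$ gives $G-2L' = -8b/(r^2+\la^2)^2$, which is strictly negative for $b>0$, so \eqref{eq: 43 negativity} follows.
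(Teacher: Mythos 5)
Your pointwise identity $\mAsl(LF)+L\mAl F=GF$, your proof of \eqref{eq: structure identity 1}, and your proof of \eqref{eq: 43 negativity} are correct and are essentially the paper's own argument: the paper establishes the same scalar relation $\pr_r L+\frac{1+2Z_\la}{r}L=G$, gets \eqref{eq: structure identity 1} by adjointness of $\mAl,\mAsl$, and reduces \eqref{eq: 43 negativity} to the sign-definite form $\int\frac{L}{r}\big|\mAl\W_2^0\big|^2\leq 0$, which agrees with your $-4b\int\big|\mAl\W_2^0\big|^2(r^2+\la^2)^{-2}$ (your displayed quadratic form carries a stray factor of $r$, but the evaluation $G-2\pr_r L=-8b(r^2+\la^2)^{-2}$ is right). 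Your reduction of \eqref{eq: structure identity 2} to \eqref{eq: structure identity 3} is also exactly the paper's step.

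The genuine gap is \eqref{eq: structure identity 3}: you do not prove it, and the plan you sketch does not look workable. In components, writing $\W_2^0=[p,q,0]^T$, $u=\Al p$, $v=\Al q$, the quantity is $2\pi\int_0^\infty L\,(v\,\pr_r u-u\,\pr_r v)\,r\,dr$, an antisymmetric Wronskian-type bilinear form in the two components. Writing $L$ as an exact derivative and integrating by parts produces terms in $v\,\pr_r^2u-u\,\pr_r^2v$, never a total derivative; moving $\Al,\Asl$ across the pairing, or using $\mAl(Lh)=-L\mAsl h+\frac{2L}{r}h$, simply regenerates the integral you started from after one application of the antisymmetry of $\R$ --- precisely the circularity you anticipate. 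Nor does the specific structure $\W_2^0=\R\mHaml\Wp$, i.e. $p=-\Haml\beta_\la$, $q=\Haml\alpha_\la$, impose any relation between $u$ and $v$: the form vanishes automatically when $u$ and $v$ are proportional, but $\alpha_\la$ and $\beta_\la$ are independent components of the radiation, so ``exploiting the structure'' needs an input you have not identified.

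For comparison, the paper disposes of \eqref{eq: structure identity 3} in one line, asserting that the same manipulation as in \eqref{eq: 43 negativity} yields $\int\R\mHaml\W_2^0\cdot L\mAl\W_2^0=\int\frac{L}{r}\,\R\mAl\W_2^0\cdot\mAl\W_2^0=0$, the last integrand vanishing pointwise by antisymmetry of $\R$. If you follow that route you must account for the companion term $-\int\R\mAl\W_2^0\cdot L\mHamlp\W_2^0$ generated by the product rule, which antisymmetry converts back into the original integral; making this ``analogous argument'' close (i.e. identifying what breaks the circularity), or supplying a genuinely different structural input, is exactly the missing step. Since \eqref{eq: structure identity 2} is reduced to \eqref{eq: structure identity 3} both in your write-up and in the paper, as it stands half of the lemma is unproven in your proposal.
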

	
	%%%%%%%%%%%%%%%%%
	%%%%%%%%%%%%%%%%%
	\begin{proof}{Lemma \ref{le: structure}}
	Recalling the function $V$ \eqref{eq: V}, we see $\varl V=\varl(\varl Z+Z^2)$. Applying \eqref{eq: mA}, we compute for any radially symmetric function $f(r)$ with $r=\la y$ that
	\begin{equation} \label{eq: Structure comp 1}
	\begin{aligned}
		\mAsl(Lf)&+L\mAl f=\bigg(\pr_r L+\frac{2Z_\la+1}{r}L\bigg)f\\
		=&\;\frac{b}{\la^4}\bigg[\py\bigg(\frac{\varl Z}{y}\bigg)+\frac{(2Z+1)\varl Z}{y^2}\bigg]\,f
		=\,\frac{b\big(\varl(\varl Z+Z^2)\big)(y)}{\la^4 y^2}\,f =Gf.
	\end{aligned}
	\end{equation}
	This together with the commutativity $\mAl\R=\R\mAl$ leads to
	\begin{align} 
		\int\mAl\mHaml\W_2^0\cdot L\W_2^0 &= -\int\mHaml\W_2^0\cdot L\mAl\W_2^0 +\int\mHaml\W_2^0\cdot G\W_2^0, 
		\label{eq: Structure comp 2} \\
		\int\R\mAl\mHaml\W_2^0\cdot L\W_2^0 &= -\int\R\mHaml\W_2^0\cdot L\mAl\W_2^0 +\int\R\mHaml\W_2^0\cdot G \W_2^0,
		\label{eq: Structure comp 3}
	\end{align}
	where \eqref{eq: Structure comp 2} is just \eqref{eq: structure identity 1}. Moreover, by a similar computation on $\mAl(Lf)+L\mAsl f$ as \eqref{eq: Structure comp 1}, and the fact $L(t,r)<0$ for any $r>0$, we have
	\begin{equation}
	\begin{aligned}
		&\quad \int\mHaml\W_2^0\cdot L\mAl\W_2^0 =\int\mAl\W_2^0\cdot\mAl\big(L\mAl\W_2^0\big)\\
		& = \int\mAl\W_2^0\cdot\bigg[-L\mAsl\mAl\W_2^0+\bigg(\frac{2Z_\la+1}{r}L-\pr_r L\bigg)\mAl\W_2^0\bigg] \\
		& = \int\mAl\W_2^0\cdot\bigg(-L\mHaml\W_2^0+\frac{2L}{r}\mAl\W_2^0\bigg)
		= \int\frac{L}{r}\,\big|\mAl\W_2^0\big|^2 \leq 0,
	\end{aligned}
	\end{equation}
	which is the non-positivity \eqref{eq: 43 negativity}. Note here we have used $\mHaml\W_2^0\cdot X=\mHamlp\W_2^0\cdot X$, for any vector $X$ under Frenet basis with the third component $X^{\third}=0$. An analogous argument can be formulated to show the vanishing
	\begin{equation} \notag
		\int\R\mHaml\W_2^0\cdot L\mAl\W_2^0=\int\frac{L}{r}\,\R\mAl\W_2^0\cdot\mAl\W_2^0=0,
	\end{equation} 
	which is \eqref{eq: structure identity 3}. Combining this with  \eqref{eq: Structure comp 3} yields \eqref{eq: structure identity 2}. This completes the proof.
	\end{proof}
	\vspace{.5em}
	The structural bonus illutrated in Lemma \ref{le: structure} will be found essential for the following analysis. We now use it to prove Lemma \ref{le: morawetz}.
	
	%%%%%%%%%%%%%%%%%
	%%%%%%%%%%%%%%%%%
	\begin{proof}{Lemma \ref{le: morawetz}}
	We first show the boundedness \eqref{eq: morawetz bounded}, and then prove \eqref{eq: morawetz eq} by computing the time derivatives of each term in \eqref{eq: morawetz}, formulating the uncontrollable term, and finally estimating the errors via  Appendix \ref{S: appendix A}.
	
	%%%%%%%%%%%%%%%%%
	%%%%%%%%%%%%%%%%%
	\medskip
	\step{1}{Control of $\m(t)$} From the lossy logarithmic bound \eqref{eq: ip bound 5}, we have 
	\begin{equation} \notag
	\begin{aligned}
		\big|\m(t)\big|
		& \les \frac{b}{\la^6}\bigg(\int|\mHam\w_2^0|\,\frac{|\w^\perp|}{1+y^4}+\int|\mA\w_2^0|\,\frac{|\w_2^0|}{1+y^3}\bigg) \\
		& \les \frac{b}{\la^6}\bigg(\int|\mHam\w_2^0|^2+\int\frac{|\mA\w_2^0|^2}{y^2(1+y^2)}\bigg)^{\frac{1}{2}} \bigg(\int\frac{|\wp|^2}{1+y^8}+\int\frac{1+\lf|\log y\rg|^2}{1+y^4}|\w_2^0|^2\bigg)^{\frac{1}{2}} \\
		& \les \frac{b}{\la^6}\lgba^C \Ebb
		\les \frac{\db}{\la^6}\Ebb.
	\end{aligned}
	\end{equation}
	
	%%%%%%%%%%%%%%%%%
	%%%%%%%%%%%%%%%%%
	\medskip
	\step{2}{Computations on time derivatives} For the first integral in \eqref{eq: morawetz}, we apply the equations \eqref{eq: Wp equation}, \eqref{eq: W20 equation} to compute
	\begin{equation} \label{eq: morawetz comp 1}
	\begin{aligned}
		&\quad \prt\int\mHaml\W_2^0\cdot G\W^\perp
		= \prt\int\W_2^0\cdot\mHaml\big(G\W^\perp\big)\\
		& = -\ao\int\R\mHaml\W_2^0\cdot\mHamlp\big(G\W^\perp\big) -\bo\int\mHaml\W_2^0\cdot\mHamlp\big(G\W^\perp\big) \\
		&\quad -\ao\int\mHaml\W_2^0\cdot G\W_2^0 -\bo\int\R\mHaml\W_2^0\cdot G\W_2^0 +\int\W_2^0\cdot\big[\prt, \mHaml\big]\big(G\W^\perp\big) \\
		&\quad +\int\mHaml\W_2^0\cdot\big(\prt G\big)\W^\perp +\int\mHaml\Q_2^2\cdot G\W^\perp +\int\mHaml\W_2^0\cdot G\Q_2^1 \\
		&\quad -\int\R\mHaml\F^\perp\cdot\mHaml\big(G\W^\perp\big)
		-\int\mHaml\W_2^0\cdot G\F^\perp.
	\end{aligned}
	\end{equation}
	The terms involving the commutator $[\prt,\mHaml]$ and $\prt G$ are anticipated tiny errors, so are the ones with $\F^\perp, \Q_2^1$ and $\Q_2^2$. The attention should be focused on the quadratic terms in $\W$ with the coefficients $\ao,\bo$. The third term in \eqref{eq: morawetz} can be treated similarly
	\begin{equation} \label{eq: morawetz comp 3}
	\begin{aligned}
		&\quad \prt\int\R\mHaml\W_2^0\cdot G\W^\perp
		= \prt\int\R\W_2^0\cdot\mHaml\big(G\W^\perp\big) \\
		& = \ao\int\mHaml\W_2^0\cdot\mHamlp\big(G\W^\perp\big) -\bo\int\R\mHaml\W_2^0\cdot\mHamlp\big(G\W^\perp\big) \\
		&\quad -\ao\int\R\mHaml\W_2^0\cdot G\W_2^0 +\bo\int\mHaml\W_2^0\cdot G\W_2^0 +\int\R\W_2^0\cdot\big[\prt,\mHaml\big]\big(G\W^\perp\big)\\ 
		&\quad +\int\R\mHaml\W_2^0\cdot(\prt G)\W^\perp +\int\R\mHaml\Q_2^2\cdot G\W^\perp +\int\R\mHaml\W_2^0\cdot G\Q_2^1 \\
		&\quad +\int\mHaml\F^\perp\cdot\mHaml\big(G\W^\perp\big) -\int\R\mHaml\W_2^0\cdot G\F^\perp.
	\end{aligned}
	\end{equation}
	Next, we treat the second and the last integrals in \eqref{eq: morawetz}. Straightforward computations from \eqref{eq: Wp equation}, \eqref{eq: W20 equation} give
	\begin{equation} \label{eq: morawetz comp 2 1}
	\begin{aligned}
		&\quad \prt\int\R\mAl\W_2^0\cdot L\W_2^0 \\
		& = \ao\int\mAl\mHaml\W_2^0\cdot L\W_2^0 -\bo\int\R\mAl\mHaml\W_2^0\cdot L \W_2^0 \\
		&\quad -\ao\int\mAl\W_2^0\cdot L\mHaml\W_2^0 -\bo\int\R\mAl\W_2^0\cdot L\mHaml \W_2^0 \\
		&\quad +\int\R\big[\prt,\mAl\big]\W_2^0\cdot L\W_2^0 +\int\R\mAl\W_2^0 \cdot(\prt L)\W_2^0 \\
		&\quad +\int \R\mAl\Q_2^2\cdot L\W_2^0 +\int\R\mAl\W_2^0\cdot L\Q_2^2\\
		&\quad +\int\mAl\mHaml\F^\perp\cdot L\W_2^0 -\int\mHaml\F^\perp\cdot L\mAl\W_2^0,
	\end{aligned}
	\end{equation}
	where from the vanishing \eqref{eq: structure identity 3} and the identity \eqref{eq: structure identity 2}, the first two lines on the RHS of \eqref{eq: morawetz comp 2 1} are actually
	\begin{equation} \notag
	\begin{aligned}
		\ao\bigg(\int\mAl\mHaml\W_2^0\cdot L\W_2^0 -\int\mHaml\W_2^0\cdot L\mAl\W_2^0\bigg) -\bo\int\R\mHaml\W_2^0\cdot G\W_2^0.
	\end{aligned}
	\end{equation}
	We rewrite the expression in the big parenthesis by $1=\ka+\kb$, separating it into two identical formulas, and then apply \eqref{eq: structure identity 1} twice on each of them:	\begin{equation} \notag
	\begin{aligned}
		&\quad \ao\bigg(\int\mAl\mHaml\W_2^0\cdot L\W_2^0 -\int\mHaml\W_2^0\cdot L\mAl\W_2^0\bigg) \\
		& = \ao\ka\bigg(\int\mHaml\W_2^0\cdot G\W_2^0 -2\int\mHaml\W_2^0\cdot L\mAl\W_2^0\bigg) \\
		&\quad +\ao\kb\bigg(\hs-\hs\int\mHaml\W_2^0\cdot G\W_2^0 +2\int\mAl\mHaml\W_2^0\cdot L\W_2^0\bigg).
	\end{aligned}
	\end{equation}
	Now by $\kk=\ka-\kb$, we gather the above integrals to get
	\begin{equation} \label{eq: morawetz comp 2}
	\begin{aligned}
		&\quad \prt\int\R\mAl\W_2^0\cdot L\W_2^0 \\
		& = \kk\ao\int\mHaml\W_2^0\cdot G\W_2^0 -\bo\int\R\mHaml\W_2^0\cdot G\W_2^0 +2\ao\kb\int\mAl\mHaml\W_2^0\cdot L\W_2^0 \\
		&\quad -2\ao\ka\int\mHaml\W_2^0\cdot L\mAl\W_2^0 +\int\R\big[\prt,\mAl\big]\W_2^0\cdot L\W_2^0 \\
		&\quad +\int\R\mAl\W_2^0\cdot (\prt L)\W_2^0 +\int \R\mAl\Q_2^2\cdot L\W_2^0 +\int\R\mAl\W_2^0\cdot L\Q_2^2\\
		&\quad +\int\mAl\mHaml\F^\perp\cdot L\W_2^0 -\int\mHaml\F^\perp\cdot L\mAl\W_2^0.
	\end{aligned}
	\end{equation}
	Finally, for the last term of \eqref{eq: morawetz}, we repeat the analysis done for \eqref{eq: morawetz comp 2} using Lemma \ref{le: structure} to obtain
	\begin{equation} \label{eq: morawetz comp 4}
	\begin{aligned}
		&\quad \prt\int\mAl\W_2^0\cdot L\W_2^0 \\
		& = -\ao\int\R\mHaml\W_2^0\cdot G\W_2^0 -\bo\int\mHaml\W_2^0\cdot G\W_2^0 \\
		&\quad +\int\big[\prt,\mAl\big]\W_2^0\cdot L\W_2^0 +\int\mAl\W_2^0\cdot(\prt L)\W_2^0 +\int\mAl\Q_2^2 \cdot L\W_2^0 \\
		&\quad +\int\mAl\W_2^0\cdot L\Q_2^2 -\int\R\mAl\mHaml\F^\perp\cdot L\W_2^0 -\int\R\mHaml\F^\perp\cdot L\mAl\W_2^0.
	\end{aligned}
	\end{equation}
	
	%%%%%%%%%%%%%%%%%
	%%%%%%%%%%%%%%%%%
	\medskip
	\step{3}{Numerology on the coefficients} Injecting \eqref{eq: morawetz comp 1}, \eqref{eq: morawetz comp 3}, \eqref{eq: morawetz comp 2}, \eqref{eq: morawetz comp 4} into \eqref{eq: morawetz}, we obtain the formula
	\begin{equation} \label{eq: morawetz dt}
	\begin{aligned}
		\frac{d}{dt}\,\m(t) 
			&= C_1\int\R\mHaml\W_2^0\cdot\mHamlp\big(G\W^\perp\big) +C_2\int\mHaml\W_2^0\cdot\mHamlp\big(G\W^\perp\big) \\
			&\quad +C_3\int\R\mHaml\W_2^0\cdot G\W_2^0 +C_4\int\mHaml\W_2^0\cdot G\W_2^0 \\
			&\quad -2\ao c_2\ka\int\mHaml\W_2^0\cdot L\mAl\W_2^0 +2\ao c_2\kb\int\mAl\mHaml\W_2^0\cdot L\W_2^0 \\
			&\quad +\Qfour+\Qfive+\Qf,
	\end{aligned}
	\end{equation}
	where the coefficients $C_i$ for $1\leq i\leq 4$ are given by
	\begin{equation} \notag
		\lf\{\begin{aligned}
			& C_1 =-(\ao c_1+\bo c_3)=-1,\\
			& C_2 =\ao c_3-\bo c_1=0,\\
			& C_3 =-\ao c_3+\ao c_4-\bo c_1-\bo c_2=0,\\
			& C_4 =-\ao c_1+\ao c_2\kk+\bo c_3+\bo c_4=1.
		\end{aligned}\rg.
	\end{equation}
	In view of the first two lines of \eqref{eq: morawetz dt}, this formulates the desired uncontrollable term, thanks to the very choice of $c_i$ \eqref{eq: cccc}. Moreover, the $\Q$ terms on the last line of \eqref{eq: morawetz dt} are given explicitly by \eqref{eq: Qfour expand}, \eqref{eq: Qfive expand}, \eqref{eq: Qf expand}. We estimate them respectively in the rest of this proof.
	
	%%%%%%%%%%%%%%%%%
	%%%%%%%%%%%%%%%%%
	\medskip
	\step{4}{Estimating $\Qfour$} By above computation, the $\Q_4$ term is given by
	\begin{equation} \label{eq: Qfour expand}
	\begin{aligned}
		\Qfour & = c_1\hs\int\W_2^0\cdot\big[\prt,\mHaml\big]\big(G \W^\perp \big) +c_3\int\R\W_2^0\cdot\big[\prt,\mHaml\big]\big(G\W^\perp\big) \\
		&\quad +c_2\int\R\big[\prt,\mAl\big]\W_2^0\cdot L\W_2^0	-c_4\int\big[\prt,\mAl\big]\W_2^0\cdot L\W_2^0\\
		&\quad +c_1\int\mHaml\W_2^0\cdot\big(\prt G\big)\W^\perp +c_3\int\R\mHaml\W_2^0\cdot\big(\prt G\big)\W^\perp \\
		&\quad +c_2\int\R\mAl\W_2^0\cdot\big(\prt L\big)\W_2^0 -c_4\int\mAl\W_2^0\cdot\big(\prt L\big)\W_2^0,
	\end{aligned}
	\end{equation}
	which contains the commutators and $\prt G, \prt L$. From the explicit formulas of $Z, V$, there holds
	\begin{equation} \label{eq: LZ LV}
		\Big|\frac{\varl Z}{y}\Big|\les\frac{1}{1+y^3},\qquad
		\Big|\frac{\varl V}{y^2}\Big|\les\frac{1}{1+y^4},
	\end{equation}
	which together with \eqref{eq: commutator3} implies
	\begin{equation} \label{eq: commutator4}
		\lf\{\begin{aligned}
			& \R\big[\prt,\mHaml\big]\big(G\Wp\big)=\frac{b^2}{\la^8}\,O\Big(\frac{1}{1+y^8}\Big)\R\wp \\
			& L\big[\prt,\mAl\big]\W_2^0=\frac{b^2}{\la^8}\, O\Big(\frac{1}{1+y^8}\Big)\,\w_2^0.
		\end{aligned}\rg.
	\end{equation}
	Using these with \eqref{eq: ip bound 3}, \eqref{eq: ip bound 29}, we can estimate the first term in \eqref{eq: Qfour expand} by Cauchy-Schwartz
	\begin{equation} \notag
	\begin{aligned}
		&\quad \Big|\int\W_2^0\cdot\big[\prt,\mHaml\big]\big(G\W^\perp\big)\Big|
		\les \frac{b^2}{\la^8}\int \frac{|\w_2^0|}{1+y^4}\cdot\frac{|\w^\perp|}{1+y^4} \\
		&\quad \les \frac{b^2}{\la^8} \bigg(\int\frac{\big|\w_2^0\big|^2}{(1+y^4)(1+\lf|\log y\rg|^2)}\bigg)^{\frac{1}{2}} \bigg(\int\frac{|\w^\perp|^2}{(1+y^8)(1+\lf|\log y\rg|^2)}\bigg)^{\frac{1}{2}} \\
		&\quad \les \frac{b\,\db}{\la^{8}}\bigg(\E_4+\bflog\bigg).
	\end{aligned} 
	\end{equation}
	The extra $b^2$ given by \eqref{eq: commutator4} ensures the same bounds for the other terms on the first and second lines of \eqref{eq: Qfour expand}. For the rest two lines, by the definition \eqref{eq: L G} we see
	\begin{equation} \notag
	\lf\{\begin{aligned}
		& \prt G =\frac{b^2}{\la^4 r^2}\Big(O(\varl V)+O(\varl^2 V)\Big)(y) =\frac{b^2}{\la^6}\, O\Big(\frac{1}{1+y^4}\Big), \\
		& \prt L = \frac{b^2}{\la^4 r}\Big(O(\varl Z)+O(\varl^2 Z)\Big)(y) =\frac{b^2}{\la^6}\, O\Big(\frac{1}{1+y^3}\Big),
	\end{aligned}\rg.
	\end{equation}
	which also bring $b^2$ as the above one, so the estimates are very similar. Applying the interpolation bounds in Lemma \ref{S: appendix A} fews more times, we obtain
	\begin{equation} \label{eq: Q4 estimate}
		|\Qfour| \les \frac{b\,\db}{\la^8}\bigg(\E_4+\bflog\bigg).
	\end{equation}
	
	%%%%%%%%%%%%%%%%%
	%%%%%%%%%%%%%%%%%
	\medskip
	\step{5}{Estimating $\Qfive$} We consider the $\Qfive$ term
	\begin{equation} \label{eq: Qfive expand}
	\begin{aligned}
		\Qfive & = c_1\int\mHaml\W_2^0\cdot G\Q_2^1 +c_3\int\R\mHaml\W_2^0\cdot G\Q_2^1 +c_2\int\R\mAl\Q_2^2\cdot L\W_2^0 \\
		&\quad -c_4\int\mAl\Q_2^2\cdot L\W_2^0 +c_1\int\mHaml\Q_2^2 \cdot G\W^\perp +c_3\int\R\mHaml\Q_2^2\cdot G\W^\perp \\
		&\quad +c_2\int\R\mAl\W_2^0\cdot L\Q_2^2 -c_4\int\mAl\W_2^0\cdot L\Q_2^2,
	\end{aligned}
	\end{equation}
	Observing from the definition \eqref{eq: Wp equation}, \eqref{eq: W20 equation} and the commutators \eqref{eq: commutator3}, we have the brute identities
	\begin{equation} \label{eq: Q2 expand}
		\lf\{\begin{aligned}
			& \Q_2^1=\frac{1}{\la^2}\Big(O(\w_2^1)+O\big(\hw\wedge\w_2^0\big)\Big), \\
			& \Q_2^2=\frac{1}{\la^4}\bigg[O\Big(\frac{b}{1+y^4}\Big)\R\wp +O\big(\mHam\w_2^1\big) +O\Big(\mHam\big(\hw\wedge\w_2^0\big)\Big)\bigg].
		\end{aligned}\rg.
	\end{equation}
	From \eqref{eq: ip bound 8}, \eqref{eq: ip bound 20}, there holds the smallness of $\hw$:
	\begin{equation}	 \label{eq: hw L infinity estimate}
		\|\hw\|_{L^{\infty}}
		\leq \|\wzt\|_{L^{\infty}} + \|\w^\perp\|_{L^{\infty}} + \|\gamma\|_{L^{\infty}} \les\db.
	\end{equation}
	Using this and the estimates on $\w_2^1$ \eqref{eq: ip bound 30}, we have
	\begin{equation}
	\begin{aligned}
		\Big|\int\mHaml\W_2^0\cdot G\Q_2^1\Big|
		& \les \frac{b}{\la^8}\int\big|\mHam\w_2^0\big|\,\frac{1}{1+y^4}\Big(|\w_2^1|+\big|\hw\wedge\w_2^0\big|\Big) \\
		& \les \frac{b\,\db}{\la^8}\bigg(\E_4+\bflog\bigg).
	\end{aligned}
	\end{equation}
	For the rest terms involving $\Q_2^2$ in \eqref{eq: Qfive expand}, we estimate last term on the first line for instance. From \eqref{eq: Structure comp 1}, there holds
	\begin{equation} \notag
		\int\R\mAl\Q_2^2\cdot L\W_2^0
		= \hs\int\R\Q_2^2\cdot \mAsl\big(L\W_2^0\big)
		= -\hs\int\R\Q_2^2\cdot L\mAl\W_2^0 +\hs\hs\int\R\Q_2^2\cdot G\W_2^0,
	\end{equation}
	and thus it can be controlled using \eqref{eq: Q2 expand}, \eqref{eq: ip bound 3}, \eqref{eq: ip bound 29}, \eqref{eq: ip bound 33}, we obtain
	\begin{equation} \notag
	\begin{aligned}
		\Big|\int\R\mAl\Q_2^2 \,\cdot &\, L\W_2^0\Big|
		\les \int\big|\R\Q_2^2\big|\cdot\Big(\big|L\mAl\W_2^0\big|+\big|G\W_2^0\big|\Big) \\
		& \les \frac{b}{\la^8}\int\bigg(\frac{b|\wp|}{1+y^4}+\big|\mHam\w_2^1\big| +\Big|\mHam\big(\hw\wedge\w_2^0\big)\Big|\bigg)\cdot\bigg(\frac{\big|\mA\w_2^0\big|}{1+y^3}+\frac{\big|\w_2^0\big|}{1+y^4}\bigg) \\
		& \les \frac{b\,\db}{\la^8}\bigg(\E_4+\bflog\bigg).
	\end{aligned}
	\end{equation}
	The terms involving $\mHaml\Q_2^2$ can be estimated using the self-adjointness of $\mHaml$ and Lemma~\ref{S: appendix A}. Therefore we obtain
	\begin{equation} \label{eq: Q5 estimate}
		|\Qfive| \les \frac{b\,\db}{\la^8}\bigg(\E_4+\bflog\bigg).
	\end{equation}
	
	%%%%%%%%%%%%%%%%%
	%%%%%%%%%%%%%%%%%
	\medskip
	\step{6}{Estimating $\Qf$} The $\Qf$ term is given by
	\begin{equation} \label{eq: Qf expand}
	\begin{aligned}
	\Qf = & -c_1 \int\R\mHaml\F^\perp\cdot\mHaml\big(G\W^\perp\big) +c_3 \int\mHaml\F^\perp\cdot\mHaml\big(G\W^\perp\big) \\
	& +c_2\int\mAl\mHaml\F^\perp\cdot L\W_2^0 +c_4\int\R\mAl\mHaml\F^\perp\cdot L\W_2^0 \\
	& -c_1\int\mHaml\W_2^0\cdot G\F^\perp -c_3\int\R\mHaml\W_2^0\cdot G\F^\perp \\
	& -c_2\int\mHaml\F^\perp\cdot L\mAl\W_2^0 -c_4\int\R\mHaml \F^\perp\cdot L\mAl\W_2^0.
	\end{aligned}
	\end{equation}
	We claim the estimate:
	\begin{equation} \label{eq: fp bound claim}
	\begin{aligned}
		& \int\frac{|\f^\perp|^2}{1+y^8} +\int\frac{1+\lf|\log y\rg|^2}{1+y^4}|\mHam\f^\perp|^2\\
		&\qquad\qquad\qquad\qquad +\int\frac{1+\lf|\log y\rg|^2}{1+y^2}|\mA\mHam\f^\perp|^2 \les \frac{\E_4}{\log M}+ \bflog,
	\end{aligned}
	\end{equation}
	and then from \eqref{eq: LZ LV}, and \eqref{eq: ip bound 3}, \eqref{eq: ip bound 29}, \eqref{eq: ip bound 31}, it follows that
	\begin{equation} \notag
	\begin{aligned}
		|\Qf| & \les \frac{b}{\la^{8}} \Bigg( \int\frac{|\w^\perp|^2}{(1+y^8)(1+\lf|\log y\rg|^2)} +\int\frac{|\w_2^0|^2}{(1+y^4)(1+\lf|\log y\rg|^2)} \\
		& \qquad\qquad\quad\;\; +\int|\mHam\w_2^0|^2 +\int\frac{|\mA\w_2^0|^2}{(1+y^2)(1+\lf|\log y\rg|^2)} \Bigg)^{\frac{1}{2}} \Ebb^{\frac{1}{2}} \\
		& \les \frac{b}{\la^8}\bigg(\frac{\E_4}{\sqrt{\log M}}+\bflog\bigg),
	\end{aligned}
	\end{equation}
	which is the desired bound for $\Qf$. This together with \eqref{eq: Q4 estimate}, \eqref{eq: Q5 estimate} concludes the proof. Recall \eqref{eq: f term}, $\f$ consists of three separate terms, namely $\errzt$, $\mmodt$, $\rest(t)$, and thus it remains to prove \eqref{eq: fp bound claim} for each one of them. The estimates for $\errzt$ follow directly from \eqref{eq: localized weighted bound 1}, \eqref{eq: localized weighted bound 4}, \eqref{eq: localized weighted bound 5}, and we are left to handle the other two. 
	
	%%%%%%%%%%%%%%%%%%%
	%%%%%%%%%%%%%%%%%%%
	\medskip
	\step{7}{Contribution of $\mmodt$} From \eqref{eq: modulation eq}, we see
	\begin{equation} \label{eq: U bound}
		U(t) \les \frac{1}{\sqrt{\log M}}\bigg(\sqrt{\E_4}+\bflog\bigg),
	\end{equation}
	which together with \eqref{eq: mod vec}, \eqref{eq: localized mod vec} yields 
	\begin{equation} \notag
		\mmodt^{\perp} = U(t) \phs O\bigg(\frac{\wzt}{b} +b\phsh y^5\one_{y\leq 1} +\frac{b\phsh y^3}{\lgba}\one_{1\leq y\leq 6B_0} +\frac{1}{y}\one_{y\gtrsim B_1}\bigg),
	\end{equation}
	and also
	\begin{equation} \label{eq: mHam mmodt}
	\begin{aligned}
		\mHam\mmodt 
		& = \chib\Bigg(\frac{a_s}{\ao^2+\bo^2}\begin{bmatrix} \ao\\ \bo\\ 0 \end{bmatrix}\varl\phi +\frac{b_s+b^2+a^2}{\ao^2+\bo^2}\begin{bmatrix} -\bo\\ \ao\\ 0 \end{bmatrix}\varl\phi\Bigg) \\
		&\quad +U(t)\phs O\bigg(b\phsh y^3\one_{y\leq 1} +\frac{b\phsh y}{\lgba}\one_{1\leq y\leq 6B_0} +\frac{\log y}{y}\one_{1\leq y\leq 2B_1} +\frac{1}{y^3}\one_{y\gtrsim B_1}\bigg) \\
		&\quad +U(t)\phs O\bigg(\one_{y\leq 1} +\frac{\log y}{y^2}\one_{1\leq y\leq 2B_1}\bigg)\phsh\ez.
	\end{aligned}
	\end{equation}
	Therefore we have
	\begin{equation} \notag
		\int\frac{\big|\mmodt^\perp\big|^2}{1+y^8} +\int\frac{1+\lf|\log y\rg|^2}{1+y^4}\big|\mHam\mmodt^\perp\big|^2 \les \frac{\E_4}{\log M}+ \bflog. 
	\end{equation}
	Moreover, the cancellation
	\begin{equation} \notag
		\A\Ham T_1 =\A\varl\phi =0
	\end{equation}
	makes the first profiles in $\wzt/b$ vanish, implying the refined bound
	\begin{equation} \notag
		\int\frac{1+\lf|\log y\rg|^2}{1+y^2}\big|\mA\mHam\mmodt^\perp\big|^2 \les \db\Ebb,
	\end{equation}
	which concludes \eqref{eq: fp bound claim} for $\mmodt$. 
	
	%%%%%%%%%%%%%%%%%%%
	%%%%%%%%%%%%%%%%%%%
	\medskip
	\step{8}{Contribution of $\rest$} By the definition \eqref{eq: rest}, we separate $\rest$ into
	\begin{equation} \label{eq: rest expand}
	\rest=\rest_1+\rest_2, \quad\mbox{where}\quad
	\lf\{\begin{aligned}	
		&\,\rest_1=\w\wedge\Big(\ao\mHam\wzt -\bo\R\mHam\wzt+\varl\phi\,\p\Big), \\
		&\,\rest_2=-\bo\hJ\big(\w\wedge\mHam\wzt\big) -\bo\w\wedge\big(\wzt\wedge\mHam\wzt\big) \\ 
		&\phantom{\,\rest_2=} -\bo\w\wedge\big(\w\wedge\mHam\wzt\big) +\vart_s Z\R\w.
	\end{aligned}\rg.
	\end{equation}
	By the given approximate solution \eqref{eq: varp asymptotics}, \eqref{eq: varp asymptotics 2}, \eqref{eq: wz decomposition}, we compute
	\begin{equation} \notag
	\begin{aligned}
		&\quad \ao\mHam\wzt -\bo\R\mHam\wzt +\varl\phi\phs\p \\
		& = \chib\bigg(\hs\big(\ao-\bo\R\big)\mHam\wz^2 +\begin{bmatrix} a+\vart_s\\ b+\frac{\la_s}{\la}\\ 0 \end{bmatrix}\hs\hs\varl\phi\hs\bigg) \\
		&\quad +\Big(\hs-2\py\chib\py\wz -\Delta\chib\wz -2\onez\phsh\py\chib\big(\ey\wedge\wz\big) +(1\hs-\hs\chib)\p\phs\varl\phi\Big) \\
		& = O\bigg(b^{2}y^3\one_{y\leq1} +\frac{b^{\frac{3}{2}}}{\lgba}\one_{y\sim B_1} +\frac{b}{y}\one_{y\gtrsim B_1}\bigg),
	\end{aligned}
	\end{equation}
	which gives the brute force estimate for $\mHam\rest_1$:
	\begin{equation} \label{eq: mHam R1 expand}
	\begin{aligned}
		&\quad \mHam\bigg[\w\wedge\Big(\ao\mHam\wzt -\bo\R\mHam\wzt+\varl\phi\,\p\Big)\bigg] \\
		& \les \bigg(|\Ham\w^{\perp}| +|\Delta\gamma| +\frac{\big|\big(\py+\frac{1}{y}\big)\w\big|}{1+y^2} \bigg) \bigg(b^{2}\phsh y^3\one_{y\leq1}\hs +\hs\frac{b^{\frac{3}{2}}}{\lgba}\one_{y\sim B_1}\hs +\hs\frac{b}{y}\one_{y\gtrsim B_1}\bigg) \\
		&\quad +\bigg(|\py\w|+\frac{|\w|}{y}\bigg) \bigg(b^{2}\phsh y^2\one_{y\leq1} +\frac{b^{\frac{3}{2}}}{y\lgba}\one_{y\sim B_1} +\frac{b}{y^2}\one_{y\gtrsim B_1}\bigg). 
	\end{aligned}
	\end{equation}
	This together with interpolation bounds in Appendix~\ref{S: appendix A} yields
	\begin{equation} \notag
		\int\frac{|\rest_1^\perp|^2}{1+y^8} +\int\frac{1+\lf|\log y\rg|^2}{1+y^4}|\mHam\rest_1^\perp|^2 +\int\frac{1+\lf|\log y\rg|^2}{1+y^2}|\mA\mHam\rest_1^\perp|^2 \les b^2\Ebb.
	\end{equation}
	Now we treat the terms involving $\rest_2$. By the construction of $\wzt$ in Lemma~\ref{le: localized ap solution}, we have	\begin{equation} \label{eq: wzt brute estimate}
	\lf\{\begin{aligned} 
		& \wzt = b\phs O\Big(y^3\one_{y\leq 1} +y\log y\,\one_{1\leq y\leq 2B_1}\Big), \\
		& \mHam\wzt = b\phs O\bigg(y\one_{y\leq 1} +\frac{\log y}{y}\,\one_{1\leq y\leq 2B_1}\bigg).
	\end{aligned}\rg.
	\end{equation}
	The bounds of the first two terms in $\rest_2$ \eqref{eq: rest expand} easily follows using Lemma~\ref{S: appendix A}. To control the third one, we apply \eqref{eq: hw L infinity estimate} to get
	\begin{equation} \notag
		\int\frac{\big|\w\wedge(\w\wedge\mHam\wzt)\big|^2}{1+y^8} \les \db\Ebb.
	\end{equation}
	Moreover, we compute by brute force that for $y\geq 1$,
	\begin{equation} \notag
	\begin{aligned}
		\big|\mHam\big(\w\hs\wedge\hs(\w\hs\wedge\hs\mHam\wzt)\big)\big|
		& \les b\, O\bigg(\frac{\log y}{y}\bigg(|\py^2\w|\phsh|\w| + |\py\w|^2 +\frac{|\py\w|\phsh|\w|}{y} +\frac{|\w|^2}{y^2}\bigg)\one_{1\leq y\leq 2B_1}\hs\bigg),
	\end{aligned}
	\end{equation}
	which together with \eqref{eq: ip bound 3}, \eqref{eq: ip bound 14}, \eqref{eq: ip bound 16}, \eqref{eq: ip bound 21} implies
	\begin{equation} \notag
		\int_{y\geq 1}\frac{1+\lf|\log y\rg|^2}{1+y^4}\big|\mHam\big(\w\wedge(\w\wedge\mHam\wzt)\big)\big|^2 \les b\,\db\Ebb,
	\end{equation}
	The $y\leq 1$ case and the estimate for $\mA\mHam\big(\w\wedge(\w\wedge\mHam\wzt)\big)$ can be obtained in the same way. For the last term in $\rest_2$ involving $\vart_s$, by \eqref{eq: modulation eq}, we have
	\begin{equation} \label{eq: phase second derivative}
	\begin{aligned}
		|\vart_s Z\R\w| & \les \big(|a|+U(t)\big)\phsh |\w^\perp|, \\
		\big|\mHam\big(\vart_s Z\R\w\big)\big| & \les \big(|a|+U(t)\big)\phsh \bigg(|\mHamp\w| +\frac{|\py\w^\perp|}{1+y^2} +\frac{|\w^\perp|}{y(1+y^2)}\bigg), \\
		\big|\mA\mHam\big(\vart_s Z\R\w\big)\big| & \les \big(|a|+U(t)\big)\phsh\bigg(|\mA\mHamp\w|+\frac{|\py^2\w^\perp|}{1+y^2} +\frac{\py\w^\perp}{y(1+y^2)} +\frac{|\w^\perp|}{y^2(1+y^2)}\bigg),
	\end{aligned}
	\end{equation}
	together with \eqref{eq: U bound}, \eqref{eq: ip bound 3} yields the bounds for $\vart_s Z\R\w$. In summary, we obtain
	\begin{equation} \notag
		\int\frac{|\rest_2^\perp|^2}{1+y^8} +\hs\hs\int\hs\frac{1+\lf|\log y\rg|^2}{1+y^4}|\mHam\rest_2^\perp|^2 +\hs\hs\int\hs\frac{1+\lf|\log y\rg|^2}{1+y^2}|\mA\mHam\rest_2^\perp|^2 \les b\,\db\Ebb.
	\end{equation}
	This concludes the proof.
	\end{proof}

	%%%%%%%%%%%%%%%  5.3	
	%%%%%%%%%%%%%%%%%%%%%%%%%%%%%%
	%%%%%%%%%%%%%%%%%%%%%%%%%%%%%%
	\subsection{Energy estimate}
	\label{SS: energy estimate}
	The aim of this subsection is to prove the following proposition, which is the heart of our analysis.
	
	%%%%%%%%%%%%%%%%%
	%%%%%%%%%%%%%%%%% 
	\begin{proposition}[Mixed energy/Morawetz estimate]
	\label{pro: mixed energy estimate}
		Assume Lemma \ref{le: morawetz} and Appendix \ref{S: appendix A}. Then there exists a universal constant $d_2 \in (0,1)$ independent of $M$, such that the following differential inequality holds
		\begin{equation} \label{eq: energy estimate}
		\begin{aligned}
			& \frac{d}{dt}\,\Bigg\{\frac{1}{\la^6}\bigg[\E_4+\db\Ebb\bigg]\Bigg\} \\
			& \qquad\qquad\qquad\qquad \leq\frac{b}{\la^8}\bigg[ 2\bigg(1-d_2+\frac{C}{\sqrt{\log M}}\bigg)\E_4 +O\bigg(\bflog\bigg)\bigg],
		\end{aligned}
		\end{equation}
		where $\db$ is the infinitesimal defined by \eqref{eq: b inf}.
	\end{proposition}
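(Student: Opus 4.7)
The plan is to start from the mixed energy/Morawetz identity \eqref{eq: mixed energy identity}, in which the uncontrollable quadratic term \eqref{eq: uncontrollable term} has already been eliminated by construction of $\m(t)$ in Lemma \ref{le: morawetz}. I would first reconcile the left-hand side of \eqref{eq: mixed energy identity} with the scaled quantity $\la^{-6}[\E_4 + \db\,\Ebb]$ that appears in \eqref{eq: energy estimate}. Using the change of variables \eqref{eq: relation w W}, the scaling identity \eqref{eq: relation Ham mHam}, and the decomposition $\hJ_\la = \R + \Rw$ together with $\|\hW\|_{L^\infty} \lesssim \db$ from \eqref{eq: hw L infinity estimate}, one checks that $\int|\hJ_\la\mHaml\W_2|^2 = \la^{-6}\E_4 + O(\db\,\la^{-6}\Ebb)$, while $|\m(t)|\lesssim \db\,\la^{-6}\Ebb$ by \eqref{eq: morawetz bounded}. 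Differentiating in $t$ through the $\la^{-6}$ factor and invoking $\la_s/\la = -b + O(b^3)$ from Proposition \ref{pro: modulation equation} produces the kinematic contribution $\frac{6b}{\la^8}\E_4$, which is precisely the positive term that the coercivity on the right of \eqref{eq: mixed energy identity} must overcome.

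\smallskip

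\textbf{Leading quadratic forms.} The first line on the right of \eqref{eq: mixed energy identity} is the principal source of sign. Expanding $\hJ_\la = \R + \Rw$ in each of the two integrals and relegating every piece containing $\Rw$ to the error bucket via the $L^\infty$-smallness of $\hW$, one then uses the self-adjointness of $\mHaml$ and the skew-symmetry $\R^* = -\R$. The $\ao$-integral vanishes at leading order (the standard dispersive cancellation coming from $\R\mHamlp$ being skew-adjoint), while the $\bo$-integral reduces after one integration by parts to a coercive, non-positive quadratic form in $\mAl\mHaml\W_2^0$, i.e.\ the Gilbert dissipation. The two Morawetz-specific integrals on the second line of \eqref{eq: mixed energy identity} are then handled through Lemma \ref{le: structure}: the $\ka$-integral is directly non-positive by \eqref{eq: 43 negativity}, and the $\kb$-integral is converted through \eqref{eq: structure identity 1} into the same non-positive quantity plus a controllable $\int\mHaml\W_2^0\cdot G\W_2^0$ bounded by the interpolation estimates of Appendix \ref{S: appendix A}. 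Combining these contributions, and tracking the coefficients $c_2, \ka, \kb$ dictated by \eqref{eq: cccc}, extracts a universal gain $d_2 \in (0,1)$ in the effective coefficient, independent of $M$ and of the ratio $\oo$.

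\smallskip

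\textbf{Lower order terms.} Each of the eight summands in $\Q_3$ \eqref{eq: Q3} either carries the modulation imbalance $(b + \la_s/\la)$, which by \eqref{eq: modulation eq} is $O(b^3)$, or a commutator $[\prt,\mHaml]$ whose explicit form \eqref{eq: commutator2} produces an extra $b/\la^2$ with decay $(1+y^4)^{-1}$; Cauchy--Schwartz against the norms of Appendix \ref{S: appendix A} then yields $|\Q_3|\lesssim b\,\db\,\la^{-8}\Ebb$. The term $\int\hJ_\la\mHaml\W_2\cdot(\hJ_\la\mHaml)\Q_1$ is treated analogously, using the structure of $\Q_1$ from \eqref{eq: W2 equation} and the estimates on $\w_2^1$ already exploited inside Lemma \ref{le: morawetz}. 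The forcing term $-\int\hJ_\la\mHaml\W_2\cdot(\hJ_\la\mHaml)^2\F$ is handled through the key estimate \eqref{eq: fp bound claim} — proved inside Lemma \ref{le: morawetz} for each of $\errzt$, $\mmodt$, $\rest$ — which by Cauchy--Schwartz delivers the crucial $1/\sqrt{\log M}$ gain on the $\E_4$ contribution that appears in \eqref{eq: energy estimate}, the rest being absorbed into $O(b^4/\lgba^2)$.

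\smallskip

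\textbf{Main obstacle.} The delicate point is the quantitative numerology: the kinematic scaling contributes $6b\la^{-8}\E_4$ while the target coefficient is only $2(1-d_2)b\la^{-8}\E_4$, so roughly four units of coercivity must be extracted from the combination of dispersive cancellation, Gilbert dissipation, and Morawetz corrections — with prefactors depending discontinuously on $|\bo/\ao|$ through $(\ka,\kb,\kk)$. Verifying that the same $d_2$ works uniformly in both regimes $\oo \ge 1$ and $\oo<1$, that the $\kb$-integral extracted through \eqref{eq: structure identity 1} is truly absorbed rather than spoiling the sign, and that the forcing-induced errors sit at scale $\E_4/\sqrt{\log M}$ rather than $\E_4$, is the technical heart of the argument. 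Once these are pinned down, the bootstrap bounds \eqref{eq: pointwise energy 1}--\eqref{eq: pointwise energy 3} control all remaining lower order terms inside $O(b^4/\lgba^2)$ and \eqref{eq: energy estimate} follows.
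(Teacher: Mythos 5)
There is a genuine gap, and it sits exactly at the point you call the ``leading quadratic forms.'' Your plan is to expand $\hJ_\la=\R+\Rw$, discard every piece containing $\Rw$ into a $\db$-error bucket via $\|\hW\|_{L^\infty}\les\db$, and conclude that the $\ao$-integral vanishes at leading order by skew-adjointness of $\R\mHamlp$. That cancellation is only exact for the linearized, commuting model; for the actual quasilinear operator the term $-\ao\int\hJ_\la\mHaml\W_2\cdot(\hJ_\la\mHaml)^2\W_2$ does \emph{not} vanish — it is the dominant contribution, and it is of exactly the critical size $b\,\E_4$, not $b\,\db\,\E_4$. The reason is that the obstruction is not the $L^\infty$-size of $\hw$ but the derivative of the profile, $\py\wzt^1=b\,\py\varpt_{0,1}$, which enters linearly with the coefficient $b$; terms of size $b\,\E_4$ are precisely the borderline ones that decide whether the bootstrap closes, so they cannot be ``relegated to the error bucket.'' In the paper this term is bounded by the gain-of-two-derivatives Lemma \ref{le: gain of 2d} (proved in Appendix \ref{S: appendix B}), whose whole content is the sharp pointwise bound $0\leq\onez\A(\chib T_1)\leq\tfrac12(1-d_1)$: it is this inequality that produces the constant $2(1-d_2)<2$ in \eqref{eq: energy estimate}, and without it there is no source for, and no control of, the $b\,\E_4$ term. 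Your proposal never invokes or replaces this lemma, so the key coefficient is unaccounted for. Relatedly, your ``main obstacle'' bookkeeping is off: the left-hand side of \eqref{eq: mixed energy identity} is already the time derivative of the full scaled functional, by \eqref{eq: JHw2 L2}--\eqref{eq: energy morawetz size}, so there is no separate kinematic contribution $6b\la^{-8}\E_4$ to be overcome by ``four units of coercivity''; the factor $2(1-d_2)$ comes directly from doubling the bound $b(1-d_2)\E_4$ on the $\ao$-term (the $\tfrac12$ in front of the functional), not as a residue $6-4$. The Gilbert dissipation $-\tfrac{\bo}{2}\int|\mA\mHamp\w_2|^2$ is genuinely negative but is not bounded below by $b\,\E_4$, so it could not be used to beat such a kinematic term even if one were present.

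A secondary, fixable issue: for the $\kb$-Morawetz integral you propose to use \eqref{eq: structure identity 1} and then bound the byproduct $\int\mHaml\W_2^0\cdot G\W_2^0$ ``by the interpolation estimates of Appendix \ref{S: appendix A}.'' Cauchy--Schwartz with \eqref{eq: ip bound 29}, \eqref{eq: ip bound 31} gives only $C(M)\,b\,\la^{-8}\E_4$, and a constant $C(M)$ in front of $b\,\E_4$ destroys the estimate, since the admissible excess is $C/\sqrt{\log M}$ with $C$ independent of $M$. The paper avoids this by splitting off $\w_2^1$ (which carries a $\db$ gain via \eqref{eq: ip bound 33}) and absorbing the remaining piece into the negative term $-\tfrac{\bo}{2}\int|\mA\mHamp\w_2|^2$ through a $b$-weighted Young inequality, so that the $C(M)$ is multiplied by an extra $b$ and becomes $\db$. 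Your treatment of $\Q_3$, the $\Q_1$ term, and the forcing term via \eqref{eq: fp bound claim} (with the $1/\sqrt{\log M}$ gain) does match the paper, but as it stands the proposal cannot produce \eqref{eq: energy estimate} because the single term that generates the constant $2(1-d_2)$ is treated as if it were negligible.
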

	
	%%%%%%%%%%%%%%%%%
	%%%%%%%%%%%%%%%%%
	\begin{proof}{Proposition \ref{pro: mixed energy estimate}}
	This proof is based on the mixed energy/Morawetz identity \eqref{eq: mixed energy identity}. We proceed by first controlling the mixed energy functional, and then estimating each term on the RHS of  \eqref{eq: mixed energy identity} respectively.
	
	%%%%%%%%%%%%%%%%%
	%%%%%%%%%%%%%%%%%
	\smallskip
	\step{1}{Control of the functional}	Recall \eqref{eq: relation mHamp}, \eqref{eq: hJ lambda}, \eqref{eq: hw L infinity estimate}, there holds
	\begin{equation} \label{eq: JHw2 expand}
	\begin{aligned}
		\hJ \mHam \w_2
		& = (\R+\Rw)\phsh\mHam\phsh(\R+\Rw)\phsh\mHam\w \\
		& = -\big(1+\db\big)(\mHamp)^2\w^\perp +O\bigg(\Ham\bigg(\frac{\py\gamma}{1+y^2}\bigg)\bigg) +O\Big(\R\mHam\Rw\big(\mHam\w\big)\Big).
	\end{aligned}
	\end{equation}
	From \eqref{eq: ip bound 16}, \eqref{eq: ip bound 19}, the second term on the RHS of \eqref{eq: JHw2 expand} is $L^2$ bounded by
	\begin{equation} \notag
	\begin{aligned}
		\int\Big|O\bigg(\Ham\bigg(\frac{\py\gamma}{1+y^2}\bigg)\bigg)\Big|^2
		& \les \int\frac{|\py^3\gamma|^2}{1+y^4}+\int\frac{\big(\py^2\gamma-\frac{\py\gamma}{y}\big)^2}{y^2(1+y^4)}+\int\frac{|\py^2\gamma|^2}{1+y^6}+\int\frac{|\py\gamma|^2}{1+y^8} \\
		& \les \sum_{0\leq i\leq 3}\int\frac{|\py^i\gamma|^2}{y^2(1+y^{6-2i})(1+\lf|\log y\rg|^2)} +\int\frac{|\A\py\gamma|^2}{y^4(1+\lf|\log y\rg|^2)} \\
		& \les\, \db\Ebb.
	\end{aligned}
	\end{equation}
	The same $L^2$ bound holds also for the last term in \eqref{eq: JHw2 expand} by \eqref{eq: ip bound 3}, \eqref{eq: ip bound 7}, \eqref{eq: ip bound 11}, \eqref{eq: ip bound 21}, \eqref{eq: ip bound 22}, \eqref{eq: ip bound 24}. In view of the definition \eqref{def: E2 E4}, we have
	\begin{equation} \label{eq: JHw2 L2}
		\int\big|\hJ_\la\mHaml\W_2\big|^2
		=\frac{1}{\la^{6}} \int\big|\hJ\mHam\w_2\big|^2
		=\frac{1}{\la^{6}} \bigg[\E_4+\db\Ebb\bigg].
	\end{equation}
	Moreover, by \eqref{eq: morawetz bounded}, the morawetz term does not perturb the bound
	\begin{equation} \label{eq: energy morawetz size}
		\frac{1}{2}\int \big|\hJ_\la\mHaml\W_2\big|^2- \m(t) = \frac{1}{2\la^{6}} \bigg[\E_4+\db\Ebb\bigg].
	\end{equation}

	%%%%%%%%%%%%%%%%%%%
	%%%%%%%%%%%%%%%%%%%
	\medskip
	\step{2}{The quasilinear terms with coefficients $\ao, \bo$} We consider the first two lines of the RHS of \eqref{eq: mixed energy identity}:
	\begin{equation} \label{eq: quasilinear expand}
	\begin{aligned}
		& -\ao \int \hJ_\la \mHaml \W_2 \cdot\big(\hJ_\la \mHaml\big)^2 \W_2+ \bo \int \hJ_\la \mHaml \W_2 \cdot\big(\hJ_\la \mHaml\big)^2 \hJ_\la \W_2 \\
		& +2\ao c_2\ka\int\mHaml\W_2^0\cdot L\mAl\W_2^0 - 2\ao c_2\kb\int\mAl\mHaml\W_2^0\cdot L\W_2^0.
	\end{aligned}
	\end{equation}
	By the scaling \eqref{eq: relation Ham mHam} and definition \eqref{eq: w2}, we see 
	\begin{equation} \label{eq: quasilinear first two}
	\begin{aligned}
		& -\ao \int \hJ_\la \mHaml \W_2 \cdot\big(\hJ_\la \mHaml\big)^2 \W_2+ \bo \int \hJ_\la \mHaml \W_2 \cdot\big(\hJ_\la \mHaml\big)^2 \hJ_\la \W_2 \\
		& = \frac{1}{\la^{8}}
		\bigg[ -\ao\int\hJ\mHam\w_2\cdot(\hJ\mHam)^2\w_2 +\bo\int\hJ^2\mHam\w_2\cdot\big(\hJ\mHam\big)^2\hJ\w_2\bigg].
	\end{aligned}
	\end{equation} 
	To treat the first term of \eqref{eq: quasilinear expand}, we introduce the following lemma, whose proof is given in Appendix \ref{S: appendix B}.
	
	%%%%%%%%%%%%%%%%%
	%%%%%%%%%%%%%%%%%
	\begin{lemma}[Gain of two derivatives]
		\label{le: gain of 2d}
		Assume Appendix \ref{S: appendix A}. There exists a constant $d_1\,\in\,(0,1)$ such that for any vector $\vg$ under the Frenet basis $\big[\er,\etau,Q\big]$, there holds the inequality
		\begin{equation} \label{eq: gain of 2d}
			-\int\hJ\mHam\vg\cdot\big(\hJ\mHam\big)^2\vg \leq\, \frac{b(1-d_1)(|\ao|+|\bo|)}{2(\ao^2+\bo^2)} \big\|\hJ\mHam\vg\big\|_{L^2}^2 +b\,\db\,\big\|\mHam \vg\big\|_{L^2}^2,
		\end{equation}
		where $\db$ is the infinitesimal defined by \eqref{eq: b inf}.
	\end{lemma}
	Applying \eqref{eq: gain of 2d} with $\vg=\w_2$, and also \eqref{eq: ip bound 31}, we have for some constant $d_2=d_2(b^{\ast})\in(0,1)$ independent of $\W_2$ that
	\begin{equation} \label{eq: sch term estimate}
	\begin{aligned}
		-\ao\int\hJ\mHam\w_2\cdot(\hJ\mHam)^2\w_2 
		& \leq b(1-d_1)\big\|\hJ\mHam\w_2\big\|_{L^2}^2 +b\,\db\big\|\mHam\w_2\big\|_{L^2}^2 \\
		& \leq b(1-d_2)\E_4+ b\,\db\Ebb,
	\end{aligned}
	\end{equation}
	where we used $\ao(|\ao|+|\bo|)\leq 2(\ao^2+\bo^2)$. Then for the second term \eqref{eq: quasilinear expand}, we use \eqref{eq: hJ lambda} to separate it into
	\begin{equation} \label{eq: heat term expand} 
	\begin{aligned}
		&\quad-\int\hJ^2\mHam\w_2\cdot\mHam\big(\hJ\mHam\hJ\w_2\big)\\
		& = -\int\R^2\mHam\w_2\cdot\mHam\big[(\R\mHam\R)\w_2\big] \\
		& \quad -\int\R^2\mHam\w_2\cdot\mHam\Big[\Big(\R\mHam\Rw+\Rw\mHam\R+\Rw\mHam\Rw\Big)\w_2\Big] \\
		& \quad -\int\Big(\R\Rw\hs+\hs\Rw\R\hs+\hs\Rw^2\Big)\mHam\w_2\cdot\mHam\Big[\Big(\R\mHam\R\hs+\hs\R\mHam\Rw\hs+\hs\Rw\mHam\R\hs+\hs\Rw\mHam\Rw\Big)\w_2\Big].
	\end{aligned}
	\end{equation}
	By \eqref{eq: relation mHamp}, \eqref{eq: mHamp factori}, the first term on the RHS is actually
	\begin{equation} \label{eq: heat term 1}
	\begin{aligned}
		&\quad -\int\R^2\mHam\w_2\cdot\mHam\big[(\R\mHam\R)\w_2\big]
		= \int\R^2\mHam\w_2\cdot\mHam(\mHamp\w_2) \\
		& = \int\Big(\hs-\hs\mHamp\w_2 +2\onez\py\w_2^{\third}\phsh\ex\Big)\cdot\bigg[(\mHamp)^2\w_2 +2\onez\pyzy\Ham\w_2^{\first}\phsh\ez\bigg] \\
		& = -\int|\mA\mHamp\w_2|^2 +\int2\onez\py\w_2^{\third}\Ham^2\w_2^{\first}.
	\end{aligned}
	\end{equation}
	We claim the negative integral $-\int|\mA\mHamp\w_2|^2$ is the leading term of \eqref{eq: heat term expand}, and the other terms in \eqref{eq: heat term expand} are small errors in the sense that they can be bounded by $-\int|\mA\mHamp\w_2|^2$ plus $b^5\,\db\lgba^{-2}$. Indeed, in view of factorization \eqref{eq: mHamp factori}, we observe the LHS of \eqref{eq: heat term expand} can be reformulated as
	\begin{equation} \notag
	\begin{aligned}
		& \quad \int\hJ^2\mHam\w_2\cdot\mHam\big(\hJ\mHam\hJ\w_2\big) \\
		& = \int\mA\big(\hJ^2\mHam\w_2\big)\cdot\mA\big(\hJ\mHam\hJ\w_2\big) -\int\big(\hJ^2\mHam\w_2\big)^{\first}\cdot 2\onez\py\big(\hJ\mHam\hJ\w_2\big)^{\third} \\
		& \quad +\int \py\big(\hJ^2\mHam\w_2\big)^{\third}\cdot \py\big(\hJ\mHam\hJ\w_2\big)^{\third} +\int\big(\hJ^2\mHam\w_2\big)^{\third}\cdot 2\onez\pyzy\big(\hJ\mHam\hJ\w_2\big)^{\first}. 
	\end{aligned}
	\end{equation}
	The second order spacial derivative from $\mHam$ can act on either the second derivative of $\w_2$ ($\mHam\w_2$ and $\hJ\mHam\hJ\w_2$), or $\hw$ coming from $\hJ$ (recall $\hJ = (\ez+\hw)\wedge$). According to this the estimates can be splited into different types. The most involved situation is that one single derivative acts on $\hw$, and the other acts on $\mHam\w_2$ (or $\hJ\mHam\hJ\w_2$), where the whole integrand is basically given by $\py\hw$ times $\mHam\w_2$ (or $\hJ\mHam\hJ\w_2$) times a third derivative of $\w_2$. A key observation here is that the third derivative of $\w_2$ actually formulates $\mA\mHamp\w_2$, so that a straight application of Cauchy-Schwartz finishes the estimate. To see this, we estimate for instance the second term on the RHS of \eqref{eq: heat term 1}. Recalling \eqref{eq: w2}, we see
	\begin{equation} \notag
		\w_2^{\third} = (\hJ\mHam\w)^{\third} 
		= \h\alpha\Ham\beta -\h\beta\Ham\alpha.
	\end{equation}
	This together with \eqref{eq: ip bound 3}, \eqref{eq: ip bound 5} \eqref{eq: ip bound 11}, \eqref{eq: ip bound 15}, and the construction of the approximation solution implies 
	\begin{equation} \notag
	\begin{aligned}
		&\quad\int\big|2\onez\py\w_2^{\third}\Ham^2\w_2^{\first}\big| \\
		& = \int\Big|\A\bigg[O\Big(\frac{4}{1+y^2}\Big)\py\big(\h\alpha\Ham\beta -\h\beta\Ham\alpha\big)\bigg]\Big|\cdot\big|\A\Ham\w_2^{\first}\big| \\
		& \leq C\bigg(\|\py^2\hw^\perp\|_{L^\infty} +\Big\|\frac{\py\hw^\perp}{y}\Big\|_{L^\infty} +\Big\|\frac{\py\hw^\perp}{1+y^2}\Big\|_{L^\infty}\bigg) \sum_{i=0}^{4} \int\frac{|\py^i\w^\perp|}{1+y^{4-i}}\cdot|\mA\mHamp\w_2| \\
		& \leq b\lgba^{C_1} \sum_{i=0}^{4} \int\frac{|\py^i\w^\perp|}{1+y^{4-i}}\cdot|\A\mHamp\w_2| \\
		& \leq b\,\db\bflog + o(1)\int|\mA\mHamp\w_2|^2,
	\end{aligned}
	\end{equation}
	where the $o(1)$ is a positive constant that can be chosen sufficiently small. In other situations, we can control the terms directly by either $\|\py\hw\|_{L^{\infty}}^2b^4\lgba^{-2}$ or $\|\hw\|_{L^{\infty}}\int|\mA\mHamp\w_2|^2$, and thus the claim follows. Therefore we obtain
	\begin{equation} \label{eq: heat term estimate}
	\begin{aligned}
		\bo\int\hJ\mHam\w_2\cdot\big(\hJ\mHam\big)^2\hJ\w_2
		\leq -\frac{\bo}{2}\int\big|\mA\mHamp\w_2|^2 + b\,\db\Ebb.
	\end{aligned}
	\end{equation}
	Now for the second line of \eqref{eq: quasilinear expand}, from the definition of $k_i$ \eqref{eq: ks}, the coefficients satisfy $\ka(\ao^2-\bo^2)\geq 0, \kb(\bo^2-\ao^2)\geq 0$, and thus
	\begin{equation} \notag
	\lf\{\begin{aligned}
		& 2\ao c_2\ka=\frac{\ka(\ao^2-\bo^2)}{\kk\ao^2+\bo^2}\cdot\frac{4\ao^2}{\ao^2+\bo^2}\in[0,4], \\
		& -2\ao c_2\kb=\frac{\kb(\bo^2-\ao^2)}{\kk\ao^2 + \bo^2}\cdot\frac{4\ao^2}{\ao^2+\bo^2}\in[0,4].
	\end{aligned}\rg.
	\end{equation}
	These together with \eqref{eq: 43 negativity} ensure the negativity
	\begin{equation} \label{eq: negativity estimate}
		2\ao c_2\ka\int\mHaml\W_2^0\cdot L\mAl\W_2^0 \leq 0.
	\end{equation}
	In addition, from the decomposition of $\W_2^0$ \eqref{eq: W2 decomposition}, we have
	\begin{equation} \notag
	\begin{aligned}
		\int\mAl\mHaml\W_2^0\cdot L\W_2^0
		= -\frac{4b}{\la^8}\bigg(\hs-\hs\hs\int\mA\mHamp\w_2^1\cdot \frac{y\phsh\w_2^0}{(1+y^2)^2} +\int\mA\mHamp\w_2\cdot \frac{y\phsh\w_2^0}{(1+y^2)^2} \bigg),
	\end{aligned}
	\end{equation}
	where by \eqref{eq: ip bound 29}, \eqref{eq: ip bound 33}, the first integral in the parenthesis is bounded by
	\begin{equation} \notag
	\begin{aligned}
		& \Big|\int\mA\mHamp\w_2^1\cdot\frac{y\phsh\w_2^0}{(1+y^2)^2}\Big|
		\leq \int|\mHamp\w_2^1|\cdot \Big|\mAs\bigg(\frac{y\phsh\w_2^0}{(1+y^2)^2}\bigg)\Big| \\
		&\qquad\qquad \les \int\big|\R\mHam(\R^2\w_2^1)\big|\cdot \bigg(\frac{|\py\w_2^0|}{1+y^3} +\frac{|\w_2^0|}{y(1+y^3)}\bigg)
		\les \db\Ebb.
	\end{aligned}
	\end{equation}
	Moreover, from the positivity of $\bo$ and $b$, the second integral is bounded by
	\begin{equation} \notag
		\int\mA\mHamp\w_2\cdot \frac{y\phsh\w_2^0}{(1+y^2)^2}
		\leq \frac{\bo}{16b}\int\big|\mA\mHamp\w_2\big|^2 +O(b)\Ebb.
	\end{equation}
	Putting these bounds together, we see
	\begin{equation} \label{eq: absorb estimate}
		-2\ao c_2\kb\int\mAl\mHaml\W_2^0\cdot L\W_2^0
		\leq \frac{1}{\la^8} \bigg[\frac{\bo}{4}\int\big|\mA\mHamp\w_2\big|^2 +b\,\db\Ebb\bigg].
	\end{equation}
	Finally, collecting together \eqref{eq: quasilinear first two},  \eqref{eq: sch term estimate}, \eqref{eq: heat term estimate}, \eqref{eq: negativity estimate}, \eqref{eq: absorb estimate}, we obtain the control of the quadratic terms in \eqref{eq: mixed energy identity}: There holds
	\begin{equation} \label{eq: quadratic}
	\begin{aligned}
		& -\ao \int \hJ_\la \mHaml \W_2 \cdot\big(\hJ_\la \mHaml\big)^2 \W_2+ \bo \int \hJ_\la \mHaml \W_2 \cdot\big(\hJ_\la \mHaml\big)^2 \hJ_\la \W_2 \\
		& +2\ao c_2\ka\int\mHaml\W_2^0\cdot L\mAl\W_2^0 - 2\ao c_2\kb\int\mAl\mHaml\W_2^0\cdot L\W_2^0 \\
		& \leq  \frac{b\big(1-d_2\big)}{\la^{8}} \Ebb,
	\end{aligned}
	\end{equation}
	for some constant $d_2\in(0,1)$. Here we note that the integral $\int\big|\mA\mHamp\w_2\big|^2$ in \eqref{eq: absorb estimate} has been absorbed by the one in \eqref{eq: heat term estimate} with negative sign. 
	
	%%%%%%%%%%%%%%%%%%%
	%%%%%%%%%%%%%%%%%%%
	\medskip
	\step{3}{The $\Q_3$ term} According to the definition \eqref{eq: Q3}, we separate $\Q_3$ into four parts
	\begin{equation} \notag
		\Q_3=\Q_3^1 +\Q_3^2 +\Q_3^3 +\Q_3^4,
	\end{equation}
	where
	\begin{equation} \label{eq: Q3 separate}
	\begin{aligned}
		\Q_3^1 &= -\Big(b+\frac{\la_s}{\la}\Big)\int\R\mHaml\W_2^0\cdot \bigg[-\mHamlp\bigg(\frac{(\varl V)_\la}{\la^2r^2}\W^\perp\bigg) +\frac{(\varl V)_\la}{\la^2r^2}\R \W_2^0\,\bigg], \\
		\Q_3^2 &= \int\R\mHaml\W_2^0\cdot \bigg(\R\mHaml\R \big[\prt,\mHaml\big]\gamma\,\ez +\Rw\mHaml \R\big[\prt,\mHaml\big]\W \\
		&\qquad\qquad\qquad\quad +\prt\hW\wedge\mHaml\W_2^0 +\Rw\big[\prt,\mHaml\big]\W_2^0+\big[\prt,\hJ_\la\mHaml\big]\W_2^1\bigg), \\
		\Q_3^3 &= \int\R\mHaml\W_2^1\cdot \Big(\hJ_\la\mHaml\R \big[\prt,\mHaml\big]\W + \big[\prt,\hJ_\la\mHaml\big]\W_2\Big), \\
		\Q_3^4 &= \int\Rw\mHaml\W_2 \cdot\Big(\hJ_\la\mHaml\R\big[\prt,\mHaml\big]\W +\big[\prt,\hJ_\la\mHaml\big]\W_2\Big).
	\end{aligned}
	\end{equation}
	We estimate each one of these respectively. From the modulation equations \eqref{eq: modulation eq}, and the rough bound of the uncontrollable term \eqref{eq: uncontrollable term}, we have
	\begin{equation} \label{eq: Q31 estimate}
		|\Q_3^1| \les\frac{O(b^3)}{\la^{8}}\Ebb \les\frac{b\,\db}{\la^{8}}\Ebb.
	\end{equation}
	To treat $\Q_3^2$, we apply \eqref{eq: commutator2}, \eqref{eq: modulation eq} to get the brute estimate
	\begin{equation} \label{eq: commutator2 explain}
		\big[\prt,\mHaml\big]\W 
		= \frac{b}{\la^4} O\bigg(
			\frac{\w^\perp}{1+y^4} +\frac{\py\gamma\,\ex}{1+y^2} +\frac{1}{1+y^2}\Big(\py\alpha+\frac{\alpha}{y}\Big)\ez \bigg).
	\end{equation}
	Now we consider the $L^2$ bound of each term in the big parenthesis. By \eqref{eq: commutator2 explain}, \eqref{eq: ip bound 16}, the first term in the brace of $\Q_3^2$ is	
	\begin{equation} \notag
		\int\big|\R\mHaml\R\big[\prt,\mHaml\big]\gamma\phsh\ez\big|^2 
		\les \frac{b^2}{\la^{10}} \int\Big|\Ham\bigg(\frac{\py\gamma}{1+y^2}\bigg)\Big|^2 \les \frac{b^2\,\db}{\la^{10}}\Ebb.
	\end{equation}
	Similarly from \eqref{eq: commutator2}, \eqref{eq: hw L infinity estimate}, \eqref{eq: ip bound 3} the second term in the brace is bounded by
	\begin{equation} \notag
	\begin{aligned}
		\int\Big|\Rw\mHaml\R\big[\prt,\mHaml\big]\W\Big|^2 
		& \les \frac{b^2\,\db}{\la^{10}} \int
			\bigg[ \Big|\mHam\bigg(\frac{\w^\perp}{1+y^4}\bigg)\Big|^2 +\Big|\Ham\bigg(\frac{\py\gamma}{1+y^2}\bigg)\Big|^2 \bigg] \\
		& \les \frac{b^2\,\db}{\la^{10}}\Ebb.
	\end{aligned} 
	\end{equation}
	The rest of the terms in the parenthesis can be controlled in a similar fashion. Thus we have
	\begin{equation} \label{eq: Q32 estimate}
		|\Q_3^2|\les\big\|\R\mHaml\W_2^0\big\|_{L^2} \bigg[\frac{b^2\,\db}{\la^{10}}\Ebb\bigg]^{\frac{1}{2}}
		\les \frac{b\,\db}{\la^{8}}\Ebb.
	\end{equation}
	For $\Q_3^3$, we compare it with the LHS of \eqref{eq: uncontrollable small originate}. The latter, according to the proof of Lemma~\ref{pf: plain energy} (see \eqref{eq: uncontrollable small originate}, \eqref{eq: uncontrollable small originate 2}), has been splited into three separate parts in \eqref{eq: Q3 separate}. Specifically,
	\begin{equation} \label{eq: compare}
	\begin{aligned}
		&\quad \int\R\mHaml\W_2^0 \cdot \Big(\hJ_\la\mHaml\R_z\big[\prt,\mHaml\big]\W +\big[\prt,\hJ_\la\mHaml\big]\W_2\Big) \\
		& = \int\R\mHaml\W_2^0\cdot\Big[-\mHamlp\big(G\W^\perp\big)+G\R \W_2^0\,\Big] +\Q_3^1 +\Q_3^2.
	\end{aligned}
	\end{equation} 
	In view of the rough bound \eqref{eq: uncontrollable term} and the above computations on $\Q_3^1, \Q_3^2$, we have already estimated the integrals in \eqref{eq: compare} using Cauchy-Schwartz and Appendix~\ref{S: appendix A}. Note that the first factor of their integrands are always $\R\mHaml\W_2^0$, bounded by
	\begin{equation} \notag
		\int|\R\mHaml\W_2^0|^2 =\frac{1}{\la^6}\int|\R\mHam\w_2^0|^2 \les\frac{1}{\la^6}\Ebb.
	\end{equation}
	On the contrary, we review \eqref{eq: Q3 separate} and see the first factor of the integrand of $\Q_3^3$ is $\R\mHaml\W_2^1$ instead of $\R\mHaml\W_2^0$. This, by \eqref{eq: ip bound 33}, actually gives a better bounded than $\R\mHaml\W_2^0$:
	\begin{equation} \notag
		\int|\R\mHaml\W_2^1|^2 \leq\frac{1}{\la^6}\int|\mHam\w_2^1|^2 \les\frac{\db}{\la^6}\Ebb.
	\end{equation}
	The extra infinitesimal $\db$ bonus appearing here with the previous estimates yields the desired bound
	\begin{equation} \label{eq: Q33 estimate}
	\begin{aligned}
		|\Q_3^3| & \les \db
			\bigg(\Big|\hs\int\hs\R\mHaml\W_2^0\hs\cdot\hs \Big[\hs-\hs\hs\mHamlp\big(G\W^\perp \big)\hs+\hs G\R\W_2^0\Big]\Big| +|Q_3^1| +|Q_3^2|\bigg)\\
		& \les \frac{b\,\db}{\la^{8}}\Ebb.
	\end{aligned}
	\end{equation}
	Finally, for $\Q_3^4$, the smallness of $\|\hw\|_{L^\infty}$ \eqref{eq: hw L infinity estimate} implies the same bound as \eqref{eq: Q33 estimate} via similar analysis. Thus we have
	\begin{equation} \label{eq: Q3 estimate}
		|\Q_3| \les \frac{b\,\db}{\la^{8}}\Ebb.
	\end{equation}
	
	%%%%%%%%%%%%%%%%%%%
	%%%%%%%%%%%%%%%%%%%
	\medskip
	\step{4}{The term involving $\Q_1$}	Recall the definition of $\Q_1$ \eqref{eq: W2 equation}, we see the term involving $\Q_1$ in \eqref{eq: mixed energy identity} is actually given by
	\begin{equation} \label{eq: Q1 expand}
	\begin{aligned}
		&\quad \int\hJ_\la\mHaml\W_2\cdot\hJ_\la \mHaml\Q_1 \\
		& = \int\hJ_\la\mHaml\W_2\cdot\hJ_\la \mHaml\Big(\prt\hW\wedge\mHaml\W +\Rw\big[\prt,\mHaml\big]\W\Big) \\
		& = \frac{1}{\la^8}\hs\int\hJ\mHam\w_2\hs\cdot\hs \hJ\mHam\big(\prs\wzt\hs\wedge\hs\mHam\w\big) 
		+\frac{1}{\la^8}\hs\int\hJ\mHam\w_2\hs\cdot\hs \hJ\mHam\Big[\hs\big(\hs-\hs\ao\w_2\hs+\hs\bo\hJ\w_2\hs+\hs\f\big) \wedge\mHam\w\Big] \\
		&\quad +\int\hJ_\la\mHaml\W_2\cdot\hJ_\la \mHaml\Big(\hW\wedge\big[\prt,\mHaml\big]\W\Big).
	\end{aligned}
	\end{equation}
	By the explicit formula of $\wz$ given by Lemma~\ref{le: ap solution}, and the modulation equations \eqref{eq: modulation eq}, we have
	\begin{equation} \label{eq: partial s wzt}
	\begin{aligned}
		\prs\wzt 
		& = (\varl\chi)_{B_1}\bigg(\hs-\hs\frac{\prs B_1}{B_1}\bigg)\wz +\chi_{B_1}\prs\wz \\
		& = O\big(b\phsh\wz\one_{y\sim B_1}\big)+ \big(\prs\wz\one_{y\les B_1}\big)
		= b^2\,O\Big(y^3\one_{y\leq 1} +y\log y\,\one_{1\leq y\les B_1}\Big).
	\end{aligned}
	\end{equation}
	Applying this with \eqref{eq: partial s wzt}, \eqref{eq: JHw2 L2}, \eqref{eq: ip bound 6}, \eqref{eq: ip bound 18}, we obtain
	\begin{equation} \notag
	\begin{aligned}
		& \quad\; \Big|\int\hJ\mHam\w_2\cdot\hJ\mHam \big(\prs\wzt\wedge\mHam\w\big)\Big| \\
		& \les b^2 \bigg(\int|\hJ\mHam\w_2|^2\bigg)^{\frac{1}{2}} \Bigg(\sum_{i=0}^4\int\frac{|\py^i\w|^2|\log y|^2}{y^2(1+y^{4-2i})}\Bigg)^{\frac{1}{2}} \les b\,\db\Ebb.
	\end{aligned}
	\end{equation}
	Also, by the double wedge formula, there holds
	\begin{equation} \notag
	\begin{aligned}
		\w_2\wedge\mHam\w 
		& = \Big((\ez+\hw)\wedge\mHam\w\Big)\wedge\mHam\w \\
		& = (\ez\cdot\mHam\w)\,\mHam\w-|\mHam\w|^2\ez+O\big(|\hw|\phsh|\mHam\w|^2\big).
	\end{aligned}
	\end{equation}
	We apply \eqref{eq: ip bound 3}, \eqref{eq: ip bound 16}, \eqref{eq: ip bound 18}, \eqref{eq: ip bound 24}, \eqref{eq: ip bound 25} and it follows that
	\begin{equation} \notag
		\Big|\int\hJ\mHam\w_2\cdot\hJ\mHam \big(\w_2\wedge\mHam\w\big)\Big| \les b\,\db\Ebb.
	\end{equation}
	The term involving $\bo\hJ\w_2$ in \eqref{eq: Q1 expand} can be estimated similarly, and the one with $f$ can be treated as in Lemma~\ref{le: morawetz}. Now for last line of \eqref{eq: Q1 expand}, we use \eqref{eq: commutator2 explain} with \eqref{eq: ip bound 3}, \eqref{eq: ip bound 16} to get 
	\begin{equation} \notag
		\Big|\int\hJ_{\la}\mHaml\W_2\cdot\hJ_{\la}\mHaml\Big(\hW\wedge\big[\prt,\mHaml\big]\W\Big)\Big|
		\les \frac{b\,\db}{\la^8}\Ebb.
	\end{equation}
	These yield that
	\begin{equation} \label{eq: Q1 estimate}
		\Big| \int \hJ_\la \mHaml \W_2 \cdot \hJ_\la \mHaml \Q_1 \Big| \les \frac{b\,\db}{\la^{8}}\Ebb.
	\end{equation}
	
	%%%%%%%%%%%%%%%%%%%
	%%%%%%%%%%%%%%%%%%%
	\medskip
	\step{5}{The integral involving $\F$} From the definition \eqref{eq: F}, there holds the estimate:
	\begin{equation} \label{eq: hJ mHam square f}
		\int\hJ_\la\mHaml\W_2\cdot\big(\hJ_\la\mHaml\big)^2\F 
		\les \frac{1}{\la^{8}} \Ebb^{\frac{1}{2}}\bigg( \int\big|(\hJ\mHam)^2\f\big|^2 \bigg)^{\frac{1}{2}}.	
	\end{equation}
	We claim the following bound:
	\begin{equation} \label{eq: hJ mHam square f expand}
		\int|(\hJ\mHam)^2\mmodt|^2 +\int|(\hJ\mHam)^2\errzt|^2 + \int|(\hJ\mHam)^2\rest|^2 
		\les b\,\db\Ebb.
	\end{equation}
	In view of \eqref{eq: f term}, we inject \eqref{eq: hJ mHam square f expand} into \eqref{eq: hJ mHam square f} and then the desired bound for the $\F$ integral follows. Collecting this with \eqref{eq: energy morawetz size}, \eqref{eq: quadratic}, \eqref{eq: Q1 estimate}, \eqref{eq: Q3 estimate}, we conclude the proof of Proposition~\ref{pro: mixed energy estimate}. Now it remains to show \eqref{eq: hJ mHam square f expand}.
	
	%%%%%%%%%%%%%%%%%%%
	%%%%%%%%%%%%%%%%%%%
	\medskip
	\step{6}{Contribution of $\mmodt$} We see from \eqref{eq: mHam mmodt} that
	\begin{equation} \notag
	\begin{aligned}
		\hJ\mHam\mmodt 
		& = \chib\Bigg(\frac{a_s}{\ao^2+\bo^2}\begin{bmatrix} -\bo\\ \ao\\ 0\end{bmatrix}\varl\phi -\frac{b_s+b^2+a^2}{\ao^2+\bo^2}\begin{bmatrix} \ao\\ \bo\\ 0\end{bmatrix}\varl\phi\Bigg)\\
		&\quad +U(t)\phsh\bigg(b\phsh y^{3}\one_{y\leq 1} +\frac{b\phsh y}{\lgba}\one_{1\leq y\leq 6B_0} +\frac{\log y}{y}\one_{y\sim B_1} +\frac{1}{y^3}\one_{y\gtrsim B_1}\bigg) \\
		&\quad +U(t)\phs\bigg[\hw\wedge O\bigg(\one_{y\leq 1} +\frac{\log y}{y}\one_{1\leq y\leq 2B_1} +\frac{1}{y^3}\one_{y\gtrsim B_1} \bigg)\bigg].
	\end{aligned}
	\end{equation}
	Applying $\hJ\mHam$ again, we find in the region of $0\leq y\leq B_1$ that the first line on the above RHS vanishes, and hence
	\begin{equation} \notag
	\begin{aligned}
		\big(\hJ\mHam\big)^2\mmodt
		& = U(t)\bigg(b\phsh y\one_{y\leq 1} +\frac{b}{y\lgba}\one_{1\leq y\leq 6B_0} +\frac{\log y}{y^3}\one_{y\sim B_1} +\frac{1}{y^5}\one_{y\gtrsim B_1}\bigg) \\
		&\quad + U(t)\phs \hJ\mHam\bigg[\hw\wedge O\bigg(\one_{y\leq 1} +\frac{\log y}{y}\one_{1\leq y\leq 2B_1} +\frac{1}{y^3}\one_{y\gtrsim B_1} \bigg)\bigg].
	\end{aligned}
	\end{equation}
	For the first part above, a straightforward computation with \eqref{eq: U bound} yields
	\begin{equation} \notag
		U(t)^2 \hs\hs\int\hs\hs\bigg(b\phsh y\one_{y\leq 1} \hs+\hs\frac{b}{y\lgba}\one_{1\leq y\leq 6B_0} \hs+\hs\frac{\log y}{y^3}\one_{y\sim B_1} \hs+\hs\frac{1}{y^5}\one_{y\gtrsim B_1}\hs\hs\bigg)^2\hs\hs
		\les b^2\bigg(\frac{\E_4}{\log M}\hs+\hs\frac{b^4}{\lgba^2}\bigg),
	\end{equation}
	while the estimate for the second part requires a separation by $\hw=\wzt+\w^\perp+\gamma\phsh\ez$ and further applications of the interpolation bounds in Appendix~\ref{S: appendix A}. The resulting bound is identical. The details are left to readers. This yields the estimate for $\mmodt$.
	
	%%%%%%%%%%%%%%%%%%%
	%%%%%%%%%%%%%%%%%%%
	\medskip
	\step{7}{Contribution of $\errzt$} According to \eqref{eq: hJ lambda}, we let
	\begin{equation} \notag
	\hJ\mHam\errzt = \Po + \Pt, \quad\mbox{where}\quad
	\lf\{\begin{aligned} 
		& \Po = \R\mHam\phsh\errzt, \\
		& \Pt = \Rw\mHam\phsh\errzt.
	\end{aligned}\rg.
	\end{equation}
	By \eqref{eq: mHam func}, the explicit expressions are
	\begin{equation} \label{eq: po}
		\Po = \begin{bmatrix}
			-\hs\Ham\errzt^{\second} \\
			 \Ham\errzt^{\first} \\ 0 \\
		\end{bmatrix} -2\onez\phsh\py\errzt^{\third}\phsh\ey,
	\end{equation}
	\begin{equation} \label{eq: pt}
	\begin{aligned}
		\Pt 	
		& = \bigg\{ \h\beta\bigg[-\Delta\errzt^{\third} + 2\onez\pyzy\errzt^{\first}\bigg] - \h\gamma\Ham\errzt^{\second} \bigg\}\phsh\ex \\
		&\quad +\bigg\{ \h\gamma\Big[\Ham\errzt^{\first} -2\onez\py \errzt^{\third}\Big] -\h\alpha\bigg[-\Delta \errzt^{\third} +2\onez\pyzy\errzt^{\first}\bigg] \bigg\}\phsh\ey \\
		&\quad + \bigg\{ \h\alpha\Ham\errzt^{\second} -\h\beta\Big[\Ham\errzt^{\first} -2\onez\py\errzt^{\third} \Big] \bigg\}\phsh\ez.
	\end{aligned}
	\end{equation}
	Using \eqref{eq: localized weighted bound 2}, \eqref{eq: localized weighted bound 4}, \eqref{eq: localized weighted bound 6}, we have
	\begin{equation} \notag
	\begin{aligned}
		\int|\mHam\Po|^2
		\les & \int\big|\Ham^2\errzt^{\first}\big|^2 + \int\big|\Ham^2 \errzt^{\second}\big|^2 \\
		& +\int\Big|\frac{\big(\py+\tfrac{1}{y}\big)\Ham\errzt^{\second}}{1+y^2}\Big|^2 +\sum_{1\leq i\leq 3}\int\frac{|\py^{i}\errzt^{\third}|^2}{(1+y^{2})y^{6-2i}} \les \frac{b^6}{\lgba^2}, 
	\end{aligned}
	\end{equation}
	which together with \eqref{eq: hw L infinity estimate} yields
	\begin{equation} \notag
		\int|\hJ\mHam\Po|^2 \les \big(1+\|\hw\|_{L^{\infty}}\big)\int|\mHam\Po|^2 \les \frac{b^6}{\lgba^2}.
	\end{equation}
	Similarly, \eqref{eq: hw L infinity estimate} implies
	\begin{equation} \label{eq: Pt no hJ}
		\int|\hJ\mHam\Pt|^2 \les \int|\mHam\Pt|^2.
	\end{equation}
	Thus it suffices to control $\mHam\Pt$ directly. For this, we   compute its components by \eqref{eq: pt}, and estimate them respectively. For instance, let us consider for $y\leq 1$ the following term coming from the first component of $\mHam\Pt$:
	\def\be{\h\beta}
	\def\ff{\errzt^{\third}}
	\def\gg{\errzt^{\first}}
	\begin{equation} \label{eq: a term in Pt}
	\begin{aligned}
		&\quad \Ham\bigg\{\h\beta\bigg[-\Delta\errzt^{\third} + 2\onez\pyzy\errzt^{\first}\bigg]\bigg\} \\
		& \les \big|\Ham\big(\h\beta\phsh\Delta\errzt^{\third}\big)\big| +\Big|\Ham\bigg[\h\beta\big(1\hs+\hs O(y^2)\big)\pyzy\errzt^{\first}\bigg]\Big| \\
		& \les \frac{y|\Ham\be|}{1+\lf|\log y\rg|}\Bigg( \sum_{1\leq i\leq 2}\frac{|\py^{i}\ff|}{y^{4-i}} +\sum_{0\leq i\leq 1}\frac{|\py^{i}\gg|}{y^{3-i}}\Bigg) + y\bigg(|\py\be|+\frac{|\be|}{y}\bigg) \sum_{0\leq i\leq 3}\frac{|\py^{i}\gg|}{y^{3-i}}.
	\end{aligned}
	\end{equation}
	Using \eqref{eq: localized weighted bound 1}, \eqref{eq: localized weighted bound 2}, \eqref{eq: ip bound 11}, \eqref{eq: ip bound 13}, we have
	\begin{equation} \notag
	\begin{aligned}
		&\quad \int_{y\leq 1}\Big|\Ham\bigg\{\h\beta\bigg[-\Delta\errzt^{\third} + 2\onez\pyzy\errzt^{\first}\bigg]\bigg\}\Big|^2 \\
		&\les \Big\|\frac{\Ham\be}{y(1+\lf|\log y\rg|)}\Big\|_{L^{\infty}(y\leq 1)}^2 \Bigg(\sum_{1\leq i\leq 2}\int\frac{|\py^{i}\ff|^2}{y^{8-2i}} +\sum_{0\leq i\leq 1}\int\frac{|\py^{i}\gg|^2}{y^{6-2i}}\Bigg) \\
		&\quad +\bigg(\|\py\be\|_{L^{\infty}(y\leq 1)}^2 +\Big\|\frac{\be}{y}\Big\|_{L^{\infty}(y\leq 1)}^2\bigg) \sum_{0\leq i\leq 3}\int\frac{|\py^{i}\gg|^2}{y^{6-2i}} 
		\les \frac{b^6}{\lgba^2}.
	\end{aligned}
	\end{equation}
	The estimates of \eqref{eq: a term in Pt} for $y\geq 1$ can be derived in a similar fashion. Other terms in $\mHam\Pt$ can be also treated in the same way. Therefore we obtain
	\begin{equation} \notag
		\int|\mHam\Pt|^2 \les \frac{b^6}{\lgba^2},
	\end{equation}
	which together with \eqref{eq: Pt no hJ} yields the bound for $\Pt$. This concludes the estimate for $\errzt$.
	
	%%%%%%%%%%%%%%%%%%%
	%%%%%%%%%%%%%%%%%%%
	\medskip	
	\step{8}{Contribution of $\rest$} From the decomposition \eqref{eq: rest expand}, and \eqref{eq: hw L infinity estimate}, we see
	\begin{equation} \label{eq: rest expand 2}
		|(\hJ\mHam)^2\rest| \les |\mHam\R\mHam\rest_1| +|\mHam\Rw\mHam\rest_1| +|\mHam\R\mHam\rest_2| +|\mHam\Rw\mHam\rest_2|.
	\end{equation}
	Using the estimate for $\mHam\rest_1$ \eqref{eq: mHam R1 expand}, we compute by brute force that
	\begin{equation} \notag
	\begin{aligned}
		\mHam\R\mHam\rest_1 & = O\big(\Ham(\mHam\rest_1)\big) +O\bigg(\frac{\py\mHam\rest_1}{1+y^2}\bigg) +O\bigg(\frac{\mHam\rest_1}{y(1+y^2)}\bigg) \\
		& \les \Bigg[\sum_{0\leq i\leq 2}\bigg(\frac{|\py^{i}\Ham\w^{\perp}|}{y^{2-i}} +\frac{|\py^{i}\Delta\gamma|}{y^{2-i}}\bigg) +\sum_{0\leq i\leq 3}\frac{|\py^{i}\w|}{y^{3-i}(1+y^2)} \Bigg] \\
		&\qquad\qquad \times\bigg(b^{2}\phsh y^3\one_{y\leq1}\hs +\hs\frac{b^{\frac{3}{2}}}{\lgba}\one_{y\sim B_1}\hs +\hs\frac{b}{y}\one_{y\gtrsim B_1}\bigg),
	\end{aligned}
	\end{equation}
	where $b^2\phsh y^3\one_{y\leq 1}$ eliminates the possible singularity at the origin caused by $|\py^{i}\w|/y^{3-i}$. From \eqref{eq: ip bound 3}, \eqref{eq: ip bound 5}, \eqref{eq: ip bound 16}, all these terms can be controlled, and thus
	\begin{equation} \notag
		\int|\mHam\R\mHam\rest_1|^2\les b\,\db\Ebb.
	\end{equation}
	The estimate for $\mHam\Rw\mHam\rest_1$ is more involved, but in view of the smallness of derivatives of $\hw$ indicated by \eqref{eq: ip bound 15}, \eqref{eq: ip bound 21}, \eqref{eq: ip bound 22}, \eqref{eq: ip bound 24}, it could be handled similarly. Therefore we have
	\begin{equation} \notag
		\int|(\hJ\mHam)^2\rest_1|^2\les b\,\db\Ebb.
	\end{equation}
	Then for the terms involving $\rest_2$ in \eqref{eq: rest expand 2}, we recall from \eqref{eq: rest expand} that
	\begin{equation} \notag
	\begin{aligned}
		\rest_2 
		& = -\bo\hJ\big(\w\wedge\mHam\wzt\big) -\bo\w\wedge\big(\wzt\wedge\mHam\wzt\big) \\ 
		&\quad -\bo\w\wedge\big(\w\wedge\mHam\wzt\big) +\vart_s Z\R\w.
	\end{aligned}
	\end{equation}
	The first two terms in $\rest_2$ are concerned with $\mHam\wzt$, and thus easy to treat. Indeed, by \eqref{eq: wzt brute estimate}, $\mHam\wzt$ would eventually become an extra $b^2$ bonus before the $\E_4$ bound, which results in the bound $b^5\db/\lgba^2$. Now we treat the third term in $\rest_2$. By brute computations, and \eqref{eq: ip bound 3}, \eqref{eq: ip bound 11}, \eqref{eq: ip bound 13}, \eqref{eq: ip bound 16}, \eqref{eq: ip bound 22}, we have 
	\begin{equation} \notag
	\begin{aligned}
		\int\big|\mHam\R & \mHam\big(\w\hs\wedge(\w\hs\wedge\hs\mHam\wzt)\big)\big|^2 
		\les \sum_{0\leq i_1+i_2+i_3\leq 4}\int\frac{|\py^{i_1}\w|^2|\py^{i_2}\w|^2|\py^{i_3}\mHam\wzt|^2}{y^{2(4-i_1+i_2+i_3)}} \\
		& \les b^2 \hspace{-.3em}\sum_{0\leq i_1+i_2\leq 4}\int\frac{|\py^{i_1}\w|^2|\py^{i_2}\w|^2}{y^{2(3-i_1-i_2)}} \bigg(\one_{y\leq 1} +\frac{\log y}{y^2}\one_{1\leq y\leq 2B_1}\bigg)^2 \\
		& \les b^2\lgba^4 \bigg(\big\|\py\w\big\|_{L^{\infty}}^2+\Big\|\frac{\w}{y}\Big\|_{L^{\infty}}^2\bigg) \sum_{0\leq i\leq 4}\int \frac{|\py^i\w|^2}{y^{2(2-i)}(1+y^4)(1+\lf|\log y\rg|^2)} \\
		& \quad +b^2\lgba^4\int\frac{|\py^2\w|^4}{(1+y^4)(1+\lf|\log y\rg|^2)} \\
		& \les b^3 \Ebb \les b\,\db\Ebb.
	\end{aligned}
	\end{equation}
	Finally for the phase term $\mHam\R\mHam(Z\R\w)$ in $\rest_2$, to deal with the possible singularity at the origin, we split it into
	\begin{equation} \label{eq: phase singularity}
	\begin{aligned}
		\mHam\R\mHam(Z\R\w) 
		& = \mHam\R\mHam\big((Z-1)\R\w\big) +\mHam\R\mHam(\R\w) \\
		& = -\mHam\mHamp\big((Z-1)\w\big) -\mHam\mHamp\w.
	\end{aligned}
	\end{equation} 
	For the first term in \eqref{eq: phase singularity}, we see
	\begin{equation} \notag
	\begin{aligned}
		\mHam\mHamp\big((Z-1)\w\big) 
		& = \Ham^2\big((Z-1)\alpha\big)\ex 
			+\Ham^2\big((Z-1)\beta\big)\ey \\
		&\quad +2\onez\pyzy\Ham\big((Z-1)\alpha\big)\ez,
	\end{aligned}
	\end{equation}
	where the singularity can only arise from the first two components, so we compute using the asymptotics $Z-1=-2y^2+O(y^4)$ for $y\leq 1$ that
	\begin{equation} \notag
	\begin{aligned}
		\Ham^2\big((Z-1)\alpha\big)
		& \sim \Ham^2(y^2\alpha) + O\big(\Ham^2(y^4\alpha)\big) \\
		& = y^2\Ham^2\alpha -4y\big(\py\Ham\alpha +\Ham\py\alpha\big) \\
		&\quad -8\Ham\alpha +8\py^2\alpha +\frac{4\py\alpha}{y} +O\big(\Ham^2(y^4\alpha)\big).
	\end{aligned}
	\end{equation}
	The singular terms are actually
	\begin{equation} \notag
	\begin{aligned}
		-4y\big(\py\Ham\alpha +\Ham\py\alpha\big) +\frac{4\py\alpha}{y}
		& = -4y\bigg[2\Ham\py\alpha +\py\bigg(\frac{V}{y^2}\bigg)\bigg] \\
		& = 8y\bigg(\py^3\alpha +\frac{\py^2\alpha}{y}\bigg) 
			-4y\bigg[\,\frac{2V}{y^2}\py\alpha +\py\bigg(\frac{V}{y^2}\bigg)\alpha\,\bigg],
	\end{aligned}
	\end{equation}
	where by $V-1=4y^2+O(y^4)$, we have
	\begin{equation} \notag
	\begin{aligned}
		\frac{2V}{y^2}\py\alpha +\py\bigg(\frac{V}{y^2}\bigg)\alpha
		& = \frac{2(V-1)}{y^2}\py\alpha -\py\bigg(\frac{V-1}{y^2}\bigg)\alpha +\frac{2(Z-1)}{y^2}\alpha -\frac{2\A\alpha}{y^2} \\
		& = O\big(\py\alpha\big) +O(\alpha) -\frac{2\A\alpha}{y^2}.
	\end{aligned}
	\end{equation}
	From \eqref{eq: ip bound 10}, the last term here admits estimate
	\begin{equation} \label{eq: singularity by A}
		\int_{y\leq 1}\frac{|\A\alpha|^2}{y^4} \les C(M)\E_4,
	\end{equation}
	which helps control the singularity. For second term in \eqref{eq: phase singularity}, we can treat it in the same way, and the singularity may come from the third component, that is
	\begin{equation} \notag
	\begin{aligned}
		\bigg(\py+\frac{Z}{y}\bigg)\Ham\alpha 
		& = -\py^3\alpha-\frac{1+Z}{y}\py^2\alpha +\frac{V-1}{y^2}\py\alpha -\frac{Z-1}{y^2}\py\alpha +\py\bigg(\frac{V-1}{y^2}\bigg)\alpha \\
		&\quad +\frac{(Z-1)(V-1)}{y^3}\alpha +\frac{2(Z-1)}{y^3}\alpha +\frac{V-1}{y^3}\alpha +\frac{\py\alpha}{y^2} -\frac{Z}{y^3}\alpha.
	\end{aligned}
	\end{equation}
	The only singular terms are the last two terms, which still constitute $-\A\alpha/y^2$, and thus are bounded by \eqref{eq: singularity by A} again. Other non-singular terms in \eqref{eq: phase singularity} can be controlled by $C(M)\E_4$ using \eqref{eq: ip bound 3}. Then by the modulation equations \eqref{eq: modulation eq}, we have
	\begin{equation} \notag
		\int\big|\mHam\R\mHam\big(\vart_s Z\R\w\big)\big|^2
		\les \big(|a|+U(t)\big)^2 \Ebb \les b^2\Ebb.
	\end{equation}
	The last term in \eqref{eq: rest expand 2} share the same bound as above due the smallness of $\hw$. We omit the details. Therefore we obtain
	\begin{equation}
		\int|(\hJ\mHam)^2\rest|^2 \les b\,\db\Ebb,
	\end{equation}
	which concludes the estimate for $\rest$, and ends the proof.
	\end{proof}

	%%%%%%%%%%%%%%%  5.4	 
	%%%%%%%%%%%%%%%%%%%%%%%%%%%%%%
	%%%%%%%%%%%%%%%%%%%%%%%%%%%%%%
	\subsection{Closing the bootstrap bounds}
	\label{SS: closing the bootstrap}
	In this subsection, we close the bootstrap argument by proving Proposition \ref{pro: trapped regime}. In particular, we apply Proposition \ref{pro: mixed energy estimate} to show the refined bounds \eqref{eq: refined pointwise a b}--\eqref{eq: refined pointwise energy 3}. 
	
	\begin{proof}{Proposition {\rm\ref{pro: trapped regime}}} \label{pf: trapped regime}
	
	%%%%%%%%%%%%%%%%%
	%%%%%%%%%%%%%%%%%
	\medskip
	\step{1}{The refined bound for $b$} The modulation equations \eqref{eq: modulation eq} implies the asymptotic
	\begin{equation} \label{eq: b asymptotics}
		b(s) = \frac{1}{s} + O\bigg(\frac{1}{s\log s}\bigg).
	\end{equation}
	This implies for $s_0\gg 1$ chosen sufficiently large, $b(s)$ decays over the rescaled time. In view of the self-similar transformation \eqref{eq: var sub}, we see for $t\geq 0$ that
	\begin{equation} \notag
		0 <b(t)\leq b(0) <b^\ast.
	\end{equation}
	Thus the refined bound \eqref{eq: refined pointwise a b} for $b(t)$ holds for $K>2$.

	%%%%%%%%%%%%%%
	%%%%%%%%%%%%%%	
	\medskip
	\step{2}{The refined bound for $a$} \label{sp: a initial data} We define the function
	\begin{equation} \notag
		\kappa(s) = \frac{2a(s)\lf|\lgb(s)\rg|}{b(s)}.
	\end{equation}
	In view of the assumption \eqref{eq: initial b a}, we see $ \kappa(s_0) \in \iset=[-\tfrac{1}{2}, \tfrac{1}{2}]$ ($t=0$ when $s=s_0$), and the refined bound \eqref{eq: refined pointwise a b} for $a$ is equivalent to $\kappa(s)\in \isetst = [-1, 1]$ for all $s\in[s_0, +\infty)$. To show the latter, we compute the equation for $\kappa$ using \eqref{eq: modulation eq}:
	\begin{equation} \label{eq: the derivative of kappa}
	\begin{aligned}
		\frac{d}{ds}\kappa(s)
		& = \frac{2a_s\lgba}{b} -\frac{2ab_s\lgba}{b^2}\bigg( 1 + \frac{1}{\lgba} \bigg) \\
		& = - \frac{2\lgba}{b}\bigg[\frac{2ab}{\lgba}+O\bigg(\frac{b^2}{\sqrt{\log M}\lgba}\bigg)\bigg] \\
		&\quad +\frac{2a\lgba}{b^2} \bigg(1+\frac{1}{\lgba}\bigg) \bigg[b^2\bigg(1+\frac{2}{\lgba}\bigg) +O\bigg(\frac{b^2}{\sqrt{\log M}\lgba}\bigg)\bigg] \\
		& = \kappa\phsh b\big(1+o(1)\big) +O\bigg(\frac{b}{\sqrt{\log M}}\bigg).
	\end{aligned}
	\end{equation}
	From the largeness of $M$, we see $\kappa$ is monotone increasing/decreasing near $\kappa=1,-1$, implying that if $\kappa$ approaches the boundary of $\isetst$, it will escape from $\isetst$. We denote the moment when $\kappa$ first leave by $\sstar>0$, called the exit time. And the desired bound \eqref{eq: refined pointwise a b} holds if we prove $\sstar=+\infty$. Let us consider the following subsets of $\iset$:
	\begin{equation} \notag
	\begin{aligned}
		& \iset_+ = \big\{\kappa(s_0)\in \iset: \phsh s^{\ast}\in(s_0,+\infty)\mbox{ such that }\kappa(s^{\ast})=1\big\}, \\
		& \iset_- = \big\{\kappa(s_0)\in \iset: \phsh s^{\ast}\in (s_0,+\infty)\mbox{ such that }\kappa(s^{\ast})=-1\big\}.
	\end{aligned}
	\end{equation}
	Now the problem can be categorized as three cases: $\iset_{+}$ empty, $\iset_{-}$ empty, and both of them being nonempty. In the first case, for any $\kappa(s_0)\in \iset$, we have $\kappa(s)<1$ for all $s\in[s_0,+\infty)$. We choose 
	\begin{equation} \label{eq: choose of a 1}
		\kappa(s_0)>\frac{C}{\sqrt{\log M}},
	\end{equation}
	for $C>0$ sufficiently large. Then by \eqref{eq: the derivative of kappa}, $\kappa$ is monotone increasing, so that $\kappa(s)\geq\kappa(s_0)>-1$, and thus $s^\ast=+\infty$. An analogous statement holds for the second case. In the last case, by the $\CCC^1$ dependence of the solutions on the initial data,  we know $\iset_{+}, \iset_{-}$ are open subsets of $\iset$, and obviously disjoint. This implies, from an elementary topological argument on the connectedness, that $\iset\neq\iset_{+}\cup\iset_{-}$. Consequently, 
	\begin{equation} \label{eq: choose of a 2}
		\exists\,\kappa(s_0)\in\iset\setminus(\iset_{+}\cup\iset_{-})\neq\emptyset,
	\end{equation}
	satisfying $\sstar=+\infty$, which is what we need. In summary, we can always find suitable initial data $a(t=0)$ satisfying \eqref{eq: initial b a} such that the refined bound \eqref{eq: refined pointwise a b} for $a(t)$ holds, concluding the proof of \eqref{eq: refined pointwise a b}.
	%%%%%%%%%%%%%%%%%
	%%%%%%%%%%%%%%%%%
	
	\medskip
	\step{3}{The refined bound for $\E_1$} For any given vector under the Frenet basis 
	\begin{equation} \notag
		\xx = \h\alpha\phsh \er +\h\beta\phsh \etau +(1+\h\gamma)\phsh Q,
	\end{equation}
	satisfying the constraint \eqref{eq: component constraint}, the corresponding Dirichlet energy is given by 
	\begin{equation} \notag
	\begin{aligned}
		\E(\xx) 
		& = \int|\nabla\xx|^2 = -\int\xx\cdot\big(\Delta\xx+|\nabla Q|^2\xx\big) +\int|\nabla Q|^2|\xx|^2 \\
		& = \big(\h\alpha,\Ham\h\alpha\big) +\big(\h\beta,\Ham\h\beta\big) +\big(-\hs\Delta\h\gamma,\h\gamma\big) \\
		&\quad + 2\int\onez\bigg(-\h\alpha\phsh\py\h\gamma +\h\gamma\pyzy\h\alpha\bigg) +\E(Q), 
	\end{aligned}
	\end{equation}
	where we haved used \eqref{eq: part linear}. By Lemma \ref{le: localized ap solution}, straightforward computations show for any $s\in[s_0,+\infty)$ that 
	\begin{equation} \notag
		\big|\E(Q+\wzt) -\E(Q)\big|(s) \les \sqrt{b}.
	\end{equation}
	Then, in view of \eqref{eq: mHam func}, the Dirichlet energy of the solution $u$ is
	\begin{equation} \notag
	\begin{aligned}
		\E(u)(s)
		& = \E(Q+\wzt) +\big(\w,\mHam\wzt\big) +\big(\wzt,\mHam\w\big) +\big(\w,\mHam\w\big) \\
		& = \E(Q) +\big(\alpha,\Ham\alpha\big) +\big(\beta,\Ham\beta\big) +O(\sqrt{b}),
	\end{aligned}
	\end{equation}
	where the cross terms are controlled by $O(\sqrt{b})$ via the bootstrap bounds. Also, the dissipative property \eqref{eq: energy dissipative} of the Dirichlet energy implies
	\begin{equation} \label{eq: dissipative 2}
		\big(\alpha,\Ham\alpha\big) +\big(\beta,\Ham\beta\big)
		\leq \big(\alpha,\Ham \alpha\big)(s_0) +\big(\beta,\Ham\beta\big)(s_0) +O(\sqrt{b}).
	\end{equation}
	Moreover, owing to the orthogonality \eqref{eq: orthogonality}, we have the coercivity estimate
	\begin{equation} \label{eq: coercivity estimate}
		\big(\alpha,\Ham\alpha\big) +\big(\beta,\Ham\beta\big) \geq C(M)\E_1(s),
	\end{equation}
	for some universal constant $C(M)>0$ independent of $K$, and for any $s\in[s_0,+\infty)$. Its opposite direction follows directly from integration by parts
	\begin{equation} \label{eq: integration by parts}
		\big(\alpha,\Ham\alpha\big) +\big(\beta,\Ham\beta\big) \leq\E_1(s).
	\end{equation}
	Now putting together \eqref{eq: dissipative 2}, \eqref{eq: coercivity estimate}, \eqref{eq: integration by parts}, we obtain
	\begin{equation} \notag
	\begin{aligned}	
		\E_1(s) \les\big(\alpha,\Ham\alpha\big) +\big(\beta,\Ham\beta\big) \les \E_1(0) +O(\sqrt{b}) \les\db.
	\end{aligned}
	\end{equation}
	Note that the implicit constant in this inequality does not depend on $K$. Therefore by choosing $K$ large enough, the refined $\E_1$ bound \eqref{eq: refined pointwise energy 1} is proved.
	
	%%%%%%%%%%%%%%%%%
	%%%%%%%%%%%%%%%%%
	\medskip
	\step{4}{The refined bound of $\E_4$} A direct integration of \eqref{eq: energy estimate} with respect to $t$ shows
	there exists constant $d_2 \in (0,1)$ and some constant $C>0$ independent of $M$ such that
	\begin{equation} \label{eq: E4 estimate formula}
		\E_4(t)
		\leq \frac{\la(t)^6}{\la(0)^6} \E_4(0) + \big( 2(1-d_2)K + C \big) \la(t)^6\int_0^t \frac{b^5}{\la^8\lgba^2} d\sigma.
	\end{equation}
	To treat the first part on the RHS, we let $C_1, C_2 >0$ be two universal constants large enough compared with the implicit constant in \eqref{eq: modulation eq}, and define
	\begin{equation} \notag
		\eta_1 = 2-\frac{C_1}{\sqrt{\log M}}, \quad
		\eta_2 = 2+\frac{C_2}{\sqrt{\log M}}.
	\end{equation}
	Using the modulation equations~\eqref{eq: modulation eq}, we compute the following derivative:
	\begin{equation} \notag
	\begin{aligned}
		\frac{d}{ds}& \bigg( \frac{b\lgba^{\eta_i}}{\la} \bigg)
		= \frac{\lgba^{\eta_i}}{\la}\bigg(b_s-\frac{b\la_s}{\la} +\frac{\eta_i b_s}{\lgba}\bigg) \\
		& = \bigg(1-\frac{\eta_i}{\lgba}\bigg) \frac{\lgba^{\eta_i}}{\la} \bigg[ b_s + \bigg(1+\frac{\eta_i}{\lgba} + O\bigg(\frac{1}{\lgba^2}\bigg)\bigg) b^2 \bigg]
		\,\begin{cases} \leq 0,\;\;i=1,\\ \geq 0,\;\;i=2, \end{cases}
	\end{aligned}
	\end{equation}
	where we see the choice of $\eta_i$ is decisive to the sign. By direct integrations, it implies
	\begin{equation} \label{eq: b lambda relation}
		\frac{b(0)}{\la(0)}\lf|\frac{\lgb(0)}{\lgb(t)}\rg|^{\eta_2}
		\leq \frac{b(t)}{\la(t)}
		\leq \frac{b(0)}{\la(0)} \lf|\frac{\lgb(0)}{\lgb(t)}\rg|^{\eta_1},
	\end{equation}
	from which we have
	\begin{equation} \label{eq: E4 estimate formula term1}
		\frac{\la(t)^6}{\la(0)^6} \E_4(0) 
		\leq \frac{b(t)^6 \lf|\lgb(t)\rg|^{6 \eta_2}}{b(0)^6 \lf|\lgb(0)\rg|^{6 \eta_2}} \E_4(0) \leq b(t)^5 
		\ll \frac{b(t)^4}{\lf|\lgb(t)\rg|^2}.
	\end{equation}
	For the second part on the RHS of \eqref{eq: E4 estimate formula}, it follows from \eqref{eq: modulation eq} that
	\begin{equation} \notag
	\lf\{\begin{aligned}
		& b = -\frac{\la_s}{\la}+O(b^3) \leq -\big(1+2b(0)^2\big)\la \la_t, \\
		& -b_t = -\frac{b_s}{\la^2} \leq \bigg(1+\frac{2}{\lf| \lgb(0)\rg|}\bigg) \frac{b^2}{\la^2}, \\
		& 4+\frac{1}{\lgba} \leq 4+\frac{1}{\lf|\lgb(0)\rg|},
	\end{aligned}\rg.
	\end{equation}
	using which we compute
	\begin{equation} \notag
	\begin{aligned}
		\int_0^t \frac{b^5}{\la^8 \lgba^2} &\, d\sigma
		\leq -\big(1+2b(0)^2\big) \int_0^t \frac{\la_t}{\la^7} \bflog d\sigma \\
		& = \frac{1}{6} \big(1+2b(0)^2\big)	\bigg[ \frac{b^4}{\la^6 \lgba^2}\bigg|_0^t - \int_0^t\frac{b_t b^3}{\la^6 \lgba^2}\bigg(4+\frac{1}{\lgba}\bigg) d\sigma \bigg] \\
		& \leq \frac{1}{6} \big(1+2b(0)^2\big) \bigg[ \frac{b^4}{\la^6\lgba^2} \bigg|_0^t \\
		&\qquad\qquad + \lf(1+\frac{2}{\lf|\lgb(0)\rg|}\rg) \lf(4+\frac{1}{\lf|\lgb(0)\rg|}\rg) \int_0^t \frac{b^5}{\la^8\lgba^2} d\sigma \bigg],
	\end{aligned}
	\end{equation}
	and hence
	\begin{equation} \label{eq: E4 estimate formula term2}
	\begin{aligned}
		\int_0^t \frac{b^5}{\la^8 \lgba^2} d\sigma
		& \leq \bigg(\frac{1}{2}+O\bigg(\frac{1}{\lf|\lgb(0)\rg|}\bigg)\bigg) \frac{b^4}{\la^6\lgba^2} \bigg|_0^t \\
		& \leq \bigg( \frac{1}{2} + O\bigg(\frac{1}{\lf|\lgb(0)\rg|}\bigg)\bigg) \frac{b^4}{\la^6 \lgba^2}.
	\end{aligned}
	\end{equation}
	Injecting \eqref{eq: E4 estimate formula term1}, \eqref{eq: E4 estimate formula term2} into \eqref{eq: E4 estimate formula}, we obtain
	\begin{equation} \notag
		\E_4(t) \leq \big((1-d_2) K+C\big)\frac{b(t)^4}{\lf|\lgb(t)\rg|^2},
	\end{equation}
	for some constant $C>0$ independent of $K,M$. By choosing $K>0$ large enough, we see there exist $0<\eta <1$ such that $(1-d_2)K+C \leq K(1-\eta)$, and thus the desired bound \eqref{eq: refined pointwise energy 3} follows.
	
	%%%%%%%%%%%%%%%%%
	%%%%%%%%%%%%%%%%%
	\medskip
	\step{5}{The refined bound of $\E_2$} A direct interpolation between the bounds of $\E_1, \E_4$ is insufficient to show \eqref{eq: refined pointwise energy 2}. Thus we treat it attentively. By \eqref{eq: hv eq}, \eqref{eq: part linear}, we see
	\begin{equation}
		u\wedge\Delta u 
		= u\wedge\big(\Delta u+|\nabla Q|^2u\big) = \hJ\mHam\hw,
	\end{equation}
	from which we compute:
	\begin{equation} \label{eq: E2 comp 1}
	\begin{aligned}
		\frac{1}{2}\frac{d}{dt} \int\big|u\wedge\Delta u\big|^2 
		& = -\bo\int (u\wedge\Delta u)\cdot\big[u\wedge(u\wedge\Delta u)\big]\wedge\Delta u \\
		& \quad +\ao\int(u\wedge\Delta u)\cdot u\wedge\Delta(u\wedge\Delta u) \\
		& \quad -\bo\int(u\wedge\Delta u)\cdot u\wedge\Delta\big[u\wedge(u\wedge\Delta u)\big] \\
		& = \frac{1}{\la^4}\; \bigg\{ \bo\int\hJ\mHam\hw\cdot(\hJ^2\mHam\hw)\wedge\mHam\hw \\
		& \qquad\qquad -\ao\int\hJ\mHam\hw \cdot(\hJ\mHam)^2\hw +\bo\int\hJ\mHam\hw\cdot\hJ\mHam(\hJ^2\mHam\hw)\bigg\}.
	\end{aligned}
	\end{equation}
	Note that the LHS is a rough equivalent of $\E_2$ disregarding the $\lambda$ power
	\begin{equation} \label{eq: lower bound to E2}
		\la^2\hs\hs\hs\int\hs\lf|u\wedge\Delta u\rg|^2
		= \hs\hs\int\hs\big|\hJ\mHam\hw\big|^2 
		= \hs\hs\int\hs\big|\hJ\mHam\w\big|^2 \hs+\hs O\bigg(\hs\int\big|\hJ\mHam\wzt\big|^2\hs\bigg) 
		= \E_2 + O\big(b^2\lgba^2\hs\big).
	\end{equation}
	In addition, from \eqref{eq: pointwise energy 2} and Appendix~\ref{S: appendix A}, we can estimate each term on the RHS of \eqref{eq: E2 comp 1}. Indeed, by \eqref{eq: wzt brute estimate}, \eqref{eq: ip bound 11}, \eqref{eq: ip bound 15}, \eqref{eq: ip bound 21}, the first term is bounded by
	\begin{equation}
	\begin{aligned}
		\int \hJ\mHam\hw\cdot(\hJ^2\mHam\hw)\wedge\mHam\hw 
		& = \int\mHam\hw\cdot\big(\hJ\mHam\hw\wedge\hJ^2\mHam\hw\big) 
			= \int\big(\mHam\hw\cdot(Q+\hw)\big)\big|\hJ\mHam\hw\big|^2 \\
		& \leq \|\mHam\hw\|_{L^\infty} \Big(\|\hJ\mHam\wzt\|_{L^2}^2 + \|\w_2\|_{L^2}^2 \Big) \\
		& \les b\Big(b^2\lgba^2+Kb^2\lgba^6\Big)
		\les K b^3 \lgba^6.
	\end{aligned} \notag
	\end{equation}
	Using Lemma \ref{le: gain of 2d}, \eqref{eq: ip bound 31}, \eqref{eq: ip bound 34}, we estimate the second one
	\begin{equation}
	\begin{aligned}
		- \int \hJ \mHam \hw \cdot(\hJ\mHam)^2 \hw
		& = - \int\hJ\mHam\wzt\cdot(\hJ\mHam)^2 \wzt -\int\w_2\cdot(\hJ\mHam)^2 \wzt\\
		& \quad -\int \hJ \mHam \wzt \cdot \hJ \mHam \w_2 - \int \w_2 \cdot \hJ \mHam \w_2\\
		& \leq b\, \|\mHam\wzt\|_{L^2}^2 + \|\mHam\hJ\w_2\|_{L^2} \phsh\|\hJ\mHam\wzt\|_{L^2} \\
		& \quad + \|\mHam\wzt\|_{L^2} \phsh \|\mHam\w_2\|_{L^2} + \|\w_2\|_{L^2} \phsh \|\hJ\mHam\w_2\|_{L^2} \\
		& \les b^3\lgba^2 + 2\sqrt{K}b^3 +K b^3 \lgba^2
		\les K b^3 \lgba^2.
	\end{aligned} \notag
	\end{equation}
	Similarly, from \eqref{eq: ip bound 27}, \eqref{eq: ip bound 34}, the third term is bounded by
	\begin{equation}
	\begin{aligned}
		\int\hJ\mHam\hw \cdot \hJ\mHam(\hJ^2\mHam\hw)
		& \leq \Big(\|\mHam\wzt\|_{L^2}+\|\mHam\w\|_{L^2}\Big) \Big(\|\mHam(\hJ^2\mHam\wzt)\|_{L^2} +\|\mHam(\hJ\w_2)\|_{L^2}\Big)  \\
		& \les \Big(b\lgba+\sqrt{\E_2}\Big) \frac{\sqrt{K}b^2}{\lgba}
		\les K b^3 \lgba^2.
	\end{aligned} \notag
	\end{equation}
	We insert these bounds into \eqref{eq: E2 comp 1} to obtain
	\begin{equation} \notag
		\frac{d}{dt} \int\big|u\wedge\Delta u\big|^2
		\les \frac{Kb^3 \lgba^2}{\la^4}.
	\end{equation}
	Integrating this in time from $0$ to $t$ and applying \eqref{eq: lower bound to E2}, we have 
	\begin{equation}
		\E_2(t) \les b(t)^2\lf|\lgb(t)\rg|^2 +\la(t)^2 b(0)^{10} +K\la(t)^2 \int_0^t \frac{b(\sigma)^3 \lf|\log b(\sigma)\rg|^3}{\la(\sigma)^4} \d\sigma.
	\label{eq: E2 comp 2}
	\end{equation}
	From \eqref{eq: b lambda relation}, there holds
	\begin{equation} \notag
		\la(t)^2 b(0)^{10}
		\leq b(0)^{10} \bigg( b(t)\frac{\la(0)}{b(0)} \bigg|\frac{\lgb(t)}{\lgb(0)}\bigg|^{\eta_2} \bigg)^2 
		\les b(t)^2 \lf|\lgb(t)\rg|^5.
	\end{equation}
	Moreover, from \eqref{eq: modulation eq}, we have $b^2\les -b_s$, which together with \eqref{eq: b lambda relation} gives	
	\begin{equation} \notag
	\begin{aligned}
		\int_0^t \frac{b^3\lgba^3}{\la^4} d\sigma
		& \les -\int_0^t \frac{b b_s\lgba^3}{\la^4} d\sigma 
		\les -\frac{b(0)^2\lf|\log b(0)\rg|^{2\eta_1}}{\la(0)^2} \int_0^t \frac{b_t}{b\lgba^{2\eta_1-3}} d\sigma \\ 
		& \les \frac{b(0)^2\lf|\log b(0)\rg|^{2}}{\la(0)^2}
		\les \frac{b^2\lgba^{2\eta_2}}{\la^2\lf|\log b(0)\rg|^{2\eta_2-2}} \les \frac{b^2\lgba^{5}}{\la^2}.
	\end{aligned}
	\end{equation}
	Injecting these into \eqref{eq: E2 comp 2}, we obtain
	\begin{equation} \notag
		\E_2 \les K b^2 \lgba^5 \leq \frac{K}{2} b^2\lgba^6,
	\end{equation}
	where, in view of \eqref{eq: b asymptotics}, the last inequality holds if $b(0)$ is chosen small enough. This is the refined bound \eqref{eq: refined pointwise energy 2}, and thus ends the proof.	
	\end{proof}
	\begin{remark}
		{\rm (i)} In the above proof we have actually specified the constants $M,K$. To be explicit, we first let $M\gg 1$ be a large constant compared with those implicit constants from earlier estimates. Next the constant $b^{\ast} = b^{\ast}(M)$ is chosen to be a small upper bound of $b(t)$, making $b(t)$ an higher-order infinitesimal relative to $\frac{1}{\log M}$. Then we set $K\gg 1$ in comparison with other unnamed constants, such that the bootstrap bounds \eqref{eq: pointwise energy 1}--\eqref{eq: pointwise energy 3} and \eqref{eq: pointwise a b} for $b(t)$  hold. {\rm (ii)} We note that the initial bounds \eqref{eq: initial b a}, \eqref{eq: initial energy} imply an open initial data set of $b, \w$ stable under small perturbation, while $a(0)$ requires precise selection relying on $b(0), \w(0)$, through a topological argument {\rm (}see \eqref{eq: choose of a 1}, \eqref{eq: choose of a 2}{\rm )}. This actually yields a codimension one initial data set of $u$, as mentioned in Theorem~\ref{th: main th}.
	\end{remark}
	
	%%%%%%%%%%%%%%%  6
	%%%%%%%%%%%%%%%%%%%%%%%%%%%%%%
	%%%%%%%%%%%%%%%%%%%%%%%%%%%%%%	
	\section{Description on the singularity formation}
	\label{S: des on singularity formation}
	We are now ready to prove the Theorem~\ref{th: main th}. In this subsection, we prove the finite time blowup, show the convergence of the phase $\vart$, give a sharp description on the blowup speed $\la$, and prove the strong convergence of the excess energy.
	
	\begin{proof}{Theorem {\rm\ref{th: main th}}}
	By Proposition \ref{pro: trapped regime}, $\w$ has been trapped in the regime of \eqref{eq: pointwise a b}, \eqref{eq: pointwise energy 1}--\eqref{eq: pointwise energy 3}. Thus $u$ satisfying \eqref{eq: decomposition} exists for $s\in[s_0,+\infty)$. Now we suppose the lifespan of $u$ is $T\leq+\infty$, and study the corresponding asymptotic behavior.
	
	%%%%%%%%%%%%%%%%%
	%%%%%%%%%%%%%%%%%
	\medskip
	\step{1}{Finite time blowup} We apply \eqref{eq: b lambda relation} and $\la_s/\la \les -b$ to obtain
	\begin{equation} \notag
	\begin{aligned}
		\frac{d}{dt} \sqrt{\la}
		= \frac{\la_s}{2\la^2 \sqrt{\la}} \les -\frac{b}{\lambda\sqrt{\lambda}} \les -\frac{1}{\sqrt{b}} \bigg( \frac{b(0)}{\la(0)} \bigg|\frac{\lgb(0)}{\lgb}\bigg|^{\eta_2} \bigg)^{\frac{3}{2}}.
	\end{aligned}
	\end{equation}
	By the smallness and monotonicity of $b$ \eqref{eq: b asymptotics}, there exists constant $C(u_0)>0$ such that
	\begin{equation} \notag
		\frac{d}{dt} \sqrt{\la} < -C(u_0) <0,
	\end{equation}
	which together with \eqref{eq: b lambda relation} implies
	\begin{equation} \label{eq: the blowup time}
		T <+\infty, \quad\mbox{with}\quad \la(T)=b(T)=0.
	\end{equation}
	Thus the scaling $\la(t)\to 0$ as $t \to T$, and the solution $u$ blows up at $T$.
	
	%%%%%%%%%%%%%%%%%
	%%%%%%%%%%%%%%%%%
	\medskip
	\def\bd{b^{-\delta}}
	\def\chibd{\chi_{\B}}
	\def\varpb{\varp_{\delta}}
	\def\ati{\ti{a}}
	\step{2}{Convergence of the phase} As illustrated in Remark~\ref{re: non convergence of theta}, the modulation equations \eqref{eq: modulation eq} on parameter $a$ is too crude to show the convergence of the phase. We claim the following refined bound with additional logarithmic gain
	\begin{equation} \label{eq: refined bound a}
		|a(t)|\les C(\delta) \frac{b(t)}{\lf|\log b(t)\rg|^{\frac{3}{2}}},
	\end{equation}
	for some universal small constant $\delta>0$ and a large constant $C(\delta)$. Applying \eqref{eq: refined bound a} with \eqref{eq: b asymptotics}, we obtain
	\begin{equation} \notag
		\lim_{s\to +\infty}\vart(s) -\vart(s_0) 
		\leq \int_{s_0}^{+\infty} |a(s)|\,ds \leq \int_{s_0}^{+\infty} \frac{ds}{s(\log s)^{\frac{3}{2}}} < +\infty,
	\end{equation}
	which implies the convergence of the phase
	\begin{equation} \notag
		\vart(t) \to \vart(T)\in \RR, \;\;\mbox{as}\;\; t\to T.
	\end{equation}
	It remains to prove \eqref{eq: refined bound a}. To this end, we choose the varying scale 
	\begin{equation} \notag
		\B=\bd,
	\end{equation}
	for sufficiently small constant $\delta>0$, and repeat the computations as in Proposition~\ref{pro: modulation equation}. More precisely, we define the direction
	\begin{equation} \label{eq: varpb}
		\varpb = \chibd\varl\phi\begin{bmatrix}
			\ao\\ \bo\\ 0
		\end{bmatrix},
		\quad\mbox{with}\quad 
		\lf\{\begin{aligned}
			&\, \|\varpb\|_{L^2}\sim \sqrt{\lgba}, \\
			&\, (\varp_{1,0},\mHamp\varpb) \sim \lgba, \\
			&\, (\varp_{0,1},\mHamp\varpb) =0.
		\end{aligned}\rg.
	\end{equation}
	We take the scalar products of \eqref{eq: w equation} with $\mHamp\varpb$, and the resulting equation is
	\begin{equation} \notag
	\begin{aligned}
		0 & = \big(\prs\w,\mHamp\varpb\big) -\frac{\la_s}{\la}\big(\varl\w,\mHamp\varpb\big) +\big(\ao\hJ\mHam\w-\bo\hJ^2\mHam\w,\mHamp\varpb\big) \\
		& \quad +\big(\mmodt,\mHamp\varpb\big) +\big(\errzt,\mHamp\varpb\big) +\big(\rest,\mHamp\varpb\big).
	\end{aligned}
	\end{equation}
	Using Appendix~\ref{S: appendix A}, we have
	\begin{equation} \notag
		\Big|\frac{\la_s}{\la}\Big|
		\,\big|\big(\varl\w,\mHamp\varpb\big)\big| \les b^{1-C\delta}\bigg(\int\frac{|\mHamp(\varl\w)|^2}{(1+y^4)(1+\lf|\log y\rg|^2)}\bigg)^{\frac{1}{2}} \les b^{1-C\delta}\sqrt{\E_4}.
	\end{equation}
	Recalling \eqref{eq: modulation comp 2}, \eqref{eq: modulation comp 3}, we see the linear terms are bounded by
	\begin{equation} \notag
		\big|\big(\hJ\mHam\w,\mHamp\varpb\big)\big| +\big|\big(\hJ^2 \mHam\w,\mHamp\varpb\big)\big|
		\les \sqrt{\lgba} \sqrt{\E_4},
	\end{equation}
	where the $\sqrt{\lgba}$ comes from the $L^2$ norm of $\varpb$. Moreover, from \eqref{eq: localized mod vec}, \eqref{eq: modulation eq}, and also the orthogonality of $\varp_{0,1}$ and $\varpb$ \eqref{eq: varpb}, we observe
	\begin{equation} \notag
		\big(\mmodt,\mHamp\varpb\big) = \big(\mmod,\mHamp\varpb\big) = a_s\big(\varp_{1,0},\mHamp\varpb\big) + O\big(b^{1-C\delta}U(t)\big).
	\end{equation}
	The estimate for $\errzt$ follows from \eqref{eq: localized weighted bound 3}, and the bound for $\rest$ can be obtained by simply repeating the computation \eqref{eq: modulation comp expand 4}, \eqref{eq: modulation comp 4}, with an extra $b^{1+C\delta}$ smallness from the $L^2$ norm of $\mHamp\varpb$. Combining these bounds, we obtain
	\begin{equation} \label{eq: b delta modulation}
		a_s\big(\varp_{1,0},\mHamp\varpb\big)+ \big(\prs\w,\mHamp\varpb\big) 
		\les \frac{b^2}{\lgba^{\frac{1}{2}}}.
	\end{equation}
	Now we let 
	\begin{equation} \notag
		\ati = a +\frac{(\w,\mHamp\varpb)}{(\varp_{1,0},\mHamp\varpb)} 
		= a +O\bigg(b^{-C\delta}\frac{b^2}{\lgba}\bigg).
	\end{equation} 
	and its derivative follows from \eqref{eq: b delta modulation}:
	\begin{equation} \notag
		\ati_s = a_s+ \frac{(\prs\w,\mHamp\varpb)}{(\varp_{1,0},\mHamp\varpb)}
		+ O\bigg(\frac{b^{3-C\delta}}{\lgba^2}\bigg) \les \frac{b^2}{\lgba^{\frac{3}{2}}}.
	\end{equation}
	Integrating this from $s=+\infty$ where $\ati=0$ to the present time $s$ using \eqref{eq: b asymptotics}, we obtain the bound
	\begin{equation} \notag
		|\ati| \les \frac{b}{\lgba^{\frac{3}{2}}}.
	\end{equation}
	This yields the corresponding bound for $a$ \eqref{eq: refined bound a}.

	%%%%%%%%%%%%%%%%%
	%%%%%%%%%%%%%%%%%
	\medskip
	\step{3}{Derivation of the blowup speed} We recall the modulation equations~\eqref{eq: modulation eq} that
	\begin{equation} \notag
		b_s = -b^2 \bigg(1+\frac{2}{\lgba}\bigg) + O\bigg(\frac{b^2}{\sqrt{\log M} \lgba} \bigg),
	\end{equation}
	From the rough asymptotics \eqref{eq: b asymptotics}, we assume $b = 1/s + f/s^2$ with $|f| \ll s$, and $s$ sufficiently large to derive the refined estimate
	\begin{equation} \notag
		b(s) = \frac{1}{s} -\frac{2}{s\log s} +O\bigg(\frac{1}{s(\log s)^2}\bigg).
	\end{equation}
	Moreover, from $b+\la_s/\la = O(b^3)$, we see the concentration scale $\la$ admits
	\begin{equation} \label{eq: lambda on s}
		\frac{\la_s}{\la} = -\frac{1}{s} +\frac{2}{s\log s} +O\bigg( \frac{1}{s(\log s)^2} \bigg),
	\end{equation}
	from which we have
	\begin{equation} \notag
		\frac{1}{s} = C(u_0)\big(1+o(1)\big)\frac{\la}{\lf|\log\la\rg|^2}.
	\end{equation}
	Using $-1/s \sim \la \la_t$ by \eqref{eq: lambda on s}, we get
	\begin{equation} \label{eq: the derivative of lambda}
		\la_t = -C(u_0)\big(1+o(1)\big)\frac{1}{\lf|\log\la\rg|^2}.
	\end{equation}
	Integrating this from  $t$ to $T$, the blowup speed \eqref{eq: lambda} follows
	\begin{equation} \label{eq: blowup speed}
		\la(t) = C(u_0)\big(1+o(1)\big)\frac{(T-t)}{\lf|\log(T-t)\rg|^2}.
	\end{equation}
	Combining \eqref{eq: the derivative of lambda} with \eqref{eq: blowup speed}, and applying $b+\la_s/\la =O(b^3)$ again, we obtain
	\begin{equation} \label{eq: b t asymptotics}
		b(t) = C(u_0)\big(1+o(1)\big)\frac{(T-t)}{\lf|\log(T-t)\rg|^4}.
	\end{equation}

	%%%%%%%%%%%%%%%%%
	%%%%%%%%%%%%%%%%%
	\medskip
	\step{4}{Convergence of the excess energy} From \eqref{eq: decomposition} we have the decomposition
	\begin{equation} \notag
		u =e^{\vart\R}Q_{\lambda} + \ti{u}, 
		\quad\mbox{where}\quad \ti{u}:=e^{\vart\R}\hv_{\lambda}.
	\end{equation}
	Owing to the dissipative Dirichlet energy \eqref{eq: energy dissipative} and the energy-critical scaling, the $\dot{H}^1$ norm of $\ti{u}$ is bounded
	\begin{equation} \notag
		\| \nabla \ti{u} \|_{L^2}
		\leq \|\nabla u\|_{L^2} +\|\nabla(e^{\vart\R}Q_{\lambda})\|_{L^2} 
		\leq \|\nabla u_0\|_{L^2} +\|\nabla Q\|_{L^2} \les C(u_0).
	\end{equation}
	Moreover, we have the $\dot{H}^2$ bound
	\begin{equation} \label{eq: ti u H2}
		\|\Delta\ti{u}\|_{L^2} \les C(u_0).
	\end{equation}
	Indeed, from \eqref{eq: part linear}, \eqref{eq: mHam}, \eqref{eq: ip bound 27}, we have
	\begin{equation} \label{eq: Lap ti u}
	\begin{aligned}
		\|\Delta \ti{u}\|_{L^2} = \frac{1}{\la^2} \int |\Delta \hv|^2
		\les \frac{1}{\la^2}\bigg(\int|\mHam\hw|^2 +\int\frac{|\hw|^2}{1+y^8} \bigg) 
		\les \frac{1}{\la^2} \big(\E_2+b^2\lgba^2\big).
	\end{aligned}
	\end{equation}
	The inequality \eqref{eq: E2 comp 2} with the explicit asymptotics \eqref{eq: blowup speed}, \eqref{eq: b t asymptotics} gives 
	\begin{equation} \notag
		\E_2 \les b^2\lgba^2 +\la^2b(0)^{10} +\la^2\int_0^t\frac{dt}{(T-t)\lf|\log(T-t)\rg|^2} 
		\les b^2\lgba^2 +\la^2.
	\end{equation}
	Injecting this into \eqref{eq: Lap ti u} with an application of \eqref{eq: b lambda relation} yields the desired $\dot{H}^2$ boundedness. By a simple localization process, this boundedness yields a strong convergence of $\nabla \ti{u}$ outside the blowup point (the origin). More precisely, there exists $u^{\ast} \in \dot{H}^1$, such that for any $R>0$ there holds
	\begin{equation} \notag
		\|\nabla u-\nabla u^{\ast}\|_{L^2(|x|>R)}
		\sim \|\nabla\ti{u}-\nabla u^{\ast}\|_{L^2(|x|>R)} \to 0,
		\quad\mbox{as}\quad t \to T,
	\end{equation}
	which yields \eqref{eq: singularity formation}. Now the convergence \eqref{eq: singularity formation} together with the $\dot{H}^2$ bound \eqref{eq: ti u H2} gives \eqref{eq: propagation of regularity}. This concludes the proof of Theorem \ref{th: main th}.
	\end{proof}
	\begin{appendix}
		
	%%%%%%%%%%%%%%%  A 
	%%%%%%%%%%%%%%%%%%%%%%%%%%%%%%
	%%%%%%%%%%%%%%%%%%%%%%%%%%%%%%
		\section{Coercivity estimates and interpolation estimates}
		\label{S: appendix A}
		In this appendix, we list some results on the coercivity of the Hamiltonians $\Ham, \Ham^2$, and also the interpolation estimates used in the proof of Proposition \ref{pro: modulation equation}, \ref{pro: mixed energy estimate}. The complete proof can be found in \cite{2011Merle_SchMapBlowup}. 
		\begin{lemma}[Coercivity of $\Ham$ {\rm \cite{2011Merle_SchMapBlowup}}] \label{le: Ham coercivity} 
		Let $M\geq 1$ be a large enough universal constant. Let $\varp_M$ be given by \eqref{eq: varp M}. Then there exists a universal constant $C(M)>0$ such that for all radially symmetric function $u\in H^1$ satisfying
		\begin{equation} \notag
			\int\frac{|u|^2}{y^4(1+\lf|\log y\rg|)^2} +\int\big|\py(\A u)\big|^2<+\infty.
		\end{equation}
		and the orthogonality conditions
		\begin{equation} \notag
			(u,\varp_M)=0,
		\end{equation}
		there holds
		\begin{equation} \label{eq: Ham coercivity}
		\begin{aligned}
			&\int_{y\geq 1}\frac{|\py^2 u|^2}{1+\lf|\log y\rg|^2}
			+\int\frac{|\py u|^2}{y^2(1+\lf|\log y\rg|)^2} +\int\frac{|u|^2}{y^4(1+\lf|\log y\rg|)^2} \\
			&\quad\leq C(M)\bigg[ \int\frac{|\A u|^2}{y^2(1+\lf|\log y\rg|)^2} +\int\big|\py(\A u)\big|^2\bigg]
			\les C(M)\int|\Ham u|^2.
		\end{aligned}
		\end{equation}
		\end{lemma}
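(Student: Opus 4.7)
The plan is to establish the chain of inequalities in \eqref{eq: Ham coercivity} in three stages, working from the inside out via the factorization $\Ham=\As\A$ with the explicit resonances $\varl\phi,\,\vg$ of $\Ham$ and the kernels $\varl\phi$ of $\A$ and $1/(y\varl\phi)$ of $\As$.

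\emph{Stage 1 {\rm (}control of $\A u${\rm )}.} Setting $v=\A u$, the identity $\Ham u=\As v$ together with integration by parts and the explicit formula $\As=\py+(1+Z)/y$ gives
\[
\int|\Ham u|^2=\int|\py v|^2+\int\frac{(1+Z)^2-y\py\!\big(\tfrac{1+Z}{y}\big)}{y^2}|v|^2
\]
with a boundary control at $y=0$ coming from the finiteness of $\int|v|^2/(y^4(1\hs+\hs\lf|\log y\rg|)^2)$ implied by the hypothesis (this is where the assumed integrability is used). Since $(1+Z)/y\sim 2/y$ near the origin, the potential term is strictly positive near $0$ and vanishes at infinity; the far-field part is then absorbed by the logarithmic Hardy inequality
\[
\int\frac{|v|^2}{y^2(1+\lf|\log y\rg|)^2}\les\int|\py v|^2,
\]
valid for radial $v$ without any orthogonality condition. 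This yields the second line of \eqref{eq: Ham coercivity}.

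\emph{Stage 2 {\rm (}sub-coercivity on $u${\rm )}.} To pass from $\A u$ back to $u,\py u,\py^2 u$, invoke the variation-of-constants representation: since $\A(\varl\phi/\varl\phi)=0$, one can write
\[
u(y)=\varl\phi(y)\lf[C-\int_1^y\frac{(\A u)(x)}{\varl\phi(x)}\,dx\rg]
\]
for a constant $C$ to be determined. Plugging the pointwise asymptotics of $\varl\phi$ from \eqref{eq: lambda phi} and applying Cauchy--Schwarz with the weight from Stage 1 produces, modulo the scalar $C\varl\phi$, the desired bounds on $\int|u|^2/(y^4(1+\lf|\log y\rg|)^2)$, $\int|\py u|^2/(y^2(1+\lf|\log y\rg|)^2)$ and $\int_{y\geq 1}|\py^2 u|^2/(1+\lf|\log y\rg|)^2$. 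For $\py^2 u$ one uses $\py^2 u=-\A\py u+(Z/y)\py u-(\py(Z/y))u$ together with commutator algebra to trade $\py^2 u$ for $\py(\A u)$ plus already-controlled lower-order terms.

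\emph{Stage 3 {\rm (}removing the resonance via $(u,\varp_M)=0${\rm )}.} It remains to eliminate the free constant $C$ from Stage 2. By the non-degeneracy $(\varl\phi,\varp_M)=4\log M\phsh(1+o(1))\neq 0$ recorded in \eqref{eq: varp M properties}, the orthogonality $(u,\varp_M)=0$ gives
\[
C\,(\varl\phi,\varp_M)=-\bigg(\varl\phi\hs\int_1^\cdot\frac{\A u}{\varl\phi}\,dx,\,\varp_M\bigg),
\]
and the localization of $\varp_M$ on $\{y\leq 2M\}$ together with the Hardy bound of Stage 1 converts the RHS into $C(M)\|\A u\|_{L^2_{\text{wt}}}$. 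This yields $|C|\les C(M)\|\Ham u\|_{L^2}$, completing \eqref{eq: Ham coercivity}.

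\emph{Main obstacle.} The delicate point is the logarithmic loss: the weight $(1+\lf|\log y\rg|)^{-2}$ is exactly critical for the Hardy inequality that controls the zero mode of $\A$ associated with the slowly-growing resonance $\vg\sim y/4$. Stage 1 must be executed carefully enough that the implicit constant does not blow up, and the representation in Stage 2 must be anchored at $y=1$ (rather than $0$ or $+\infty$) to avoid spurious divergences from $\vg$. The constant $C(M)$ appearing at the end reflects precisely the cost of killing the $\varl\phi$ direction via $\varp_M$, and the choice of $M$ large only enters to make $(\varl\phi,\varp_M)$ non-degenerate.
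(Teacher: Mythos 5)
The paper never proves this lemma: it is imported verbatim from \cite{2011Merle_SchMapBlowup} (``The complete proof can be found in...''), where the argument uses the same factorization $\Ham=\As\A$ that you use, with the same division of labour — the outer inequality is a coercivity statement for $\As$, needing no orthogonality because the kernel $1/(y\varl\phi)$ of $\As$ is non-normalizable at the origin, while the inner inequality controls $u$ by $\A u$ and uses $(u,\varp_M)=0$ together with $(\varl\phi,\varp_M)=4\log M(1+o(1))\neq0$ to kill the kernel direction $\varl\phi$ of $\A$. The difference is that the cited proof closes the second step by a subcoercivity estimate (with localized lower-order terms) plus a compactness/contradiction argument, whereas you propose an explicit variation-of-constants integration and then fix the free constant by the orthogonality; your Stage 3 is fine as written, and the explicit route is legitimate and more quantitative, provided the two weighted estimates you invoke are actually proved.

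That is where the gaps are. First, the logarithmic Hardy inequality $\int\frac{|v|^2}{y^2(1+|\log y|)^2}\les\int|\py v|^2$ is false as stated: $v\equiv1$ gives a finite nonzero left side and a vanishing right side. The correct version carries a localized anchor such as $\int_{1\leq y\leq2}|v|^2$, which in your Stage 1 must be recovered from the positive potential in the exact identity $\int|\As v|^2=\int|\py v|^2+4\int\frac{|v|^2}{y^2(1+y^2)}$ (up to boundary terms, with the measure $y\,dy$; note your potential formula is also off). Second, and more seriously, in Stage 2 a naive Cauchy--Schwarz at the critical weight does not close: writing $u=\varl\phi\,[\,C-\int_1^y \A u/\varl\phi\,dx\,]$ and $F(y)=\int_1^y x\,\A u\,dx$, Cauchy--Schwarz only gives $|F(y)|^2\les y^4(1+|\log y|)^2\int\frac{|\A u|^2}{x^2(1+|\log x|)^2}\,x\,dx$, and then $\int^{\infty}\frac{|F(y)|^2}{y^5(1+|\log y|)^2}\,dy$ is logarithmically divergent. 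The desired bound is true, but it requires a genuine weighted Hardy argument at infinity (e.g. substitute $F=y^2G$ and integrate by parts, or sum dyadically), not Cauchy--Schwarz; this — together with the compactness alternative of \cite{2011Merle_SchMapBlowup} — is precisely how the critical log weight is handled. Incidentally, the critical direction is the kernel $\varl\phi$ of $\A$ (and the excluded kernel $1/(y\varl\phi)$ of $\As$), not the resonance $\vg$ of $\Ham$, which never enters once the problem is factorized; and your identity for $\py^2u$ should read $\py^2u=-\py(\A u)+\tfrac{Z}{y}\py u+\py\big(\tfrac{Z}{y}\big)u$. With these repairs your outline closes.
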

		\begin{lemma}[Coercivity of $\Ham^2$ {\rm \cite{2011Merle_SchMapBlowup}}] \label{le: Ham2 coercivity} 
		Assume the conditions in Lemma \ref{le: Ham coercivity}. Then there exists a universal constant $C(M)>0$ such that for all radially symmetric function $u$ satisfying
		\begin{equation} \notag
		\begin{aligned}
			&\int|\py\A\Ham u|^2 +\int\frac{|\A\Ham u|^2}{y^2(1+y^2)} +\int\frac{|\Ham u|^2}{y^4(1+\lf|\log y\rg|)^2}\\
			&\qquad +\int\frac{|u|^2}{y^4(1+y^4)(1+\lf|\log y\rg|)^2} +\int\frac{(\py u)^2}{y^2(1+y^4)(1+\lf|\log y\rg|)^2}<+\infty.
		\end{aligned}
		\end{equation}
		and the orthogonality conditions
		\begin{equation} \notag
			(u,\varp_M)=0,\quad (\Ham u,\varp_M)=0,
		\end{equation}
		there holds
		\begin{equation} \label{eq: Ham2 coercivity}
		\begin{aligned}
			&\int\frac{|\Ham u|^2}{y^4(1+\lf|\log y\rg|)^2}
			+\int\frac{|\py\Ham u|^2}{y^2(1+\lf|\log y\rg|)^2} +\int\frac{|\py^4 u|^2}{(1+\lf|\log y\rg|)^2} \\
			&\quad +\int\frac{|\py^3 u|^2}{y^2(1+\lf|\log y\rg|)^2} +\int\frac{|\py^2 u|^2}{y^4(1+\lf|\log y\rg|)^2} \\
			&\quad +\int\frac{|\py u|^2}{y^2(1+y^4)(1+\lf|\log y\rg|)^2} +\int\frac{|u|^2}{y^4(1+y^4)(1+\lf|\log y\rg|)^2} \leq C(M)\int|\Ham^2 u|^2.
		\end{aligned}
		\end{equation}
		\end{lemma}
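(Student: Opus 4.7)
The plan is to bootstrap from the first-order coercivity Lemma~\ref{le: Ham coercivity}, exploiting the two orthogonality conditions $(u,\varp_M)=0$ and $(\Ham u,\varp_M)=0$ in succession.

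First, I would set $w:=\Ham u$. The orthogonality $(w,\varp_M)=0$ holds by assumption, and the integrability hypotheses placed on $u$ are tailored so that the integrability required by Lemma~\ref{le: Ham coercivity} holds at the level of $w$: the $L^2$-finiteness of $w/(y^2(1+|\log y|))$ is given directly, and $\py\A w=\py\A\Ham u$ is $L^2$ by hypothesis. Applying \eqref{eq: Ham coercivity} to $w$ yields
\[
\int\frac{|\Ham u|^2}{y^4(1+|\log y|)^2}+\int\frac{|\py\Ham u|^2}{y^2(1+|\log y|)^2}+\int_{y\geq 1}\frac{|\py^2\Ham u|^2}{1+|\log y|^2}\,\les\,C(M)\int|\Ham^2 u|^2,
\]
which gives the first two terms in \eqref{eq: Ham2 coercivity} and the main ingredient for the $\py^4 u$ term via the algebraic identity below.

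Second, to obtain the weighted bounds on $u$ and $\py u$, I would invert the factorization $\Ham=\As\A$. Setting $z:=\A u$ we have $\As z=\Ham u$ and, by \eqref{eq: A reformulation}, this reduces to the first-order ODE $\py(y\varl\phi\, z)=y\varl\phi\,\Ham u$, solved explicitly using $\vg\in\ker\As$ and its asymptotics \eqref{eq: gamma}. The resulting representation of $z(y)$, combined with the weighted bound on $\Ham u$ from Step~1, yields control of $\A u$ in the weight $y^{-2}(1+y^4)^{-1}(1+|\log y|)^{-2}$ via a weighted Hardy inequality near $0$ and $\infty$. A second inversion of $-\py u+(Z/y)u=z$, whose homogeneous solution $\varl\phi$ is excluded precisely by the orthogonality $(u,\varp_M)=0$, then transfers this into the desired weighted bounds on $u$ and $\py u$.

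Third, to recover $\py^2 u$, $\py^3 u$, $\py^4 u$, differentiate zero, one, and two times the pointwise identity
\[
\py^2 u=-\Ham u-\frac{1}{y}\py u+\frac{V}{y^2}u,
\]
so that $\py^k u$ for $k=2,3,4$ is expressed as a linear combination of $\py^{k-2}\Ham u$ and $\py^j u$ for $j\leq k-1$, with coefficients smooth and decaying polynomially at infinity since $V$ is smooth and bounded. Substituting the weighted bounds from Steps~1 and~2 and treating the $y\leq 1$ region separately by smoothness (to accommodate the cutoff $\one_{y\geq 1}$ in the $\py^2\Ham u$ estimate) produces the remaining five weighted integrals.

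The principal obstacle is the weighted inversion in Step~2: upgrading the weighted control of $\Ham u$ to that of $u$ with the sharp gain of $(1+y^4)$ in the weight. This relies on careful use of the resonance $\varl\phi$ and its dual $\vg$, on the Wronskian identity recalled before \eqref{eq: inverse of Ham}, and on the fact that the orthogonality $(u,\varp_M)=0$ removes exactly the mode of $\Ham$ that would otherwise prevent the Hardy inequality from closing at infinity. This scheme is standard within the framework of \cite{2011Merle_SchMapBlowup}, to which we refer for the full computations.
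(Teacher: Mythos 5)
Your overall architecture---apply Lemma \ref{le: Ham coercivity} to $w=\Ham u$ using $(\Ham u,\varp_M)=0$, then control $u$ from $\Ham u$ by inverting the factorization $\Ham=\As\A$ with weighted Hardy inequalities, the orthogonality $(u,\varp_M)=0$ serving to fix the coefficient of the resonance $\varl\phi$ (its role is to kill the kernel direction, not literally to make the Hardy inequality close at infinity)---is exactly the route of the proof cited in \cite{2011Merle_SchMapBlowup}; the paper itself gives no proof, only the reference. Steps 1 and 2 are sound as sketched.

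Step 3, however, has a genuine gap near the origin. Writing $\py^2u=-\Ham u-\frac{1}{y}\py u+\frac{V}{y^2}u$ and ``substituting the weighted bounds from Steps 1 and 2'' cannot produce $\int_{y\leq1}\frac{|\py^2u|^2}{y^4(1+\lf|\log y\rg|)^2}$: since $V(0)=Z(0)=1$, for a generic admissible $u\sim cy$ each of the two last terms is of size $c/y$, so estimating them separately against the $y^{-4}$ weight would require $\int_{y\leq1}\frac{|u|^2}{y^8(1+\lf|\log y\rg|)^2}$ and $\int_{y\leq1}\frac{|\py u|^2}{y^6(1+\lf|\log y\rg|)^2}$, which Steps 1--2 do not give and which are in fact divergent for $u\sim cy$; only the cancellation between the two terms saves the estimate. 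The correct grouping is $-\frac{1}{y}\py u+\frac{V}{y^2}u=\frac{\A u}{y}+\frac{V-Z}{y^2}u$ with $\frac{V-Z}{y^2}$ bounded near $y=0$, i.e.\ the higher derivatives must be reconstructed through the factorized quantities $\A u$, $\Ham u=\As\A u$, $\A\Ham u$ and their log-weighted bounds (and the analogous groupings at the next two orders for $\py^3u$, $\py^4u$), which is how the cited proof proceeds. For the same reason, the remark that $y\leq1$ can be ``treated by smoothness'' is not enough: the lemma assumes only the stated weighted integrability, and even for smooth $u$ one needs quantitative control of the fourth derivative near the origin by $\int|\Ham^2u|^2$, which again comes from the $\A$, $\As$ chain rather than from the naive Laplacian identity. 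Away from the origin your Step 3 is fine, since $V/y^2$ decays polynomially.
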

		We recall the notations
		\begin{equation} \notag
			\w=\begin{bmatrix}
				\alpha\\ \beta\\ \gamma
			\end{bmatrix}=\w^\perp+\gamma\,\ez,
			\qquad \w_2=\hJ\mHam\w^\perp=\w_2^0+\w_2^1.
		\end{equation}
		and assume the bootstrap bounds \eqref{eq: pointwise energy 1}--\eqref{eq: pointwise energy 3}, then the following interpolation bounds are the consequences of the coercivity estimate \eqref{eq: Ham coercivity}, \eqref{eq: Ham2 coercivity} and the regularity of $\w$ at the origin ensured by the smoothness of the LL flow $u$.
		\begin{lemma}[Interpolation bounds for $\w^\perp$ {\rm \cite{2011Merle_SchMapBlowup}}]
			\label{le: bounds for wp}
			There holds:
			\begin{align}
			& \int\hs\hs\frac{|\w^\perp|^2}{y^4(1\hs+\hs y^4)(1\hs+\hs\lf|\log y\rg|^2)} +\hs\int\hs\hs\frac{|\py^i \w^\perp|^2}{y^2(1\hs+\hs y^{6-2i})(1\hs+\hs\lf|\log y\rg|^2)} \les C(M)\E_4, \quad 1\leq i\leq 3,
				\label{eq: ip bound 3}\\
			& \int_{|y|\geq 1} \frac{|\py^i \w^\perp|^2}{(1+y^{4-2i})(1+\lf|\log y\rg|^2)}
			\les C(M) \E_2, \quad 1 \leq i \leq 2,
			\label{eq: ip bound 4}\\
			& \int_{|y|\geq 1} \frac{(1+ \lf| \log y\rg|^C)|\py^i \w^\perp|^2}{y^2(1+\lf|\log y\rg|^2)(1+y^{6-2i})}
				\les b^4\lgba^{C_1(C)}, \quad 0\leq i\leq 3,
				\label{eq: ip bound 5}\\
			& \int_{y\geq 1}\frac{1+\lf|\log y\rg|^C}{y^2(1+\lf|\log y\rg|^2)(1+y^{4-2i})}|\py^i\w^\perp|^2 \les b^3\lgba^{C_1(C)},  \quad 0\leq i\leq 2,
				\label{eq: ip bound 6}\\
			& \int_{|y|\geq 1}|\py\mHam\w^\perp|^2\les b^3\lgba^6,
				\label{eq: ip bound 7}\\
			& \big\|\w^\perp\big\|_{L^\infty}\les \db,
				\label{eq: ip bound 8}\\
			& \lf\| \mA\w^\perp \rg\|_{L^\infty}^2
			\les b^2\lgba^9,
			\label{eq: ip bound 9}\\
			& \int_{y \leq 1} \frac{|\mA \w^\perp|^2}{y^6(1+\lf|\log y\rg|^2)}
			\les C(M)\E_4,
			\label{eq: ip bound 10}\\
			& \lf\|\frac{\mA\w^\perp}{y^2(1\hs+\hs\lf|\log y\rg|)}\rg\|_{L^\infty(y\leq 1)}^2 \hs\hs+\lf\|\frac{\Delta\mA\w^\perp}{1\hs+\hs\lf|\log y\rg|}\rg\|_{L^\infty(y \leq 1)}^2 \hs\hs+\lf\|\frac{\mHam\w^\perp}{y(1\hs+\hs\lf|\log y\rg|)}\rg\|_{L^\infty(y\leq 1)}^2 \hs\hs\les b^4,
			\label{eq: ip bound 11}\\
			& \lf\| \frac{|\Ham \alpha| + |\Ham \beta|}
			{y(1+ \lf|\log y\rg|)}\rg\|_{L^\infty(y\leq 1)}^2 \les b^4,
			\label{eq: ip bound 12}\\
			& \lf\| \frac{\w^\perp}{y} \rg\|_{L^\infty(y\leq 1)}^2
			+ \lf\| \py \w^\perp \rg\|_{L^\infty(y\leq 1)}^2
			\les b^4,
			\label{eq: ip bound 13}\\
			& \lf\|\frac{\w^\perp}{y}\rg\|_{L^\infty(y\geq 1)}^2 +\lf\|\py\w^\perp\rg\|_{L^\infty(y\geq 1)}^2 
				\les b^2\lgba^8,
				\label{eq: ip bound 14}\\
			& \lf\|\frac{\w^\perp}{1+y^2}\rg\|_{L^\infty}^2 +\lf\|\frac{\py\w^\perp}{1+y}\rg\|_{L^\infty}^2 +\lf\|\pr_{y}^2 \w^\perp \rg\|_{L^\infty(y \geq 1)}^2 \les C(M)\,b^2\lgba^2.
				\label{eq: ip bound 15}
			\end{align}
		\end{lemma}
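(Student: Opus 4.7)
The plan is to derive each of the bounds \eqref{eq: ip bound 3}--\eqref{eq: ip bound 15} by combining the coercivity estimates of Lemmas~\ref{le: Ham coercivity}--\ref{le: Ham2 coercivity} with the bootstrap energy controls \eqref{eq: pointwise energy 1}--\eqref{eq: pointwise energy 3}, applied componentwise to $\alpha$ and $\beta$. Since $\w^\perp = \alpha\phs\ex + \beta\phs\ey$, and the orthogonality conditions \eqref{eq: orthogonality} imply that each scalar component satisfies $(\alpha,\varp_M) = (\alpha,\Ham\varp_M) = 0$ and likewise for $\beta$, the scalar coercivity lemmas apply directly. The identification $\int |\Ham^2\alpha|^2 + |\Ham^2\beta|^2 \leq \E_4$ and $\int|\Ham\alpha|^2 + |\Ham\beta|^2 \leq \E_2$ makes this transfer immediate.

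For the weighted $L^2$ bounds, my first step would be to prove \eqref{eq: ip bound 3} as a direct application of Lemma~\ref{le: Ham2 coercivity} to each of $\alpha,\beta$, which supplies all the derivative--weight combinations appearing on the left-hand side, controlled by $C(M)\E_4$. The bound \eqref{eq: ip bound 4} follows in the same way from Lemma~\ref{le: Ham coercivity} applied to $\alpha, \beta$, restricted to $y\geq 1$ where the logarithmic weights are comparable to universal constants. The finer bounds \eqref{eq: ip bound 5}--\eqref{eq: ip bound 7} with extra powers of $\lf|\log y\rg|^C$ would follow from an interpolation between the $\E_2$-level control $\E_2\leq Kb^2\lgba^6$ and the $\E_4$-level control $\E_4 \leq Kb^4/\lgba^2$, using a standard logarithmic Gagliardo--Nirenberg inequality on the annular region $y\geq 1$ to pay for the logarithmic weight at the cost of a power $\lgba^{C_1(C)}$.

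The pointwise bounds \eqref{eq: ip bound 8}--\eqref{eq: ip bound 15} would be obtained from weighted Sobolev embedding combined with the preceding weighted $L^2$ estimates and the regularity of $\w$ at the origin. Specifically, the smoothness of the Landau--Lifschitz flow forces $\alpha, \beta$ to vanish like $y$ at $y=0$ (from 1-equivariance plus regularity), which together with the finiteness of the weighted integrals in \eqref{eq: ip bound 3} yields sharp near-origin bounds such as $\|\w^\perp/y\|_{L^\infty(y\leq 1)}^2 \les b^4$ in \eqref{eq: ip bound 13} and the $\mA\w^\perp$ and $\Ham\w^\perp$ bounds in \eqref{eq: ip bound 11}--\eqref{eq: ip bound 12}. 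For $y\geq 1$, I would use the 2D radial embedding $\|f\|_{L^\infty(y\geq 1)}^2 \les \|f\|_{L^2}\|\py f\|_{L^2}\cdot\lgba^{C}$ combined with \eqref{eq: ip bound 4}, \eqref{eq: ip bound 5} to obtain \eqref{eq: ip bound 14}, \eqref{eq: ip bound 15}, and an analogous application to $\mA\w^\perp$ to get \eqref{eq: ip bound 9}.

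The main obstacle would be tracking logarithmic losses sharply across these interpolations: since the $\E_2$ bootstrap bound already carries $\lgba^6$, any redundant logarithmic factor introduced in the embeddings risks breaking the bounds needed to close the energy estimate in Proposition~\ref{pro: mixed energy estimate}. Fortunately, the underlying linearized operator $\Ham$ is identical to that of the \Sch map problem and the orthogonality conditions \eqref{eq: orthogonality} and bootstrap regime coincide with those in \cite{2011Merle_SchMapBlowup}; hence the detailed computations carried out there transfer essentially verbatim, and the present lemma amounts to a careful transcription (with the third component $\gamma$ contributing trivially since $\w^\perp$ has vanishing third component).
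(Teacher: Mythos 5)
Your proposal follows essentially the same route the paper takes: the paper gives no independent proof of this lemma, but imports it from \cite{2011Merle_SchMapBlowup}, remarking only that the bounds are consequences of the coercivity estimates \eqref{eq: Ham coercivity}, \eqref{eq: Ham2 coercivity} applied to the components $\alpha,\beta$ (which satisfy the orthogonality conditions \eqref{eq: orthogonality}), the bootstrap bounds \eqref{eq: pointwise energy 1}--\eqref{eq: pointwise energy 3}, and the regularity of $\w$ at the origin. Your componentwise application of the coercivity lemmas, the $\E_2$/$\E_4$ interpolation for the lossy logarithmic bounds, and the weighted radial embeddings for the $L^\infty$ estimates is precisely the scheme carried out in detail in that reference, so the proposal is consistent with the paper's treatment.
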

		\begin{lemma}[Interpolation bounds for $\gamma$ {\rm \cite{2011Merle_SchMapBlowup}}]
			\label{le: bounds for gamma}
			There holds:
			\begin{align}
			& \int\frac{|\gamma|^2}{y^6(1+y^2)(1+\lf|\log y\rg|^2)} +\int\frac{|\py\gamma|^2}{y^4(1+y^{4-2i})(1+\lf|\log y\rg|^2)} \notag\\
			& \quad +\int\frac{|\py^i\gamma|^2}{y^2(1+y^{6-2i})(1+ \lf|\log y\rg|^2)} \les \db\Ebb, \quad 2\leq i\leq 3,
				\label{eq: ip bound 16}\\
			& \int_{y \geq 1} \frac{1 + \lf| \log y \rg|^C}{y^4(1+ \lf| \log y\rg|^2)(1+ y^{4-2i})}|\py^i \gamma|^2
			\les b^4 \lgba^{C_1(C)}, \quad 0 \leq i\leq 2,
			\label{eq: ip bound 17}\\
			& \int_{y\geq 1} \frac{1+\lf|\log y\rg|^C}{y^{6-2i}(1+\lf|\log y\rg|^2)}|\py^i\gamma|^2 \les b^3\lgba^{C_1(C)}, \quad 0\leq i\leq2,
				\label{eq: ip bound 18}\\
			& \int\frac{|\A\py\gamma|^2}{y^4(1+\lf|\log y\rg|^2)}\les \db\Ebb,
				\label{eq: ip bound 19}\\
			& \lf\|\gamma\frac{1+|y|}{|y|}\rg\|_{L^\infty}
			\les \db,
			\label{eq: ip bound 20}\\
			& \lf\| \frac{(1+|y|) \gamma}{y^2} \rg\|_{L^\infty}^2
			+ \lf\| \py \gamma \rg\|_{L^\infty}^2
			\les b^2 \lgba^8,
			\label{eq: ip bound 21}\\
			& \lf\| \frac{\gamma}{y(1+y)} \rg\|_{L^\infty}^2
			+ \lf\| \frac{\py \gamma}{y} \rg\|_{L^\infty}^2
			\les C(M)b^3 \lgba^2.
			\label{eq: ip bound 22}\\
			& \int | \Delta \gamma |^2
			\les \db \E_2 + b^2\lgba^2,
			\label{eq: ip bound 23}\\
			& \|\Delta\gamma\|_{L^\infty(y\geq1)}^2 \les b^3\lgba^8,
				\label{eq: ip bound 24}\\
			& \int|\w^\perp|^2|\Delta^2\gamma|^2 +\int_{y\geq1}|\Delta^2\gamma|^2 \les \db\Ebb.
				\label{eq: ip bound 25}
			\end{align}
		\end{lemma}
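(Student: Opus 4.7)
The plan is to exploit the pointwise sphere constraint to reduce every bound on $\gamma$ to a bound on $\w^\perp$ and $\wzt$, after which Lemma \ref{le: bounds for wp} and the explicit asymptotics of $\wzt$ from Lemma \ref{le: ap solution} finish the work. The starting point is that $u$ takes values in $\SS^2$, which via \eqref{eq: component constraint} forces the algebraic identity
\begin{equation}
\h\gamma \;=\; -\tfrac{1}{2}\big(\h\alpha^2+\h\beta^2\big) +O\big((\h\alpha^2+\h\beta^2)^2\big), \notag
\end{equation}
so that $\h\gamma$ and all of its derivatives are determined algebraically (via the chain rule) from $\h\alpha,\h\beta$ and their derivatives. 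This is the sole extra input: once this relation is differentiated, no new PDE for $\gamma$ is needed.

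Next, the profile $\wzt^{\third}=b^2 S_{0,2}$ constructed in Step~2 of Lemma \ref{le: ap solution} is precisely designed so as to cancel the leading quadratic term $-\tfrac{1}{2}(|\wzt^{\first}|^2+|\wzt^{\second}|^2)$. Substituting $\hw=\wzt+\w$ into the constraint and subtracting $\wzt^{\third}$ therefore yields
\begin{equation}
\gamma \;=\; -\big(\wzt^{\first}\alpha+\wzt^{\second}\beta\big) -\tfrac{1}{2}\big(\alpha^2+\beta^2\big) +\mathrm{h.o.t.}, \notag
\end{equation}
modulo a correction localized at $y\sim B_1$ produced by the cut-off $\chi_{B_1}$. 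Differentiating this identity $i$ times, for $i\leq 4$, gives a corresponding pointwise expression for $\py^i\gamma$, $\Delta\gamma$, $\Delta^2\gamma$ as finite sums of products $\py^{j}\wzt^\perp\cdot\py^{k}\w^\perp$ and $\py^{j}\w^\perp\cdot\py^{k}\w^\perp$ with $j+k\leq i$, plus higher powers of $\hw$.

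The weighted bounds then follow from H\"older, pairing an $L^\infty$ factor with a weighted $L^2$ factor. For the cross terms, the $L^\infty$ bound on $\wzt^\perp$ and its derivatives is explicit from Lemma \ref{le: ap solution} (of size $b\phsh\lgba^C$ on the relevant region), while the weighted $L^2$ factor of $\w^\perp$ is exactly \eqref{eq: ip bound 3}; this produces the $\db\phsh(\E_4+b^4/\lgba^2)$ bound in \eqref{eq: ip bound 16} and \eqref{eq: ip bound 19}. For the quadratic-in-$\w^\perp$ terms, one factor is placed in $L^\infty$ via \eqref{eq: ip bound 8} or \eqref{eq: ip bound 13}--\eqref{eq: ip bound 15}, and the other in the appropriate weighted $L^2$ from \eqref{eq: ip bound 3}; the smallness $\db$ comes from the $L^\infty$ factor. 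The logarithmically sharper bounds \eqref{eq: ip bound 17}, \eqref{eq: ip bound 18}, \eqref{eq: ip bound 24} inherit directly from the analogous sharper bounds \eqref{eq: ip bound 5}--\eqref{eq: ip bound 7}, \eqref{eq: ip bound 14} on $\w^\perp$, while the $L^\infty$ estimates \eqref{eq: ip bound 20}--\eqref{eq: ip bound 22} are immediate from the pointwise expression for $\gamma$ and the corresponding $L^\infty$ bounds on $\wzt^\perp,\w^\perp$. The $L^2$ bound \eqref{eq: ip bound 23} on $\Delta\gamma$ is more delicate: chain-rule differentiation twice gives cross terms of the form $\py\wzt^\perp\cdot\py\w^\perp$ and $|\py\w^\perp|^2$, which must be estimated against \eqref{eq: pointwise energy 2} rather than \eqref{eq: pointwise energy 3}; this is exactly where the $\E_2$ loss $b^2\lgba^2$ enters.

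The main obstacle will be \eqref{eq: ip bound 25}, where four derivatives fall on the algebraic constraint, producing terms up to $\py^4\w^\perp\cdot\py^j \w^\perp$ with $j\leq 4-i$. Here the additional weight $|\w^\perp|^2$ is essential: it allows one to place two factors of $\w^\perp$ in $L^\infty$ (via \eqref{eq: ip bound 8}, \eqref{eq: ip bound 15}) and absorb two more derivatives inside the $\E_4$--controlled weighted $L^2$ norm of $\py^4\w^\perp$ from \eqref{eq: ip bound 3}. Care must also be taken near $y=0$, where the degeneracy $\varp_{0,1}=O(y^3)$ of $\wzt$ is required to match the $y^6$ weight in \eqref{eq: ip bound 16} without spurious singularities, and at $y\sim B_1$, where the cut-off contribution produces terms of order $1/B_1$ that are absorbed using the scale $B_1 = \lgba/\sqrt{b}$.
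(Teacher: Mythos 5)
Your overall strategy is the right one, and it is essentially the route behind the proof to which the paper defers: the paper itself gives no argument for this lemma beyond citing \cite{2011Merle_SchMapBlowup}, and since $\E_2,\E_4$ are built from $\mHamp$, which annihilates the third component, the sphere constraint \eqref{eq: component constraint} is indeed the only available source of higher-derivative control on $\gamma$; the remark following the proof of Lemma \ref{le: ap solution} (on why $S_{0,2}$ must be chosen as in \eqref{eq: S02 function}) confirms this is the intended mechanism. So you have correctly identified the key idea. The problem is that, as written, your reduction and your H\"older bookkeeping have concrete gaps.

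First, your identity for $\gamma$ is not correct as stated. Subtracting $\wzt^{\third}=\chib\, b^2S_{0,2}$ from $\h\gamma$ cancels only the $b^2|\varp_{0,1}|^2$ part of $-\tfrac12\big(\h\alpha^2+\h\beta^2\big)$. What remains \emph{deterministically} is $-\tfrac12\chib^2\big(a^2|\varp_{1,0}|^2+2\,\wz^{1}\!\cdot\wz^{2}+|\wz^{2}|^2\big)$, the quartic tail of the square-root expansion, and the $\chib$ versus $\chib^2$ mismatch; only the last of these is localized at $y\sim B_1$, and none of them is a ``higher power of $\hw$''. These terms must themselves be shown to obey the $\db$-weighted right-hand sides, and with only the crude bound \eqref{eq: varp asymptotics 2} (which allows $|\varp_{i,j}|\sim (b\lgba)^{-1}$ already at $y\sim1$) the cross term $b\varp_{0,1}\cdot b^2\varp_{0,2}$ contributes a quantity of size $b^4/\lgba^2$ to the left-hand side of \eqref{eq: ip bound 16} with no small constant, so the stated $\db$ gain is not automatic: you need the finer behaviour of the $\varp_{i,j}$ for bounded $y$ (together with cancellations such as $\varp_{1,0}\cdot\varp_{0,1}=0$) which Lemma \ref{le: ap solution} does not record. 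Second, several of your pairings do not close as described. For \eqref{eq: ip bound 23}, putting one factor $\py\w^\perp$ in $L^\infty$ (size $b^2\lgba^8$ by \eqref{eq: ip bound 14}) against $\E_1\les\db$ gives $\db\,b^2\lgba^8$, and pairing $\|\w^\perp\|_{L^\infty}\les\db$ with $\|\Delta\w^\perp\|_{L^2}^2$ costs at least the $(1+\lf|\log y\rg|^2)$ and $(1+y^{4-2i})$ weights of \eqref{eq: ip bound 4}; neither product is $\les\db\,\E_2+b^2\lgba^2$, so the claim that ``this is exactly where the $\E_2$ loss enters'' is not substantiated by the bounds you invoke. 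Likewise, the unweighted piece $\int_{y\geq1}|\Delta^2\gamma|^2$ in \eqref{eq: ip bound 25} is not covered by your ``two factors of $\w^\perp$ in $L^\infty$'' device, and for $y\geq 2B_1$ (where $\w=\hw$ and there is no profile) the weights in \eqref{eq: ip bound 3} degrade, so the global integrals need a separate treatment. In short, the region splitting ($y\leq1$, $1\leq y\leq 2B_1$, $y\geq 2B_1$) and the precise choice of which factor carries the weight constitute the actual content of the proof, and these are exactly the steps your proposal leaves out or gets wrong.
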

		\begin{lemma}[Interpolation bounds for $\w_2$ {\rm \cite{2011Merle_SchMapBlowup}}]
			\label{le: bounds for w2}
			There holds:
			\begin{align}
			& \int |\w_2|^2 =
			\E_2 + O\big( b^2\lgba^2 + \db \E_2 \big),
			\label{eq: ip bound 26}\\
			& \int |\mHam \w|^2
			\les \E_2 + b^2\lgba^2,
			\label{eq: ip bound 27}\\
			& \int \frac{|\mHam \w|^2}{(1+y^4)(1+ \lf| \log y\rg|^2)}
			\les C(M) \E_4,
			\label{eq: ip bound 28}\\
			& \int \frac{|\w_2^0|^2}{(1+y^4)(1+ \lf| \log y\rg|^2)}
			\les C(M) \E_4,
			\label{eq: ip bound 29}\\
			& \int \frac{|\w_2^1|^2}{(1+y^4)(1+ \lf| \log y\rg|^2)}
			\les \db\lf( \E_4 + \bflog \rg),
			\label{eq: ip bound 30}
			\end{align}
			%%%%%%%%%%%%%%%%%
			%%%%%%%%%%%%%%%%%
			\begin{align}
			& \int | \mHam \w_2 |^2
			\les C(M)\lf( \E_4 + \bflog \rg),
			\label{eq: ip bound 31}\\
			& \int \big| \hJ \mHam \w_2 \big|^2
			\les \E_4 + \bflog,
			\label{eq: ip bound 32}\\
			& \int \big| \mHam \w_2^1 \big|^2
			+ \int \lf| \R \mHam \big( \R^2 \w_2^1 \big) \rg|^2
			\les \db\lf( \E_4 + \bflog \rg),
			\label{eq: ip bound 33}\\
			& \int |\mHam \hJ \w_2 |^2
			\les C(M) \bflog.
			\label{eq: ip bound 34}
			\end{align}
		\end{lemma}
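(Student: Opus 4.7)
The plan is to obtain the $\w_2$ bounds as a consequence of the already-established interpolation bounds on $\w^\perp$ (Lemma on bounds for $\w^\perp$) and on $\gamma$ (Lemma on bounds for $\gamma$), together with the coercivity estimates \eqref{eq: Ham coercivity}, \eqref{eq: Ham2 coercivity} and the bootstrap regime \eqref{eq: pointwise energy 1}--\eqref{eq: pointwise energy 3}. The starting point is the algebraic decomposition
\begin{equation} \notag
\w_2=\hJ\mHam\w=(\R+\Rw)\mHam(\wp+\gamma\,\ez)=\R\mHam\wp+\R\mHam(\gamma\ez)+\Rw\mHam\w,
\end{equation}
which in view of the definition $\w_2^0=\R\mHam\wp$ displays the structure $\w_2^1=\R\mHam(\gamma\ez)+\Rw\mHam\w$. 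From this identity, every bound in the lemma reduces to controlling (a) the leading scalar Hamiltonian piece $\Ham\alpha$, $\Ham\beta$ through $\E_2$, $\E_4$; (b) the $\gamma$-tail pieces $\Delta\gamma$, $\py\gamma/(1+y^2)$, which are small by Lemma on bounds for $\gamma$; and (c) the quasilinear rotation term $\Rw(\cdot)=\hw\wedge(\cdot)$, which picks up a factor $\|\hw\|_{L^\infty}\les\db$ by \eqref{eq: ip bound 8}, \eqref{eq: ip bound 20} and the construction of $\wzt$.

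First I would establish \eqref{eq: ip bound 26} by expanding $|\w_2|^2=|\R\mHam\w|^2+2\R\mHam\w\cdot\Rw\mHam\w+|\Rw\mHam\w|^2$, identifying $\int|\R\mHam\w|^2=\int|\mHamp\wp|^2+\int|2(1+Z)(\py+Z/y)\alpha|^2$, recognising the first integral as $\E_2$ up to the $\gamma$-contribution, and absorbing the cross terms via \eqref{eq: ip bound 11}, \eqref{eq: ip bound 16}, \eqref{eq: ip bound 21}. Bound \eqref{eq: ip bound 27} follows by writing $\mHam\w=\mHamp\wp+$ (third component and $\gamma$-mixing terms) and estimating the latter by $b^2\lgba^2$ using the bounds on $\gamma$ and the approximate profile. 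Bounds \eqref{eq: ip bound 28}--\eqref{eq: ip bound 30} come from two ingredients: applying Lemma on coercivity of $\Ham^2$ componentwise to $\alpha,\beta$ (which gives the weighted control of $\mHam\w$ and $\w_2^0$ by $\E_4$), and for \eqref{eq: ip bound 30} using the explicit formula for $\w_2^1$ above together with the $\gamma$ bounds \eqref{eq: ip bound 16}, \eqref{eq: ip bound 19} and the smallness $\|\hw\|_{L^\infty}\les\db$.

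Next I would turn to the bounds \eqref{eq: ip bound 31}--\eqref{eq: ip bound 34} for one more derivative. The plan is to write $\mHam\w_2=\mHam(\hJ\mHam\w)=\hJ\mHam^2\w+[\mHam,\hJ]\mHam\w$; the principal part is controlled by $\E_4+b^4/\lgba^2$ through Lemma on coercivity of $\Ham^2$ applied to $\wp$ plus the $\gamma$-contributions (which give the $b^4/\lgba^2$), while the commutator $[\mHam,\hJ]$ produces at most two derivatives of $\hw$ hitting at most one derivative of $\w$; these are absorbed using \eqref{eq: ip bound 3}, \eqref{eq: ip bound 15}--\eqref{eq: ip bound 16}, \eqref{eq: ip bound 21}--\eqref{eq: ip bound 22}. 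Bound \eqref{eq: ip bound 32} follows from \eqref{eq: ip bound 31} since $|\hJ\vv|\leq|\vv|$ thanks to $|\ez+\hw|=1$. Bound \eqref{eq: ip bound 33} is obtained exactly as \eqref{eq: ip bound 30}, but after applying $\mHam$ (resp.\ $\R\mHam\R^2$), so that every term carries the small factor $\db$. Finally \eqref{eq: ip bound 34} is derived by commuting $\mHam$ past $\hJ$ again and using \eqref{eq: ip bound 31}.

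The main obstacle I anticipate is the sharp constant in \eqref{eq: ip bound 28}, \eqref{eq: ip bound 29}, which requires the weighted $C(M)$-coercivity of $\Ham^2$ rather than a crude interpolation; the orthogonality conditions \eqref{eq: orthogonality} feed directly into Lemma on coercivity of $\Ham^2$ to deliver exactly this. A secondary technical difficulty is the strong weight $1/(y^4(1+\lf|\log y\rg|^2))$ for $\w_2^0$ near the origin: to handle this one must verify that $\alpha,\beta$ vanish at sufficient order at $y=0$, which follows from smoothness of the LL flow together with the equivariant structure preserved by the decomposition \eqref{eq: decomposition}. Since each of these pieces is already available in the excerpt, the remainder of the argument is a systematic but routine collation of the bounds in Lemmas on $\w^\perp$ and $\gamma$ via the decomposition $\w_2=\R\mHam\wp+\R\mHam(\gamma\ez)+\Rw\mHam\w$, and we follow the line-by-line analysis of \cite{2011Merle_SchMapBlowup}.
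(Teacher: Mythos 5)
Your plan is sound and follows essentially the same route as the source: the paper itself does not prove this lemma but quotes it from \cite{2011Merle_SchMapBlowup}, where the bounds are obtained exactly as you describe — splitting $\w_2=\w_2^0+\w_2^1$ with $\w_2^1=\R\mHam(\gamma\,\ez)+\Rw\mHam\w$, applying the coercivity of $\Ham$, $\Ham^2$ under the orthogonality conditions \eqref{eq: orthogonality} to $\alpha,\beta$, and collating the listed bounds on $\w^\perp$, $\gamma$ together with $\|\hw\|_{L^\infty}\les\db$ and the bootstrap regime (which is also what lets \eqref{eq: ip bound 34} carry only $b^4\lgba^{-2}$ on the right). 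One algebraic slip to correct: since $\R$ annihilates the third component, $\int|\R\mHam\w|^2=\int\big(|\Ham\beta|^2+|\Ham\alpha+2(1+Z)\py\gamma|^2\big)$, so the term $2(1+Z)\big(\py+\tfrac{Z}{y}\big)\alpha$ you wrote appears in $|\mHam\w^\perp|^2$ rather than in $|\R\mHam\w|^2$; this is harmless, as both types of terms are controlled by the very bounds you invoke.
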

		
	%%%%%%%%%%%%%%%  B 
	%%%%%%%%%%%%%%%%%%%%%%%%%%%%%%
	%%%%%%%%%%%%%%%%%%%%%%%%%%%%%%
		\section{Proof of Lemma \ref{le: gain of 2d}}
		\label{S: appendix B}
		This appendix is devoted to prove Lemma \ref{le: gain of 2d}. The proof is basically algebraic. The key here is to explore the structure of the LHS of \eqref{eq: gain of 2d}. 
		
		%%%%%%%%%%%%%%%%%
		%%%%%%%%%%%%%%%%%
		\begin{proof}{Lemma \ref{le: gain of 2d}}
		
		%%%%%%%%%%%%%%%%%
		%%%%%%%%%%%%%%%%%
		\medskip
		\step{1}{Gain of two derivatives} Let
		\begin{equation} \notag
			\av=\alpha \er +\beta \etau +\gamma\phsh Q, \quad
			\vg=\begin{bmatrix}
				\alpha\\ \beta\\ \gamma
			\end{bmatrix},
		\end{equation}
		be a decomposition of the vector $\av$ under the Frenet basis, with its components $(\alpha, \beta, \gamma)$   functions of the radial variable $y$ satisfying the constraint
		\begin{equation} \notag
			\alpha^2 + \beta^2 + (1+\gamma)^2 = 1. 
		\end{equation}
		To ease the notations, we introduce the vectors
		\begin{equation} \notag
			Z_1=(1+Z)\ey,\quad
			Z_2=\frac{Z}{y}\ez-(1+Z)\ex,
		\end{equation}
		based on which we apply Lemma \ref{le: frenet basis} and obtain
		\begin{equation} \label{v w frenet basis}
			\py\av=\py \vg+Z_1\wedge\vg, 
			\quad \pr_{\tau}\av=Z_2\wedge \vg.
		\end{equation}
		We also recall the double wedge formula
		\begin{equation} \notag
			a\wedge(b\wedge c)=(a\cdot c)b-(a\cdot b)c.
		\end{equation}
		In view of \eqref{eq: part linear}, \eqref{eq: mHam func}, we compute
		\begin{equation} \label{eq: b compute 1}
		- \vg \cdot \hJ \mHam \vg
		= - \vg \cdot (\ez + \hw) \wedge \mHam \vg
		= \av\cdot \hv \wedge (\Delta \av+ |\nabla Q|^2 \av )
		= \av\cdot \big( \hv \wedge \Delta \av \big).
		\end{equation}
		which together with the action of the Laplacian in the Frenet basis gives
		\begin{equation} \notag
		\begin{aligned}
		\int\hs\av\cdot \big( \hv \wedge \Delta \av \big)
		= \int\hs\Delta\av\cdot \big( \av\wedge \hv \big) 
		= \int\hs\av\cdot \Delta \big( \av\wedge \hv \big)
		= \int\hs\av\cdot \big( \Delta \av \wedge \hv + 2 \nabla \av \wedge \nabla \hv \big),
		\end{aligned}
		\end{equation}
		and thus 
		\begin{equation} \label{eq: b compute 2}
			\int\hs\av\cdot \big( \hv \wedge \Delta \av \big)
			= \int\hs\av\cdot \big( \nabla \av \wedge \nabla \hv \big).
		\end{equation}
		Combining \eqref{v w frenet basis}, \eqref{eq: b compute 1},
		\eqref{eq: b compute 2}, we let $\vg=\hJ\mHam\vg$, and compute  the LHS of \eqref{eq: gain of 2d}:
		\begin{equation} \label{eq: b identity}
		\begin{aligned}
		- \int \hJ \mHam \vg \cdot (\hJ \mHam)^2 \vg
		= & \int \hJ \mHam \vg \cdot \py \hJ \mHam \vg \wedge \py \hw
		+ \int \hJ \mHam \vg \cdot \big(Z_1\wedge \hJ \mHam \vg\big) \wedge \py \hw \\
		& + \int \hJ \mHam \vg \cdot \py \hJ \mHam \vg \wedge \big(Z_1\wedge(\ez+\hw)\big) \\
		& + \int \hJ \mHam \vg \cdot\big(Z_1\wedge \hJ\mHam\vg\big) \wedge \big(Z_1\wedge(\ez+\hw)\big) \\
		& + \int \hJ \mHam \vg \cdot \big(Z_2\wedge \hJ \mHam \vg\big) \wedge \big(Z_2\wedge(\ez+\hw)\big),
		\end{aligned}
		\end{equation}
		which gives a two-derivatives gain. The normalization of $(\ez+\hw)$ and the structures of $Z_1, Z_2$ produce some cancellations in the RHS of \eqref{eq: b identity}. To see this, we Let
		\begin{equation}
			\vg_2 = \mHam\vg,
		\end{equation}
		then the first term on the RHS of \eqref{eq: b identity} can be simplified to
		\begin{equation} \notag
		\begin{aligned}
		& \int \hJ\varg_2 \cdot \py \hJ\varg_2\wedge\py\hw \\
		= & \int \hJ \varg_2 \cdot \big[ \py \hw \wedge \varg_2
		+ ( \ez + \hw ) \wedge \py \varg_2 \big] \wedge \py \hw \notag\\
		= & \int \big( \hJ \varg_2 \cdot \varg_2 \big) \big( \py \hw \cdot \py \hw \big)
		- \int \big(\hJ \varg_2 \cdot \py \hw \big)\big(\varg_2 \cdot \py \hw \big) \notag\\
		& + \int \big( \hJ \varg_2 \cdot \py \varg_2 \big)\big( ( \ez + \hw ) \cdot \py \hw \big)
		- \int \big( \hJ \varg_2 \cdot ( \ez + \hw ) \big)\big( \py \varg_2 \cdot \py \hw \big) \notag\\
		= & - \int \big( \hJ \varg_2 \cdot \py \hw \big) \big( \varg_2 \cdot \py \hw \big).
		\end{aligned}
		\end{equation}
		The second term on the RHS of \eqref{eq: b identity} is
		\begin{equation} \notag
		\begin{aligned}
			& \int \hJ \varg_2 \cdot \big(Z_1\wedge \hJ \varg_2\big) \wedge \py \hw \\
			= & \int \hJ \varg_2 \cdot \Big[ \hJ \varg_2 \big( Z_1 \cdot \py \hw \big) - Z_1 \big( \hJ \varg_2 \cdot \py \hw \big) \Big] \\
			= & \int(1+Z)\big|\hJ\varg_2\big|^2(\ey\cdot\py\hw) -\int(1+Z)\big(\ey\cdot\hJ\varg_2\big) \big(\py\hw\cdot\hJ\varg_2\big).
		\end{aligned}
		\end{equation}
		Moreover, the third term is
		\begin{equation} \notag
		\begin{aligned}
			& \int \hJ \varg_2 \cdot \py \hJ \varg_2 \wedge \big(Z_1\wedge(\ez+\hw)\big) \\
			= & \int (\hJ\mHam\vg \cdot Z_1) \big( \py \hJ \varg_2 \cdot ( \ez + \hw ) \big) - \big(\hJ\mHam\vg\cdot(\ez+\hw)\big) \big( \py \hJ \varg_2 \cdot \ey \big) \\
			= & \int ( 1 + Z ) \big( \hJ \varg_2 \cdot \ey \big) \Big[ \big( \py \hw \wedge \varg_2 + \hJ \py \varg_2 \big) \cdot ( \ez + \hw ) \Big] \\
			=& -\int(1+Z)\big(\ey\cdot\hJ\varg_2\big) \big(\py\hw\cdot\hJ\varg_2\big).
		\end{aligned}
		\end{equation}
		We observe
		\begin{equation}
			Z_1\wedge \ez=(1+Z)\ex, \quad Z_2\wedge \ez=(1+Z)\ey,
		\end{equation}
		 then the last two terms of \eqref{eq: b identity} can be reformulated as
		\begin{equation}
		\begin{aligned}
		& \int \hJ \varg_2 \cdot \big(Z_1\wedge \hJ \varg_2\big) \wedge\big(Z_1\wedge(\ez+\hw)\big)
		+ \int \hJ \varg_2 \cdot\big(Z_2\wedge \hJ \varg_2\big) \wedge \big(Z_2\wedge(\ez+\hw)\big) \\
		= & - \int \big(\hJ \varg_2 \cdot Z_1 \big) \big( \hJ \varg_2 \cdot Z_1 \wedge \ez \big)
		+ \int \hJ \varg_2 \cdot \big(Z_1\wedge \hJ \varg_2\big) \wedge \big(Z_1\wedge \hw\big) \\
		& - \int \big( \hJ \varg_2 \cdot Z_2 \big) \big( \hJ \varg_2 \cdot Z_2 \wedge \ez \big) 
		+ \int \hJ \varg_2 \cdot \big(Z_2\wedge \hJ \varg_2\big) \wedge \big(Z_2\wedge \hw\big) \\
		= & - \int \frac{Z( 1 + Z )}{y} \big(\ez\cdot\hJ\varg_2\big)
		\big(\ey\cdot\hJ\varg_2\big) \\
		& + \int \hJ \varg_2 \cdot \big(Z_1\wedge \hJ \varg_2\big) \wedge \big(Z_1\wedge \hw\big)
		+ \int \hJ \varg_2 \cdot \big(Z_2\wedge \hJ \varg_2\big) \wedge \big(Z_2\wedge \hw\big)
		\end{aligned}\notag
		\end{equation}
		Injecting these computations into \eqref{eq: b identity}, we have the following refined formula
		\begin{equation} \label{eq: b identity 2}
		\begin{aligned}
			&\quad -\int\hJ\mHam\vg\cdot(\hJ\mHam)^2\vg\\
			&= -\int\big(\hJ\mHam\vg\cdot\py\hw\big) \big(\mHam\vg\cdot\py\hw\big) +\int\onez\big(\ey\cdot\py\hw\big)|\hJ\mHam\vg|^2 \\
			&\quad -2\int\onez\big(\ey\cdot\hJ\mHam\vg\big) \big(\py\hw\cdot\hJ\mHam\vg\big) -\int\frac{Z\onez}{y}\big(\ez\cdot\hJ\mHam\vg\big) \big(\ey\cdot\hJ\mHam\vg\big) \\
			&\quad +\int\hJ\mHam\vg\cdot \big(Z_1\wedge\hJ\mHam\vg\big) \wedge\big(Z_1\wedge\hw\big) +\int\hJ\mHam\vg\cdot \big(Z_2\wedge\hJ\mHam\vg\big) \wedge\big(Z_2\wedge\hw\big).	
		\end{aligned}
		\end{equation}
		
		%%%%%%%%%%%%%%%%%
		%%%%%%%%%%%%%%%%%
		\medskip
		\step{2}{Extraction of the leading terms} To derive precise bounds for the RHS of \eqref{eq: b identity 2}, let us introduce the following decomposition of $\hw,\wzt$ (This is different from decomposition~\eqref{eq: wz decomposition}):
		\begin{equation} \label{eq: hw decomp}
			\hw = \wzt^1+\wzt^2+\w, \quad\mbox{with}\quad
			\lf\{\begin{aligned}	
				&\wzt^1 = b\phsh\varpt_{0,1} = b\phsh\chib\varp_{0,1}, \\
				&\wzt^2 = \wzt - \wzt^1.
			\end{aligned}\rg.
		\end{equation}
		By \eqref{eq: ip bound 13}, \eqref{eq: ip bound 21}, the first term on the RHS of \eqref{eq: b identity 2} is bounded by
		\begin{equation} \notag
			\Big|\hs\int\big(\hJ\mHam\vg\cdot\py\hw\big)\big(\mHam \vg\cdot\py\hw\big)\Big|
			\leq\,\|\py\hw\|_{L^{\infty}}^2 \|\mHam\vg\|_{L^2}^2 \les b\,\db\,\|\mHam\vg\|_{L^2}^2.
		\end{equation}
		According to \eqref{eq: hw decomp}, we apply \eqref{eq: ip bound 15}, \eqref{eq: ip bound 22}, and $|a|\leq b\lgba^{-1}$ to compute the second term		
		\begin{equation} \notag
		\begin{aligned}
			& \int\onez\big(\ey\cdot\py\hw\big)|\hJ\mHam\vg|^2 \\
			=&\, b\int\onez\,\py\varpt_{0,1}^{\second}\,|\hJ\mHam \vg|^2 +\int\onez\Big(\ey\cdot\py(\wzt^2+\w)\Big)|\hJ\mHam\vg|^2 \\
			=&\, b\int\onez\,\py\varpt_{0,1}^{\second}\,|\hJ\mHam\vg|^2 +O\Big(b\,\db\,\|\hJ\mHam\vg\|_{L^2}^2\Big).
		\end{aligned}
		\end{equation}
		Similarly, the third term on the RHS of \eqref{eq: b identity 2} is
		\begin{equation} \notag
		\begin{aligned}
			& -2\int\onez\big(\ey\cdot\hJ\mHam\vg\big)\big(\py\hw\cdot\hJ\mHam\vg\big)\\
			=& -2b\int\onez\big(\ey\cdot\hJ\mHam\vg\big) \big(\py\varpt_{0,1}\cdot\hJ\mHam\vg\big) \\
			& -2\int\onez\big(\ey\cdot\hJ\mHam\vg\big) \Big(\py(\wzt^2+\w)\cdot\hJ\mHam\vg\Big) \\
			\leq & -2b\int\onez\,\py\varpt_{0,1}^{\first}\,\big(\ex\cdot\hJ\mHam\vg\big)\big(\ey\cdot\hJ\mHam\vg\big) \\
			& -2b\int\onez\,\py\varpt_{0,1}^{\second}\,\big(\ey\cdot\hJ\mHam\vg\big)^2 +O\Big(b\,\db\,\|\hJ\mHam\vg\|_{L^2}^2\Big).
		\end{aligned}
		\end{equation}
		Moreover, from \eqref{eq: hJ} we note the simple identities
		\begin{equation} \notag
		\begin{aligned}
			\ez\cdot\hJ\mHam\vg &=\ez\cdot(\ez+\hw)\wedge\mHam\vg
				=\ez\cdot\hw\wedge\mHam\vg, \\
			\ex\cdot\mHam\vg &=\ey\cdot(\ez\wedge\mHam\vg) =\ey\cdot\hJ\mHam\vg -\ey\cdot\Rw\mHam\vg, \\
			\ey\cdot\mHam\vg &=-\ex\cdot(\ez\wedge\mHam\vg) 
				=-\ex\cdot\hJ\mHam\vg +\ey\cdot\Rw\mHam\vg,
		\end{aligned}
		\end{equation}
		from which the fourth term on the RHS of \eqref{eq: b identity 2} is bounded by
		\begin{equation} \notag
		\begin{aligned}
			& -\int\frac{Z\onez}{y}\big(\ez\cdot\hJ\mHam\vg\big) \big(\ey\cdot\hJ\mHam\vg\big) \\
			\leq & -b\hs\hs\int\hs\frac{Z\onez}{y} \big(\ez\cdot \varpt_{0,1}\hs\wedge\hs\mHam\vg\big) \big(\ey\cdot\hJ\mHam\vg\big) 
			+\hs\int\hs O\bigg(\frac{|\wzt^2|+|\w|}{y(1+y^2)}\bigg) \,|\hJ\mHam\vg|^2 \\
			\leq &\, b\hs\int\frac{Z\onez}{y}\,\varpt_{0,1}^{\first}\, \big(\ex\cdot\hJ\mHam\vg\big) \big(\ey\cdot\hJ\mHam\vg\big)
			+b\hs\int\frac{Z\onez}{y}\,\varpt_{0,1}^{\second}\,\big(\ey\cdot\hJ\mHam\vg\big)^2 \\
			& +O\bigg(b\phsh\bigg\|\frac{\chib T_1}{y(1+y^2)}\bigg\|_{L^{\infty}}\|\hw\|_{L^{\infty}}\hs\bigg)\,\|\hJ\mHam\vg \|_{L^2}^2
			+O\bigg(\,\bigg\|\frac{|\wzt^2|+|\w|}{y(1+y^2)}\bigg\|_{L^{\infty}}\bigg)\,\|\hJ\mHam\vg\|_{L^2}^2 \\
			\leq &\, b\hs\int\frac{Z\onez}{y}\,\varpt_{0,1}^{\first}\, \big(\ex\cdot\hJ\mHam\vg\big) \big(\ey\cdot\hJ\mHam\vg\big) \\
			& +b\hs\int\frac{Z\onez}{y}\,\varpt_{0,1}^{\second}\,\big(\ey\cdot\hJ\mHam\vg\big)^2 
			+O\Big(b\,\db\,\|\hJ\mHam\vg\|_{L^2}^2\Big).
		\end{aligned}
		\end{equation}
		Furthermore, we compute the fifth term on the RHS of \eqref{eq: b identity 2}:
		\begin{equation} \notag
		\begin{aligned}
			& \int \hJ\mHam\vg\cdot\big(Z_1\wedge \hJ\mHam\vg\big)\wedge\big(Z_1\wedge\hw\big) \\
			=&\, \int\big(Z_1\cdot\hJ\mHam\vg\big) \big(\hw\cdot Z_1\wedge\hJ\mHam\vg\big) \\
			=&\, b\int\onez^2 \Big(\varpt_{0,1}\cdot \ey\wedge\hJ\mHam\vg\Big) \big(\ey\cdot\hJ\mHam\vg\big) \\
			& + \int\onez^2 \big((\wzt^2+\w)\cdot \ey\wedge\hJ\mHam\vg\big) \big(\ey\cdot\hJ\mHam\vg\big) \\
			\leq &\, b\int\onez^2\,\varpt_{0,1}^{\first}\,\big(\ey\cdot\hJ\mHam\vg\big) \big(\ez\cdot\hJ\mHam\vg\big) +O\Big(b\,\db\,\|\hJ\mHam\vg\|_{L^2}^2\Big).
		\end{aligned}
		\end{equation}
		Finally, for the last term on the RHS of \eqref{eq: b identity 2}, we have
		\begin{equation} \notag
		\begin{aligned}
			&\quad \int\hJ\mHam\vg\cdot\big(Z_2\wedge\hJ\mHam \vg\big) \wedge\big(Z_2\wedge\hw\big) \\
			& = \int\big(Z_2\cdot\hJ\mHam\vg\big) \big(\hw\cdot Z_2\wedge\hJ\mHam\vg\big) \\
			& = b\int\big(Z_2\cdot\hJ\mHam\vg\big) \Big(\big(\varpt_{0,1}\wedge Z_2\big)\cdot\hJ\mHam\vg\Big) +\int\big(Z_2\cdot\hJ\mHam\vg\big)\Big(\big((\wzt^2+\w)\wedge Z_2\big)\cdot \hJ\mHam\vg\Big) \\
			& \leq b\int\frac{Z\onez}{y}\,\varpt_{0,1}^{\first}\, \big(\ex\cdot\hJ\mHam\vg\big) \big(\ey\cdot\hJ\mHam\vg\big) \\
			&\quad -b\int\frac{Z\onez}{y}\,\varpt_{0,1}^{\second}\, \big(\ex\cdot\hJ\mHam\vg\big)^2 
			+\rest_{\vg} +O\Big(b\,\db\,\|\hJ\mHam\vg\|_{L^2}^2\Big),
		\end{aligned}
		\end{equation}
		where $\rest_{\vg}$ consists of the terms involving $(\ez\cdot\hJ\mHam\vg)$: 
		\begin{equation} \notag
		\begin{aligned} 
			\rest_{\vg} &= -b\hs\int\bigg(\frac{Z}{y}\bigg)^2\,\varpt_{0,1}^{\first}\,\big(\ez\hs\cdot\hs\hJ\mHam\vg\big) \big(\ey\hs\cdot\hs\hJ\mHam\vg\big) 
			+ b\hs\hs\int\frac{Z\onez}{y}\,\varpt_{0,1}^{\second}\, \big(\ez\hs\cdot\hs\hJ\mHam\vg\big)^2 \\
			&\quad +b\int\bigg(\frac{Z}{y}\bigg)^2\,\varpt_{0,1}^{\second}\, \big(\ez\hs\cdot\hs\hJ\mHam\vg\big) \big(\ex\hs\cdot\hs\hJ\mHam\vg\big) 
			-b\int\onez^2\,\varpt_{0,1}^{\first}\, \big(\ez\hs\cdot\hs\hJ\mHam\vg\big) \big(\ex\hs\cdot\hs\hJ\mHam\vg\big).
		\end{aligned}
		\end{equation}
		Combing these computations with \eqref{eq: b identity 2}, and  applying the definition of function $Z$ \eqref{eq: lambda phi Z} the operator $A$ \eqref{eq: Ham factori}, we obtain the following inequality
		\begin{equation} \label{b ineq 3 estimate}
		\begin{aligned}
			&\quad -\int\hJ\mHam\vg\cdot(\hJ\mHam)^2\vg \\
			&\leq b\int\onez\A\phsh\varpt_{0,1}^{\second}\Big[\big(\ey\cdot\hJ\mHam\vg\big)^2-\big(\ex\cdot\hJ\mHam\vg\big)^2 -\big(\ez\cdot\hJ\mHam\vg\big)^2 \Big] \\
			&\quad + 2b\int\onez\A\phsh\varpt_{0,1}^{\first}\big(\ex\cdot\hJ\mHam\vg\big)\big(\ey\cdot\hJ\mHam\vg\big) \\
			&\quad +b\int\frac{V}{y^2}\,\varpt_{0,1}^{\second}\big(\ex\cdot\hJ\mHam\vg\big)\big(\ez\cdot\hJ\mHam\vg\big)
			-b\int\frac{V}{y^2}\,\varpt_{0,1}^{\first}\big(\ey\cdot\hJ\mHam\vg\big)\big(\ez\cdot\hJ\mHam\vg\big).
		\end{aligned}
		\end{equation}
	
		%%%%%%%%%%%%%%%%%
		%%%%%%%%%%%%%%%%%
		\medskip
		\step{3}{Upper bound on the quadratic terms} Recalling \eqref{eq: Ham factori}, for $\ti{T}_1=\chib T_1$, we claim that there exists a constant $d_1\in (0,1)$ such that
		\begin{equation} \label{AT1 1+Z}
			\forall y>0, \quad 0\leq\onez\A\ti{T}_1 
			\leq\frac{1}{2}(1-d_1).
		\end{equation}
		We prove this inequality for $T_1$, but the same result follows for $\ti{T}_1$ immediately. First we note from \eqref{eq: A reformulation} that
		\begin{equation} \notag
			\frac{1}{y\varl\phi} \py\big(y\varl\phi\, \A T_1\big)
			= \As(\A T_1) = \varl\phi,
		\end{equation}
		which yields the explicit expression of $\onez\A T_1$:
		\begin{equation} \label{eq: onez AT_1}
			\onez\A T_1 = \frac{1+Z}{y \varl\phi} \int_0^y\tau \varl\phi^2(\tau) d\tau
			= \frac{2\log(1+y^2)}{y^2} -\frac{2}{1+y^2}.
		\end{equation}
		To show \eqref{AT1 1+Z}, we define the functions
		\begin{equation} \notag
			f_1(y) = 2\log(1+y)-\frac{2y}{1+y}, \quad
			f_2(y) = f_1(y) -\frac{y}{2}.
		\end{equation}
		By direct computations, we see 
		\begin{equation} \notag
			f_1(0)=f_2(0)=0, \quad\mbox{and}\quad
			\forall\phsh y>0,\quad f_1'(y) > 0 \geq f_2'(y),
		\end{equation}
		where $f_2'(y)=0$ holds for $y=1$ only. This actually implies
		\begin{equation} \notag
			\forall y>0, \quad
			0<\frac{2\log(1+y^2)}{y^2}-\frac{2}{1+y^2}<\frac{1}{2},
		\end{equation}
		which together with \eqref{eq: onez AT_1} yields \eqref{AT1 1+Z}. Consequently, the quadratic terms in \eqref{b ineq 3 estimate} can be estimated. By the explicit formula of $\varp_{0,1}$ \eqref{eq: varp function}, we have
		\begin{equation} \label{b compute 3}
		\begin{aligned}
			&\quad b\int\onez\A\phsh\varpt_{0,1}^{\second}\Big[\big(\ey\cdot\hJ\mHam\vg\big)^2-\big(\ex\cdot\hJ\mHam\vg\big)^2 -\big(\ez\cdot\hJ\mHam\vg\big)^2\Big]  \\
			& \leq \frac{b|\ao|}{\ao^2+\bo^2} \,\big\|\onez\A\ti{T}_1 \big\|_{L^{\infty}} \int|\hJ\mHam\vg\big|^2 
			\leq \frac{b(1-d_1)|\ao|}{2(\ao^2+\bo^2)}\, \|\hJ\mHam\vg\|_{L^2}^2,
		\end{aligned}
		\end{equation}
		and also
		\begin{equation} \label{b compute 4}
		\begin{aligned}
			&\quad 2b\int\onez\A\phsh\varpt_{0,1}^{\first}\big(\ex\cdot\hJ\mHam\vg\big)\big(\ey\cdot\hJ\mHam\vg\big) \\
			& \leq \frac{2b|\bo|}{\ao^2+\bo^2} \,\big\|\onez\A\ti{T}_1 \big\|_{L^{\infty}} \int|\hJ\mHam\vg|^2
			\leq \frac{b(1-d_1)|\bo|}{2(\ao^2+\bo^2)}\, \|\hJ\mHam\vg\|_{L^2}^2.
		\end{aligned}
		\end{equation}
		For the last line of \eqref{b ineq 3 estimate}, from the smallness of $\w$ \eqref{eq: ip bound 8}, \eqref{eq: ip bound 20}, we observe
		\begin{equation} \notag
			|\ez\cdot\hJ\mHam\vg| = |\ez\cdot\hw\wedge\mHam\vg|
			\leq |\hw|\,|\mHam\vg| \les \db|\mHam\vg|,
		\end{equation}
		and hence
		\begin{equation} \label{b compute 5}
		\begin{aligned}
			&\quad b\int\frac{V}{y^2}\,\varpt_{0,1}^{\second}\big(\ex\cdot\hJ\mHam\vg\big)\big(\ez\cdot\hJ\mHam\vg\big)
			-b\int\frac{V}{y^2}\,\varpt_{0,1}^{\first}\big(\ey\cdot\hJ\mHam\vg\big)\big(\ez\cdot\hJ\mHam\vg\big) \\
			& \les  b\,\db \bigg\|\frac{V}{y^2}\,\ti{T}_1\bigg\|_{L^{\infty}} \int|\hJ\mHam\vg|\cdot|\mHam\vg| 
			\les b\,\db \|\mHam\vg\|_{L^2}^2.
		\end{aligned}
		\end{equation}
		Eventually, we inject \eqref{b compute 3}, \eqref{b compute 4}, \eqref{b compute 5} into \eqref{b ineq 3 estimate}, and then \eqref{eq: gain of 2d} follows.
		\end{proof}
	\end{appendix}
	
	\bibliographystyle{plain}

\begin{thebibliography}{10}
		\bibitem{spinwaves}
		A.I.~Akhiezer, V.G.~Bar'yakhtar, S.V.~Peletminskii,
		\newblock  Spin waves.
		\newblock {\em Wiley, NY, 1968.}

		\bibitem{AS}
		F.~Alouges, A.~Soyeur,
		\newblock On global weak solutions for Landau-Lifschitz equations: existence and nonuniqueness.
		\newblock {\em Nonl. Anal.}, 18(11), 1071--1084, 1992.


		\bibitem{BIKT2}
		I.~Bejenaru, A.~Ionescu, C.~Kenig, D.~Tataru.
		\newblock Global \Sch maps in dimensions $d\geq2$: small data in the critical Sobolev spaces.
		\newblock {\em Ann. Math}, 173(3), 1443--1506, 2011.

		\bibitem{BIKT1}
		I.~Bejenaru, A.~Ionescu, C.~Kenig, D.~Tataru.
		\newblock Equivariant \Sch maps in two spatial dimensions.
		\newblock {\em Duke Math. J.}, 162(11), 1967--2025, 2013.

		\bibitem{BT}
		I.~Bejenaru, D.~Tataru.
		\newblock Near soliton evolution for equivariant \Sch maps in two spatial dimensions.
		\newblock {\em Mem. Amer. Math. Soc.},  228, no. 1069, 2014.


		\bibitem{BHK}
		J.B.~van den Berg, J.~Hulshof, J.R.~King.
		\newblock Formal asymptotics of bubbling in the harmonic map heat flow.
		\newblock {\em SIAM J. Appl. Math}, 63(5), 1682--1717, 2003.

		\bibitem{BW}
		J.B.~van den Berg, J.F.~Williams.
		\newblock (In-)stability of singular equivariant solutions to the Landau-Lifshitz-Gilbert equation.
		\newblock {\em Euro. J. Appl. Math.}, 24(6), 921--948, 2013.

		\bibitem{BBCH}
		F.~Bethuel, H.~Brezis, B.D.~Coleman, F.~H\'elein
		\newblock Bifurcation analysis of minimizing harmonic maps describing the equilibrium of nematic phases between cylinders.
		\newblock {\em Arch. Rathion. Mech. Anal.}, 118(2), 149--168, 1992.


		\bibitem{1976Bogomol_StabilityofClassicalSolutions}
		E.B.~Bogomol'nyi.
		\newblock The stability of classical solutions.
		\newblock {\em Sov. J. Nucl. Phys.}, 24:4, 10 1976.

		\bibitem{CF}
		G.~Carbou, P.~Fabrie,
		\newblock Regular solutions for Landau-Lifschitz equation in $\mathbb R^3$.
		\newblock {\em Comm. Appl. Anal.},  5(1), 17--30, 2001.

		\bibitem{kung1989heat}
		K.-C.~Chang.
		\newblock Heat flow and boundary value problem for harmonic maps.
		\newblock {\em Ann. Inst. H. Poincare Anal. Non Lineaire},6(5):363--395, 1989.


		\bibitem{changdingye}
		K.~Chang, W.~Ding, R.~Ye.
		\newblock Finite-time blow-up of the heat flow of harmonic maps from surfaces.
		\newblock {\em J. Diff. Geom.},36(2):507--515, 1992.

		\bibitem{chang2000schrodinger}
		N.H.~Chang, J.~Shatah, K.~Uhlenbeck.
		\newblock \Sch maps.
		\newblock {\em Comm. Pure Appl. Math.}, 53(5):590--602, 2000.

		\bibitem{1990ChenYunmei_HarmonicMapBlowupandGlobalExistence}
		Y.~Chen, W.~Ding.
		\newblock Blow-up and global existence for heat flows of harmonic maps.
		\newblock {\em Invent. Math.}, 99(1):567--578, 1990.

		\bibitem{DPJ}
		J.~DÃ¡vila, M.~del Pino, J.~Wei,
		\newblock Singularity formation for the two-dimensional harmonic map flow into $S^2$.
		\newblock {\em Invent. Math.}, 219(2):345--466, 2020.

		\bibitem{2002DingWeiyue_Schrodingerflow}
		W.~Ding.
		\newblock On the Schr{\"o}dinger flows.
		\newblock {\em Proceedings of the ICM},
		vol. II, 282--291, 2002.

		\bibitem{dingtian}
		W.~Ding, G.~Tian.
		\newblock Energy identity for a class of approximate harmonic maps from surfaces.
		\newblock {\em Comm. Anal. Geom.},
		3(3-4):543--554, 1995.

		\bibitem{ding2001local}
		W.~Ding, Y.~Wang.
		\newblock Local Schr{\"o}dinger flow into k{\"a}hler manifolds.
		\newblock {\em Sci. China Ser. A},
		44(11):1446--1464, 2001.


		\bibitem{eells1964harmonic}
		J.~Eells Jr., J. H.~Sampson.
		\newblock Harmonic mappings of Riemannian manifolds.
		\newblock {\em Amer. J. Math.},
		86(1):109--160, 1964.

		\bibitem{GH}
		B.~Guo, M.~Hong,
		\newblock The Landau-Lifschitz equation of the ferromagnetic spin chain and harmonic maps.
		\newblock {\em Calc. Var. PDE},
		1(3):311--334, 1993.

		\bibitem{2007Gustafson_SchrodingerFlow}
		S.~Gustafson, K.~Nakanishi, T.P.~Tsai.
		\newblock Schr{\"o}dinger flow near harmonic maps.
		\newblock {\em Comm. Pure Appl. Math.},
		60(4), 463--499, 2007.

		\bibitem{2008Gustafson_HarmonicMapAsymptotic}
		S.~Gustafson, K.~Nakanishi, T.P.~Tsai.
		\newblock Asymptotic stability of harmonic maps under the Schr{\"o}dinger flow.
		\newblock {\em Duke Math. J.}, 145(145):537--583, 2008.

		\bibitem{2010Gustafson_HarmonicHeatAsymptoticStability}
		S.~Gustafson, K.~Nakanishi, T.P.~Tsai.
		\newblock Asymptotic stability, concentration, and oscillation in harmonic map
		heat-flow, landau-lifshitz, and Schr{\"o}dinger maps on $\RR^2$.
		\newblock {\em Comm. Math. Phys.},
		300(1):205--242, 2010.

		\bibitem{KO}
		J.~Ko,
		\newblock The construction of a partially regular solution to the Landau-Lifshitz-Gilbert equation in $\mathbb R^2$.
		\newblock {\em Nonlinearity},
		18(6), 2681--2714, 2005.

		\bibitem{landau1992theory}
		L.~Landau, E.~Lifshitz.
		\newblock On the theory of the dispersion of magnetic permeability in
		ferromagnetic bodies.
		\newblock {\em Phys. Zeitsch. der Sow.} 8, 153--169 1935.


		\bibitem{2018Lize2}
		Z.~Li,
		\newblock Global Schr\"odinger map flows to K{\"a}hler manifolds with small data in critical Sobolev spaces: Energy critical case.
		\newblock {\em Arxiv:1811.10924}.


		\bibitem{2019Lizeh}
		Z.~Li,
		\newblock Global Schr\"odinger map flows to K{\"a}hler manifolds with small data in critical Sobolev spaces: High dimensions.
		\newblock {\em J. Funct. Anal.}
		281(6):Paper No. 109093, 76, 2021.

		\bibitem{2017LizeAsymptotic}
		Z.~Li, L.~Zhao.
		\newblock Asymptotic behaviors of Landau-Lifshitz flows from $\RR^2$ to
		k{\"a}hler manifolds.
		\newblock {\em Calc. Var. PDE},
		56(4):96, 2017.


		\bibitem{lin1998energy}
		F.~Lin, C.~Wang.
		\newblock Energy identity of harmonic map flows from surfaces at finite
		singular time.
		\newblock {\em Calc. Var. PDE},
		6(4):369--380, 1998.

		\bibitem{LW}
		F.~Lin, C.~Wang.
		\newblock The analysis of harmonic maps and their heat flows.
		\newblock {\em World Scientific Publishing Co. Pte. Ltd.}, NJ
		2008.

		\bibitem{martelmerle2002stabilityofblowup}
		Y.~Martel, F.~Merle.
		\newblock Stability of blow-up profile and lower bounds for blow-up rate for the critical generalized {K}d{V} equation.
		\newblock {\em Ann. Math.(2)},
		155(1):235--280, 2002.
		
		\bibitem{mcgahagan2007approximation}
		H.~McGahagan.
		\newblock An approximation scheme for Schr{\"o}dinger maps.
		\newblock {\em Comm. PDE},
		32(3):375--400, 2007.

		\bibitem{Mel}
		C.~Melcher,
		\newblock Existence of partially regular solutions for Landau-Lifshitz equations in $\mathbb R^3$.
		\newblock {\em Comm. PDE},
		30(4):567--587, 2005.

		\bibitem{2011Merle_SchMapBlowup}
		F.~Merle, P.~Rapha{\"e}l, I.~Rodnianski.
		\newblock Blowup dynamics for smooth equivariant solutions to the energy
		critical Schr{\"o}dinger map.
		\newblock {\em Invent. math.},
		193:249--356, 2013.

		\bibitem{2012Raphael_WaveMapBlowup}
		I.~Rodnianski, P.~Rapha{\"e}l.
		\newblock Stable blow up dynamics for the critical co-rotational wave maps and
		equivariant Yang-Mills problems.
		\newblock {\em Publ. Math. Inst. Hautes {\'E}tudes Sci.},
		115(1):1--122, 2012.

		\bibitem{2013Raphael_HarmonicHeatQuantizedBlowup}
		P.~Rapha{\"e}l, R.~Schweyer.
		\newblock Stable blow-up dynamics for the 1-corotational energy-critical
		harmonic heat flow.
		\newblock {\em Comm. Pure Appl. Math.},
		66(3):414-480, 2013.

		\bibitem{2015Raphael_HarmonicHeatQuantizedBlowup}
		P.~Rapha{\"e}l, R.~Schweyer.
		\newblock Quantized slow blow-up dynamics for the corotational energy-critical harmonic heat flow.
		\newblock {\em Anal. PDE},
		7(8):1713--1805, 2015.



		\bibitem{2014Galina_Blowup}
		G.~Perelman.
		\newblock Blow up dynamics for equivariant critical Schr{\"o}dinger maps.
		\newblock {\em Comm. Math. Phys.},
		330(1):69--105, 2014.


		\bibitem{qing1997bubbling}
		J.~Qing, G.~Tian.
		\newblock Bubbling of the heat flows for harmonic maps from surfaces.
		\newblock {\em Comm. Pure Appl. Math.},
		50(4):295--310, 1997.



		\bibitem{rodnianski2009global}
		I.~Rodnianski, Y.~Rubinstein, S.~Gigliola.
		\newblock On the global well-posedness of the one-dimensional Schr{\"o}dinger
		map flow.
		\newblock {\em Anal. PDE},
		2(2):187--209, 2009.

		\bibitem{2012Schweyer_heatEq}
		R.~Schweyer.
		\newblock Type II blow-up for the four dimensional energy critical semi linear
		heat equation.
		\newblock {\em J. Func. Anal.},
		263(12):3922--3983, 2012.


		\bibitem{1985Struwe}
		M.~Struwe, \newblock On the evolution of harmonic maps of Riemannian surfaces.
		\newblock {\em Comm. Math. Helv.},
		60(4):558--581, 1985.

		\bibitem{sulem1986continuous}
		P.L.~Sulem, C.~Sulem, C.~Bardos.
		\newblock On the continuous limit for a system of classical spins.
		\newblock {\em Comm. Math. Phys.},
		107(3):431--454, 1986.

		\bibitem{topping2004winding}
		P.~Topping.
		\newblock Winding behavior of finite-time singularities of the harmonic map heat flow.
		\newblock {\em Math. Z.},
		247(2):279--302, 2004.

		\bibitem{Wang}
		C.~Wang,
		\newblock On Landau-Lifshitz equation in dimensions at most four.
		\newblock {\em Indiana Univ. Math. J.},
		55(5):1615--1644, 2006.
	\end{thebibliography}
	
\end{document}